\newtheorem{theorem}{Theorem}[section]
\newtheorem{lemma}[theorem]{Lemma}
\newtheorem{corollary}[theorem]{Corollary}
\newtheorem{proposition}[theorem]{Proposition}
\newtheorem{remark}[theorem]{Remark}
\newtheorem{conjecture}[theorem]{Conjecture}
\theoremstyle{definition}
\theoremstyle{remark}
\def \cB {\mathcal{B}}
\def \cC {\mathcal{C}}
\def \cE {\mathcal{E}}
\def \cG {\mathcal{G}}
\def \cH {\mathcal{H}}
\def \cL {\mathcal{L}}
\def \cR {\mathcal{R}}
\def \cV {\mathcal{V}}
\def \b {\beta}
\def \d {\delta}
\def \e {\varepsilon}
\def \l {\lambda}
\def \s {\sigma}
\def \G {\Gamma}
\def \E {\mathbb{E}}
\def \N {\mathbb{N}}
\def \P {\mathbb{P}}
\def \R {\mathbb{R}}
\def \Z {\mathbb{Z}}
\def\8{\infty}
\def \lra {\longrightarrow}
\def \un {\lbrace 0,1 \rbrace}
\title
{Scaling limit of the heavy-tailed ballistic deposition model with
	$p$-sticking}
\author
{Francis Comets\footnotemark[1]\thanks{Universit\'e de Paris and LPSM, Math\'ematiques, France. \textsc{email}: comets@lpsm.paris
} 
	\and Joseba Dalmau\thanks{NYU--ECNU Institute of Mathematics at NYU Shanghai, China. \newline \textsc{email}: dalmau.joseba@gmail.com}\and Santiago Saglietti\thanks{Pontificia Universidad Católica de Chile. \textsc{email}: sasaglietti@mat.uc.cl}}
\date{}
\begin{document}
	\maketitle
	
\begin{abstract}Ballistic deposition is a classical model for interface growth in which unit blocks fall down vertically at random on the different sites of $\Z$ and stick to the interface at the first point of contact, causing it to grow. We consider an alternative version of this model in which the blocks have random heights which are i.i.d. with a heavy (right) tail, and where each block sticks to the interface at the first point of contact with probability $p$ (otherwise, it falls straight~down until it lands on a block belonging to the interface). We study scaling limits of the resulting interface for the different values of $p$ and show that there is a phase transition as $p$ goes from $1$ to $0$.
\end{abstract}

\section{Introduction}\label{sec:intro}
In the last decades, random growth models have attracted a keen interest in the physics and mathematics communities \cite{BarabasiStanley,  DamronRAS16, DamronRAS-book}.  Typically, the height of a $d$-dimensional surface evolves subject to random increments which combine the following features:
\begin{enumerate}
	\item [(i)] locality: changes in height depends only on neighboring heights;
	\item [(ii)] spatial smoothing: valleys are quickly filled in, due to the influence of higher neighbors;
	\item [(iii)] nonlinear slope dependence: the effective growth rate increases in a nonlinear manner in the local slope;
\item [(iv)] space-time independent noise: the growth is driven by random noise with fast decay of correlations.
\end{enumerate}
Central examples of models with these characteristics include the Eden model, Diffusion Limited Aggregation, First and Last Passage Percolation (LPP), Directed Polymers in Random Environment (DPRE), PolyNuclear Growth and Ballistic Deposition (BD), among others.  
One reason for the flourishing of scientific contributions in models with one spatial dimension is that, under the extra assumption
\begin{enumerate}
	\item [(v)] the  noise has light tails,
\end{enumerate}
these models are expected to belong to the KPZ universality class, which is characterized by specific scaling exponents ruling fluctuations and path stretchings~\cite{Corwin16}, 
which are different from the Gaussian case. The most prominent member of this class is the KPZ equation~\cite{AmirCorwinQuastel}. Quite a few rigorous proofs that instances of these models belong to the KPZ class have been given, but they rely on an integrable structure.

To bypass the need of integrability, Hambly and Martin~\cite{HamblyMartin07} considered LPP in the first quadrant with heavy-tailed passage times. The limit only~retains the extreme statistics of the passage times field, 
and the scaling limit of the~path with largest passage time is given by the 1-Lipschitz path picking~up the largest sum of extremes.
Similar simplifying assumptions were later used to obtain scaling limits and characteristic exponents for DPRE \cite{AuffingerLouidor,BergerTorri,DeyZygouras}.

The model of ballistic deposition was first introduced in \cite{Vold59}: unit blocks fall independently from the sky and stick to the first point of contact, resulting in a lateral growth and creating overhangs.
Simulations and heuristic arguments strongly suggest ballistic deposition is in the KPZ class~\cite{KatzavSchwartz}, even if the block height is random with a light tail. However, a mathematical proof is far from reach at the moment and, furthermore, there is no hint of how stable to perturbations it is. In~\cite{Seppalainen} the height function is represented by a variational formula and a hydrodynamic limit is proved, and in~\cite{Penrose} laws of large numbers are established. The model of ballistic deposition can be seen as the $0$-temperature version of a general model in which falling blocks attach to the blocks deposited on neighboring sites with a probability that depends on an inverse temperature parameter $\beta$ in a Gibbsian fashion. Contrary to other models believed to belong to the KPZ class, the infinite-temperature version of this generalized ballistic deposition model does not belong to the Edward-Wilkinson universality class, but rather to to the newly found universality class of the Brownian Castle, see \cite{cannizzaro2021brownian}.

In this paper, instead of (v) we will assume that the noise has heavy tails, and that the block height is a random variable in the attracting domain of an $\alpha$-stable law, with $\alpha \in (0,2)$. For this case, we will derive the scaling limit of the height function.
Since this height function turns out to be given by a variational formula similar to that in (site) LPP, our limit is similar to that of~\cite{HamblyMartin07}. However, the picture and the arguments are now more involved due to the time-space random field of block depositions which determines a random cone of propagation. The region  $\alpha <2$ corresponds to the complete stretching of the optimal path according to the Flory argument, e.g. \cite{BiroliBouchaudPotters}.

Another related question we consider is that of the domain of attraction of this scaling limit. 
The random deposition model (RD) is one of the simplest models for a randomly growing one-dimensional interface. In this model, unit blocks fall down independently and vertically on the different sites of~$\Z$, depositing themselves on top of the last block to have fallen at the same point. Since there is no sticking to neighboring columns,
the height functions of the different columns are independent processes having a stable law as their limit for large times, and the corresponding model under assumption (v) belongs to the Gaussian class instead of KPZ. Our second main contribution is to study the competition between the corresponding universality classes under the assumption of heavy tails. We derive scaling limits for mixtures of RD and BD.  First, we show that any \textit{fixed} amount of BD in the mixture is enough for the scaling limit to be the same as pure BD. This result can be viewed as a small step towards the following conjecture:
\begin{conjecture}
For light-tailed block heights, any fixed amount of BD makes a deposition process fall into the KPZ universality class. 
\end{conjecture}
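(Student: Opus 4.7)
The plan is to be honest that this is a central open problem rather than a routine exercise, and to articulate a strategy that decomposes it into sub-problems. The natural decomposition is twofold: (i) establish that pure BD with light-tailed increments lies in the KPZ universality class, and (ii) show that KPZ scaling is stable under replacing a fraction $1-p$ of BD events by non-sticking RD events. Sub-goal (i) is itself unresolved and is widely regarded as the hardest part, so any serious attack on the conjecture has to take (i) as an input and focus effort on (ii).

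For (ii), I would pursue a coupling and renormalization argument. Drive both the $p$-sticking mixture and the pure BD ($p=1$) model by the same Poisson clock for the sticking events, attach an independent RD field for the non-sticking drops in the mixture, and control $\Delta_T(x) := h_T^{(p)}(x) - h_T^{(1)}(x)$. At the level of the variational representation of the height function (in the spirit of Sepp\"al\"ainen's formula cited in the introduction), an optimizing path still uses order $T$ steps, of which roughly $pT$ are BD-steps; the heuristic is that the optimizer continues to live on the KPZ transversal scale $T^{2/3}$, with only an effective $p$-dependent constant changing. The concrete target is to show that the RD contributions, while Gaussian of scale $T^{1/2}$ per individual column, are visible to the final interface only through the random cone of propagation generated by the BD component, and that this filtering reduces the effective number of visible RD contributions enough to make their net fluctuation $o(T^{1/3})$. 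A multiscale renormalization over dyadic time-scales, coupling the mixture at scale $2^k$ with a pure BD model at scale $2^k$ and bootstrapping the fluctuation bound from smaller to larger scales, seems to be the only structurally robust way to close the loop without integrability.

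The main obstacle is unavoidable and twofold. First, sub-goal (i) is beyond every current technique for non-integrable models: no growth model without exactly-solvable structure has been rigorously placed in the KPZ class. Second, even granting (i), the coupling argument in (ii) must control correlations between the RD-field and the BD-generated cone of influence, and these are exactly the same correlations that obstruct direct proofs of KPZ scaling. This is precisely why the present paper attacks the heavy-tailed analog instead: there the extremal statistics of the block heights dominate the geometry of the optimal path, and the stability question becomes tractable. A realistic intermediate target en route to the conjecture would be the weaker one-sided statement that the fluctuations of $h_T^{(p)}$ are of order at most $T^{1/3+\e}$ while transversal fluctuations are at least $T^{2/3-\e}$; this stops short of identifying a scaling limit but would already demonstrate that the BD universality class is robust to a positive density of RD events, which is the mechanism the conjecture is really asserting.
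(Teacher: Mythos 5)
This statement is a \emph{conjecture}, not a theorem: the paper offers no proof, and explicitly frames its Theorems~\ref{theo:1} and~\ref{theo:2} as merely ``a small step towards'' it in the heavy-tailed regime. You correctly recognize this, and your decomposition into (i) pure BD is KPZ and (ii) KPZ is stable under mixing with a positive density of RD events matches the paper's own framing: Theorem~\ref{theo:1} is precisely the heavy-tailed analogue of (ii), where the explicit Poissonian scaling limit $H$ replaces the (conjectural) KPZ limit and the LPP representation of Proposition~\ref{prop:lpprep} replaces the renormalization you sketch. Your discussion of the obstacles is accurate -- (i) is open for every non-integrable model, and (ii) would face the same correlation-control difficulties that block (i) -- and your proposed weaker intermediate target (one-sided $T^{1/3+\e}$/$T^{2/3-\e}$ exponent bounds) is a sensible, more attainable formulation of what the conjecture is asserting. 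Since there is no proof in the paper to compare against, there is nothing to flag beyond noting that your proposal is, correctly, a strategy outline rather than a proof.
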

On the other hand, if we consider \textit{infinitesimal} amounts of BD in the~mixture, i.e. an amount tending to zero as time goes to infinity, then we show that a phase transition takes place: if this infinitesimal amount is not too small then one~recovers the scaling limit of pure BD, whereas for all smaller amounts the scaling of the height function becomes different. 

The key tool we use to derive these results is the aforementioned alternative representation of the height function in terms of a variational formula in the spirit of LPP. As such, one important part of our analysis is to obtain suitable bounds on the number of macroscopic weights collected by any optimal path in this LPP representation. We achieve this by studying an auxiliary ballistic deposition model in which the block heights are i.i.d. Bernoulli distributed. For this auxiliary model, we obtain upper bounds on the expected growth of its height function analogous to those given in~\cite{HamblyMartin07} for Bernoulli LPP, which the reader may find of independent interest. 

In this paper we always take the spatial dimension $d=1$. We could have~also treated the case $d > 1$ by performing a few simple changes, but we chose not to as this does not bring any new significant features. The paper is organized as follows: in Section 2 we formally introduce the ballistic deposition model, as well as its scaling limit; in Section 3 we state our main results; in Section 4 we give a rigorous construction of the process and then, in Section 5, we use this construction to give the alternative representation of the height function in terms of a last passage percolation problem; in Section~6 we give a general outline of the proofs, while Sections 7 through~10 are devoted to the proofs of various auxiliary technical results.

\section{Description of the models}\label{sec:desc}


We now formally introduce the ballistic deposition model we shall work with, as well as the continuous model which will act as its scaling limit.

\subsection{The Ballistic Deposition model}
On each site $x \in \Z$, rectangular blocks fall down at random with rate~$1$, independently of all other sites. Falling blocks each have width $1$ and their own random height, where the heights corresponding to the different blocks are i.i.d. with a common distribution function $F$.
These blocks will ``deposit'' themselves on top of the different sites in $\Z$ and thus form a growing cluster according to the \textit{deposition rules} we describe next.  
First, fix some $p\in [0,1]$, which will henceforth be referred to as the \textit{sticking parameter} of the model. Then, whenever a block falls on top of site $x$, it will do one of the following:
\begin{enumerate}
	\item [$\bullet$] with probability $1-p$, it will deposit itself directly on top of the last block that fell on $x$ (see Figure \ref{fig:stickiness});
	\item [$\bullet$] with probability $p$, it will \textit{stick} to the growing cluster of blocks at the first point of contact, which will belong to the last block deposited on any one of the sites $x-1$, $x$, $x+1$, whichever has been deposited at the largest height among the three
(see Figure \ref{fig:stickiness}). 
\end{enumerate}
\begin{figure}
	\includegraphics[width=\textwidth]{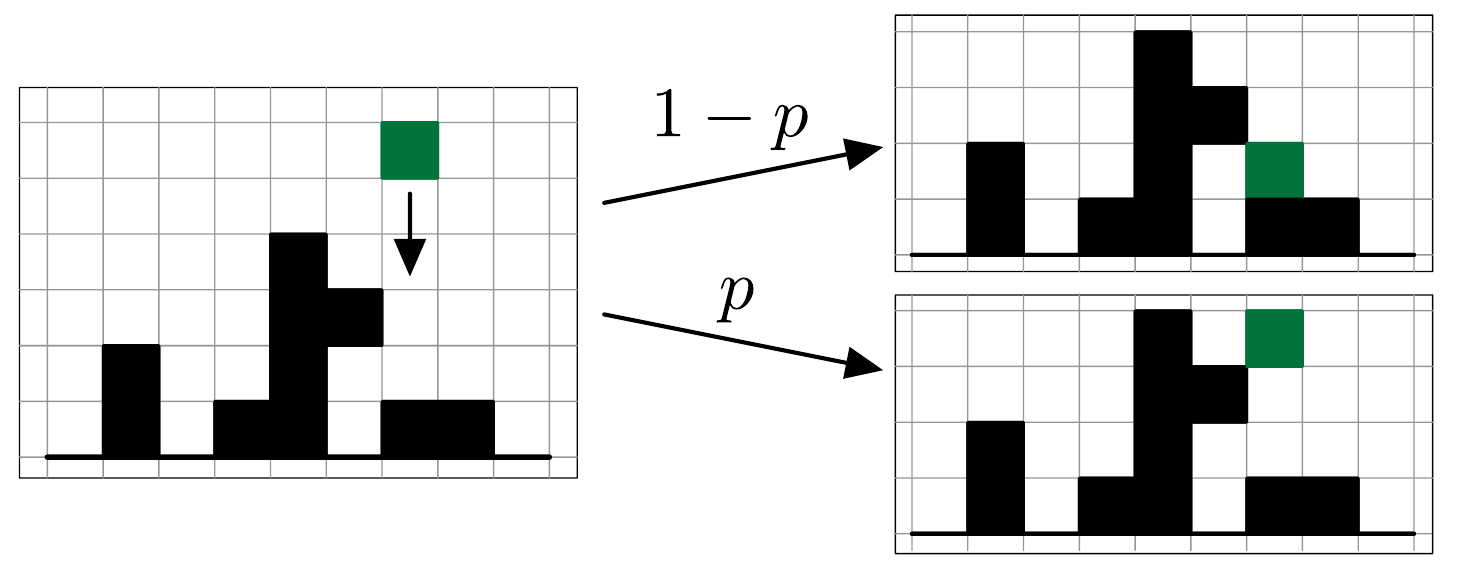}
	\caption{\textit{The Ballistic Deposition Model with unit blocks}}
	\label{fig:stickiness}
\end{figure}

If for $x \in \Z$ and $t \geq 0$ we denote by $h(x,t)$ the height of the growing cluster above the site $x$  at time $t$, we will call the random configuration 
\[
h=(h(x,t) : x \in \Z\,,\,t \geq 0)
\] the \textit{	ballistic deposition model} with $p$-\textit{sticking} and \textit{height distribution function} $F$. The particular case in which $p=1$ and $F=1_{[1,\infty)}$
corresponds to the usual ballistic deposition model, whereas the case $p=0$ corresponds to the \textit{random deposition model} (RD). 
In order to complete the description of our model, it remains to specify an initial condition.
Our methods allow for treatment of a wide range of
different initial conditions. In particular,
the usual \emph{flat}, \emph{seed} and \emph{random}
initial conditions can all be treated without any added difficulty. However, for the sake of simplicity,
throughout the article we~will always consider the model with a \textit{flat} initial condition, i.e. $h(\cdot,0) \equiv 0$.

If for any configuration $h \in \R^\Z$, $x \in \Z$ and $\eta \geq 0$ we define the configurations $R^{(0)}_x(h,\eta) \in \R^\Z$ and $R^{(1)}_x(h,\eta) \in \R^\Z$ by the formulas
\begin{equation}
	\label{eq:r0}
	R^{(0)}_x(h,\eta)(y)\,=\,\begin{cases}
		 h(y) &\quad \text{if}\ y\neq x\\
		 h(x)+\eta &\quad \text{if}\ y =x
	\end{cases}
\end{equation} and
\begin{equation}
	\label{eq:r1}
	R^{(1)}_x(h,\eta)(y)\,=\,\begin{cases}
		h(y) &\quad \text{if}\ y\neq x\\
		\max\lbrace 
		h(x-1),h(x),h(x+1)\rbrace+\eta &\quad \text{if}\ y =x
	\end{cases}
\end{equation} then the ballistic deposition model described above can be formally defined as the Markov process on $\R^\Z$ having a flat initial condition $h(\cdot,0) \equiv 0$ and with infinitesimal generator given by 
\begin{equation}\label{eq:gen}
	\mathscr{L}(f)(h)= \sum_{x \in \Z} \int_0^\infty\left[ pf(R^{(1)}_x(h,\eta)) + (1-p)f(R^{(0)}_x(h,\eta))-f(h)\right]\mathrm{d}F(\eta)
\end{equation} for any bounded and continuous function $f \in C_b(\R^\Z)$ which is also \textit{local}, i.e. $f(h)$ depends on $h$ only through the values of $h(x)$ for finitely many $x \in \Z$. 
In Section \ref{sec:const}, we will explicitly construct the process $h$ as a function of a marked Poisson process on $\Z \times \R$. 

\begin{figure}
	\includegraphics[width=\textwidth]{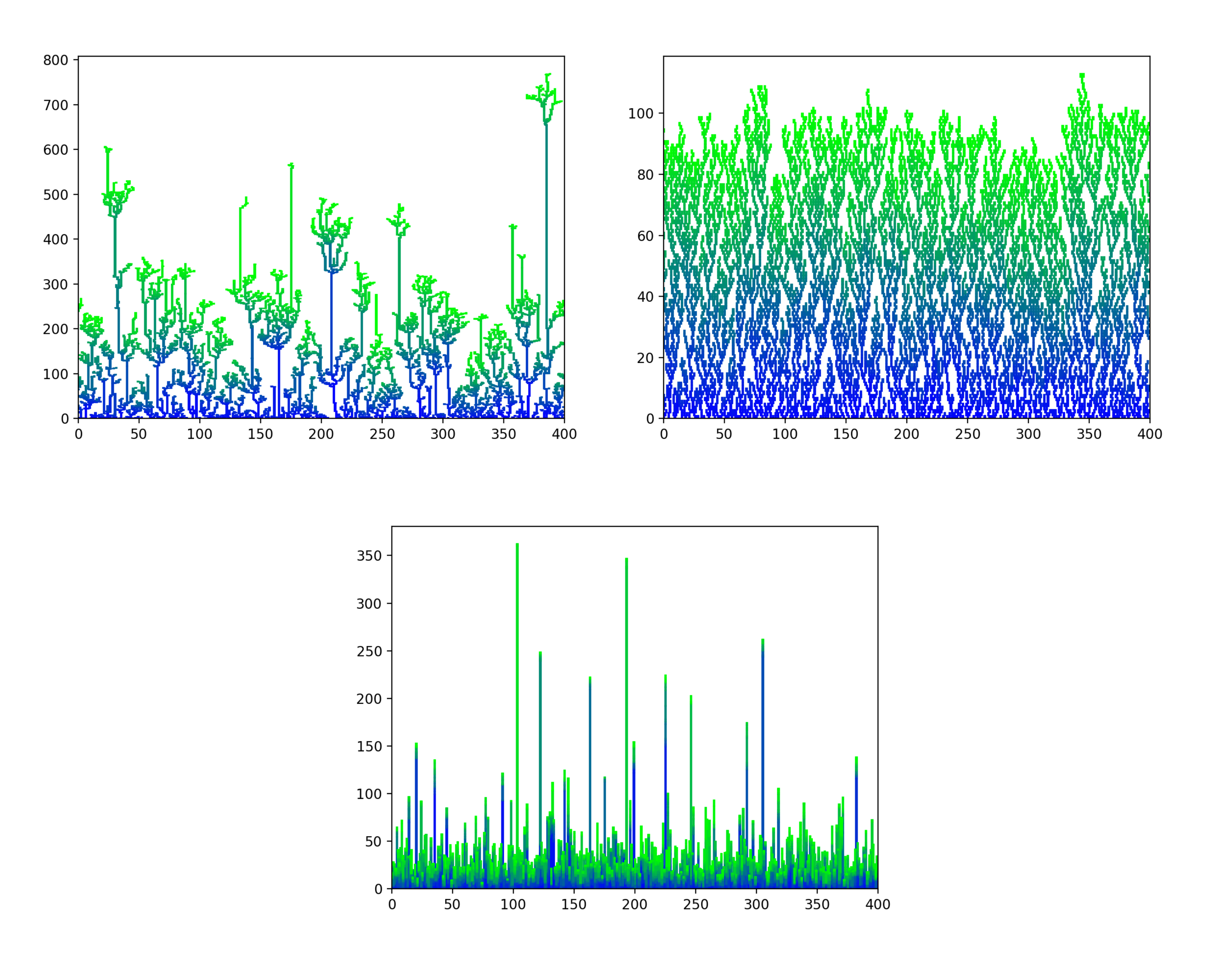}
	\caption{\textit{Simulations of three ballistic deposition models for different values of $p$ and $F$.} All simulations feature $10000$ block depositions (the color of each block represents time of deposition) on a torus with $400$ sites, starting from a flat initial condition. The upper-left picture corresponds to $p=1$ and $F$ given by a Pareto distribution with index $\alpha=1.5$ (heavy-tailed BD); the upper-right picture corresponds to $p=1$ and $F=1_{[1,\infty)}$ (standard BD); the bottom picture corresponds to $p=0$ and $F$ given by a Pareto distribution with index $\alpha=1.5$ (heavy tailed RD).}
	\label{fig:depfigures2}
\end{figure}

In the sequel, we will only work with models whose block height distribution function is continuous and heavy-tailed. More precisely, we will assume:

\textbf{Assumptions (F).} The block height distribution function $F$ satisfies:
\begin{itemize}
	\item [F1.] $F$ is continuous with $F(0)=0$;
	\item [F2.] $F$ is regularly varying at infinity with index $\alpha \in (0,2)$, i.e. 
	\begin{equation}
		\forall t> 0\qquad 
		\lim_{x\to\infty}\,\frac{1-F(tx)}{1-F(x)}\,=\, t^{-\alpha}.
	\end{equation}
\end{itemize} 


\subsection{The Continuous Last Passage Percolation model}\label{sec:cont2}

Our main objective in this article is to establish scaling limits as $t \rightarrow \infty$ for the height function $h(x,t)$ in different scenarios. The first step is to introduce the limiting object which will appear in all these scaling limits. We shall do this via a \emph{continuous last passage percolation} model.

Let us define the \emph{path space} 
\[
\cL=\big\lbrace
\gamma:[0,1]\to\R : \gamma(1)=0\,,\,|\gamma(t_1)-\gamma(t_2)|\leq |t_1 -t_2| \,\,\forall\, t_1,t_2 \in [0,1]\big\rbrace,
\] i.e. the set of $1$-Lipschitz paths on $[0,1]$ ending at $0$, together with the~\textit{triangle}
\[
\Delta:=\big\lbrace
(x,t)\in\R^2 : 0\leq t\leq 1\,,\, |x|\leq 1-t
\big\rbrace.
\] Observe that if we define the \text{graph} of any path $\gamma \in \cL$ by 
\[
\text{graph}(\gamma):=\{ (\gamma(t),t) : t \in [0,1]\}
\] (note the unconventional order of the coordinates in our definition of graph) then we have $\Delta = \cup_{\gamma \in \cL} \text{graph}(\gamma)$. That is, the triangle $\Delta$ is the minimal set with the property that $\text{graph}(\gamma) \in \cL$ for any $\gamma \in \cL$. 

To define our limiting object, let us first consider a Poisson process on $\R_+$ with rate $1$. If we order the points of this process in an increasing fashion, i.e. $0 \leq X_1 < X_2 <\dots < X_i < \dots$, then it is straightforward to check that the decreasing sequence of points $M=(M_i)_{i \in \N}$ given by 
\begin{equation}\label{eq:defM}
	M_i:=(X_i)^{-\tfrac{1}{\alpha}},
\end{equation} 
with $\alpha \in (0,2)$ as in Assumptions (F) is a non-homogeneous Poisson process on $\R_+$ with intensity measure having density $\alpha m^{-(\alpha+1)}1_{m > 0}$. 
Now, let us consider an i.i.d. sequence $(U_i)_{i \in \N}$ with uniform distribution on the triangle $\Delta$ and define the random point measure $\Pi$ on~$\Delta$ by the formula
\begin{equation}
	\label{eq:defpi0}
	\Pi := \sum_{i \in \N} M_i\delta_{U_i}.
\end{equation}
Observe that the collection $\{(U_i,M_i) : i \in \N\}$ is a Poisson process on $\Delta \times \R_+$ with intensity measure having density $1_{\Delta}(x) \times \alpha m^{-(\alpha+1)}1_{m>0}$. 

We define our limit as 
\begin{equation}
	\label{eq:H}
	H:=\sup_{\gamma\in\cL}\Pi\big(
	\text{graph}(\gamma)
	\big).
\end{equation} That is, $H$ represents the maximum sum of weights $M_k$ that can be collected by any path $\gamma\in\cL$ in the triangle $\Delta$.

Our first result, which we will prove in Section~\ref{sec:cont}, is then the following:

\begin{theorem}\label{theo:0} The quantity $H$ defined in \eqref{eq:H} is measurable and a.s. finite. Furthermore, there exists an a.s.-unique path $\gamma^* \in \cL$ with $H=\Pi(\text{graph}(\gamma^*))$ so that, in particular, with probability one we have
	\[
	H=\max_{\gamma\in\cL}\Pi\big(
	\text{graph}(\gamma)\big).
	\]
\end{theorem}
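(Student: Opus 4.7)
The plan is to recast the problem as a last-passage question on the partial order $\preceq$ on $\Delta$ defined by $(x,t) \preceq (x',t')$ iff $t \leq t'$ and $|x-x'| \leq t'-t$: a finite set of atoms $\{U_i\}_{i \in K}$ is contained in $\text{graph}(\gamma)$ for some $\gamma \in \cL$ if and only if it forms a $\preceq$-chain. Measurability of $H$ is then immediate, since it can be written as the countable supremum
\[
H \;=\; \sup_{K \subset \N \text{ finite}} \Big(\sum_{i \in K} M_i\Big) \mathbf{1}\{\{U_i\}_{i \in K} \text{ is a $\preceq$-chain}\}.
\]
For a.s.\ finiteness I would decompose atoms by weight-scale $\cA_k = \{i : M_i \in (2^{-k-1}, 2^{-k}]\}$, whose cardinality is Poisson of mean of order $2^{\alpha k}$. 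The change of variables $(x,t) \mapsto (t-x, t+x)$ is an isometry sending $\preceq$ to the coordinate-wise product order and $\Delta$ to a triangle of finite area, so classical Hammersley--Vershik--Kerov bounds give that the longest $\preceq$-chain supported on $\cA_k$ has length $L_k = O(2^{\alpha k/2})$ with tail probabilities summable in $k$. Any $\preceq$-chain meets $\cA_k$ in a subchain of length at most $L_k$, so
\[
H \;\leq\; \sum_{k \in \Z} 2^{-k} L_k,
\]
whose large-$k$ tail is bounded by $\sum_{k \geq 0} 2^{-k(1-\alpha/2)}$, finite precisely because $\alpha < 2$; the small-$k$ contribution involves only finitely many atoms a.s.

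Existence of a maximizer will follow from a compactness and upper-semicontinuity argument. The space $\cL$ is compact in the uniform topology by Arzelà--Ascoli, so any maximizing sequence $(\gamma_n)$ admits a uniform limit $\gamma^* \in \cL$ along a subsequence. For each $\e > 0$ the truncated functional
\[
\Pi^{(\e)}(\text{graph}(\gamma)) \;:=\; \sum_{i : M_i \geq \e} M_i \mathbf{1}\{\gamma(t_i) = x_i\}
\]
involves only finitely many atoms and is upper semicontinuous at $\gamma^*$: if $\gamma_n(t_i) = x_i$ along an infinite subsequence and $\gamma_n \to \gamma^*$ uniformly, then $\gamma^*(t_i) = x_i$. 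Setting $R^{(\e)} := \sup_{\gamma \in \cL}\big(\Pi - \Pi^{(\e)}\big)(\text{graph}(\gamma))$, the scale bound of the previous step forces $R^{(\e)} \to 0$ a.s.\ as $\e \downarrow 0$. Then
\[
H \;=\; \lim_n \Pi(\text{graph}(\gamma_n)) \;\leq\; \limsup_n \Pi^{(\e)}(\text{graph}(\gamma_n)) + R^{(\e)} \;\leq\; \Pi^{(\e)}(\text{graph}(\gamma^*)) + R^{(\e)},
\]
and letting $\e \downarrow 0$ yields $H \leq \Pi(\text{graph}(\gamma^*))$, hence equality.

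The hard part is uniqueness, where I would combine two ingredients. First, since the marks $M_i$ are i.i.d.\ and continuously distributed, for any pair of distinct finite subsets $K_1 \neq K_2$ of $\N$ one has $\P(\sum_{K_1} M_i = \sum_{K_2} M_i) = 0$, and a union bound over this countable collection shows that a.s.\ no two distinct finite $\preceq$-chains have identical weight. Second, optimality forces a geometric rigidity on the chain $S^*$ of atoms collected by any maximizer $\gamma^*$: if two of its consecutive atoms $U_a \prec U_b$ had strict slack $|x_a - x_b| < t_b - t_a$, then the $\preceq$-diamond between them has positive area and a.s.\ contains further atoms of $\Pi$, any one of which could be inserted into $S^*$ to strictly increase the total weight, contradicting optimality. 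Consequently, between consecutive atoms of $S^*$ the path $\gamma^*$ is pinned to a segment of slope $\pm 1$, and $S^*$ together with the constraint $\gamma^*(1)=0$ determines $\gamma^*$ pointwise. A finite-scale approximation (applied to atoms with $M_i \geq \e$ and passing to $\e \downarrow 0$) combined with the first ingredient rules out two distinct maximizing chains, yielding the stated a.s.\ uniqueness.
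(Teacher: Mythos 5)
Your argument is broadly sound and takes a somewhat different route from the paper, but there is a genuine gap in the uniqueness step.

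On the first three items: measurability by writing $H$ as a countable supremum over finite sets is the same observation the paper makes at the start of the proof of Lemma~\ref{lemma:cont1}. For a.s.\ finiteness you dyadically bin atoms by weight scale, apply Hammersley-type bounds to get $L_k = O(2^{\alpha k/2})$ on the longest $\preceq$-chain at scale $k$, and sum $\sum_k 2^{-k} L_k$; this is essentially self-contained, whereas the paper instead rotates $\Delta$ into a half-square and piggybacks on \cite[Lemma~3.1]{HamblyMartin07} (which internally runs a very similar computation). Both work; yours is a bit more explicit about why $\alpha<2$ is the threshold. For existence of a maximizer you use compactness of $\cL$ plus upper semicontinuity of the truncated functionals, with the remainder $R^{(\e)}$ controlled by the same scale bound. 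The paper instead runs compactness at the level of the discrete sets $A_k^*$ in $\{0,1\}^\N$ and only then passes to paths via Arzelà--Ascoli. Both routes are correct; yours is slightly more direct. Your ``geometric rigidity'' step (strict slack between consecutive collected atoms produces a positive-area diamond, which a.s.\ contains further atoms of $\Pi$ since the intensity diverges near $m=0$, contradicting optimality) is the right argument for why the maximizing path is pinned between collected points, and matches what the paper sketches about gaps in $\overline{t(U^*)}$.

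The gap is in uniqueness of the maximizing chain $S^*$. Your ``first ingredient'' shows a.s.\ no two \emph{distinct finite} chains have the same weight, by a union bound over the countably many pairs of finite subsets of $\N$. But the maximizing chain is a.s.\ infinite (as your rigidity argument itself forces), and there are uncountably many infinite subsets of $\N$, so a union bound over pairs of chains cannot run. The hand-waved ``finite-scale approximation and pass $\e\downarrow 0$'' does not close this: the $\e$-truncations of two distinct infinite maximizers need not coincide, nor need either of them maximize the $\e$-truncated problem, so the finite-chain uniqueness is never actually invoked in a position where it applies. The paper (following \cite{HamblyMartin07}) fixes this by working with a single index $k_0$ in the symmetric difference of the two maximizers: one deduces that
\[
M_{k_0} \;=\; \sup_{A\in\cC,\,k_0\notin A}\sum_{i\in A}M_i \;-\; \sup_{A\in\cC,\,k_0\in A}\sum_{i\in A,\,i\neq k_0}M_i,
\]
and the right-hand side has a distribution with no atoms when conditioned on $(M_i)_{i\neq k_0}$ and $(U_i)_i$, so this is a null event; a union over the \emph{countably many} $k_0\in\N$ then finishes. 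That replacement of ``union over pairs of chains'' by ``union over single indices'' is exactly what is missing in your write-up.

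You should also note, for completeness, that the ``pinned'' description of $\gamma^*$ must be extended to the boundary intervals: the stretch between the last collected atom and $(0,1)$ is handled the same way (if slack, insert an atom into the diamond below $(0,1)$), while the stretch below the earliest collected atom is eliminated by showing the collection times accumulate at $0$. These are small additions but the argument as written glosses over them.
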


Observe that $H$ is obtained essentially via a continuous version of a standard last passage percolation model (LPP), where increasing paths are replaced by Lipschitz continuous functions and the weights are now randomly distributed throughout the region $\Delta$ according to a Poisson process. For this reason, in the sequel we will often refer to this limit setting as the \textit{continuous model} and, for reasons that will become even more evident in Section \ref{sec:lpprep}, call our original ballistic deposition model the \textit{discrete model}.  

We also point out that our limiting object $H$ is analogous to the one in \cite{HamblyMartin07}. Indeed, the only difference between our $H$ and the random variable $T$ defined in \cite[Equation~(2.5)]{HamblyMartin07} is that, in the latter, $\Delta$ is replaced by the unit square $[0,1]^2$ and $\mathcal{L}$ is replaced by the set of all increasing paths on this unit square. By performing translation by $(0,-1)$ and then a 135-degree counterclockwise rotation, 
the set $\Delta$ can be seen as one half of the square of side length $\sqrt{2}$, and maximizing over paths in $\mathcal{L}$ in this context is equivalent to doing so over the set of all \emph{increasing} paths (see Section~\ref{sec:cont} and Figure~\ref{fig:delta} therein).  In other words, $H$ is a continuous version of the \textit{point-to-line} LPP model, while $T$ corresponds to a continuous version of \textit{point-to-point} LPP. 
The only reason why we obtain $H$ instead of $T$ in our limits is because we have chosen to work with a flat initial 
condition, i.e. $h(\cdot,0)\equiv 0$, instead of
a \textit{seed}-type initial condition, i.e. $h(\cdot,0)=(-\infty)\mathbf{1}_{x=0}$.

\section{Main Results}\label{sec:res}

Having introduced the limiting object, we can now state our first main result, which states that the continuous object $H$ from \eqref{eq:H} is the scaling limit of the height function in the discrete model whenever the sticking parameter $p>0$ remains fixed. First, let us introduce the appropriate scaling. For each $t > 0$ let us define the quantity 
\begin{equation} \label{eq:defq}
a_t:=F^{-1}(1-1/t),
\end{equation} where $F^{-1}$ denotes the generalized inverse function of $F$. Observe that, under Assumptions (F), we have $a_t=t^{1/\alpha} L(t)$ for some slowly varying function~$L$, i.e. such that for all $t > 0$
\begin{equation}\label{eq:L}
\lim_{x \to \infty} \frac{L(tx)}{L(x)} = 1.
\end{equation} Our first main result is the following:

\begin{theorem}\label{theo:1} Given $p \in (0,1]$, the height function $h$ of the discrete model with sticking parameter $p$ satisfies, as $T \rightarrow \infty$, the convergence in distribution
\begin{equation}\label{eq:conv}
\frac{1}{a_{pT^2}}h(0,T) \overset{d}{\longrightarrow} H.
\end{equation}
\end{theorem}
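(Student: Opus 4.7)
The plan is to pass from the discrete ballistic deposition model to the continuous LPP limit through the variational representation of $h$ announced in Section~\ref{sec:lpprep}. Using that representation, $h(0,T)$ equals the maximum over a suitable family of discrete paths (roughly, lattice paths in a space-time triangle of half-width $T$ above site $0$) of the sum of block heights collected along the path, where only the depositions marked as ``sticking'' (density $p$) contribute and each column contains a Poisson rate-$1$ stream of such marked blocks. So the relevant point set consists of approximately $pT^2$ i.i.d.\ heights with distribution $F$, placed at uniformly distributed space-time locations in a triangle of area $\sim T^2$.

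I would then perform the scaling $(x,t)\mapsto(x/T,t/T)$ and divide heights by $a_{pT^2}$. The discrete triangle maps into $\Delta$, the discrete $1$-Lipschitz constraint becomes the continuous $1$-Lipschitz constraint defining $\mathcal L$, and the classical theory of regularly varying tails together with \eqref{eq:defq} gives that the point process of rescaled marked depositions $(U_i^{(T)}, M_i^{(T)})=\bigl((x_i/T,t_i/T),\eta_i/a_{pT^2}\bigr)$ converges in distribution (on $\Delta\times(0,\infty]$, with the vague topology away from height $0$) to the Poisson process $\{(U_i,M_i)\}$ that defines $\Pi$ in \eqref{eq:defpi0}. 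In particular, for every fixed $\varepsilon>0$, the restriction to heights $\geq\varepsilon$ converges to a finite Poisson configuration.

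Next I would run a truncation argument. For each $\varepsilon>0$, let $h^{(\varepsilon)}(0,T)$ be obtained from the LPP representation by keeping only blocks whose height exceeds $\varepsilon a_{pT^2}$; by the convergence of the truncated point process and the continuity of $\gamma\mapsto\Pi(\text{graph}(\gamma))$ on the finite-point truncation, one gets
\begin{equation*}
\frac{h^{(\varepsilon)}(0,T)}{a_{pT^2}}\overset{d}{\longrightarrow}\sup_{\gamma\in\cL}\Pi^{(\varepsilon)}\bigl(\text{graph}(\gamma)\bigr),
\end{equation*}
where $\Pi^{(\varepsilon)}$ keeps only masses $M_i\geq\varepsilon$. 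Sending $\varepsilon\downarrow 0$ on the right gives $H$ by Theorem~\ref{theo:0} (monotone convergence to the unique optimizer $\gamma^*$, which by a.s.\ finiteness of $H$ picks up only finitely many macroscopic masses). The main obstacle is to show on the discrete side that
\begin{equation*}
\lim_{\varepsilon\downarrow 0}\limsup_{T\to\infty}\,\P\!\left(\frac{h(0,T)-h^{(\varepsilon)}(0,T)}{a_{pT^2}}>\delta\right)=0
\end{equation*}
for every $\delta>0$; that is, the bulk of small heights along the optimal path contributes negligibly. This is where the auxiliary Bernoulli ballistic deposition model announced in the introduction enters: by thresholding at level $\varepsilon a_{pT^2}$, the sub-threshold contribution along any admissible path is dominated in distribution by the sum of i.i.d.\ truncated heights along a path in a Bernoulli-BD LPP with site success probability $q_\varepsilon(T)=F(\varepsilon a_{pT^2})$. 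The expected-growth bounds for this auxiliary model (the discrete analogue of the Hambly--Martin bounds) give control on the typical number of marked sites collected by any path and hence show that the aggregated sub-threshold contribution is $o(a_{pT^2})$ in probability uniformly in $T$, as $\varepsilon\downarrow 0$. Combining the three steps—LPP representation, point-process convergence of macroscopic weights, and the Bernoulli-BD negligibility bound—yields \eqref{eq:conv}.
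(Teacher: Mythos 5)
Your proposal follows the same high-level decomposition as the paper's proof: use the LPP representation from Proposition~\ref{prop:lpprep}, handle the macroscopic weights by passing to the limit of the associated point process, and control the microscopic contribution via the Bernoulli ballistic deposition model (this last part is exactly Proposition~\ref{prop:remdiscrete}, built on Theorem~\ref{theo:bbp} and Lemma~\ref{lemma:hml}). However, there are two genuine gaps in the middle step.

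First, for $p<1$ the statement that ``the discrete $1$-Lipschitz constraint becomes the continuous $1$-Lipschitz constraint defining $\mathcal L$'' under the scaling $(x,t)\mapsto(x/T,t/T)$ is wrong. Discrete compatible paths in $\cC_T$ may only jump at sticky points, which arrive at rate $p$, so after dividing by $T$ the reachable region is (approximately) the squashed triangle $\Delta^{(p)}$ of slope $p$, not $\Delta$, and paths are (approximately) $p$-Lipschitz. An extra dilation of the space coordinate by $1/p$ (the map $r_p$ in Proposition~\ref{prop:coupling}) is essential to land in $\Delta$ and $\mathcal L$; without it the macroscopic point process you write down does not converge to the $\Pi$ of \eqref{eq:defpi0}.

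Second, and more seriously, vague convergence of the rescaled marked point process does not by itself imply convergence of the LPP values. The functional $\gamma\mapsto\Pi(\mathrm{graph}(\gamma))$ is not continuous in $\gamma$, and the supremum over $\gamma$ is governed by the combinatorial structure of which subsets of the top weights can be collected by a single admissible path, i.e.\ by the sets $\cC^{(k)}$ and $\cC^{(k)}_T$ of \eqref{eq:defck}--\eqref{eq:defckt}. In the discrete model this reachability structure depends on the random locations of the sticky marks (paths may only jump there), so it is not merely a deterministic $1$-Lipschitz relation applied to approximately converging points. Proving that $\cC^{(k)}_T = \cC^{(k)}$ with high probability is precisely condition (C3) of Proposition~\ref{prop:coupling} and occupies Section~\ref{sec:condc3}; your sketch asserts convergence by ``continuity'' but does not supply this ingredient. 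With the $r_p$ correction and an argument of the (C3) type, the rest of your plan (truncation at level $\varepsilon a_{pT^2}$, sending $\varepsilon\downarrow 0$ using Theorem~\ref{theo:0}, and bounding the sub-threshold remainder via the BBD estimates) matches the paper's structure, modulo the cosmetic difference that the paper truncates to the top $k$ weights rather than at a fixed height threshold.
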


\begin{remark}\label{rem:1} Under the assumption that $F$ is regularly varying at infinity, for any fixed $p > 0$ we have that 
	\[
	 a_{pT^2}=p^{1/\alpha}a_{T^2}(1+o_T(1)) = (pT^2)^\frac{1}{\alpha}L'(T),
	\] where $o_{T}(1) \rightarrow 0$ as $T \rightarrow \infty$ and $L'$ is some slowly varying function at infinity (which may not coincide with the function $L$ in \eqref{eq:L}). In particular, the limit in \eqref{eq:conv} still holds if we normalize $h(0,T)$ by $p^{1/\alpha}a_{T^2}$ instead of $a_{pT^2}$. However, this will not necessarily be true anymore in Theorem \ref{theo:2}~below, where we allow $p$ to vary with $T$.
\end{remark}

To understand why we obtain such a limit for our ballistic deposition model, it will be convenient to give an alternative formulation of our model in terms of a specific last passage percolation problem. We do this in Section \ref{sec:lpprep}.

Notice that the situation depicted in Theorem \ref{theo:1} is considerably different from that of the model without sticking, i.e. $p = 0$. In this case, the behavior for each column is independent and therefore $h(0,T)$ has the distribution of a random sum of i.i.d. random variables $\eta_i$ having distribution function $F$, where the number of summands is Poisson of parameter $T$ and independent of the $\eta_i$. It then follows from the generalized CLT (see \cite[Theorem 3.8.2]{DurrettBook2019}), that
\begin{equation} \label{eq:rd}
\frac{h(0,T) - c_T}{a_T} \overset{d}{\longrightarrow} G_\alpha
\end{equation} where $G_\alpha$ is a certain stable law of index $\alpha$, $a_T$ is as in \eqref{eq:defq} and $c_T$ is given by
\[
c_T:=\begin{cases} 0 & \text{ if }0 < \alpha < 1\\ \\ T\displaystyle{\int_0^{a_T} x\mathrm{d}F(x)} & \text{ if }\alpha = 1\\ \\ T\displaystyle{\int_0^\infty x \mathrm{d}F(x)} & \text{ if }1 < \alpha <2.
	\end{cases} 
\] In particular, as $T \rightarrow \infty$, $h(0,T)$ is of order 
\[
\begin{cases} T^{1/\alpha}L(T) & \text{ if }0 < \alpha < 1\\ \\ T\widetilde{L}(T) & \text{ if }\alpha = 1\\ \\ T & \text{ if }1 < \alpha <2
\end{cases}
\] for some slowly varying functions $L$ and $\widetilde{L}$. Thus, being the behavior of the height function $h(0,T)$ for $p = 0$ and $p>0$ drastically different as $T \rightarrow \infty$, 
it is natural to ask how will $h(0,T)$ behave if one takes $p \rightarrow 0$ and $T \rightarrow \infty$ \textit{simultaneously}. Our next result shows that, as long as $p$ does not tend to $0$ too fast, $h(0,T)$ behaves in the same way described in Theorem \ref{theo:1}.

\begin{theorem}\label{theo:2} Fix $\zeta \in (0, (2-\alpha) \wedge 1)$. Then, for any sequence $(p_T)_{T > 0}$ satisfying $p_T \in [T^{-\zeta},1]$ for all $T$ sufficiently large, as $T \rightarrow \infty$ we have that
\[
\frac{1}{a_{p_T T^2}} h^{(p_T)}(0,T) \overset{d}{\longrightarrow} H,
\] where, for each $p \in [0,1]$, $h^{(p)}(0,T)$ denotes the height of the growing cluster above $0$ at time $T$ for the discrete model with sticking parameter $p$. 
\end{theorem}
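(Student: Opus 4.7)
My plan is to adapt the proof of Theorem~\ref{theo:1} via the LPP representation of the height function from Section~\ref{sec:lpprep}, tracking carefully the dependence of every estimate on the sticking parameter. The only genuinely new ingredient, compared to the fixed-$p$ case, is to show that the random-deposition-like contribution to the height produced by blocks that do \emph{not} stick remains negligible compared with the normalization $a_{p_T T^2}$ under the assumption $p_T \ge T^{-\zeta}$.

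First, I would write $h^{(p_T)}(0,T) = \max_\gamma W(\gamma)$ using the LPP formula and decompose $W(\gamma) = S(\gamma) + R(\gamma)$, where $S(\gamma)$ collects the heights of blocks that actually stuck to a neighboring column along $\gamma$ while $R(\gamma)$ collects the heights of the remaining blocks that piled up vertically on the column visited. The sticky depositions form a marked Poisson point process on $\Z \times [0,T]$ with rate $p_T$ per site and marks distributed as $F$; after rescaling space and time by $T^{-1}$ and heights by $a_{p_T T^2}^{-1}$, the expected number of points in the relevant cone is $\sim p_T T^2 \to \infty$, and by definition of $a_{p_T T^2}$ together with the regular variation of $F$, the rescaled sticky point process converges in distribution to the Poisson limit $\Pi$ on $\Delta\times\R_+$ from~\eqref{eq:defpi0}. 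This step reproduces the fixed-$p$ argument verbatim and only requires $p_T T^2 \to \infty$, which is granted by our hypothesis since $\zeta<2$.

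Next, I would control the non-sticky contribution $R(\gamma)$. Any admissible path visits at most $2T+1$ columns, and on each column the non-sticky depositions form an independent compound Poisson process of rate at most $1-p_T\le 1$ run up to time $T$. By the generalized CLT each such column sum is of order $a_T$ when $\alpha\in(0,1)$ and of order $T$ plus the centering $c_T$ when $\alpha\in[1,2)$. A standard tail estimate combined with a union bound over the $O(T)$ columns, together with the expansion $a_{p_T T^2} \sim p_T^{1/\alpha}\,T^{2/\alpha}\,L'(T)$, yields $\max_\gamma R(\gamma)/a_{p_T T^2}\to 0$ in probability provided that $\zeta<1$ when $\alpha<1$ and $\zeta<2-\alpha$ when $\alpha\in[1,2)$; the hypothesis $\zeta<(2-\alpha)\wedge 1$ is precisely what guarantees both simultaneously.

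The final step is to transfer the point-process convergence to the convergence of the LPP functional, i.e.\ $\max_\gamma S(\gamma)/a_{p_T T^2} \to H$, via the continuous-mapping argument already used in Theorem~\ref{theo:1}. This relies on an a priori upper bound, uniform in $T$, on the number of macroscopic weights any near-maximizing path can collect, coming from comparison with the auxiliary Bernoulli BD model mentioned in the introduction. The main technical obstacle is precisely to verify that the Hambly--Martin-type estimates on the expected growth of this Bernoulli auxiliary model remain valid with constants that do not deteriorate as the Bernoulli parameter decays polynomially with~$T$, whereas in the fixed-$p$ setting this parameter was simply a constant. Once such uniform control is in place, combining it with the decomposition $W=S+R$ and the two convergences above yields the claimed convergence to $H$.
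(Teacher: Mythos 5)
There is a genuine gap, and it is the central one: the proposed decomposition $W(\gamma)=S(\gamma)+R(\gamma)$ into sticky and non-sticky contributions runs backwards. In the LPP representation \eqref{eq:lpprep}, a compatible path collects the $\eta$-weights of \emph{every} Poisson point on its graph; the $\varepsilon$-marks (sticky or not) only dictate where a path may jump, not whose weight it picks up. The scaling $a_{p_T T^2}$ is calibrated to the top order statistics among \emph{all} $N_T\approx p_T T^2$ attainable points in the slope-$p_T$ cone $\Delta_T$, and only a fraction $p_T$ of those points are sticky. So when $p_T\to 0$, the largest weights in $\Delta_T$ are overwhelmingly non-sticky, and your $R(\gamma)$ (far from being negligible) is precisely what carries the $a_{p_T T^2}$ scale. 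Claiming $\max_\gamma R(\gamma)/a_{p_T T^2}\to 0$ while $\max_\gamma S(\gamma)/a_{p_T T^2}\to H$ is the opposite of the truth: $\max_\gamma S(\gamma)$, summed over the $\sim p_T^2 T^2$ sticky points accessible to a path, is of order $a_{p_T^2 T^2}=o(a_{p_T T^2})$.

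Two related quantitative slips reinforce the gap. You count $\sim p_T T^2$ sticky points ``in the relevant cone,'' but this is the area of the slope-$1$ cone, which compatible paths cannot reach. The cone $\Delta_T$ bounded by the rightmost/leftmost paths has slope $p_T$ (they are Poisson jump processes of rate $p_T$), hence area $\sim p_T T^2$ and only $\sim p_T^2 T^2$ sticky points. Likewise, an admissible path visits $O(p_T T)$ columns, not $2T+1$, so the union-bound sketch for $R(\gamma)$ is estimating the wrong quantity over the wrong set of columns. The decomposition that actually works is the paper's, splitting the weight measure $\Pi_T$ into its $k$ largest order statistics $\Pi^{\leq k}_T$ (all points, sticky or not) and the remainder $\Pi^{\geq k}_T$, then showing the first piece converges to $H_k$ via a coupling (Proposition~\ref{prop:coupling2}) and the second is uniformly small (Proposition~\ref{prop:remdiscrete2}). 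Your last paragraph does correctly locate the hardest uniformity problem --- that the Bernoulli-BD bound analogous to Hambly--Martin must hold with constants uniform over $p\in[T^{-\zeta},1]$, which is exactly what Theorem~\ref{theo:bbp} and Lemma~\ref{lemma:hml} provide --- but this ingredient sits inside the large/small-weight decomposition, not a sticky/non-sticky one, so as written the proof would not close.
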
  

We mention that Theorem \ref{theo:2} does not hold for smaller sticking parameters, i.e. if $\zeta \geq (2-\alpha) \wedge 1$. Indeed, on the one hand notice that if $p_T \leq \frac{1}{T}$ then, with probability at least $\mathrm{e}^{-1}$, none of the blocks that fell on $x=0$ stuck to a neighboring column. As a consequence, on this event the height $h^{(p_T)}(0,T)$ coincides with that of the model with $0$-sticking, so~that there is no hope of retaining the same limit (the asymptotics on this event are in fact given by \eqref{eq:rd}). Thus, we must always have $\zeta < 1$. On the other hand, it is easy to see that $h^{(p_T)}(0,T)$ is always \textit{at least} as large as $h^{(0)}(0,T)$, the height in the model with $0$-sticking. Therefore, if $\alpha \in (1,2)$ and $\zeta \in (2-\alpha,1)$, we have $h^{(p_T)}(0,T) \geq h^{(0)}(0,T) = O(T)$ and hence, since $a_{T^{2-\zeta}}=o(T)$,  $\tfrac{1}{a_{T^{2-\zeta}}}h^{(T^{-\zeta})}(0,T)$ cannot converge in distribution as $T \rightarrow \infty$ (when $\zeta = 2 - \alpha$ this may also be the case depending on the slowly-varying term $L(t)$ from~$a_t$). In particular, if we take $p_T=T^{-\zeta}$ then the model exhibits a phase transition at $\zeta_c:=(2-\alpha) \wedge 1$: for $\zeta < \zeta_c$ the model behaves in the same way as the model with ``pure'' ballistic deposition (i.e. $p_T \equiv 1$), whereas for $\zeta > \zeta_c$ the behavior is different.  It would be an interesting problem to understand which is the correct scaling of the height function in the latter cases and to determine whether a true scaling limit actually exists. For concreteness, we summarize the above discussion in the following corollary:

\begin{corollary} Let $\alpha \in (0,2)$ as in Assumptions (F). If we fix $\zeta \in (0,1)$ and set $p_T:=T^{-\zeta}$ for each $T > 0$, then the system $(h^{(p_T)}(\cdot,T))_{T > 0}$ exhibits a phase transition at $\zeta_c:=(2-\alpha) \wedge 1$:
	\begin{enumerate}
		\item [i.] if $\zeta < \zeta_c$ then $\frac{1}{a_{T^{2-\zeta}}}h^{(p_T)}(0,T) \overset{d}{\longrightarrow} H$ as $T \to \infty$,
		\item [ii.] if $\zeta > \zeta_c$ then $\big(\frac{1}{a_{T^{2-\zeta}}}h^{(p_T)}(0,T)\big)_{T > T_0}$ is not tight for any fixed $T_0 > 0$. Furthermore, if also $\alpha \in (1,2)$ then $\frac{1}{a_{T^{2-\zeta}}}h^{(p_T)}(0,T) \overset{\P}{\longrightarrow} \infty$ as $T \to \infty$.
	\end{enumerate}
\end{corollary}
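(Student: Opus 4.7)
Part (i) is an immediate consequence of Theorem~\ref{theo:2}. With $p_T := T^{-\zeta}$ and $\zeta \in (0,\zeta_c)$, the value of $\zeta$ already lies in the range $(0,(2-\alpha)\wedge 1)$ required by Theorem~\ref{theo:2}, and trivially $p_T \in [T^{-\zeta},1]$ for every $T \geq 1$. Since $p_T T^2 = T^{2-\zeta}$, Theorem~\ref{theo:2} gives
\[
\frac{1}{a_{T^{2-\zeta}}}\, h^{(p_T)}(0,T) \;=\; \frac{1}{a_{p_T T^2}}\, h^{(p_T)}(0,T) \;\overset{d}{\longrightarrow}\; H.
\]

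For part (ii), first observe that the range $\zeta \in (\zeta_c,1)$ is nonempty only when $\zeta_c<1$, i.e. when $\alpha \in (1,2)$, so we may assume $\alpha \in (1,2)$ and $\zeta \in (2-\alpha,1)$. The main tool is a \emph{monotonicity of $h^{(p)}$ in $p$}. Under the natural coupling where all models $h^{(p)}$, $p \in [0,1]$, share a common marked Poisson process $(\tau_i,x_i,\eta_i)$ of arrivals together with a single sequence of i.i.d. $\text{Uniform}[0,1]$ marks $U_i$, with block $i$ declared to stick in the model of parameter $p$ iff $U_i \leq p$, one checks by induction on the deposition events that
\[
h^{(p)}(x,t) \;\geq\; h^{(q)}(x,t) \quad \text{a.s. for all } x \in \Z,\, t \geq 0, \text{ whenever } p \geq q.
\]
Indeed, the inductive step is trivial when block $i$ sticks in neither model, and whenever it sticks in the higher-parameter model the update $\max\{h^{(p)}(x_i-1),h^{(p)}(x_i),h^{(p)}(x_i+1)\}+\eta_i$ dominates, by the inductive hypothesis, both possible updates in the lower-parameter model (namely $h^{(q)}(x_i)+\eta_i$ or $\max\{h^{(q)}(x_i-1),h^{(q)}(x_i),h^{(q)}(x_i+1)\}+\eta_i$).

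Applying the monotonicity with $q = 0$ yields $h^{(p_T)}(0,T) \geq h^{(0)}(0,T)$, and since $h^{(0)}(0,T)$ is a compound Poisson sum of $N \sim \text{Poisson}(T)$ i.i.d. copies of $\eta \sim F$ with $\mu := \E[\eta] \in (0,\infty)$ (as $\alpha>1$), the weak law of large numbers gives $T^{-1} h^{(0)}(0,T) \overset{\P}{\to} \mu$. On the other hand, Assumptions (F) give $a_{T^{2-\zeta}} = T^{(2-\zeta)/\alpha}\,\widetilde L(T)$ for some slowly varying $\widetilde L$, and the exponent $(2-\zeta)/\alpha<1$ exactly when $\zeta > 2-\alpha$; hence $a_{T^{2-\zeta}}/T \to 0$, which combined with the previous bound yields
\[
\frac{h^{(p_T)}(0,T)}{a_{T^{2-\zeta}}} \;\geq\; \frac{T}{a_{T^{2-\zeta}}}\cdot\frac{h^{(0)}(0,T)}{T} \;\overset{\P}{\longrightarrow}\; +\infty,
\]
which is stronger than (and implies) non-tightness. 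Conceptually there is no obstacle in this corollary: all the genuine difficulty is concentrated in Theorems~\ref{theo:1}--\ref{theo:2}, and the only auxiliary ingredient the corollary itself needs is the coupling monotonicity above, which is routine.
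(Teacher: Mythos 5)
Your proposal is correct and tracks the paper's own reasoning: part (i) is read off directly from Theorem~\ref{theo:2} with $p_T T^2=T^{2-\zeta}$, and part (ii) is the observation in the paragraph preceding the corollary that $h^{(p_T)}(0,T)\geq h^{(0)}(0,T)$, together with $h^{(0)}(0,T)/T\to\mu>0$ (LLN, $\alpha>1$) and $a_{T^{2-\zeta}}=o(T)$ (since $\zeta>2-\alpha$). You also correctly note that the non-tightness clause of (ii) is vacuous for $\alpha\leq 1$, since then $\zeta_c=1$ and there is no admissible $\zeta$.

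The one place you do slightly more work than the paper is in establishing the domination $h^{(p)}\geq h^{(0)}$. You prove the full monotonicity $h^{(p)}\geq h^{(q)}$ for $p\geq q$ by a coupled induction on deposition events, which is correct (and more general than needed). The paper's "it is easy to see" is most naturally read as a one-line consequence of the LPP representation in Proposition~\ref{prop:lpprep}: the constant path $\gamma\equiv 0$ lies in $\cC_T$ for every $p$, and the weight it collects is exactly $h^{(0)}(0,T)$, so $h^{(p)}(0,T)\geq h^{(0)}(0,T)$ under the canonical coupling without any induction. Either route is fine; the LPP route avoids verifying the case analysis in the coupled induction (your Case analysis is correct, but you should state explicitly that $p\geq q$ rules out the block sticking in the $q$-model but not the $p$-model, which you implicitly use). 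In all, the substance of the argument matches the paper.
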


As remarked earlier, the proofs of these results rely on an alternative representation of the height function $h$ based on a variational formula involving a specific last passage percolation problem. This alternative representation of $h_T$ will not only throw light on the link between our ballistic
	deposition model and the CLPP model from Section~\ref{sec:cont2}, it will also form the basis for a coupling in which most of our proofs will build upon. In the next section we formally construct the ballistic deposition model, and then in Section~\ref{sec:lpprep} we introduce this alternative LPP representation for the height function $h$. 
In Section~\ref{sec:outline} we give a general
	outline of the proofs of the above theorems. The rest of the 
	paper is devoted to carrying out those proofs.


\section{Construction of the process}\label{sec:const}

We now carry out the formal construction of our ballistic deposition model as a function of a marked Poisson process on $\Z \times \R_+$. This explicit construction will be useful to establish the link between our model and certain last passage percolation models and it will also serve as a basis for the  couplings we shall perform later in Section \ref{coupling} to help us with the proofs.

Let us start the construction by considering a Poisson process $\xi$ on $\Z \times \R_+$ with intensity $n_\Z \otimes \lambda_{\R^+}$, where $n_\Z$ stands for the counting measure on $\Z$ and $\lambda_{\R^+}$ denotes the Lebesgue measure on $\R_+$. We shall treat $\xi$ indistinctively as both a random point measure
\[
\xi:=\sum_{k \in \N} \delta_{(x_k,t_k)}
\] and a random collection of points $\xi=\{ (x_k,t_k) : k \in \N\}$ (corresponding to the atoms of $\xi$ when viewed as a point measure). In particular, expressions of the sort $(x,t) \in \xi$ and $\xi(\{(x,t)\})=1$ will be understood as synonyms (and may both be used indistinctively in the sequel). 
The points in $\xi$ will represent the \textit{falling block events}, i.e. that the point $(x,t)$ belongs to $\xi$ means that a block will fall down on top of site $x$ (and attach to the growing cluster) precisely at time $t$. 

Next, we endow each of the points $(x,t) \in \xi$ with its own independent mark $(\varepsilon(x,t),\eta(x,t))$, where:
\begin{enumerate}
	\item [$\bullet$] $\varepsilon(x,t)$ is a Bernoulli random variable of parameter $p$, which indicates whether the falling block $(x,t)$ will choose to stick to the first point of contact of the growing cluster (if $\varepsilon(x,t)=1$) or not (if $\varepsilon(x,t)=0$);
	\item [$\bullet$] $\eta(x,t)$ represents the random \textit{height} of the falling block $(x,t)$, which is distributed according to $F$ and independent of $\varepsilon(x,t)$. 
\end{enumerate} Formally, we could carry out this marking by introducing a Poisson process $\overline{\xi}$ on $\Z\times \R_+ \times (\{0,1\}\times \R_+)$ with intensity measure $n_\Z \otimes \lambda_{\R_+} \otimes (\text{Ber}(p)\otimes P_F)$, where $P_F$ here denotes the distribution on $\R_+$ induced by $F$, and viewing $\xi$ as the restriction of $\overline{\xi}$ to its first two coordinates. Thus, a point $(x,t,\varepsilon,\eta) \in \overline{\xi}$ would be regarded as a block $(x,t) \in \xi$ carrying the mark $(\varepsilon,\eta)$. However, in the sequel it will be more convenient to think of marks not via $\overline{\xi}$ but rather as described above: as independent decorations $(\varepsilon(x,t),\eta(x,t))$ attached to each $(x,t) \in \xi$. 

Now, we construct our ballistic deposition model $h=(h(x,t) : x \in \Z\,,\,t \geq 0)$ by specifying the evolution of the height functions $t \mapsto h(x,t)$ for each $x \in \Z$. For any such $x$, the value of $h(x,t)$ will be piecewise constant, updating itself only at those times $t \geq 0$ such that $(x,t) \in \xi$. Thus, we fix $h(x,0) \equiv 0$ as the initial condition and then for $(x,t) \in \xi$ we set 
\[
h(x,t)\,:=\, \varepsilon(x,t) R^{(1)}_x(h(\cdot,t^-),\eta(x,t)) + (1-\varepsilon(x,t))R^{(0)}_x(h(\cdot,t^-),\eta(x,t))\,,
\] where $R^{(0)}_x$ and $R^{(1)}_x$ are defined as in \eqref{eq:r0} and \eqref{eq:r1}, respectively. 

It is straightforward to check that $h(x,t)$ is well-defined for all $t \geq 0$ and $x$, and that the process $h=(h(x,t) : x \in \Z\,,\,t \geq 0)$ has infinitesimal generator given by \eqref{eq:gen}. We omit the details. 


\section{Last passage percolation representation}\label{sec:lpprep}

We now give an alternative representation of our height configuration $h$ based on a last passage percolation problem, which will help us better understand the connection with the limit $H$ in Theorem \ref{theo:1}. 
Intuitively, the idea is to evaluate the height $h(x,T)$
at a point $x\in \Z$ and time $T>0$ by exploring the Poisson process $\xi$ of falling blocks backwards in time from $t=T$ to $t=0$.
At first, i.e. for times just below $T$, it is enough to look at the Poisson process restricted to $\lbrace x\rbrace  \times [0,T]$. However, once we encounter a sticky block, we may ``jump'' to the neighboring sites to go and find the highest column.
This backward exploring procedure generates a \emph{path space}
and a \emph{propagation cone} of accessible paths
that we can use to compute $h(x,T)$.
Let us begin formalizing these ideas by introducing some notation.

First, given our Poisson process $\xi$ from the construction of $h$ in Section \ref{sec:const}, let us define
\begin{equation} \label{eq:defs}
\xi^{\text{[st]}}:=\{ (x,t) \in \xi : \varepsilon(x,t) = 1\}.
\end{equation} Put into words, $\xi^{\text{[st]}}$ is simply the collection of all points in $\xi$ which are \textit{sticky}, i.e. which have a Bernoulli mark equal to $1$. These sticky points represent the falling blocks in our deposition dynamics which stick to the growing cluster at the first point of contact. 

Now, given $T_2 > T_1 \geq 0$ and $y \in \Z$, define the set $\mathcal{C}_{[T_1,T_2];y}$ of \textit{compatible paths} as
\begin{equation}\label{def:compatible}
\mathcal{C}_{[T_1,T_2];y}=\bigg\lbrace
\gamma:[T_1,T_2]\to \Z\,\bigg|\,\begin{matrix} \gamma(T_2)=y,s \text{ c\`adl\`ag}, |\gamma(t^-)-\gamma(t)|\leq 1 \text{ for all }t,\\ 
\gamma(t^-)\neq \gamma(t) \text{ for some }t \Longrightarrow (\gamma(t),t) \in \xi^{\text{[st]}} \end{matrix}
\bigg\rbrace.
\end{equation}
That is, $\mathcal{C}_{[T_1,T_2];y}$ is the set of paths on $\Z$ such that they: 
\begin{enumerate}
	\item [i.] end at $y$;
	\item [ii.] have càdlàg trajectories which are piecewise constant and consist only of nearest-neighbor jumps;
	\item [iii.] can jump at time $t$ from one site $x'$ onto one of its nearest neighbors $x$ only if $(x,t)$ is a sticky point from $\xi$.
\end{enumerate}   

Having introduced the set of compatible paths, we now set for $T_2>T_1 \geq 0$
\[
\xi_{[T_1,T_2]}:=\{ (x,t) \in \xi : T_1\leq t \leq T_2 \}
\] and define the set $\cV_{[T_1,T_2];y}$ of \textit{attainable space-time points} as
\begin{equation} \label{def:attainable}
	\cV_{[T_1,T_2];y}:=\big\lbrace
	(x,t)\in \xi_{[T_1,T_2]} : 
		\exists\, \gamma \in \cC_{[T_1,T_2];y}\ \text{such that}\ \gamma(t)=x
	\big\rbrace.
\end{equation}

With these definitions at hand, we have the following last passage percolation type representation for the height $h(y,T_2)$:

\begin{proposition}\label{prop:lpprep} For any fixed $T_2>T_1\geq 0$ and $y \in \Z$, with probability one we have the identity
\begin{equation} \label{eq:lpprep}
h(y,T_2)=\max
\bigg\lbrace
h\big(
\gamma(T_1),T_1
\big)+\!\!\!\!\!\!
\sum_{(\gamma(t),t) \in \xi} \eta(
\gamma(t),t
) :
\gamma\in\cC_{[T_1,T_2];y}
\,\bigg\rbrace.
\end{equation}
\end{proposition}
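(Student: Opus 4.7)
I would prove the identity by establishing the two inequalities separately: the $\geq$ direction via a forward monotonicity argument along an arbitrary compatible path, and the $\leq$ direction via an explicit backward recursive construction of an optimal path (which upgrades the supremum to a maximum).

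For the $\geq$ direction, I would fix $\gamma \in \cC_{[T_1, T_2]; y}$ and prove by forward induction on $t \in [T_1, T_2]$ the pointwise bound
\[
h(\gamma(t), t) \,\geq\, h(\gamma(T_1), T_1) + \sum_{\substack{s \in (T_1, t]\\ (\gamma(s), s) \in \xi}} \eta(\gamma(s), s).
\]
Between consecutive jumps of $\gamma$ the path remains at a fixed site $x$, and each $(x, s) \in \xi$ encountered there replaces $h(x, s^-)$ by either $h(x, s^-) + \eta(x, s)$ (if $\varepsilon(x, s) = 0$) or $\max_{|z - x| \leq 1} h(z, s^-) + \eta(x, s) \geq h(x, s^-) + \eta(x, s)$ (if $\varepsilon(x, s) = 1$), which preserves the inequality. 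At a jump time $t^*$ of $\gamma$ from $x' := \gamma((t^*)^-)$ to $x := \gamma(t^*)$, compatibility forces $(x, t^*) \in \xi^{\text{[st]}}$, so the sticky update rule gives $h(x, t^*) \geq h(x', (t^*)^-) + \eta(\gamma(t^*), t^*)$, again preserving the bound. Evaluating at $t = T_2$ and taking the supremum over $\gamma$ yields one direction.

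For the $\leq$ direction, I would build an optimal compatible path $\gamma^*$ by backward recursion from $(y, T_2)$. Almost surely the set $\xi \cap (\{y\} \times (T_1, T_2])$ is finite; enumerate its times as $T_1 < s_1 < \cdots < s_n \leq T_2$. If $n = 0$ then $h(y, T_2) = h(y, T_1)$ and the constant path $\gamma^* \equiv y$ achieves equality. Otherwise, by the construction of Section~\ref{sec:const} one has $h(y, T_2) = h(y, s_n) = h(z, s_n^-) + \eta(y, s_n)$ for some $z \in \{y - 1, y, y + 1\}$: either $z = y$ in the non-sticky case, or $z$ is any maximizer in~\eqref{eq:r1} in the sticky case. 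I would then recursively construct an optimal $\tilde{\gamma}^*$ ending at $z$ just before time $s_n$ and extend it to $[T_1, T_2]$ by jumping to $y$ at $s_n$ (when $z \neq y$) and remaining there until $T_2$; the extension lies in $\cC_{[T_1, T_2]; y}$ because $(y, s_n) \in \xi^{\text{[st]}}$ in the sticky case and no jump occurs in the non-sticky case.

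The main obstacle is justifying termination of this recursion, which reduces to the almost sure finiteness of the attainable set $\cV_{[T_1, T_2]; y}$. This should follow from standard Poisson-process estimates: the backward cone of influence can expand spatially only at sticky points of $\xi$, and its size in the bounded time interval $[T_1, T_2]$ can be dominated by a subcritical branching-type quantity, or handled by a direct Borel--Cantelli argument applied to the event that sufficiently wide spatial strips are devoid of sticky points. Strong induction on $|\cV_{[T_1, T_2]; y}|$ then closes the argument, since this quantity strictly decreases at each recursive call (the point $(y, s_n)$ being removed while the remaining relevant Poisson points all stay in $\cV_{[T_1, T_2]; y}$).
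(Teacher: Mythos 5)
Your proposal is correct and reaches the same identity, but it is organized differently from the paper's proof, so let me compare the two. Both arguments rest on the almost sure finiteness of the attainable set $\cV_{[T_1,T_2];y}$, which the paper cites from the later Lemma~\ref{lemma:control} while you sketch a self-contained argument; your Borel--Cantelli idea does work cleanly once phrased correctly: for each $n>0$ the event that column $y+n$ receives no sticky point during $(T_1,T_2]$ has fixed positive probability $\mathrm{e}^{-p(T_2-T_1)}$ and these events are independent across $n$, so a.s.\ some column to the right and some to the left of $y$ are sticky-free, trapping the backward cone in a finite box (the ``subcritical branching'' intuition is a red herring here, since the cone is an interval rather than a tree). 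The main structural difference is that you prove the two inequalities separately --- $\geq$ by a forward monotonicity bound valid along any compatible path, which is a clean standalone argument and does not even require finiteness of $\cV_{[T_1,T_2];y}$; and $\leq$ by constructing an explicit optimizer by backward recursion, which also upgrades the supremum to a maximum. The paper instead proves the identity in a single inductive stroke by peeling off \emph{only} the sticky attainable points: ordering their times as $t_1>\cdots>t_n$, noting that the latest one necessarily lies at column $y$, and checking the identity on the small slab $[t_1-\delta,T_2]$ where $\cC_{[t_1-\delta,T_2];y}$ consists of exactly three paths; the general case then follows by induction on the $t_i$. Your recursion steps through every Poisson point at the current column (including non-sticky ones), while the paper's steps only through sticky attainable points, which is marginally more economical; both terminate because the relevant Poisson sets are a.s.\ finite. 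Your explicit $\geq$ argument spells out something the paper leaves as ``straightforward to check,'' so on balance your version is slightly longer but more transparent about where each ingredient is used.
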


\begin{remark} This specific representation for the height function $h$ is known and has been used previously in the literature, see e.g. \cite{khanin2010ballistic}. See also \cite{cannizzaro2021brownian} for an analogous representation in the infinite-temperature regime.
\end{remark}

\begin{proof}
Consider the set of attainable sticky points $\cV^{\text{[st]}}_{[T_1,T_2];y}:= \cV_{[T_1,T_2];y} \cap \xi^{\text{[st]}}$. This set is almost surely finite (as a matter of fact, we will show in Lemma~\ref{lemma:control} that the cardinality of $\cV_{[T_1,T_2];y}$ has finite expectation, so that $\cV_{[T_1,T_2];y}$ and all of its subsets are a.s.-finite).
If $\cV^{\text{[st]}}_{[T_1,T_2];y}$ is empty then $\cC_{[T_1,T_2];y}$ consists only of the constant path $s(t) \equiv y$, in which case \eqref{eq:lpprep} is immediately verified. Thus, let us assume that $\cV^{\text{[st]}}_{[T_1,T_2];y}$ is nonempty. In this case, since for fixed $T_2 > T_1 \geq 0$ with probability one no two points in $\cV^{\text{[st]}}_{[T_1,T_2];y}$ have the same time coordinate and, furthermore, all these time coordinates are different from $T_1$ and $T_2$, we may number these points in a time-decreasing fashion: that is, we can write 
\[
\cV^{\text{[st]}}_{[T_1,T_2];y}:=\{(x_1,t_1),\dots,(x_n,t_n)\}
\] where the $t_i$ satisfy $T_2 > t_1 > \dots > t_n > T_1$.	
Observe that, by definition of $\cV^{\text{[st]}}_{[T_1,T_2];y}$, we have $x_1 = y$.

Now, choose some (random) $\delta > 0$ small enough so that $t_1 - \delta > t_2$ if $n > 1$ or $t_1 - \delta > T_1$ if $n=1$. Observe that, by definition of compatible path and the ordering of the $(x_i,t_i)$, the set $\cC_{[t_1-\delta,T_2];y}$ is composed of \textit{exactly} three paths: all of them are of the form
\[
s(t)\,=\,\begin{cases}
	y & \text{ if }t \in [t_1,T_2]\\ \\ y+c & \text{ if }t \in [t_1-\delta,t_1),\end{cases}
\] with $c \in \{-1,0,+1\}$. With this in mind, if we take $\delta$ small enough so that, in addition, no
	point in $\cV_{[T_1,T_2];y}$ has its time coordinate equal to $t_1-\delta$ (which we can do since $\cV_{[T_1,T_2];y}$ is almost surely finite), it is straightforward
	to check that
\[
h(y,T_2)=\max
\bigg\lbrace
h\big(
\gamma(t_1-\delta),t_1-\delta
\big)+\!\!\!\!\!\!
\sum_{(\gamma(t),t) \in \xi} \eta(
\gamma(t),t
) :
\gamma\in\cC_{[t_1-\delta,T_2];y}
\bigg\rbrace.
\] The general claim in \eqref{eq:lpprep} now follows from this by induction on the $t_i$.
\end{proof}

There is yet another way to realize our ballistic deposition process, based on Proposition \ref{prop:lpprep}, which is intimately related with our limit object $H$ in \eqref{eq:H}. We explain this alternative realization next.

To begin, we introduce the following less cumbersome notation 
for the objects we use the most:
\begin{equation}\label{eq:ab}
\cC_T:=\cC_{[0,T];0}\qquad\xi_T:=\xi_{[0,T]}\qquad
\cV_T\:=\cV_{[0,T];0}
\end{equation} and define also $\cV^{(\eta)}_T$ to be the set of points in $\cV_T$ endowed with their $\eta$-marks, i.e.
\[
\cV^{(\eta)}_T:=\{(x,t,\eta(x,t)) : (x,t) \in \cV_T \}.
\]

Let $N_T$ be the number of points in $\cV_T$. Observe that $N_T$ is almost surely finite (as a matter of fact, it has finite expectation, see Lemma~\ref{lemma:control}) and thus we may number the points in $\cV_T=\{(x_1,t_1),\dots,(x_{N_T},t_{N_T})\}$ in such a way that their marks $\eta(x_i,t_i)$ are ordered in decreasing fashion, i.e. 
\[
\eta(x_1,t_1) \geq \eta(x_2,t_2) \geq \dots \geq \eta(x_{N_T},t_{N_T}).
\] Abbreviating $U_i^{(T)}:=(x_i,t_i)$ and $M_i^{(T)}:=\eta(x_i,t_i)$ for each $i=1,\dots,N_T$, we define
\begin{equation}\label{eq:defpia}
\Pi_T:= \sum_{i=1}^{N_T} M_i^{(T)} \delta_{U_i^{(T)}}
\end{equation} with the convention that $\Pi_T \equiv 0$ whenever $N_T \equiv 0$. Observe that, as a~direct consequence of Proposition~\ref{prop:lpprep}, we have for each $T>0$ the following equality in distribution:
\begin{equation}\label{eq:defpi1}
h(0,T)\overset{d}{=} \max_{\gamma \in \cC_T} \Pi_T\big(\text{graph}(\gamma)\big).
\end{equation} The connection between this representation of the height function $h$ given~by the right-hand side of~\eqref{eq:defpi1} and the limit object $H$ defined in~\eqref{eq:H} is now~clear:
on the one hand, for the discrete model we have
\[
\frac{1}{a_{pT^2}}h(0,T)\overset{d}{=} \max_{\gamma \in \cC_T} \widetilde{\Pi}_T\big(\text{graph}(\gamma)\big)\qquad\text{with}\qquad
\widetilde{\Pi}_T=\sum_{i=1}^{N_T} \tfrac{1}{a_{pT^2}} M_i^{(T)}\delta_{U_i^{(T)}}\,
\] while, on the other hand, for the continuous model we have
\[
H\overset{d}{=} \max_{\gamma \in \cL} \Pi\big(\text{graph}(\gamma)\big)\qquad\text{with}\qquad
\Pi=\sum_{i\geq 1} M_i\delta_{U_i}.
\]
Heuristically, if we can couple the $(U_i^{(T)},M_i^{(T)})$ and the $(U_i,M_i)$ so that $\frac{1}{T}U_i^{(T)}\to U_i$ and $\frac{1}{a_{pT^2}}M_t^{(T)}\to M_i$, then the above representation will yield the convergence $\frac{1}{a_{pT^2}}h(0,T)\to H$. Before proceeding to formalize this heuristic,
let us explain better why $a_{pT^2}$ is the appropriate scaling factor.

To this end, given any $N \in \N$ let us recall the quantity $a_{N}:=F^{-1}(1-1/N)$ defined in~\eqref{eq:defq}. If we consider the order statistics 
\[
M_{1,(N)} \geq M_{2,(N)} \geq \dots \geq M_{N,(N)}
\] of an i.i.d. sample of $N$ random variables with distribution function $F$, then $a_{N}$ represents the order of magnitude of their maximum $M_{1,(N)}$. In particular, it is a standard fact from extreme values theory that, for each $k \in \N$, we have as $N \rightarrow \infty$ the convergence in distribution 
\begin{equation}\label{eq:convM}
\frac{1}{a_N}\big(M_{1,(N)},\dots,M_{k ,(N)}
\big)\,\overset{d}{\lra}\, \big(M_1,\dots,M_k\big),
\end{equation} where $M=(M_n)_{n \in \N}$ is the non-homogeneous Poisson process defined in \eqref{eq:defM}. 
Coming back to the random measure $\Pi_T$, we will show later in Lemma \ref{lemma:control} that, for fixed $p \in (0,1]$, in the limit as $T \rightarrow \infty$,
\[
\frac{N_T}{pT^2} \overset{\P}{\longrightarrow} 1
\] which implies that $\frac{a_{N_T}}{a_{pT^2}} \overset{\P}{\longrightarrow} 1$ in the same limit. 
Thus, it follows from~\eqref{eq:convM} that for each 
$k\in\N$, as $T\to\infty$,
\[
\frac{1}{a_{pT^2}}\big(
M_1^{(T)},\dots,M_k^{(T)}
\big)\overset{d}{\lra}\big(M_1,\dots,M_k\big), 
\] which shows why $a_{pT^2}$ is the correct scaling.

\section{General outline of the proofs}\label{sec:outline}

We now outline the general strategy we will use to prove each of our results. This strategy will rely on showing a few auxiliary and more technical results, whose proofs are deferred to subsequent sections. We begin with the proof of Theorem~\ref{theo:0}.

Let $\Pi$ be the random point measure from Section \ref{sec:res}. For $k \in \N$, let us write
\[
\Pi^{\leq k}:=\sum_{i=1}^k M_i \delta_{U_i} \hspace{1cm}\text{ and }\hspace{1cm}\Pi^{\geq k}:= \sum_{i=k}^\infty M_i \delta_{U_i}
\] and define the quantities
\begin{equation}\label{eq:defhrcont}
H_k:= \sup_{\gamma \in \mathcal{L}} \Pi^{\leq k}\big(\text{graph}(\gamma)\big)\hspace{1cm}\text{ and }\hspace{1cm}R_k:=\sup_{\gamma \in \mathcal{L}} \Pi^{\geq k}\big(\text{graph}(\gamma)\big).	
\end{equation}

Then, we have the following result, analogous to \cite[Lemma~3.1]{HamblyMartin07}.

\begin{lemma}\label{lemma:cont1} The quantities $H_k$ and $R_k$ are both measurable for all $k \in \N$. Furthermore, with probability $1$, $R_k < \infty$ for all $k$ and $R_k \rightarrow 0$ as $k \rightarrow \infty$.	
\end{lemma}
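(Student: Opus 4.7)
My plan has four parts.

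\emph{Measurability.} A finite set $S\subseteq\N$ is \emph{Lipschitz-compatible}---meaning the atoms $\{U_i\}_{i\in S}$ lie on the graph of some $\gamma\in\cL$---iff, upon ordering the indices in $S$ by time as $t_{i_1}<\cdots<t_{i_m}$, one has $|x_{i_{j+1}}-x_{i_j}|\le t_{i_{j+1}}-t_{i_j}$ for $1\le j<m$ and $|x_{i_m}|\le 1-t_{i_m}$; this is a measurable condition on $(U_i)_{i\in S}$. Hence $H_k$, the maximum of $\sum_{i\in S}M_i$ over the $2^k$ Lipschitz-compatible subsets of $\{1,\dots,k\}$, is measurable. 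For $R_k$, monotone convergence gives $R_k=\lim_{N\to\infty}\sup_{\gamma\in\cL}\sum_{i=k}^N M_i\mathbf{1}_{U_i\in\text{graph}(\gamma)}$, with each term in the limit measurable by the same reasoning.

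\emph{Reduction to a small-atom estimate.} For $\varepsilon>0$ define
$Q_\varepsilon:=\sup_{\gamma\in\cL}\sum_{i:\,M_i\le\varepsilon}M_i\mathbf{1}_{U_i\in\text{graph}(\gamma)}$.
Since $\#\{i:M_i>\varepsilon\}$ is Poisson with mean $\varepsilon^{-\alpha}$, it is a.s.\ finite with a.s.\ finite total weight, and for $k$ with $M_k\le\varepsilon$ one has $R_k\le Q_\varepsilon$. It therefore suffices to show that $\E[Q_\varepsilon]<\infty$ for each $\varepsilon>0$ and $\E[Q_\varepsilon]\to 0$ as $\varepsilon\to 0$: the first property combined with the a.s.\ finiteness of the large-atom sum gives $R_1=H<\infty$ a.s., hence $R_k<\infty$ for every $k$ since $R_k\le R_1$; and $R_k\le Q_{M_k}$ together with $M_k\to 0$ a.s.\ and monotonicity of $\varepsilon\mapsto Q_\varepsilon$ upgrade the second property to $R_k\to 0$ a.s.

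\emph{Dyadic scale bound.} Partition $(0,\varepsilon]$ into shells $I_j:=(2^{-j-1}\varepsilon,2^{-j}\varepsilon]$ for $j\ge 0$. Any Lipschitz chain decomposes into its restrictions to the shells, each still a Lipschitz chain, so $Q_\varepsilon\le\sum_{j\ge 0}Q_\varepsilon^{(j)}$, where $Q_\varepsilon^{(j)}$ is the analogous supremum restricted to atoms with $M_i\in I_j$. By Campbell's formula, $N_j:=\#\{i:M_i\in I_j\}$ is Poisson with mean $\asymp 2^{j\alpha}\varepsilon^{-\alpha}$, and given $N_j$ the corresponding positions are i.i.d.\ uniform in $\Delta$. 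Under the $45^\circ$ rotation discussed after Theorem~\ref{theo:0}, $\Delta$ becomes half of a $\sqrt{2}\times\sqrt{2}$ square and the Lipschitz partial order becomes coordinatewise comparison; the longest Lipschitz chain among atoms at scale $j$ is therefore a classical longest increasing subsequence of $N_j$ uniform points in a bounded planar region, and Vershik--Kerov together with Jensen give $\E[\text{LIS}]\le 2\sqrt{\E[N_j]}$. Multiplying by the weight bound $2^{-j}\varepsilon$ and summing,
$$\E[Q_\varepsilon]\,\le\,\sum_{j\ge 0}\E\bigl[Q_\varepsilon^{(j)}\bigr]\,\lesssim\,\varepsilon^{1-\alpha/2}\sum_{j\ge 0}2^{j(\alpha/2-1)},$$
and the geometric series converges precisely because $\alpha<2$, giving $\E[Q_\varepsilon]=O(\varepsilon^{1-\alpha/2})\to 0$.

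\emph{Main obstacle.} The substantive step is the chain-length bound $O(\sqrt{N_j})$ at each scale: a priori one might worry that a single 1-Lipschitz path could visit a positive fraction of the $N_j$ atoms, which would break the summability. What saves the argument is the rotation turning Lipschitz chains into coordinatewise-increasing chains in a bounded planar region, after which the classical $2\sqrt{N}$ scaling of the longest increasing subsequence applies. This is the only nontrivial input beyond Campbell's formula and dyadic summation, and it is exactly the ingredient used in \cite[Lemma~3.1]{HamblyMartin07}; the condition $\alpha<2$ is precisely what is required for the resulting geometric series to converge.
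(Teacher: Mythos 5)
Your measurability argument is essentially the one the paper uses (reduce to countable suprema over Lipschitz-compatible finite index sets). For the substance of the lemma---the a.s.\ finiteness and decay of $R_k$---you and the paper diverge. The paper performs the $135^\circ$ rotation $\mathcal T$ to view $\Delta$ as (half of) the unit square, observes that under this map $\Pi$ corresponds to $2^{-1/\alpha}$ times the restriction of the Hambly--Martin continuous-LPP weight field, deduces the stochastic domination $R_k\preceq 2^{1/\alpha}S_k$, and then simply \emph{cites} \cite[Lemma 3.1]{HamblyMartin07}. You instead unroll that cited lemma: a dyadic decomposition of the weight interval $(0,\varepsilon]$, Campbell's formula to count atoms at each scale, the rotation trick again but only to identify Lipschitz chains with increasing chains so the $O(\sqrt{N})$ longest-increasing-subsequence bound applies, and geometric summation over scales with $\alpha<2$ supplying convergence. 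Your derivation is correct and makes transparent exactly where $\alpha<2$ enters, at the price of re-proving the Hambly--Martin estimate from scratch; the paper's proof is shorter but opaque without opening \cite{HamblyMartin07}. Two small points worth tightening: for a clean a.s.\ upper bound on $\E[\text{LIS}]$ valid at every $N$ one usually quotes the elementary $\E[\text{LIS}_N]\le e\sqrt N$ rather than the Vershik--Kerov constant $2$ (though $2$ does work once the region is accounted for), and your passage from $\E[Q_\varepsilon]\to 0$ to $Q_\varepsilon\to 0$ a.s.\ implicitly uses monotone convergence along $\varepsilon\searrow 0$; both gaps are easy to fill and do not affect the conclusion.
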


In particular, since $H = R_1$, we obtain from Lemma \ref{lemma:cont1} that $H$ is measurable and a.s. finite. Moreover, we have that $H_k \rightarrow H$ almost surely as $k \rightarrow \infty$ since, for all $k \in \N$,
\begin{align}
0 \leq H - H_k &= \sup_{\gamma \in \mathcal{L}} \Pi\big(\text{graph}(\gamma)\big) - \sup_{\gamma \in \mathcal{L}} \Pi^{\leq k}\big(\text{graph}(\gamma)\big) \nonumber\\
& \leq \sup_{\gamma \in \mathcal{L}} \Pi^{\geq k+1}\big(\text{graph}(\gamma)\big) \nonumber\\
& = R_{k+1} \longrightarrow 0. \label{eq:convhhk}
\end{align}

Our next step will be to show that the supremum from the definition of $H$ in \eqref{eq:H} is almost surely attained (by a unique path).

\begin{proposition}\label{prop:cont1} With probability one, there exists a unique path $\gamma^* \in \cL$ such that $H=\Pi(\text{graph}(\gamma^*))$. In particular, almost surely we have
	\[
	H\,:=\,\max_{\gamma\in\cL}\Pi\big(
		\text{graph}(\gamma)\big).
	\]
\end{proposition}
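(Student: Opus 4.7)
The plan is to prove existence via a compactness and upper semicontinuity argument: although the full weight functional $\gamma \mapsto \Pi(\text{graph}(\gamma))$ need not be continuous on $\cL$, its truncations $W_k(\gamma) := \Pi^{\leq k}(\text{graph}(\gamma))$ are upper semicontinuous, and Lemma~\ref{lemma:cont1} controls the tail uniformly in $\gamma$. Uniqueness should then follow from the almost-sure property that no two distinct finite subfamilies of $(M_i)_{i \in \N}$ have equal sum.

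\textbf{Existence.} First I would verify that $\cL$ is compact in the uniform topology on $C([0,1];\R)$: since $|\gamma(t)| = |\gamma(t) - \gamma(1)| \leq 1-t \leq 1$ and the family is equicontinuous (all elements being $1$-Lipschitz), Arzel\`a--Ascoli applies, and closedness of $\cL$ under uniform limits is routine. Writing $U_i = (x_i, t_i)$, each set $\{\gamma \in \cL : \gamma(t_i) = x_i\}$ is closed (uniform convergence implies pointwise convergence at $t_i$), so each indicator $\mathbf{1}_{\{\gamma(t_i) = x_i\}}$ is upper semicontinuous, and hence so is the finite positive combination $W_k(\gamma) = \sum_{i=1}^k M_i \mathbf{1}_{\{\gamma(t_i) = x_i\}}$. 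By compactness, $H_k$ is attained at some $\gamma_k \in \cL$. Extracting a uniformly convergent subsequence $\gamma_{k_n} \to \gamma^* \in \cL$, the decisive bound is that for $k_n \geq k$,
\[
W_k(\gamma_{k_n}) \,=\, H_{k_n} - \sum_{i=k+1}^{k_n} M_i \mathbf{1}_{\{\gamma_{k_n}(t_i) = x_i\}} \,\geq\, H_{k_n} - R_{k+1}.
\]
Upper semicontinuity of $W_k$ combined with $H_{k_n} \to H$ from \eqref{eq:convhhk} then gives $W_k(\gamma^*) \geq H - R_{k+1}$; letting $k \to \infty$, monotone convergence together with $R_{k+1} \to 0$ yields $\Pi(\text{graph}(\gamma^*)) \geq H$, and the reverse inequality is immediate.

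\textbf{Uniqueness and the main obstacle.} For uniqueness I would first establish the following no-ties property: almost surely, for any two distinct finite subsets $S_1, S_2 \subseteq \N$ one has $\sum_{i \in S_1} M_i \neq \sum_{i \in S_2} M_i$. Fixing such a pair and taking $j := \max(S_1 \triangle S_2)$, the difference takes the form $\pm M_j + c$ with $c$ measurable with respect to $(M_i)_{i \neq j}$; since the joint law of any finite subcollection of the $M_i$'s is absolutely continuous with respect to Lebesgue measure (via the representation \eqref{eq:defM} and the Poisson gap structure), $M_j$ has a continuous conditional density given the rest, so the equality has probability zero, and a countable union over pairs closes this step. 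The main obstacle, as I see it, is promoting this finite no-ties property to uniqueness of the optimal collected set $S^* := \{i : U_i \in \text{graph}(\gamma^*)\}$ when $S^*$ may a priori be infinite. The most natural route is to prove first that $S^*$ is almost surely finite---the $1$-Lipschitz constraint together with the rapid decay of the ordered weights $(M_i)$ should preclude a path from profiting from too many tail weights---after which the no-ties property immediately gives uniqueness of $S^*$, and $\gamma^*$ is then pinned down (up to a canonical interpolation between consecutive collected points) by the requirement $\gamma^*(t_i) = x_i$ for all $i \in S^*$.
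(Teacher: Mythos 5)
Your existence argument is correct and takes a genuinely different route from the paper's: you run Arzel\`a--Ascoli directly on the paths $\gamma_k$ attaining $H_k$ and exploit upper semicontinuity of the truncated weight functionals $W_k$ on the compact set $\cL$, whereas the paper extracts a convergent subsequence of the optimal \emph{index sets} $A_k^*$ in the product topology on $\{0,1\}^\N$ and only afterwards produces a path via Arzel\`a--Ascoli. Your version is arguably more streamlined for the existence step, and both approaches correctly use the uniform tail bound $R_{k+1}\to 0$ from Lemma~\ref{lemma:cont1}.

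The uniqueness part of your proposal, however, has two genuine gaps, and the first one is fatal as stated. You propose to prove that the optimal collected set $S^*$ is almost surely \emph{finite} so that your countable-union no-ties argument applies. But the opposite is true: the paper shows (adapting Hambly--Martin's Section~4) that the maximizing path collects \emph{infinitely} many points, so $S^*$ is a.s.\ infinite, and any plan resting on finiteness of $S^*$ cannot succeed. The paper sidesteps the finiteness issue entirely by a cleverer no-ties formulation: if two maximizing sets existed, there would be a single index $k_0$ in exactly one of them, forcing $M_{k_0}$ to equal a quantity built from suprema over subfamilies of $\cC$ that never involve $M_{k_0}$; by independence of $M_{k_0}$ from that quantity and continuity of its law, this event is null. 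No union over (uncountably many) infinite pairs is needed, only a union over $k_0\in\N$.

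The second gap is the step from uniqueness of $S^*$ to uniqueness of $\gamma^*$, which you wave away as ``a canonical interpolation between consecutive collected points.'' The proposition claims a \emph{unique path}, not a unique collected set; a priori many $1$-Lipschitz paths can interpolate the same (even infinite) collection of points if the closure $\overline{t(U^*)}$ has gaps. The paper resolves this by showing that on any such gap $(t_1,t_2)$, every maximizing path is forced to be linear with slope $\pm 1$, with the sign determined by $U^*$ (this is exactly where infiniteness of $U^*$ and the Hambly--Martin analysis are used), and on $\overline{t(U^*)}$ itself the path is pinned down by continuity. Without this argument, your proposal at best establishes uniqueness of the optimal index set, not of the optimal path, which is strictly weaker than what the proposition asserts.
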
 

Theorem \ref{theo:0} now immediately follows from Lemma \ref{lemma:cont1} and Proposition \ref{prop:cont1}.

We next turn to the proof of Theorem \ref{theo:1}. For this purpose, let us consider the quantities analogous to \eqref{eq:defhrcont} but for the discrete model. That is, being $\Pi_T$ the random measure defined in~\eqref{eq:defpia}, for $k \in \N$ and $T > 0$ let us define
\[
\Pi^{\leq k}_T:= \sum_{i=1}^{k \wedge N_T} M_i^{(T)} \delta_{U_i^{(T)}}\hspace{1cm}\text{ and }\hspace{1cm}\Pi^{\geq k}_T:= 
	\sum_{i=k}^{N_T} M_i^{(T)} \delta_{U_i^{(T)}},
\] with the convention that $\Pi^{\leq k}_T=0$ if $N_T=0$ and $\Pi^{\geq k}_T=0$ if $N_T < k$. 
Furthermore, in analogy with the continuous model, we also define
\[
H_k^{(T)}:=\sup_{\gamma \in \cC_T} \Pi_T^{\leq k}(\text{graph}(\gamma))\hspace{1cm}\text{ and }\hspace{1cm}R_k^{(T)}:=\sup_{\gamma \in \cC_T} \Pi_T^{\geq k}(\text{graph}(\gamma))
\] and set
\begin{equation}\label{eq:defrtilde}
\widetilde{H}^{(T)}_k:= \frac{1}{a_{pT^2}}H^{(T)}_k \hspace{1cm}\text{ and }\hspace{1cm}\widetilde{R}^{(T)}_k:= \frac{1}{a_{pT^2}}R^{(T)}_k
\end{equation} for $a_t$ as in \eqref{eq:defq}. Observe that, if we define
\begin{equation}\label{eq:defck}
\mathcal{C}^{(k)}:=\{ A \subseteq \{1,\dots,k\} : \exists\, \gamma \in \mathcal{L} \text{ with } U_i \in \text{graph}(\gamma)  \text{ for all }i \in A\}
\end{equation} together with
\begin{equation}\label{eq:defckt}
\mathcal{C}^{(k)}_T:=\{ A \subseteq \{1,\dots,k \wedge N_T \} : \exists\, \gamma \in \cC_T \text{ with } U_i^{(T)} \in \text{graph}(\gamma)  \text{ for all }i \in A\}
\end{equation} with the convention that $\mathcal{C}^{(k)}_T=\emptyset$ whenever $N_T=0$ then, upon recalling~\eqref{eq:defpi0},
we can rewrite (similarly to \eqref{eq:H})
\begin{equation}\label{eq:altrep}
H_k = \sup_{A \in \mathcal{C}^{(k)}} \Pi(A) \hspace{1cm}\text{ and }\hspace{1cm}H_k^{(T)}= \sup_{A \in \mathcal{C}^{(k)}_T} \Pi_T(A).
\end{equation} Finally, for $i=1,\dots,N_T$ let us define 
\[
\widetilde{M}_i^{(T)}:= \frac{1}{a_{pT^2}}M_i^{(T)}.
\] The following proposition is a key element in the proof of Theorem \ref{theo:1}:

\begin{proposition}
	\label{prop:coupling} For any $p \in (0,1]$, $\delta > 0$ and $k \in \N$ there exists $T_{k,\delta,p}>0$ such that, for each $T \geq T_{k,\delta,p}$, there exists a coupling of the continuous model and the discrete model with sticking parameter $p$ at time $T$ which satisfies the following properties:
	\begin{enumerate}
		\item [(C1)] $\P\left( \sum_{i=1}^{k \wedge N_T} |M_i - \widetilde{M}_i^{(T)}| > \delta\right) \leq \delta$,
		\item [(C2)] $\P\left( \sum_{i=1}^{k \wedge N_T} \|U_i - r_{p} (\tfrac{1}{T}U_i^{(T)})\| > \delta\right) \leq \delta$,
		\item [(C3)] $\P\left(\cC^{(k)}_T \neq \cC^{(k)}\right) \leq \delta$,
	\end{enumerate} where $r_{p}(x,t):=(\tfrac{x}{p},t)$.
\end{proposition}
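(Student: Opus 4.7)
The plan is to construct the coupling in three stages, one for each of (C1), (C2), (C3), exploiting the three independent sources of randomness defining the discrete model: the Poisson field $\xi$ of falling blocks, the Bernoulli stickiness marks $\varepsilon(\cdot)$, and the height marks $\eta(\cdot)$. Throughout I condition on the concentration event $\{|N_T - pT^2| \leq \delta_0\, pT^2\}$, which by Lemma~\ref{lemma:control} holds outside a set of probability $\leq \delta/4$ for $T$ large; this event also yields $|a_{N_T}/a_{pT^2} - 1| \leq \delta_0$ via the regular variation of $a_{\cdot}$.

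\textbf{Stage 1 (weights, (C1)).} Because the $\eta$-marks are independent of both $\xi$ and $\varepsilon(\cdot)$, and hence of $\cV_T$, conditionally on $N_T$ the heights $\{M_i^{(T)}\}_{i=1}^{N_T}$ are i.i.d.\ $F$; by the quantile transform one may write $M_i^{(T)} = F^{-1}(1 - Y_i^{(T)}/N_T)$ where $(Y_i^{(T)})$ are the increasing order statistics of $N_T$ i.i.d.\ uniforms on $[0,1]$. The classical joint convergence $(N_T Y_i^{(T)})_{i \leq k} \to (X_i)_{i \leq k}$, where $X_i$ is the $i$th atom of the rate-$1$ Poisson process from Section~\ref{sec:cont2}, allows a Skorokhod-type coupling with $N_T Y_i^{(T)} \to X_i$ in probability. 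Combined with Assumption~F2 and $a_{N_T}/a_{pT^2} \to 1$, this gives $\widetilde{M}_i^{(T)} \to M_i$ in probability for each $i \leq k$, yielding (C1).

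\textbf{Stage 2 (positions, (C2)).} Conditionally on $\cV_T$ and the $\eta$-marks, the top-$k$ positions form a uniform random $k$-subset of $\cV_T$. Moreover, conditionally on the \emph{attainable region} $\mathcal{A}_T \subseteq \Z \times [0,T]$, which is a function only of $\xi$ and $\varepsilon(\cdot)$, the point set $\cV_T$ is a Poisson process of intensity $1$ restricted to $\mathcal{A}_T$. The key ingredient is a \emph{shape theorem}: in probability, the rescaled region $r_{p}(\mathcal{A}_T/T)$ converges to $\Delta$ (say, in Hausdorff distance after filling to unit cells), because the left and right ``frontier processes'' tracking the extreme attainable sites have drift $\pm p$ per unit of real time, matching the slopes $\pm 1$ of $\Delta$ after rescaling. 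Granted this, the rescaled $\cV_T$ is a Poisson process of intensity $pT^2$ on a region close to $\Delta$, and a standard quantile-transport coupling matches a uniform $k$-subset with $k$ i.i.d.\ uniform draws from $\Delta$, which (after the reordering induced by Stage~1) are precisely $U_1, \dots, U_k$. This delivers (C2).

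\textbf{Stage 3 (combinatorial structure, (C3)).} Writing the restriction $\{U_i : i \in A\}$ of $A = \{i_1 < \ldots < i_m\}$ in time-increasing order as $(\alpha_{i_j}, \tau_{i_j})$ with $\tau_{i_1} < \ldots < \tau_{i_m}$, membership $A \in \cC^{(k)}$ is equivalent to the finite list of Lipschitz inequalities $|\alpha_{i_{j+1}} - \alpha_{i_j}| \leq \tau_{i_{j+1}} - \tau_{i_j}$ for $j < m$, together with $|\alpha_{i_m}| \leq 1 - \tau_{i_m}$. Almost surely each of these finitely many inequalities is strict, with uniform slack $\rho > 0$ (random but positive). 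Under the Stage~2 coupling each such inequality translates, up to vanishing error in $T$, to $|x_{j+1} - x_j| < p(t_{j+1} - t_j)$ or its reverse for the discrete positions. In the first case, a \emph{local} shape theorem---proved by comparison with the auxiliary Bernoulli-driven BD model highlighted in the introduction---shows that with probability tending to $1$, the attainable set from $(x_{j+1}, t_{j+1})$ explored backward in time contains $(x_j, t_j)$, so the sticky sub-path required by $\cC^{(k)}_T$ exists; in the reverse case any discrete compatible path between the two points would project to an approximately $p$-Lipschitz curve, contradicting continuous incompatibility. Union-bounding over the $2^k$ subsets completes (C3).

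The principal technical obstacle lies in Stages~2 and~3: establishing the shape theorem for $\mathcal{A}_T$ and, even more, its quantitative local refinement asserting that any point strictly inside the cone of propagation is reachable with probability tending to $1$. This is exactly where the auxiliary Bernoulli BD model from the introduction is needed, providing the sharp growth-rate bounds that drive the union-bound argument above.
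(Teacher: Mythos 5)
Your three-stage outline correctly identifies the three sources of randomness to be coupled and the right qualitative ingredients (concentration of $N_T$, convergence of order statistics, a shape theorem for the propagation cone, and a comparison of discrete vs.\ continuous compatibility). Stage 1 is essentially the paper's argument. However, Stages 2 and 3 each contain a genuine gap.

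In Stage 2, the claim that ``conditionally on the attainable region $\mathcal{A}_T$, which is a function only of $\xi$ and $\varepsilon$, the point set $\cV_T$ is a Poisson process of intensity $1$ restricted to $\mathcal{A}_T$'' is false: the region $\Delta_T$ is \emph{determined by} certain points of $\xi$ (the sticky points on the boundary paths $\gamma^{\pm,T}$), so conditioning on the region reveals that there are points at every jump of $\gamma^{\pm,T}$. What is true, and what the paper uses (Section~\ref{sec:boundint}), is that conditionally on the boundary set $\partial \cV_T$ the \emph{interior} $\cV^\circ_T$ is a Poisson process on $\Delta^\circ_T$. The paper then has to argue separately, via $N^\partial_T/N_T \approx 2/(pT) \to 0$, that the top-$k$ weights a.a.s.\ lie in the interior so the boundary points can be disregarded. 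Without this interior/boundary split, the construction does not furnish a genuine coupling of the top-$k$ positions with a uniform sample from $\Delta$.

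In Stage 3, the quantitative input you attribute to the Bernoulli BD model is not what drives (C3). Theorem~\ref{theo:bbp} and Lemma~\ref{lemma:hml} are used to prove Proposition~\ref{prop:remdiscrete}/\ref{prop:remdiscrete2} (control of the remainder $\widetilde R_k^{(T)}$), not the coupling proposition. What the paper actually uses for (C3) is the large-deviations estimate \eqref{eq:triangle} for the Poisson jump processes $s^{\pm,T}$, applied to the boundary paths of sub-triangles $\Delta_T(n_i,t_i)$ with apex at the discrete points $U_i^{\circ,(T)}$. More importantly, you do not address the central technical obstacle there: once you condition on the positions and the event $\{N^\circ_T \geq k\}$, the boundary paths of these sub-triangles are no longer unconditioned Poisson jump processes, so \eqref{eq:triangle} cannot be applied directly. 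The paper resolves this by a two-colour thinning of $\cV^\circ_T$ into a ``small'' and a ``big'' part that are conditionally independent given $\partial \cV_T$, then shows (Lemmas~\ref{lemma:bigpathscoincide} and~\ref{lemma:couplepoisson}) that the sub-triangle boundary path ignores the small part with overwhelming probability and can be coupled to a genuine Poisson process. Your ``local shape theorem'' would need this decoupling to be valid, and that idea is missing from the proposal.
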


\begin{remark}\label{rem:diff1} Proposition \ref{prop:coupling} above is the analogue of \cite[Proposition 3.2]{HamblyMartin07}, obtained in the context of heavy-tailed Last Passage Percolation on $\N \times \N$. However, the proof of this result in our setting is more involved than for LPP, mainly for two reasons. On the one hand, the geometric structure of the set $\cC_T$ of admissible paths is more complicated now, since the attainable points in $\cV_T$ are not located on a regular lattice such as $\N \times \N$ anymore, but rather on a random ``Poissonian lattice''. On the other hand, as opposed to \cite{HamblyMartin07}, the~notion of \textit{compatible points} in the discrete and continuous models (i.e. the conditions used to define the sets $\cC^{(k)}$ and $\cC^{(k)}_T$ in \eqref{eq:altrep}) are not equivalent in~our setting, but are rather only asymptotically equivalent as $T \rightarrow \infty$. These two facts will make the construction of the coupling and verification of its properties significantly more difficult, see Section \ref{coupling} for details.
\end{remark}

The other key element in the proof of Theorem \ref{theo:1} is the following result:

\begin{proposition}\label{prop:remdiscrete} Given $\delta > 0$, for all $k \in \N$ large enough (depending on $\delta$) we have
	\[
	\sup_{T > (2k)^{1/\alpha}} \P\big( \widetilde{R}_k^{(T)} > \delta\big) \leq \delta.
	\]	
\end{proposition}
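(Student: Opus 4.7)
The plan is to bound $\widetilde{R}_k^{(T)}$ by replacing the random threshold $M_k^{(T)}$ by a deterministic one $\theta_k := a_{pT^2}/(2k)^{1/\alpha}$, which reduces the problem to controlling a ballistic-deposition LPP with weights truncated at $\theta_k$. This truncated LPP can in turn be bounded by the auxiliary Bernoulli-BD estimate announced in the Introduction, which is the analogue for our model of \cite{HamblyMartin07}'s upper bound for Bernoulli LPP.

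First, I would show that $\P(M_k^{(T)}\le\theta_k)\le e^{-ck}$ uniformly in $T>(2k)^{1/\alpha}$. Conditionally on $\cV_T$, the number of points $(x,t)\in\cV_T$ with $\eta(x,t)>\theta_k$ is $\mathrm{Bin}(|\cV_T|,1-F(\theta_k))$; Lemma~\ref{lemma:control} together with regular variation gives $|\cV_T|(1-F(\theta_k))$ concentrated around $2k$, and standard Chernoff bounds deliver the claim. On the complementary event every weight of rank $\ge k$ is at most $\theta_k$, so
\[
R_k^{(T)}\le H_{\theta_k}^{(T)}:=\sup_{\gamma\in\cC_T}\sum_{(x,t)\in\xi\cap\mathrm{graph}(\gamma)}\eta(x,t)\,\mathbf{1}_{\eta(x,t)\le\theta_k}.
\]

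Next, I apply the layer-cake identity $\eta\,\mathbf{1}_{\eta\le\theta_k}=\int_0^{\theta_k}\mathbf{1}_{\eta>s}\,ds$ and interchange the supremum with the outer integral to obtain $H_{\theta_k}^{(T)}\le\int_0^{\theta_k}h^{\mathrm{Ber},s}(0,T)\,ds$, where $h^{\mathrm{Ber},s}$ is the height at $(0,T)$ of the ballistic-deposition model driven by the same $\xi$ and $\varepsilon$-marks but with each block height replaced by $\mathbf{1}_{\eta(x,t)>s}\sim\mathrm{Bernoulli}(1-F(s))$. Taking expectations, invoking the auxiliary Bernoulli-BD upper bound $\E[h^{\mathrm{Ber}}(0,T;q)]\le C\bigl(T\sqrt{pq}+\mathrm{l.o.t.}\bigr)$ proven later in the paper, and applying Fubini,
\[
\E[H_{\theta_k}^{(T)}]\le CT\sqrt{p}\int_0^{\theta_k}\sqrt{1-F(s)}\,ds+\mathrm{l.o.t.}
\]
Since $\sqrt{1-F}$ is regularly varying at infinity with index $-\alpha/2\in(-1,0)$, Karamata's theorem bounds this integral by $\tfrac{2}{2-\alpha}\,\theta_k\sqrt{1-F(\theta_k)}\bigl(1+o_T(1)\bigr)$, and substituting $\theta_k=a_{pT^2}/(2k)^{1/\alpha}$ together with $1-F(\theta_k)\sim 2k/(pT^2)$ yields $\E[H_{\theta_k}^{(T)}]\le C_\alpha\,a_{pT^2}\,k^{1/2-1/\alpha}$. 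Markov's inequality combined with the first step then gives $\P(\widetilde{R}_k^{(T)}>\delta)\le e^{-ck}+(C_\alpha/\delta)\,k^{1/2-1/\alpha}$; since $\alpha<2$ forces $1/2-1/\alpha<0$, this is $\le\delta$ for all $k$ large enough (depending only on $\delta$ and $\alpha$), uniformly in $T>(2k)^{1/\alpha}$.

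The main obstacle is the Bernoulli-BD estimate invoked above, which is the principal technical contribution announced in the Introduction. Adapting the cellular decomposition of \cite{HamblyMartin07}, tailored to Bernoulli LPP on the deterministic lattice $\N^2$, to our setting requires dealing with the fact that the admissible path set $\cC_T$ is determined by the random sticky field $\xi^{[\mathrm{st}]}$, so the propagation cone has a random width which must be controlled in parallel with the $\sqrt{pq}$ bound. A secondary technical point is to make the Karamata estimate and the Binomial tail bound uniform in $T$; the constraint $T>(2k)^{1/\alpha}$ in the statement is exactly what is needed to push $\theta_k$ deep enough into the regular-variation regime of $F$ that the asymptotic identities above translate into quantitative inequalities with controlled error.
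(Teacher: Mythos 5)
Your approach is genuinely different from the paper's, though both ultimately rest on the same technical cornerstone: the Bernoulli-ballistic-deposition upper bound (Theorem~\ref{theo:bbp}). The paper proceeds as in \cite{HamblyMartin07}: it introduces $L_k^{(T)}$, the maximum number of the $k$ highest blocks that a single compatible path can collect, bounds $\widetilde{R}_k^{(T)}$ by an Abel (summation-by-parts) rearrangement of the form $\sum_i L_i^{(T)}(\widetilde M_i^{(T)}-\widetilde M_{i+1}^{(T)})$, estimates $\E[L_i^{(T)};\cB^{(T)}]\leq Ci^{1/(2-\zeta)}$ by coupling to the Bernoulli-BD model, and controls the order statistics $\widetilde M_i^{(T)}$ via a quantile good event (Lemmas~\ref{lemma:badset}--\ref{lemma:l1}). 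Your route short-circuits this machinery by truncating at a deterministic threshold, applying the layer-cake identity $\eta\wedge\theta=\int_0^{\theta}\mathbf{1}_{\eta>s}\,ds$ to reduce the truncated LPP to an integral of Bernoulli-BD heights, and invoking Karamata's theorem. This is more elementary and avoids manipulating the order statistics directly; its cost is that the Karamata asymptotic and the conditional binomial bound must be made quantitative and uniform, a point you rightly flag.

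There is, however, a genuine sign error at the heart of the argument. You set $\theta_k=a_{pT^2}/(2k)^{1/\alpha}$, correctly note that the number of exceedances of $\theta_k$ in $\cV_T$ is Binomial with mean $\approx 2k$, deduce $\P(M_k^{(T)}\leq\theta_k)\leq e^{-ck}$, and then claim that on the complementary event every weight of rank $\geq k$ is at most $\theta_k$, so $R_k^{(T)}\leq H_{\theta_k}^{(T)}$. But the complementary event is $\{M_k^{(T)}>\theta_k\}$, on which $M_k^{(T)}$ itself --- a weight of rank exactly $k$ --- \emph{exceeds} $\theta_k$; indeed, since typically about $2k$ weights exceed $\theta_k$, on this event most of $M_k^{(T)},\dots,M_{2k}^{(T)}$ exceed $\theta_k$ as well, and the truncation inequality $R_k^{(T)}\leq H_{\theta_k}^{(T)}$ fails. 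The event you actually need --- ``all weights of rank $\geq k$ are $\leq\theta_k$'' --- is exactly $\{M_k^{(T)}\leq\theta_k\}$, which with your choice has probability tending to zero. The fix is to choose the threshold on the other side, say $\theta_k=a_{pT^2}/(k/2)^{1/\alpha}$, so that the mean number of exceedances is $\approx k/2<k$, the Chernoff \emph{upper} tail gives $\P(M_k^{(T)}>\theta_k)\leq e^{-ck}$, and the inequality $R_k^{(T)}\leq H_{\theta_k}^{(T)}$ then holds on the high-probability complement. Running your Karamata computation with this larger $\theta_k$ still yields $\E[H_{\theta_k}^{(T)}]\leq C_\alpha\,a_{pT^2}k^{1/2-1/\alpha}$ (the constant shifts but the exponent $1/2-1/\alpha<0$ is unchanged since $\alpha<2$), so the Markov step and the conclusion survive. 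With this correction the argument is a viable, and in some respects simpler, alternative to the one in the paper.
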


With these two propositions at our disposal, we can now conclude the proof of Theorem \ref{theo:1}. 

\begin{proof}[Proof of Theorem \ref{theo:1}]
For each $T> 0$ define
\[
k_T:= \max\{ k \in \N : T \geq T_{k,\frac{1}{k}, p}\},
\] where $T_{k,\frac{1}{k},p}>0$ is the one given by Proposition \ref{prop:coupling} for $\delta=\frac{1}{k}$. 
It follows from the finiteness of each $T_{k,\frac{1}{k}, p}$ that $k_T \rightarrow \infty$ as $T \rightarrow \infty$. Then, using Lemma \ref{lemma:cont1} together with Propositions \ref{prop:coupling} and \ref{prop:remdiscrete}, we can construct couplings between the discrete and continuous models at each time $T$ such that, as $T \to \infty$,
\begin{equation}\label{eq:C1}
\sum_{i=1}^{k_T \wedge N_T} |M_i - \widetilde{M}_i^{(T)}|\overset{\P}{\longrightarrow} 0
\end{equation}together with
\begin{equation}\label{eq:C2}
R_{k_T} \overset{\P}{\longrightarrow} 0 \hspace{1cm}\widetilde{R}_{k_T}^{(T)} \overset{\P}{\longrightarrow} 0
\end{equation} and 
\begin{equation}\label{eq:C3}
\lim_{T \rightarrow \infty} \P\left(\cC^{(k_T)}_T \neq \cC^{(k_T)}\right) = 0.
\end{equation} Observe that, under such coupling, we have
\[
H \overset{d}{=}R_1 \hspace{1cm}\text{and }\hspace{1cm}\frac{1}{a_{pT^2}}h(0,T) \overset{d}{=} \widetilde{R}^{(T)}_1.
\] Thus, in order to conclude Theorem \ref{theo:1}, it will suffice to show that as $T \rightarrow \infty$,
\begin{equation}\label{eq:convcoupling}
\widetilde{R}^{(T)}_1 \overset{\P}{\rightarrow} R_1.
\end{equation} To this end, notice that
\[
R_1 - \widetilde{R}^{(T)}_1 = (R_1 - H_{k_T}) + (H_{k_T} - \widetilde{H}^{(T)}_{k_T}) + (\widetilde{H}^{(T)}_{k_T}- \widetilde{R}^{(T)}_1).
\] Since $|R_1 - H_{k_T}| \leq R_{k_T}$ and $|\widetilde{H}^{(T)}_{k_T}- \widetilde{R}^{(T)}_1| \leq \widetilde{R}_{k_T}^{(T)}$, by \eqref{eq:C2} we conclude that, in order for us to obtain \eqref{eq:convcoupling}, it will suffice to show that $H_{k_T} - \widetilde{H}^{(T)}_{k_T} \overset{\P}{\longrightarrow} 0$. To this end, observe that by \eqref{eq:altrep}
\[
H_{k_T} = \max_{A \in \cC^{(k_T)}} \sum_{i \in A} M_i \hspace{1cm}\text{ and }\hspace{1cm}\widetilde{H}_{k_T}^{(T)}=\max_{A \in \cC^{(k_T)}_T} \sum_{i \in A} \widetilde{M}_i^{(T)}.
\] In particular, on the event $\{\cC^{(k_T)}_T = \cC^{(k_T)}\}$ we have that
\[
\big|H_{k_T} - \widetilde{H}^{(T)}_{k_T}\big| \leq \sum_{i=1}^{k_T \wedge N_T} |M_i - \widetilde{M}_i^{(T)}|.
\] 
In view of \eqref{eq:C1} and \eqref{eq:C3}, the former inequality implies that $H_{k_T} - \widetilde{H}^{(T)}_{k_T} \overset{\P}{\longrightarrow} 0$ and thus concludes the proof of Theorem \ref{theo:1}.
\end{proof}

Finally, in order to prove Theorem \ref{theo:2}, we can repeat the same strategy~used to prove Theorem \ref{theo:1}. To be successful this time, the only difference is that we need to replace Propositions \ref{prop:coupling} and \ref{prop:remdiscrete} by stronger versions which are ``uniform over $p \geq T^{-\zeta}$''. In the sequel, since we will consider simultaneously multiple discrete models having different sticking parameters, we will write $\P_p$ (or $\E_p$) to indicate that all the quantities associated with the discrete model which appear in the respective probability (or expectation) correspond to the one with sticking parameter $p$. 

The stronger form of Proposition \ref{prop:coupling} we shall need is the following:

\begin{proposition}
	\label{prop:coupling2} For any $\zeta \in (0,1)$, $\delta >0$ and $k \in \N$ there~exists $T_{k,\delta,\zeta}>1$ such that, for each $T \geq T_{k,\delta,\zeta}$ and $p \in [T^{-\zeta},1]$, there exists a coupling between the continuous model and the discrete model of sticking parameter $p$ at time~$T$ in such a way that the following properties hold:
	\begin{enumerate}
		\item [(C1')] $\sup_{p \geq T^{-\zeta}} \P_p\left( \sum_{i=1}^{k \wedge N_T} |M_i - \widetilde{M}_i^{(T)}| > \delta\right) \leq \delta$,
		\item [(C2')] $\sup_{p \geq T^{-\zeta}} \P_p \left( \sum_{i=1}^{k \wedge N_T} \|U_i - r_{p}(\tfrac{1}{T}U_i^{(T)})\| > \delta\right) \leq \delta$,
		\item [(C3')] $\sup_{p \geq T^{-\zeta}} \P_p \left(\cC^{(k)}_T \neq \cC^{(k)}\right) \leq \delta$,
	\end{enumerate} where, with a slight abuse of notation, in the conditions above and henceforth the expression ``$p \geq T^{-\zeta}$'' stands for $p \in [T^{-\zeta},1]$.
\end{proposition}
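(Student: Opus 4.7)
The plan is to adapt the argument of Proposition~\ref{prop:coupling} while carefully tracking uniformity in $p$ throughout. The central observation is that all the relevant probabilistic estimates depend on $p$ and $T$ only through the product $pT^2$, which under the hypothesis $p \in [T^{-\zeta}, 1]$ with $\zeta < 1$ satisfies $pT^2 \geq T^{2-\zeta} \to \infty$. Thus, any bound whose quality depends only on $pT^2$ automatically becomes uniform in $p \in [T^{-\zeta}, 1]$.

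The first step is to upgrade Lemma~\ref{lemma:control} to its uniform-in-$p$ version: for every $\epsilon > 0$,
\[
\lim_{T \to \infty} \sup_{p \in [T^{-\zeta}, 1]} \P_p\bigl( |N_T / (pT^2) - 1| > \epsilon \bigr) = 0.
\]
This should follow from Chernoff-type concentration estimates for the Poisson number of attainable points, whose sharpness depends only on the mean $pT^2$. Combined with the regular variation of $a_t$ from Assumptions~(F), this yields the uniform-in-$p$ convergence $a_{N_T}/a_{pT^2} \to 1$ in probability. Property~(C1') then follows by using a common rate-$1$ Poisson process $(X_i)_{i \in \N}$ on $\R_+$ to couple the marks: one realizes $M_i = X_i^{-1/\alpha}$ in the continuous model and $\widetilde{M}_i^{(T)} = a_{pT^2}^{-1} F^{-1}(1 - X_i/N_T)$ in the discrete one, and regular variation gives $\widetilde{M}_i^{(T)} \to M_i$ with an error that depends only on $N_T \to \infty$, hence uniform in $p$. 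Property~(C2') follows from the fact that, conditionally on $\cV_T$, the positions of the top-$k$ marks are iid uniform on $\cV_T$; after the rescaling $r_p(\cdot/T)$ the propagation cone maps approximately onto $\Delta$, and a uniform empirical-measure argument couples the rescaled positions to iid uniform points on $\Delta$.

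The main obstacle is (C3'). In the continuous model, a subset $A \subseteq \{1, \dots, k\}$ belongs to $\cC^{(k)}$ iff, sorted by time, every pair of consecutive points $U_{i_j} = (x_j, t_j)$, $U_{i_{j+1}} = (x_{j+1}, t_{j+1})$ satisfies $|x_{j+1} - x_j| \leq t_{j+1} - t_j$, and with probability one no such inequality is tight. In the discrete model, $A \in \cC^{(k)}_T$ requires an actual sticky-jump path through the corresponding discrete points of $\cV_T$. The key uniform estimate to prove is: for every $\eta > 0$, with probability $1 - o_T(1)$ uniformly in $p \in [T^{-\zeta}, 1]$,
\begin{enumerate}
    \item [(a)] if $|x_{j+1} - x_j|/(t_{j+1} - t_j) < 1 - \eta$, then a sticky-jump path between the two discrete points exists;
    \item [(b)] if the same ratio exceeds $1 + \eta$, then no such path exists.
\end{enumerate}
Both directions reduce to a first-passage estimate: starting from $(x_0, t_0)$, the maximum space displacement reachable via sticky jumps within time $\tau$ has a $\text{Poisson}(p\tau)$ distribution on each side (since successive rightward jumps are separated by independent $\text{Exp}(p)$ waiting times), so it concentrates around $p\tau$ with fluctuations of order $\sqrt{p\tau}$. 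Since $pT \geq T^{1-\zeta} \to \infty$ for $\zeta < 1$, these fluctuations are uniformly negligible compared to the drift, and standard Chernoff bounds give failure probabilities that are exponentially small in $pT$, uniformly in $p$. A union bound over the $O(k^2)$ pairs, together with the fact that the continuous boundary cases $|x_{j+1}-x_j| = t_{j+1}-t_j$ have probability zero, then yields~(C3').
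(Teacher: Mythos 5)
Your overall strategy mirrors the paper's: identify the scaling window $pT^2 \geq T^{2-\zeta} \to \infty$, upgrade the concentration of $N_T$ to be uniform in $p$, and reduce the compatibility condition (C3') to slope estimates for the rightmost/leftmost sticky-jump paths. The characterization of $\cC^{(k)}$ in terms of consecutive Lipschitz conditions, and the observation that ties have probability zero, are correct and match the paper's treatment. However, there is a genuine gap in the (C3') argument.

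You assert that ``starting from $(x_0,t_0)$, the maximum space displacement reachable via sticky jumps within time $\tau$ has a $\text{Poisson}(p\tau)$ distribution,'' and you then apply Chernoff bounds as if these rightmost/leftmost paths were unconditionally Poisson. This is not true in the situation at hand: you are estimating the probability of an event that is \emph{jointly} specified with the locations of the top-$k$ heights (and with $N_T \geq k$, and with the shape of the propagation cone $\Delta_T$). Conditioned on the positions $U_1^{(T)},\dots,U_k^{(T)}$ and on $\Delta_T$, the set of remaining sticky points available to the rightmost path emanating from $U_i^{(T)}$ is no longer a clean Poisson process — its rate is perturbed by the excluded top-$k$ points and, more seriously, the path is constrained to remain inside the random cone $\Delta_T$, whose boundary is itself built from sticky points. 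The paper deals with exactly this in Section~\ref{sec:condc3}: it splits $\cV^\circ_T$ into a ``small'' $T^{\beta-2}$-colored part containing the top-$k$ points with high probability and a ``big'' complementary part, proves that the rightmost path from $U_i^{\circ,(T)}$ essentially sees only the big part (Lemma~\ref{lemma:bigpathscoincide}), and then couples that path with a genuine Poisson process $\sigma^{+,T}$ independent of the conditioning data, controlling separately the event that the path ever hits $\partial\Delta_T$ (Lemma~\ref{lemma:couplepoisson}). Without some version of this decoupling step, your union-bound-over-pairs argument is not justified.

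Two smaller points. For (C1'), your proposed coupling $\widetilde{M}_i^{(T)} = a_{pT^2}^{-1}F^{-1}(1-X_i/N_T)$ with $(X_i)$ a common rate-$1$ Poisson process does not exactly produce the correct conditional law of the order statistics given $N_T$ (one needs something like $X_i/X_{N_T+1}$, or the Skorokhod-type coupling the paper sets up with conditions (M0)--(M3)); this is fixable but as stated is not a valid coupling. For (C2'), ``a uniform empirical-measure argument'' is a placeholder rather than an argument: the propagation cone $\frac{1}{T}\Delta_T$ converges to the $p$-sloped triangle $\Delta^{(p)}$, not to $\Delta$ directly, and one needs to control boundary effects in the scaling-projection (this is the content of Lemma~\ref{lemma:c2.1} and the surrounding construction). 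In short, the skeleton is right, but the conditioning/decoupling step in (C3') is the crux of the proposition and is missing from your sketch.
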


Similarly, the stronger form of Proposition \ref{prop:remdiscrete} we shall need is the following:

\begin{proposition}\label{prop:remdiscrete2} Given any $\zeta \in (0, (2-\alpha) \wedge 1)$ and $\delta >0$, for all $k \in \N$ large enough (depending on $\zeta$ and $\delta$) we have
	\[
	\sup_{T > (2k)^{1/\alpha}} \left[ \sup_{p \geq T^{-\zeta}} \P_p\big( \widetilde{R}_k^{(T)} > \delta\big)\right] \leq \delta.
	\]	
\end{proposition}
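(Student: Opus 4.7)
The plan is to mirror the strategy of Proposition \ref{prop:remdiscrete}, but to track the dependence of all estimates on $p$ carefully to obtain uniformity over $p \in [T^{-\zeta}, 1]$. By Markov's inequality it suffices to show that
\[
\lim_{k\to\infty}\; \sup_{T > (2k)^{1/\alpha}}\; \sup_{p \geq T^{-\zeta}} \E_p\!\left[\widetilde{R}_k^{(T)}\right] \;=\; 0.
\]

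First I would perform a dyadic decomposition of the tail weights. Writing $I_j := \{i : 2^j k \leq i < 2^{j+1} k\}$ for $j \geq 0$, one has
\[
R_k^{(T)} \;\leq\; \sum_{j \geq 0} M^{(T)}_{2^j k}\cdot N_j^{*},
\]
where $N_j^{*}$ is the maximum over compatible paths $\gamma \in \cC_T$ of the number of points $U_i^{(T)}$ with $i \in I_j$ that lie on $\mathrm{graph}(\gamma)$. Using Lemma \ref{lemma:control} (which I would need to verify also gives $N_T/(pT^2)\to 1$ uniformly over $p\geq T^{-\zeta}$) together with standard extreme-value asymptotics applied conditionally on $\xi$, one expects $\E_p[M^{(T)}_{2^j k}]/a_{pT^2} \lesssim (2^j k)^{-1/\alpha}$ uniformly in $p\geq T^{-\zeta}$, provided that $pT^2 \geq 2^j k$ (this is where both the hypothesis $T>(2k)^{1/\alpha}$ and the upper bound on $\zeta$ enter).

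To bound $N_j^{*}$, I would invoke the auxiliary Bernoulli ballistic deposition model advertised in the introduction. Given the top $n \approx 2^{j+1} k$ weights, their positions in $\cV_T$ are exchangeable, so the event that many of them lie on a single compatible path has the same probability as in a BD model with i.i.d.\ Bernoulli heights of parameter $q_j \approx 2^{j+1}k/(pT^2)$. The auxiliary estimate (the analogue of Hambly--Martin's Bernoulli LPP bound stated in the introduction) should yield $\E_p[N_j^{*}] \leq \varphi(2^j k)$ for some $\varphi$ with $\varphi(n)/n^{1/\alpha} \to 0$, uniformly in $p,T$ in the stated range. Combining these two ingredients via the dyadic sum gives
\[
\E_p\!\left[\widetilde{R}_k^{(T)}\right] \;\lesssim\; \sum_{j \geq 0} (2^j k)^{-1/\alpha}\,\varphi(2^j k),
\]
which would be summable and $o_k(1)$, delivering the proposition.

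The main obstacle I anticipate is precisely the uniform-in-$p$ control of the two ingredients above in the regime $p\approx T^{-\zeta}$, where $pT^2 = T^{2-\zeta}$ diverges only moderately and the backwards cone of reach becomes slim. The constraint $\zeta<(2-\alpha)\wedge 1$ should be consumed exactly in this step: the bound $\zeta<1$ guarantees that $pT\to\infty$, so a typical compatible path still visits many sticky points and the Bernoulli BD estimate applies without pathological boundary effects; the bound $\zeta<2-\alpha$ ensures $a_{pT^2}\gg T$, preventing the pure single-column RD contribution (of order $T$ when $\alpha\in(1,2)$, cf.\ \eqref{eq:rd}) from overwhelming the normalization $a_{pT^2}$ and ruining the tightness of $\widetilde{R}_k^{(T)}$. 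Verifying that the estimates of \cite{HamblyMartin07}-type used in the auxiliary Bernoulli model genuinely hold uniformly over the range of parameters $q_j$ that arise as $p$ varies will be the delicate technical core of the argument.
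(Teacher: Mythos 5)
Your proposal follows essentially the same strategy as the paper: Markov's inequality, reduction to an expectation bound for $\widetilde{R}_k^{(T)}$, and the same two ingredients -- extreme-value asymptotics for the ordered weights $M_i^{(T)}$ and the Bernoulli ballistic-deposition bound for the number of ``large-weight'' sites a compatible path can collect. Your dyadic decomposition $\widetilde{R}_k^{(T)}\le\sum_j \widetilde{M}_{2^jk}^{(T)} N_j^*$ is a cosmetic variant of the paper's summation-by-parts $\widetilde{R}_k^{(T)}\le\sum_i L_i^{(T)}(\widetilde{M}_i^{(T)}-\widetilde{M}_{i+1}^{(T)})$; both exploit the conditional independence, given $N_T$, between the positions of the top weights (controlled via the Bernoulli model) and their magnitudes (controlled via order statistics). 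Two points of imprecision worth flagging. First, your stated sufficiency criterion ``$\varphi(n)/n^{1/\alpha}\to 0$'' is too weak for the dyadic sum $\sum_j (2^jk)^{-1/\alpha}\varphi(2^jk)$ to be $o_k(1)$ (it fails, e.g., for $\varphi(n)=n^{1/\alpha}/\sqrt{\log n}$); what you actually get from the Bernoulli BD estimate, refined to track $p$ as in Theorem~\ref{theo:bbp} and Lemma~\ref{lemma:hml}, is the polynomial bound $\varphi(n)\lesssim n^{1/(2-\zeta)}$, and it is precisely the strict inequality $1/(2-\zeta)<1/\alpha$ (i.e.\ $\zeta<2-\alpha$, up to the $\rho$-loss from the slowly varying part of $F$) that makes the geometric series converge and go to zero in $k$. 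Second, your Markov step bounds $\E_p[\widetilde{R}_k^{(T)}]$ directly, whereas the paper first restricts to a good event $\cG_k^{(T)}$ on which $N_T\in[\tfrac12 pT^2,\tfrac32 pT^2]$ and the order statistics $M_i^{(T)}$ behave typically (Lemma~\ref{lemma:badset}); some such restriction is needed to make the conditional-expectation bounds uniform and to rule out the rare fluctuations of $N_T$ that would spoil the normalization $a_{pT^2}$.
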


\begin{remark}Proposition \ref{prop:remdiscrete} (and, more generally, Proposition \ref{prop:remdiscrete2}) above is the analogue of \cite[Proposition 3.3]{HamblyMartin07}, shown in the context of heavy-tailed Last Passage Percolation on $\N \times \N$. Even if our approach is inspired by \cite{HamblyMartin07}, the execution of the proof in our setting will have two important differences. On the one hand, the key estimate \cite[Lemma 3.5]{HamblyMartin07} used to prove the result for LPP will require a different proof in our setting, due to the more complicated geometric structure of the set $\cC_T$ of compatible~paths, see also Remark \ref{rem:diff1}. On the other hand, to obtain the stronger form of this result in Proposition~\ref{prop:remdiscrete2} which allows for vanishing sticking parameters, we will have in fact to \textit{refine} the original estimate in \cite{HamblyMartin07}, see Theorem \ref{theo:bbp} and Lemma \ref{lemma:hml} below.
\end{remark}

With these stronger results at hand, we can now prove Theorem \ref{theo:2}:

\begin{proof}[Proof of Theorem \ref{theo:2}] The result follows by mimicking the proof outlined above for Theorem \ref{theo:1}, the only modification being the use of Propositions \ref{prop:coupling2}-\ref{prop:remdiscrete2} instead of \ref{prop:coupling}-\ref{prop:remdiscrete} to show the analogues \eqref{eq:C1}-\eqref{eq:C2}-\eqref{eq:C3} in this context. We omit the details.
\end{proof}

The following sections are devoted to the proofs of all the auxiliary results we stated in this section. Before we begin, we introduce some further notation to be used extensively in the sequel:
\begin{enumerate}
	\item [$\bullet$] For any finite set $A$, we will denote its cardinal by $|A|$.
	\item [$\bullet$] Given $P=(X,T) \in \R^2$, we will denote its space and time coordinates respectively by $x(P)$ and $t(P)$, i.e. $x(P):=X$ and $t(P):=T$. Likewise, for any $B \subseteq \R^2$, $x(B):=\{ x(b) : b \in B\}$ and $t(B):=\{ t(b) : b \in B\}$ will denote the projection of $B$ onto the space and time coordinates.
	\item [$\bullet$] Given $Q=(X,T,\varepsilon,Z) \in \R^2 \times \{0,1\} \times \R_+$, we shall write $x(Q):=X$, $t(Q):=T$, $\varepsilon(Q):=\varepsilon$ and $z(Q):=Z$ to denote each of its coordinates.
	\item [$\bullet$] For any subset $\mathcal{O} \subseteq \xi$ of space-time points, we shall use the subscripts $\mathcal{O}^{(\varepsilon)}$ and $\mathcal{O}^{(\varepsilon,\eta)}$ to refer to the set of points in $\mathcal{O}$ endowed with their $\varepsilon$ and $(\varepsilon,\eta)$ marks, respectively.  
\end{enumerate}

We are now ready to begin with the proofs.

\section{Proof of Theorem \ref{theo:0}}\label{sec:cont}

As mentioned in Section \ref{sec:outline}, in order to obtain Theorem \ref{theo:0} it will suffice to prove Lemma \ref{lemma:cont1} and Proposition \ref{prop:cont1}. Since the proofs of both results are similar to their analogous counterparts in \cite{HamblyMartin07}, we will give an outline of these proofs and refer the reader to \cite{HamblyMartin07} for some of the details. 

\begin{proof}[Proof of Lemma \ref{lemma:cont1}] Observe that, if we define
\begin{equation}\label{def:c2}
\mathcal{C}:=\{ A \subseteq \N : \exists\, \gamma \in \mathcal{L} \text{ with } U_i \in \text{graph}(\gamma)  \text{ for all }i \in A\},
\end{equation} we have the following alternative representation for $R_1=H$:
\[
R_1= \sup_{A\in \cC} \sum_{i \in A} M_i.
\] As a matter of fact, to obtain $R_1$ it suffices to take the supremum over \textit{finite} subsets $A$ in $\cC$. Indeed, either $R_1 < \infty$ and then we can approximate the sum over any infinite set $A$ by that over $A \cap \{1,\dots,k\}$ for some $k$ large enough, or $R_1=\infty$, in which case we may always find sums over finite sets $A$ which are arbitrarily large. In particular, $R_1$ is the supremum of a countable family of measurable random variables, and is hence measurable itself. The same argument applies to establish the measurability of all other $H_k$ and $R_k$.

To prove the remaining parts of Lemma~\ref{lemma:cont1}, we compare our continuous~model to that in \cite{HamblyMartin07}. To this end, let us consider the 135-degree counterclockwise
 rotation $\cR$ about the origin which maps the translated triangle $\Delta+(0,-1)$ to the region 
\[
\cR(\Delta+(0,-1)):=\{ (x,y) \in \R^2 : 0 \leq x \leq \sqrt{2}\,,
0 \leq y \leq \sqrt{2}-x\}
\] corresponding to the lower half of the square $
[0,\sqrt{2}]^2$ obtained when splitting it into two alongside its diagonal $y=\sqrt{2}-x$ (see Figure \ref{fig:delta}) and, in addition, define $\mathcal{T}(x,y):=\tfrac{1}{\sqrt{2}}\mathcal{R}((x,y)+(0,-1))$. Observe that:
\begin{enumerate}
	\item [a)] given two points $v_1,v_2 \in \Delta$, there exists a path in $\cL$ joining $v_1$ and $v_2$ if and only if there exists an increasing path joining $\mathcal{T}(v_1)$ and $\mathcal{T}(v_2)$, see Figure \ref{fig:delta};
	\item [b)] if one considers a Poisson process on $[0,1]^2 \times \R_+$  with~intensity~measure having density $1_{[0,1]^2}(x) \times \alpha m^{-(\alpha+1)}1_{m>0}$, i.e. the continuous model from~\cite{HamblyMartin07}, then its restriction to $\mathcal{T}(\Delta)$, the lower half of the unit square, corresponds to (the image via $\mathcal{T}$~of) the random measure $2^{-1/\alpha}\Pi$. 
\end{enumerate}
\begin{figure}
	\includegraphics[width=\textwidth]{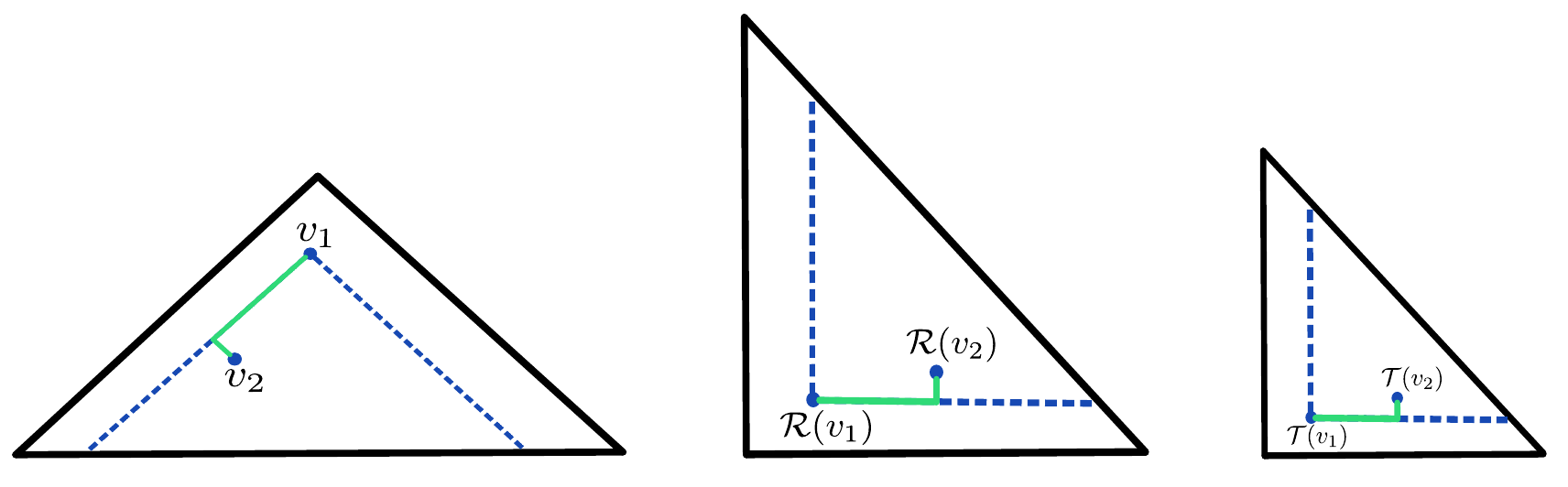}
	\caption{From left to right, the triangles $\Delta$, $\mathcal{R}(\Delta)$ and $\mathcal{T}(\Delta)$, respectively. The points inside the dotted blue triangle with apex $v_1$ in $\Delta$ are precisely those which can be joined with $v_1$ by a path in $\mathcal{L}$, a convenient example of such a path being depicted in green. Observe how this green path is mapped into a increasing path by the scaling-rotation $\mathcal{T}$.}
	\label{fig:delta}
\end{figure} Combining (a) and (b) above (together with the fact that it suffices to take suprema over finite sets $A \in \cC$) yields that $R_k \preceq 2^{1/\alpha} S_k$ for all $k \in \N$, where $\preceq$ stands for stochastic domination and $S_k$ denotes the analogue of $R_k$ corresponding to the continuous model in \cite{HamblyMartin07}. Taking this into consideration, the rest of the proof now follows from \cite[Lemma 3.1]{HamblyMartin07}.
\end{proof}

\begin{proof}[Proof of Proposition \ref{prop:cont1}] The proof is similar to that of \cite[Proposition~4.1]{HamblyMartin07}, we summarize it here for completeness and refer to \cite{HamblyMartin07} for some of the details. 
Recall the definitions of $\cC^{(k)}$ and $H_k$
from~\eqref{eq:defck} and~\eqref{eq:altrep}
and, for each $k \in \N$, define $A_k^* \subseteq \cC^{(k)}$ as the set achieving the maximum in the definition of $H_k$, i.e.
\[
H_k = \Pi( A_k^*)=\sum_{i \in A_k^*} M_i.
\] (Since the height distribution $F$ is assumed to be continuous, the set $A_k^*$ is almost surely unique.) If we view each $A_k^*$ as a random element in $\{0,1\}^\N$ then, since $\{0,1\}^\N$ is compact when endowed with the product topology, it follows~that with probability one the sequence $(A_k^*)_{k \in \N}$ has at least one convergent subsequence (in this topology). If $(A_{k_j}^*)_{j \in \N}$ is any subsequence converging to some set $A^* \subseteq \N$ (notice that the $k_j$'s may be random), then for each $m \in \N$ we have that, for all $j \in \N$ large enough,
\begin{equation}\label{eq:convprod}
A^* \cap \{1,\dots,m\} =  A^*_{k_j} \cap \{1,\dots,m\} 
\end{equation} by definition of convergence in the product topology. Then, it follows that, for all $j$ large enough so that \eqref{eq:convprod} holds,
\[
\left|\sum_{i \in A_{k_j}^*} M_i - \sum_{i \in A^*} M_i \right| \leq \sup_{A \in \cC} \sum_{i \in A\,,\,i\geq m+1} M_i = R_{m+1}
\] which, by Lemma \ref{lemma:cont1} and \eqref{eq:convhhk}, implies that
\begin{equation*} \label{eq:convh}
H = \lim_{j \rightarrow \infty} H_{k_j} = \lim_{j \rightarrow \infty} \sum_{i \in A_{k_j}^*} M_i = \sum_{i \in A^*} M_i.
\end{equation*} Thus, if we manage to show that $A^* \in \cC$, where $\cC$ is as in \eqref{def:c2}, in particular this will imply that the supremum from the definition of $H$ is attained (by whichever path $\gamma^* \in \cL$ collects all the points in $A^*$). This part of the proof differs from that of \cite[Proposition~4.1]{HamblyMartin07}, since our definition of compatible path is not exactly the same as the one in \cite{HamblyMartin07}. To show this, for~each $m \in \N$ let $\gamma^*_m \in \cL$ be a path collecting all points in $A^* \cap \{1,\dots,m\}$. Such a path always exists by \eqref{eq:convprod}, since $A^*_{k_j} \in \cC$ for all $j \in \N$ by definition. Now, since $\mathcal{L}$ is a uniformly bounded and equicontinuous family of paths by definition, by the Arzelà-Ascoli theorem $(\gamma^*_m)_{m \in \N}$ converges uniformly to some path $\gamma^* \in \mathcal{L}$.  Furthermore, since for each $m \in \N$ we have
\[
\{ U_i : i \in A^* \cap \{1,\dots,m\} \} \subseteq \text{graph}(\gamma^*_n)
\] for all $n \geq m$ by choice of the paths $\gamma_n$, by letting $n \rightarrow \infty$ we obtain that 
\[
\{ U_i : i \in A^* \cap \{1,\dots,m\}\} \subseteq \text{graph}(\gamma^*)
\] for all $m \in \N$, which implies that $\{ U_i : i \in A^*\} \subseteq  \text{graph}(\gamma^*)$ and thus $A^* \in \cC$.

Finally, to prove that the maximizing path $\gamma^* \in \mathcal{L}$ is unique, we first establish that the set $A^* \in \cC$ satisfying
\[
H= \sum_{i \in A^*} M_i
\] is a.s.-unique. This can be done as in the proof of \cite[Proposition~4.1]{HamblyMartin07}. Indeed, if there were two maximizing sets $A^*$ then there would exist $k_0 \in \N$ which belongs to only one of them, in which case we would have
\begin{equation}\label{eq:equivh}
\sup_{A \in \cC\,,\,k_0 \in A} \sum_{i \in A} M_i = \sup_{A \in \cC\,,\,k_0 \notin A} \sum_{i \in A} M_i.
\end{equation} Since 
\[
\sup_{A \in \cC\,,\,k_0 \in A} \sum_{i \in A} M_i = M_{k_0} +\sup_{A \in \cC\,,\,k_0 \in A} \sum_{i \in A\,,\,i\neq k_0} M_i, 
\] \eqref{eq:equivh} yields that
\[
M_{k_0} = \sup_{A \in \cC\,,\,k_0 \notin A} \sum_{i \in A} M_i - \sup_{A \in \cC\,,\,k _0\in A} \sum_{i \in A\,,\,i\neq k_0} M_i
\] which, being that both sides of this last equation are independent and $M_{k_0}$ has a continuous distribution, can only occur with zero probability (see \cite{HamblyMartin07} for further details). Taking this into consideration, let us define
\[
U^*:=\{U_i : i \in A^*\} \hspace{1cm} \text{ and }\hspace{1cm}t(U^*):=\{ t(U_i) : i \in A^*\}
\] where $A^*$ is the a.s.-unique set verifying \eqref{eq:equivh}. Note that, by uniqueness of~$A^*$, the graph of any maximizing path $\gamma^* \in \cL$ must contain all the points in $U^*$. In particular, any two maximizing paths must coincide on $t(U^*)$, which is just the collection of times $t \in [0,1]$ in which these points in $U^*$ are collected. By continuity, it then follows that any two such paths must in fact agree on the entire closure $\overline{t(U^*)}$. Now, by adapting the methods in \cite[Section 4]{HamblyMartin07} to our setting (in particular, see the discussion preceding \cite[Theorem 4.4]{HamblyMartin07}) as~explained in the proof of Lemma~\ref{lemma:cont1}, one can show that:
\begin{enumerate}
	\item [i.] $U^*$ is infinite (i.e. any maximizing path collects infinitely many points), 
	\item [ii.] if the closure $\overline{t(U^*)}$ has gaps, i.e. there exist $t_1<t_2 \in \overline{t(U^*)}$ such that $(t_1,t_2) \cap \overline{t(U^*)} = \emptyset$, then any path $\gamma \in \cL$ satisfying that $U^* \subseteq \text{graph}(\gamma)$ must be linear on $(t_1,t_2)$ with slope $\pm 1$, and whether the slope is $+1$ or $-1$ is completely determined by the set $U^*$ (and thus does not depend on the particular choice of maximizing path). 
\end{enumerate} In particular, (ii) implies that any two maximizing paths in $\cL$ must coincide on $[0,1] \setminus \overline{t(U^*)}$. Since we have already argued that they must also coincide on all of $\overline{t(U^*)}$, we conclude that there can only be one maximizing path.
\end{proof}

\section{Proof of Proposition \ref{prop:coupling2}} \label{coupling} 

In this section we construct the coupling between the discrete and continuous models satisfying the properties specified in Proposition \ref{prop:coupling2}. More precisely, for each $T>0$ and $p \in (0,1]$ we shall couple the random variables 
\[
\{ (U_i,M_i) : i \in \N\} \hspace{1cm}\text{ and }\hspace{1cm}\{ (U_i^{(T)},\varepsilon(U_i^{(T)}),M_i^{(T)}) : i=1,\dots,N_T \}
\] in such a way that, for any given $\zeta \in (0,1)$, $k \in \N$ and $\delta > 0$, if $T \geq T_{k,\delta,\zeta}$ then the conditions (C1')-(C2')-(C3') in the statement of Proposition~\ref{prop:coupling2} are all satisfied. Notice that this will immediately prove Proposition \ref{prop:coupling} as well.

To this end, our first step will be to study in more detail the properties of~$\cV_T$, the set of attainable space-time points defined in \eqref{def:attainable} (see also \eqref{eq:ab}). 

\subsection{Interior and boundary of $\cV_T$}\label{sec:boundint}

For fixed $p \in (0,1]$, consider the discrete model with sticking parameter~$p$. Given $T>0$, let $\gamma^{+,T}$ and $\gamma^{-,T}$ respectively denote the rightmost and leftmost paths in $\cC_T$ for this model, i.e. the paths which satisfy, for each $t \in [0,T]$,
\[
\gamma^{+,T}(t)\,=\, \max \{ \gamma(t) : \gamma \in \cC_T \} \hspace{1cm}\text{ and }\hspace{1cm}\gamma^{-,T}(t)\,=\, \min \{ \gamma(t) : \gamma \in \cC_T \},
\] where we have omitted the dependence on $p$ from the notation for simplicity.  
If we consider their time-reversed (càdlàg) versions 
\begin{equation}
	\label{eq:rmlmp}
	s^{+,T}(t):=\gamma^{+,T}((T-t)^- \vee 0) \hspace{1cm}\text{ and }\hspace{1cm}s^{-,T}(t):=\gamma^{-,T}((T-t)^- \vee 0)\,
\end{equation} (the left-hand limit $(T-t)^- \vee 0$ is taken so that both $s^{+,T}$ and $s^{-,T}$ have càdlàg trajectories on $[0,T]$), then it is not hard to check that $s^{+,T}$ and $-s^{-,T}$ are both Poisson processes on $[0,T]$ with jump rate $p$. Note that, if we define the \textit{discrete triangle} $\Delta_T$ generated by the paths $\gamma^{\pm,T}$ as
\[
\Delta_T:=\{ (x,t) \in \Z \times [0,T] : \gamma^{-,T}(t) \leq x \leq \gamma^{+,T}(t)\},
\] then we have (see Figure~\ref{fig:cone})
\[
\cV_T = \xi_T \cap \Delta_T.
\] 
Next, define the \textit{boundary} and \textit{interior} of $\Delta_T$ respectively as
\begin{equation}\label{eq:defboundarytriangle}
\partial \Delta_T := \text{graph}(\gamma^{+,T})\cup\text{graph}(\gamma^{-,T})
\end{equation} and 
\begin{equation} \label{eq:defint}
\Delta^\circ_T:=\Delta_T \setminus \partial \Delta_T = \{ (x,t) \in \Z \times [0,T] : \gamma^{-,T}(t) < x < \gamma^{+,T}(t)\},
\end{equation}
and set
\begin{equation}\label{eq:decomp}
\cV^\circ_T:= \xi_T \cap \Delta^\circ_T \hspace{1cm}\text{ together with }\hspace{1cm}\partial \cV_T:= \xi_T \cap \partial \Delta_T.
\end{equation}
\begin{figure}
	\includegraphics[width=\textwidth]{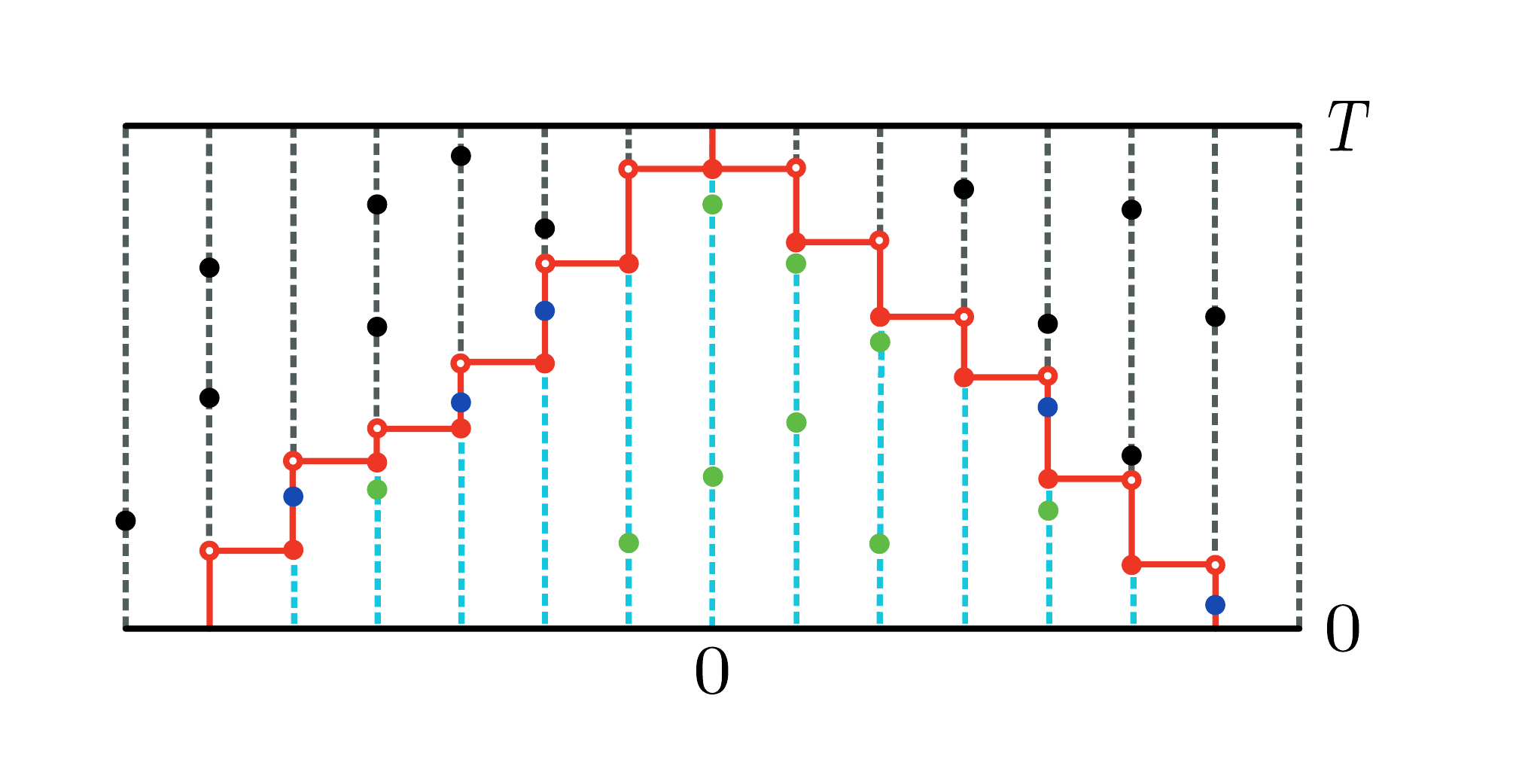}
	\caption{\textit{An illustration of $\cV_T$}. The boundary $\partial \Delta_T$ is colored in red, while the interior $\Delta^\circ_T$ is colored in light blue. The union of the two regions gives $\Delta_T$. The blue and red dots correspond respectively to the non-sticky and sticky points in $\partial \cV_T$, while the green ones to those points in $\cV^\circ_T$. The collection of all red, blue and green dots constitutes $\cV_T$.}
	\label{fig:cone}
\end{figure}
We call the sets $\cV^\circ_T$ and $\partial \cV_T$ the \textit{interior} and \textit{boundary} of $\cV_T$, respectively. In the sequel, we shall use the notation
\[
N^\circ_T:=|\cV^\circ_T| \hspace{1cm}\text{ and }\hspace{1cm}N^\partial_T:= |\partial \cV_T|.
\] By standard properties of the Poisson process, it is not difficult to verify that, conditional on $\partial \cV_T$, $\cV^\circ_T$ is a Poisson process on $\Delta^\circ_T$ whose intensity measure is the restriction of $n_\Z \otimes \lambda_{\R_+}$ to $\Delta^\circ_T$. This is the crucial property on which we will base our coupling. In turn, this characterization yields the following useful asymptotics for the shape of $\Delta_T$ and the quantities $N_T$, $\partial N_T$ and $N^\circ_T$, 
whose proof we delay until Appendix~\ref{sec:proofrelemma} (recall that the notation $\P_p$ and/or $\E_p$ indicates that the model under consideration has sticking parameter $p$):

\begin{lemma}\label{lemma:control} For any $p \in (0,1]$ and $T>0$ we have 
		\begin{equation} \label{eq:expectations}
		\E_p(N^\circ_T)= pT^2 - T + \frac{1-\mathrm{e}^{-pT}}{p}\hspace{1cm}\text{ and }\hspace{1cm}\E_p(N^\partial_T)=2T - \frac{1-\mathrm{e}^{-pT}}{p}.
		\end{equation} Furthermore, for any $\zeta \in (0,1)$ and $\delta>0$ there exist $T_{\delta,\zeta},C_{\delta,\zeta} > 0$ such that, for all $T>T_{\delta,\zeta}$, 
	\begin{equation}
		\label{eq:triangle}
		\sup_{p \geq T^{-\zeta}} \P_p\left( \sup_{0\leq t \leq 1}\left[ \left| \frac{1}{pT}s^{+,T}(tT) - t\right| \vee \left| \frac{1}{pT}s^{-,T}(tT) +t\right|\right]>\delta\right) \leq \mathrm{e}^{-C_{\delta,\zeta} T^{1-\zeta}},
	\end{equation} and
	\begin{equation}
		\label{eq:T2pts}
		\sup_{p \geq T^{-\zeta}} \P_p\left( \left|\frac{N^\circ_T}{pT^2} - 1\right| \vee \left|\frac{N^\partial_T}{2T} - 1\right| \vee \left|\frac{N_T}{pT^2} - 1\right| > \delta \right) \leq \mathrm{e}^{-C_{\delta,\zeta} T^{1-\zeta}}.
	\end{equation}
\end{lemma}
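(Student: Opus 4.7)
\textbf{Proof plan for Lemma~\ref{lemma:control}.}

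The plan is to leverage two structural facts highlighted just above the statement: (i) each of the time-reversed extremal paths $s^{+,T}$ and $-s^{-,T}$ is a rate-$p$ Poisson process on $[0,T]$ (they share the same first jump at a common time $s_1\sim\mathrm{Exp}(p)$ and are otherwise independent); and (ii) conditionally on $\partial\cV_T$, the interior process $\cV^\circ_T$ is a Poisson point process on $\Delta^\circ_T$ with intensity $n_\Z\otimes\lambda_{\R_+}$. From these, \eqref{eq:expectations} will follow by a direct area computation, and \eqref{eq:triangle}--\eqref{eq:T2pts} from Poisson concentration applied in two layers: first for the boundary shape, and then for the interior and non-sticky counts conditionally on the boundary.

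For \eqref{eq:expectations}, writing $\gamma^{\pm,T}(t)=s^{\pm,T}(T-t)$ a.s.\ gives $|\Delta^\circ_T|=\int_0^T(\gamma^{+,T}(t)-\gamma^{-,T}(t)-1)^+\,dt$. Since $\gamma^{+,T}(t)-\gamma^{-,T}(t)$ is a nonnegative integer, the identity $(a-1)^+=a-1+\mathbf 1_{\{a=0\}}$ applies, and $\{\gamma^{+,T}(t)=\gamma^{-,T}(t)\}=\{s_1>T-t\}$, so $\int_0^T\mathbf 1_{\{\gamma^{+,T}(t)=\gamma^{-,T}(t)\}}\,dt=\min(s_1,T)$ with expectation $(1-e^{-pT})/p$. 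Combined with $\E[s^{+,T}(u)]=pu=\E[-s^{-,T}(u)]$ and taking expectation through (ii), one obtains $\E_p(N^\circ_T)=pT^2-T+(1-e^{-pT})/p$. For $\E_p(N^\partial_T)$, decompose the atoms of $\xi\cap\partial\Delta_T$ into sticky ones (which, by the maximality of $\gamma^{\pm,T}$, are exactly the jump points of $s^{\pm,T}$, contributing $s^{+,T}(T)-s^{-,T}(T)-\mathbf 1_{\{s_1\le T\}}$) and non-sticky ones (which, conditionally on $\xi^{\text{[st]}}$, form a Poisson process of rate $1-p$ on $\partial\Delta_T$, whose expected length is $2T-(1-e^{-pT})/p$); adding the two contributions and simplifying gives $2T-(1-e^{-pT})/p$.

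For \eqref{eq:triangle}, applying Doob's maximal inequality to the exponential martingale $\exp(\lambda s^{+,T}(u)-pu(e^\lambda-1))$ and optimizing in $\lambda$ (i.e.\ Bennett's inequality for Poisson processes) yields $\P_p(\sup_{u\le T}|s^{+,T}(u)-pu|>\delta pT)\le 2e^{-c_\delta pT}\le 2e^{-c_\delta T^{1-\zeta}}$ uniformly in $p\in[T^{-\zeta},1]$, and the same bound holds for $-s^{-,T}$. For \eqref{eq:T2pts}, I would condition on $\partial\cV_T$. On the event of \eqref{eq:triangle} with $\delta$ suitably reduced, together with $\{\min(s_1,T)\le(\delta/2)T\}$ (whose complement has probability at most $e^{-p\delta T/2}\le e^{-\delta T^{1-\zeta}/2}$), one has $\bigl|\,|\Delta^\circ_T|-pT^2\bigr|\le(\delta/2)pT^2$ and $\bigl|\,|\partial\Delta_T|-2T\bigr|\le(\delta/2)T$. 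Conditionally on the boundary, $N^\circ_T\sim\mathrm{Poisson}(|\Delta^\circ_T|)$ and the non-sticky atoms of $\partial\cV_T$ form a Poisson variable of mean $(1-p)|\partial\Delta_T|\le 2T$; Bennett's inequality then gives conditional deviations beyond $(\delta/2)pT^2$ (resp.\ $(\delta/2)T$) with probability at most $e^{-c_\delta pT^2}$ (resp.\ $e^{-c_\delta T}$), both dominated by $e^{-C_{\delta,\zeta}T^{1-\zeta}}$. The sticky contribution to $N^\partial_T$ is already controlled by the preceding step, and $N_T=N^\circ_T+N^\partial_T$, so combining these estimates yields \eqref{eq:T2pts}.

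The main obstacle is keeping the three relevant Poisson scales---$pT$ (boundary paths), $pT^2$ (interior count), and $T$ (non-sticky boundary count)---all at least $T^{1-\zeta}$ uniformly in $p\ge T^{-\zeta}$; in particular $pT$ saturates at $T^{1-\zeta}$ precisely at $p=T^{-\zeta}$, making the first layer the binding one. The separate correction $\min(s_1,T)$ is not a Poisson count but is handled directly by the exponential tail $\P(s_1>\delta T)\le e^{-p\delta T}$.
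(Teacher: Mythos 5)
Your plan is correct, and the endpoints agree with the paper's, but you take a genuinely different route in two of the three steps. For $\E_p(N^\partial_T)$ the paper does not split into sticky vs.\ non-sticky atoms; it writes $N^\partial_T=n_T(G^+_T)+n_T(G^-_T)-n_T(G^+_T\cap G^-_T)$ where $G^\pm_t$ is the graph of $\gamma^{\pm,T}$ restricted to $[T-t,T]$, observes that $t\mapsto n_T(G^\pm_t)$ are rate-$1$ Poisson counting processes, and evaluates the overlap term via the identity $n_T(G^+_T\cap G^-_T)=n_T(G^+_{H_T})$ plus optional stopping at $H_T=s_1\wedge T$. Your sticky/non-sticky decomposition is an equally clean alternative (note that your claim that the sticky atoms on $\partial\Delta_T$ are \emph{exactly} the jump points of $\gamma^{\pm,T}$ is correct, by the maximality/minimality argument, but it is a lemma in its own right and worth a line of justification); the paper's version avoids having to classify marks and instead exploits the two Poisson counting processes directly. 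For \eqref{eq:triangle} the paper invokes the Freidlin--Wentzell sample-path LDP for the rescaled Poisson jump process (Theorem 5.3.3 of that reference), whereas you propose Doob's maximal inequality applied to the exponential martingale $\exp(\theta s^{+,T}(u)-pu(e^\theta-1))$, optimized in $\theta$. Your version is more elementary, fully self-contained, and gives explicit constants $c_\delta=(1+\delta)\log(1+\delta)-\delta$; the paper's version is shorter modulo the external reference but obscures the quantitative dependence on $\delta$. For \eqref{eq:T2pts} your two-layer conditioning on $\partial\cV_T$ and the Poisson/Chernoff bounds for $N^\circ_T$ and the non-sticky boundary count is essentially what the paper does (via its \eqref{eq:checkint} and \eqref{eq:boundarea2}), with your sticky/non-sticky bookkeeping replacing the paper's inclusion--exclusion bookkeeping; the only thing to be careful about is the extra $-T$ correction in $(n_\Z\otimes\lambda_{\R_+})(\Delta^\circ_T)=\int_0^T(\gamma^{+,T}-\gamma^{-,T})\,dt-T+H_T$, which you handle with $(a-1)^+=a-1+\mathbf 1_{\{a=0\}}$, matching the paper's \eqref{eq:boundarea}, and the observation that $T\le\delta'pT^2$ once $T$ is large uniformly in $p\ge T^{-\zeta}$ (since $\zeta<1$). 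Your closing remark correctly identifies $pT\ge T^{1-\zeta}$ as the binding scale.
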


We will use this decomposition of $\cV_T$ into boundary and interior to construct our coupling with the continuous model in the following way:
\begin{enumerate}
	\item [i.] First, sample the boundary $\partial \cV_T$ of $\cV_T$ by constructing a pair of Poisson jump processes with the joint law of $(s^{+,T},-s^{-,T})$, i.e. they agree until their first jump and then behave independently afterwards.
	\item [ii.] Then, conditionally on $\partial \cV_T$, sample the interior $\cV^\circ_T$ and couple it with the continuous model so that it converges to it (in an appropriate way) as $T \rightarrow \infty$.
	\item [iii.] Finally, argue that, since typically $N^\partial_T \ll N^\circ_T$, those space-time points having the $k$ largest block heights will be found (with high probability) always in the interior of $\cV_T$, so that one can disregard points in $\partial \cV_T$ and thus find that the coupling carried out in (ii) verifies the (C')-conditions in the statement of Proposition \ref{prop:coupling2}.  
\end{enumerate}

\subsection{Step 1: sampling $\partial \cV_T$}\label{sec:boundary}

We will begin the construction of our coupling by first sampling the set $\partial \cV_T^{(\varepsilon,\eta)}$ of points in $\partial \cV_T$ endowed with their $\varepsilon$ and $\eta$ marks, i.e.
\[
\partial \cV_T^{(\varepsilon,\eta)}:=\{ (x,t,\varepsilon(x,t),\eta(x,t)) : (x,t) \in \partial \cV_T\}.
\] In order for our coupling to succeed, it shall not be necessary to sample $\partial \cV_T$ in any particular way, any sample will suffice. Therefore, the quickest way to do this would be to consider the Poisson process $\overline{\xi}$ from Section \ref{sec:const} and redo the whole construction of this section to obtain the entire discrete process, in particular yielding a sample of $\cV_T$ and ultimately of $\partial \cV_T$. Another option, which is more technical but does not require constructing the whole process, 
can be briefly summarized as follows:
\begin{enumerate}
	\item [i.] We first sample the time-reversed rightmost/leftmost paths $s^{\pm,T}$ from \eqref{eq:rmlmp}. We do this by sampling $s^{+,T}$ and $-s^{-,T}$ as Poisson jump processes with parameter $p$ which coincide up to their first jump and then evolve independently.
	\item [ii.] We then time-reverse them to obtain the true rightmost/leftmost paths $\gamma^{\pm,T}$ in $\cC_T$.
	\item [iii.] We now construct the sample of $\partial \cV_T$ by appropriately selecting points from the graphs of $\gamma^{\pm,T}$. First, we define the sticky points in our~sample to be exactly those corresponding to a jump of one of the paths $\gamma^{\pm,T}$, i.e. points $(\gamma^{\pm,T}(t),t)$ where $t$ is a discontinuity point of $\gamma^{\pm,T}$. Then, we~add the non-sticky ones by choosing a number $N$ of points uniformly from the curve $\partial \Delta_T$ in \eqref{eq:defboundarytriangle}, where $N$ is an independent random variable with Poisson distribution of parameter $(1-p) \text{length}(\partial \Delta_T)$.
	\item [iv.] Finally, we add independent $\eta$-marks to all the selected points.
\end{enumerate}
Whichever way we wish to proceed, in the end we obtain a sample of $\partial \cV_T^{(\varepsilon,\eta)}$ which, for future reference purposes, we shall denote by $\widehat{\partial \cV}_T^{(\varepsilon,\eta)}$ or simply by $\widehat{\partial \cV}_T$ if we only wish to refer to the space-time points without their marks.

\subsection{Step 2: coupling $\cV^\circ_T$ with the continuous model} \label{sec:couplingstep2}

Our next step is to sample the set $\cV_T^{\circ,(\varepsilon)}$ of points in $\cV_T^{\circ}$  with their $\varepsilon$-marks, i.e.
\[
\cV_T^{\circ,(\varepsilon)}:=\{(x,t,\varepsilon(x,t)) : (x,t) \in \cV^\circ_T\},
\] conditionally on $\partial \cV_T$ and then to couple this set with the sequence $(U_i)_{i \in \N}$ from the continuous model. To this end, we consider a Poisson process 
\[
\Theta = \sum \delta_{(x,t,\e,z)}
\]
on $\R \times [0,1] \times\un \times \R_+$ with intensity measure $\l_{\R}\otimes \l_{[0,1]} \otimes \text{Ber}(p) \otimes \l_{\R_+}$ which is independent of the set $\widehat{\partial \cV}_T^{(\varepsilon,\eta)}$ constructed in the previous subsection. We will interpret the last two coordinates $\varepsilon,z$ in $(x,t,\varepsilon,z)$ as (independent) marks given to the space-time point $(x,t)$. The $z$-marks are not to be confused with the 
$\eta$-marks giving the height of the 
blocks: we use the $z$-marks to induce a random ordering of the points in our sample of $\cV_T^{\circ,(\varepsilon)}$, an ingredient which is necessary to later couple this sample with the $(U_i)_{i \in \N}$.
We shall sometimes write ${\mathbf 1}_{z \in I} \Theta$ or 
${\mathbf 1}_{(x,t)\in D} \Theta$ whenever we wish to restrict the $z$-marks to a certain range $I$ or the space-time points~$(x,t)$ to some region $D$.

Now, for each $T>0$ define $\varphi_T: \R \times [0,1] \rightarrow  \Z \times [0,T]$ the scaling-projection given by the formula
\begin{equation}\label{eq:defvarphi0}
\varphi_{T}(x,t):= (\lfloor Tx +1/2\rfloor,Tt),
\end{equation} and let $\widehat{\xi}_T$  be the collection of scaled-projected space-time points in ${\mathbf 1}_{z<T^2} \Theta$, i.e.
\begin{equation} \label{eq:defc}
\widehat{\xi}_T:=\sum_{(x,t,\varepsilon,z) \in \Theta} \mathbf 1_{z<T^2}\delta_{\varphi_T(x,t)}.
\end{equation} \begin{figure}
\includegraphics[width=\textwidth]{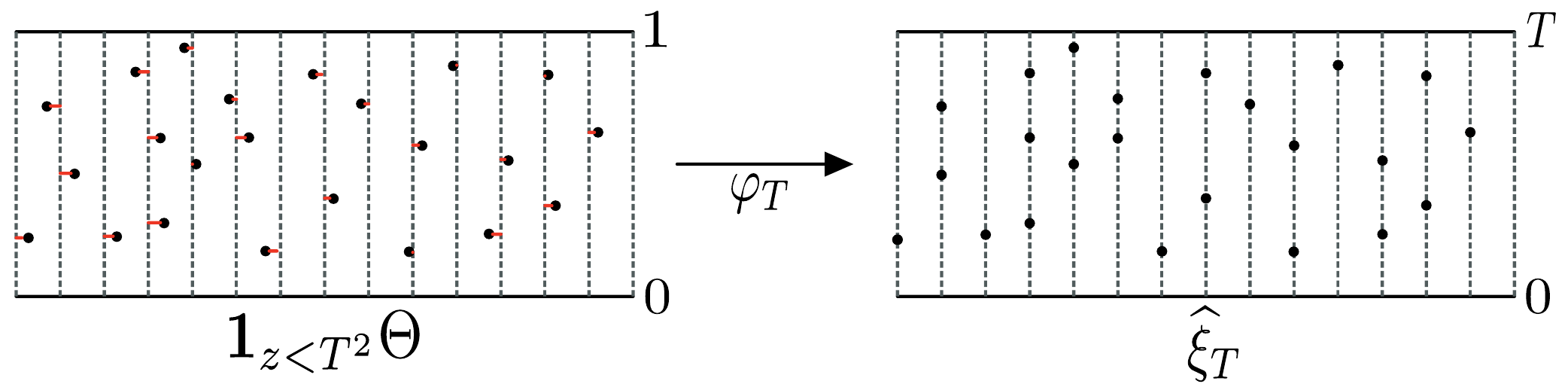}
\caption{\textit{The sample $\widehat{\xi}_T$}. On the left we can see an illustration of $\mathbf{1}_{z<T^2}\Theta$, the set of points in $\Theta$ having a $z$-mark strictly less than $T^2$. On the right, we have the sample $\widehat{\xi}_T$, which is obtained by applying the scaling-projection $\varphi_T$ to the points in the left picture.}
\label{fig:g}
\end{figure}Put into words, $\widehat{\xi}_T$ is the collection of all $(k,t) \in \Z\times \R_+$ such that $\Theta$ has a point at time $t/T$ in the space interval $T^{-1} [k-\tfrac{1}{2}, k+\tfrac{1}{2})$ having a $z$-mark which is smaller than $T^2$, see Figure \ref{fig:g}. It is straightforward to check that $\widehat{\xi}_T$ is a Poisson process on $\Z \times [0,T]$ with intensity $n_\Z \otimes \l_{[0,T]}$, that is, a sample of the set $\xi_T$ in \eqref{eq:ab} (hence the notation $\widehat{\xi}_T$). Therefore, upon~noticing that $\Delta^\circ_T$ is a measurable function of $\partial \cV_T^{(\varepsilon,\eta)}$, i.e.
\begin{equation}
	\label{eq:altercirc}
\Delta^\circ_T = \{ (k,t) \in \Z \times [0,T] : \exists\, t' > t \text{ such that }(k,t') \in  \partial \cV_T \text{ and }\varepsilon(k,t')=1\},
\end{equation} it follows that the collection 
\begin{equation*}\label{eq:constint}
\widehat{\cV}^\circ_T:= \widehat{\xi}_T \cap \Delta^\circ_T,
\end{equation*} where $\Delta^\circ_T$ is defined via \eqref{eq:altercirc} but now using the sample $\widehat{\partial \cV}_T^{(\varepsilon,\eta)}$ constructed in Section \ref{sec:boundary}, is itself a sample of the interior $\cV^{\circ}_T$.

The next step is to add the $\varepsilon$-marks to $\widehat{\cV}^\circ_T$. To this end, we note that, since all points in $\Theta$ have different time coordinates, each point $(k,t) \in \widehat{\xi}_T$ comes from an unique point $Q \in {\mathbf 1}_{z<T^2}\Theta$. Therefore, we may assign marks to each $(k,t) \in \widehat{\xi}_T$ in an unambiguous way by simply allowing it to inherit the marks of its corresponding point $Q \in {\mathbf 1}_{z<T^2}\Theta$, i.e. by defining
\[
\varepsilon(k,t):=\varepsilon(Q) \hspace{1cm}\text{ and }\hspace{1cm}z(k,t):=z(Q).
\]
Furthermore, since $|\widehat{\cV}^\circ_T| < \infty$ because $n_\Z \otimes \lambda_{\R_+}(\Delta^\circ_T) < \infty$ almost surely, we may number the points in $\widehat{\cV}^\circ_T$ by ordering their $z$-marks in increasing fashion, i.e. we may write
\[
\widehat{\cV}^\circ_T=\{ U^{\circ,(T)}_i : i=1,\dots,N^\circ_T\},
\] where $N^\circ_T=|\widehat{\cV}^\circ_T|$ and the $U^{\circ,(T)}_i$ verify that $0< z(U^{\circ,(T)}_1) < \dots < z(U^{\circ,(T)}_{N^\circ_T})$. It follows from the above discussion that the collection
\begin{equation} \label{eq:defupsilon}
\widehat{\cV}_T^{\circ,(\varepsilon)}=\{ (U^{\circ,(T)}_i,\varepsilon(U^{\circ,(T)}_i)) : i=1,\dots,N^\circ_T\}
\end{equation} is a sample of $\cV_T^{\circ,(\varepsilon)}$.

Finally, we couple this set $\widehat{\cV}_T^{\circ,(\varepsilon)}$ together with the space-time points $(U_i)_{i \in \N}$ from the continuous model. To do this, 
we first observe that, by Lemma \ref{lemma:control}, as $T \rightarrow \infty$ the scaled discrete triangle $\frac{1}{T}\Delta_T$ should resemble a (continuous) triangle but with slope $p$, i.e. it should be close to the set
\begin{equation} \label{eq:deftrianglep}
\Delta^{(p)}:=\big\lbrace
(x,t)\in\R^2 : 0\leq t\leq 1\,,\, |x|\leq p(1-t)
\big\rbrace=r_{1/p}(\Delta),
\end{equation} where $r_{1/p}$ is the inverse of the map $r_p$ in the statement of Proposition \ref{prop:coupling}. Hence, since $|\widehat{\cV}_T^{\circ}| \to \infty$ by Lemma \ref{lemma:control} and the locations of points in $\widehat{\cV}_T^{\circ}$ are independent and uniformly chosen from $\Delta^\circ_T$, it follows that as $T \rightarrow \infty$ the law of the collection $\frac{1}{T}\widehat{\cV}_T^{\circ}$ is approximately that of an i.i.d. sequence of uniform random variables on~$\Delta^{(p)}$ (because the boundary of $\Delta^{(p)}$ has measure zero). Having this in mind, the most natural thing to~do is first to couple $\frac{1}{T}\widehat{\cV}_T^{\circ}$ with an i.i.d. sequence $(U_i^{(p)})_{i \in \N}$ with uniform distribution on $\Delta^{(p)}$ (that is, with the space-time points from a ``continuous model on $\Delta^{(p)}$'') in such a way that $\frac{1}{T}\widehat{\cV}_T^{\circ}$ converges to this sequence in an appropriate fashion. We will construct these points $U_i^{(p)}$ from the process $\Theta$. Then, we will scale the variables $U_i^{(p)}$ via $r_p$ to obtain a sample of the space-time points $(U_i)_{i \in \N}$ from our original continuous model, thus producing the desired coupling.

To be more precise, let us consider the collection of marked points 
\[
\mathcal{U}^{(p)}:=\{ (x,t,\varepsilon,z) \in \Theta : (x,t) \in \Delta^{(p)}\}.
\] Since $\Delta^{(p)}$ has a finite Lebesgue measure, for each $K > 0$ the set $\mathcal{U}^{(p)}$ contains only finitely many points with a $z$-mark which belongs to $[0,K]$. In particular, the collection of $z$-marks of points in $\mathcal{U}^{(p)}$ is discrete and thus we may number the points in $\mathcal{U}^{(p)}$ by ordering their $z$-marks in increasing fashion, i.e. we may write $\mathcal{U}^{(p)}$ as
\[
\mathcal{U}^{(p)}=\{ (U_i^{(p)},\varepsilon(U_i^{(p)}),z(U_i^{(p)})) : i \in \N\}
\] where $0< z(U_1^{(p)}) < z(U_2^{(p)}) < \dots$. It is not hard to show that the sequence $(U_i^{(p)})_{i \in \N}$ is i.i.d. with uniform distribution on the triangle $\Delta^{(p)}$. 
Furthermore, it is the natural choice to couple with $\frac{1}{T}\widehat{\cV}^\circ_T$ since
\begin{align*}
\frac{1}{T}\widehat{\cV}^\circ_T&=\{\tfrac{1}{T} \varphi_T(x,t) : \varphi_T(x,t) \in \widehat{\xi}_T \cap \Delta^\circ_T\}\\& = \{\tfrac{1}{T} \varphi_T(x,t) : \tfrac{1}{T}\varphi_T(x,t) \in \tfrac{1}{T}\widehat{\xi}_T \cap \tfrac{1}{T}\Delta^\circ_T\}\\
& \approx \{ (x,t) : (x,t,\varepsilon,z) \in \Theta \text{ for some }(\varepsilon,z)\,,\, (x,t) \in \Delta^{(p)}\} = (U_i^{(p)})_{i \in \N},
\end{align*} where the approximation $\approx$ on the third line holds true as $T \rightarrow \infty$ because under this limit we have  $\frac{1}{T}\varphi_T(x,t) \approx (x,t)$, $\mathbf{1}_{z < T^2} \to 1$ and also $\frac{1}{T}\Delta^\circ_T \approx \Delta^{(p)}$ as argued above (we will give precise details about this approximation in Section \ref{sec:condc2}). 
In particular, the sequence 
\[
\widehat{\mathcal{U}}:=(\widehat{U}_i)_{i \in \N}
\] where, for each $i \in \N$, we define
\begin{equation} \label{eq:deflambda}
\widehat{U}_i:=r_p(U_i^{(p)}),
\end{equation} is an i.i.d. sequence of random variables with uniform distribution on~$\Delta$ and therefore has the same law as the sequence $(U_i)_{i \in \N}$ from the continuous~model.
Hence, the pair $(\widehat{\cV}^{(\varepsilon)}_T,\widehat{\mathcal{U}})$ constitutes the desired coupling. 

\begin{remark}
This coupling has been specifically designed so that it satisfies 
\begin{equation}\label{eq:convc2}
r_p\big(\tfrac{1}{T} U_i^{\circ,(T)}\big) \overset{\P}{\longrightarrow} \widehat{U}_i
\end{equation} for each $i \in \N$, see Section \ref{sec:condc2} below.\footnote{Notice that the random variable $U_i^{\circ,(T)}$ is only well-defined if there are enough points in $\widehat{\cV}^\circ_T$, i.e. on the event that $N^\circ_T > i$, which does not have full probability. However, since 
$\P(N^\circ_T > i)\to 1$ because $N^\circ_T \to \infty$ in probability, we can still make sense of \eqref{eq:convc2}.} This is the crucial property which will allow us to show conditions (C2') and (C3') later on.
\end{remark}

\subsection{Step 3: coupling the block heights}\label{sec:blockheights}

As pointed out in Section \ref{sec:lpprep},  if we consider the order statistics 
\[
M_{1,(N)} \geq M_{2,(N)} \geq \dots \geq M_{N,(N)}
\] of an i.i.d. sample of $N$ random variables with distribution function $F$ then, as $N \rightarrow \infty$, we have the convergence in distribution 
\[
	\big(a_N^{-1}M_{1,(N)},\dots,a_N^{-1}M_{N ,(N)} 
	\big)\overset{d}{\lra} M,
\] where $M$ denotes the non-homogeneous Poisson process on $\R_+$ defined in~\eqref{eq:defM}. Therefore, by eventually enlarging our probability space if necessary, we may assume that in it there exist an infinite sequence $(M_n)_{n \in \N}$ and for each $N \in \N$ a finite sequence $(M^{\circ}_{i,(N)})_{i=1,\dots,N}$ such that the following are satisfied:
\begin{enumerate}
	\item [M0.] Both $(M_n)_{n \in \N}$ and the finite sequences $(M^{\circ}_{i,(N)})_{i=1,\dots,N}$ for each $N \in \N$ are independent of the samples $\widehat{\partial \cV}^{(\varepsilon,\eta)}_T$, $\widehat{\cV}^{\circ,(\varepsilon)}_T$ and $\widehat{\mathcal{U}}$,
	\item [M1.] $(M_i)_{i \in \N}\overset{d}{=}M$,
	\item [M2.] $(M^{\circ}_{i,(N)})_{i \in \N}\overset{d}{=}(M_{1,(N)},\dots,M_{N,(N)})$ for each $N \in \N$,
	\item [M3.] $a_N^{-1}M^\circ_{i,(N)} \overset{as}{\lra} M_i$ as $N \rightarrow \infty$ for each $i \in \N$. 
\end{enumerate} 

Thus, if we define
\begin{equation}\label{eq:defmint}
\mathfrak{C}:=\{ (\widehat{U}_i, M_i) : i \in \N\},
\end{equation} for $\widehat{U}_i$ as defined in \eqref{eq:deflambda}, then $\mathfrak{C}$ is a sample of the continuous model $(U_i,M_i)_{i \in \N}$. On the other hand, if we recall \eqref{eq:defupsilon} and for $i=1,\dots,N^\circ_T$ define 
\[
M^{\circ,(T)}_i:= M^\circ_{i,(N^\circ_T)},
\] then the collection
\[
\widehat{\cV}^{\circ,(\varepsilon,\eta)}_T:=\{ (U^{\circ,(T)}_i,\varepsilon(U^{\circ,(T)}_i),M^{\circ,(T)}_i) : i=1,\dots,N^\circ_T\}
\] is a sample of the set
\[
\cV^{\circ,(\varepsilon,\eta)}_T:=\{ (x,t,\varepsilon(x,t),\eta(x,t)) : (x,t) \in \cV^\circ_T\}.
\] 		
Finally, recalling also \eqref{eq:decomp}, it then follows that the collection
\[
\mathfrak{D}_T:= \widehat{\partial \cV}^{(\varepsilon,\eta)}_T \cup \widehat{\cV}^{\circ,(\varepsilon,\eta)}_T
\] is a sample of the discrete model $\{ (U^{(T)}_i,\varepsilon(U^{(T)}_i),M^{(T)}_i) : i=1,\dots,N_T\}$ and thus the pair $(\mathfrak{C},\mathfrak{D}_T)$ constitutes the desired coupling between the continuous and discrete models. 

\begin{remark} This sample has been specifically designed so that it satisfies 
\[
\frac{1}{a_{pT^2}}M^{\circ,(T)}_i \overset{\P}{\longrightarrow} M_i
\] for each $i \in \N$, see Lemma \ref{lemma:c1.a} below for details. Notice that this convergence is for $\eta$-marks associated with space-time points in $\cV^\circ_T$, and \textit{not} for all points in $\widehat{\cV}_T$ (as, perhaps, one would expect). The reason we have chosen to proceed in this way is because for our purposes we shall require not only the $\eta$-marks to converge but also their associated space-time positions and, in light of our previous construction (see~\eqref{eq:convc2}), this will only hold for points in the interior. In the end, since we will show that points in the boundary can be disregarded, this will not make any difference.
\end{remark}

\subsection{Step 4: verifying the (C')-conditions}

We now verify that the coupling $(\mathfrak{C},\mathfrak{D}_T)$ constructed in the previous sections satisfies all the (C')-conditions appearing in the statement of Proposition \ref{prop:coupling2}. To simplify the notation, in the sequel we shall drop the ``hat-notation'' used before to refer to our coupled samples and instead write
\[
\mathfrak{C}=\{(U_i,M_i) : i \in \N\}
\] where $U_i$ denotes the space-time position of the $i$-th largest weight $M_i$ and, similarly,
\[
\mathfrak{D}_T=\{ (U_i^{(T)},\varepsilon(U_i^{(T)}),M_i^{(T)}) : i=1,\dots,N_T\}
\] where $U_i^{(T)}$ will denote the space-time position in the coupled discrete model of the block with the $i$-th largest height $M_i^{(T)}$ among all those in $\cV_T$. Finally, recall that we write $U_i^{\circ,(T)}$ to denote the space-time position of the block with the $i$-th largest height $M_i^{\circ,(T)}$ among all those in (our coupled version of) $\cV^\circ_T$.
We remind the reader that, as a general rule,  we use $T$ (as a super/subscript) to~denote objects associated with the discrete model, whereas objects denoted without $T$ will correspond to the continuous one.

Let us now verify all three (C')-conditions.  
To keep the exposition as simple as possible, in the next three subsections we will try to convey the main ideas leading to the verification of each (C')-condition, deferring the proofs of some of the more technical aspects to the Appendix~\ref{sec:techapp}.

\subsubsection{Condition (C1')}\label{sec:condc1}

Our first step is to show that it suffices to consider the case in which there are enough points in the interior of $\cV_T$. Indeed, for any $T>1$ and $p \in [T^{-\zeta},1]$, by the union bound we have that
\begin{align}
\P_p\left( \sum_{i=1}^{k \wedge N_T} |M_i - \widetilde{M}_i^{(T)}| > \delta\right) \leq \P_p&( N^\circ_T < k)\nonumber\\ 
&+ \P_p\left( \sum_{i=1}^{k} |M_i - \widetilde{M}_i^{(T)}| > \delta\,,\, N^\circ_T \geq k\right) \label{eq:c1.1}.
\end{align} Since $pT^2 \geq T^{2-\zeta} \rightarrow \infty$ as $T \rightarrow \infty$, by \eqref{eq:T2pts} we have that 
\begin{equation} \label{eq:boundnt}
\lim_{T \rightarrow \infty} \left[\sup_{p \geq T^{-\zeta}} \P_p (N^\circ_T < k)\right] = 0,
\end{equation} so that it will suffice to estimate the last probability in \eqref{eq:c1.1}. To this end, consider the event 
\begin{equation} \label{eq:defomega1}
\Omega_{k}:=\{N^\circ_T\geq k\}\cap \{ M_i^{(T)} = M_i^{\circ,(T)} \text{ for all }i=1,\dots,k\}
\end{equation} i.e. $\Omega_k$ is the event that the $k$ largest heights in $\cV_T$ all correspond to points in the interior $\cV^\circ_T$. Then, by the union bound again, 
the probability in \eqref{eq:c1.1} can be bounded from above by 
\begin{equation} \label{eq:boundc1}
\P_p(\Omega_k^c)
+ \P_p\left( \sum_{i=1}^{k} |M_i - \widetilde{M}_i^{\circ,(T)}| > \delta\,,\,N^\circ_T \geq k\right)
\end{equation} where, recalling \eqref{eq:defmint}, for $i=1,\dots,N^\circ_T$ we set 
\begin{equation}\label{def:mtildeint}
\widetilde{M}_i^{\circ,(T)}:= \frac{1}{a_{pT^2}}M_i^{\circ,(T)}=\frac{1}{a_{pT^2}}M^\circ_{i,(N^\circ_T)}.
\end{equation}
Now, on the one hand, since $\frac{a_{N^\circ_T}}{a_{pT^2}} \overset{\P}{\longrightarrow} 1$ uniformly over $p \in [T^{-\zeta},1]$ by~\eqref{eq:T2pts}, it follows from property~(M3) from the construction in Section \ref{sec:blockheights} that:

\begin{lemma} \label{lemma:c1.a} For $\widetilde{M}_i^{\circ,(T)}$ as defined in \eqref{def:mtildeint}, 
\begin{equation} \label{eq:c1b}
		\lim_{T \rightarrow \infty} \left[\sup_{p \geq T^{-\zeta}} \P_p\left( \sum_{i=1}^{k} |M_i - \widetilde{M}_i^{\circ,(T)}| > \delta\,,\,N^\circ_T \geq k\right)\right]=0.
\end{equation}
\end{lemma}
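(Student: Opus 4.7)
The plan is to factor $\widetilde{M}_i^{\circ,(T)}$ as
\[
\widetilde{M}_i^{\circ,(T)} \,=\, \frac{a_{N^\circ_T}}{a_{pT^2}} \cdot \frac{M^\circ_{i,(N^\circ_T)}}{a_{N^\circ_T}},
\]
and to show that each factor is close to its intended limit ($1$ for the first, $M_i$ for the second) in probability, uniformly over $p\geq T^{-\zeta}$. A single triangle inequality,
\[
\bigl|M_i - \widetilde{M}_i^{\circ,(T)}\bigr| \,\leq\, \bigl|M_i - a_{N^\circ_T}^{-1} M^\circ_{i,(N^\circ_T)}\bigr| \,+\, \Bigl|1 - \frac{a_{N^\circ_T}}{a_{pT^2}}\Bigr| \cdot \frac{M^\circ_{i,(N^\circ_T)}}{a_{N^\circ_T}},
\]
combined with the fact that $M^\circ_{i,(N^\circ_T)}/a_{N^\circ_T}$ is tight (it converges to $M_i<\infty$ a.s.), will then immediately give the desired conclusion by summing over $i=1,\dots,k$.

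For the second factor, I would use the independence built in at step (M0): the double array $(M^\circ_{i,(N)})_{N,i}$ is independent of all quantities used to construct $N^\circ_T$, in particular independent of $N^\circ_T$ itself. Setting $Y_N^{(i)}:= \bigl|a_N^{-1} M^\circ_{i,(N)} - M_i\bigr|$, property (M3) gives $Y_N^{(i)}\to 0$ almost surely as $N\to\infty$, and hence $\P(Y_N^{(i)}>\e)\to 0$ for every $\e>0$. By independence of $(Y_N^{(i)})_N$ from $N^\circ_T$, for any $\e'>0$ one can choose $N_0$ so that $\P(Y_N^{(i)}>\e)<\e'/2$ for all $N\geq N_0$, and then
\[
\P_p\bigl(Y_{N^\circ_T}^{(i)}>\e\bigr) \,\leq\, \P_p(N^\circ_T < N_0) \,+\, \e'/2.
\]
Since Lemma~\ref{lemma:control} gives $N^\circ_T/(pT^2)\overset{\P}{\to}1$ with exponential-in-$T^{1-\zeta}$ error uniformly over $p\geq T^{-\zeta}$ and $pT^2\geq T^{2-\zeta}\to\infty$, the first term on the right tends to $0$ uniformly in $p$, yielding $a_{N^\circ_T}^{-1}M^\circ_{i,(N^\circ_T)} \overset{\P}{\to} M_i$ uniformly in $p\geq T^{-\zeta}$.

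For the first factor, I would invoke the uniform convergence theorem for regularly varying functions (Potter bounds): since $a_t = t^{1/\alpha} L(t)$ with $L$ slowly varying, for every $\kappa,\theta>0$ there is $t_0$ such that
\[
\Bigl|\frac{a_{\lambda t}}{a_t} - \lambda^{1/\alpha}\Bigr| \,\leq\, \kappa\,(\lambda^{1/\alpha+\theta}\vee \lambda^{1/\alpha-\theta}) \qquad \text{whenever } t\geq t_0, \; \lambda>0.
\]
Applying this with $t=pT^2\geq T^{2-\zeta}$ and $\lambda = N^\circ_T/(pT^2)$, combined with the concentration estimate \eqref{eq:T2pts}, gives $a_{N^\circ_T}/a_{pT^2}\overset{\P}{\to}1$ uniformly over $p\geq T^{-\zeta}$.

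The main obstacle I foresee is precisely this uniformity in $p$: one must avoid any argument that implicitly fixes $p$ (and thus fixes the rate at which $pT^2\to\infty$). The two ingredients that carry the uniformity are the Potter bounds, which are inherently uniform in the location $t$ provided $t\to\infty$, and the uniform-in-$p$ concentration of $N^\circ_T/(pT^2)$ supplied by \eqref{eq:T2pts}. Once these are in hand, the conditioning argument above goes through verbatim, and the sum $\sum_{i=1}^k |M_i-\widetilde M_i^{\circ,(T)}|$ is the sum of $k$ terms each vanishing in probability uniformly in $p\geq T^{-\zeta}$, which gives \eqref{eq:c1b}.
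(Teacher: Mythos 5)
Your proof is correct and follows essentially the same route as the paper: factor $\widetilde M_i^{\circ,(T)}$ as $(a_{N^\circ_T}/a_{pT^2})\cdot (a_{N^\circ_T}^{-1}M^\circ_{i,(N^\circ_T)})$, control the second factor via (M0), (M3) and the uniform-in-$p$ divergence $N^\circ_T\to\infty$ from \eqref{eq:T2pts}, and control the first via the regular variation of $a_t$ together with the concentration of $N^\circ_T/(pT^2)$ around $1$, again from \eqref{eq:T2pts}. One small quibble: the ``Potter bound'' you write down is not quite a valid statement (letting $\kappa\to 0$ would force $a_{\lambda t}/a_t=\lambda^{1/\alpha}$ exactly, and Potter's inequalities bound the ratio $a_{\lambda t}/a_t$, not its deviation from $\lambda^{1/\alpha}$); what you actually need, and what does hold, is Karamata's uniform convergence theorem, which gives $a_{\lambda t}/a_t\to\lambda^{1/\alpha}$ uniformly for $\lambda$ in compact subsets of $(0,\infty)$ as $t\to\infty$ --- this suffices since $\lambda=N^\circ_T/(pT^2)$ lies in, say, $[1/2,3/2]$ with probability tending to $1$ uniformly in $p\geq T^{-\zeta}$.
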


On~the other hand, since the space-time position associated with $M_i^{(T)}$, the $i$-th largest height from the discrete model, is uniformly distributed among all points in $\cV_T$, by \eqref{eq:T2pts} it follows that the probability that $M_i^{(T)}$ is the height corresponding to a point in $\partial \cV_T$ (given $N^\partial_T$ and $N_T$) is \textit{roughly}
\[
\frac{N^\partial_T}{N_T} \approx \frac{2T}{pT^2}=\frac{2}{pT} \leq \frac{1}{T^{1-\zeta}} \longrightarrow 0
\] uniformly over $p \in [T^{-\zeta},1]$ for each $i=1,\dots,k$, so that by the union bound we obtain:

\begin{lemma}\label{lemma:c1.b} For $\Omega_k$ as defined in \eqref{eq:defomega1}, 
	\begin{equation}
		\label{eq:boundomega}
		\lim_{T \rightarrow \infty} \left[\inf_{p \geq T^{-\zeta}}  \P_p(\Omega_k)\right] = 1.
	\end{equation}
\end{lemma}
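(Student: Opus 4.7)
The plan is to exploit the fact that, conditionally on the geometric configuration of $\cV_T = \partial \cV_T \cup \cV^\circ_T$, the assignment of the $N_T$ block heights to their positions is uniformly random. Combined with the concentration estimates of Lemma~\ref{lemma:control}, this reduces the proof of \eqref{eq:boundomega} to an elementary combinatorial inequality.

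First, I would observe that, conditionally on $\partial \cV_T$ and $\cV^\circ_T$ (and hence on $N_T$, $N^\partial_T$, $N^\circ_T$), the $N_T$ block heights are jointly i.i.d.\ with distribution $F$. This is immediate for the boundary, where the heights were sampled in Section~\ref{sec:boundary} as i.i.d.\ $F$-marks. On the interior, although the heights $M^{\circ,(T)}_i = M^\circ_{i,(N^\circ_T)}$ are introduced via the decreasing order statistics and attached to the $i$-th interior point ordered by its $z$-mark, the $z$-marks on interior points are i.i.d.\ uniform (by the Poisson structure of $\Theta$ together with the restriction $\{z<T^2\}$) and independent of spatial positions, so they induce a uniformly random permutation on the interior. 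Applying this permutation to the decreasing order statistics recovers an i.i.d.\ $F$-sample on the interior, independent of the boundary heights. Consequently, conditionally on the geometric configuration, the $k$ positions carrying the top-$k$ heights form a uniformly random $k$-subset of $\cV_T$.

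On the event $\{N^\circ_T \geq k\}$ this yields
\[
\P_p\bigl(\Omega_k^c \,\big|\, \partial \cV_T,\, \cV^\circ_T\bigr)\,\mathbf{1}_{N^\circ_T \geq k} \;=\; \left(1 - \frac{\binom{N^\circ_T}{k}}{\binom{N_T}{k}}\right)\mathbf{1}_{N^\circ_T \geq k} \;\leq\; \frac{k\,N^\partial_T}{N_T - k + 1},
\]
by the elementary bound $1 - \prod_{i=0}^{k-1}\bigl(1 - m/(n-i)\bigr) \leq km/(n-k+1)$ applied with $n = N_T$, $m = N^\partial_T$. Fixing $\delta_0 \in (0,1/2)$ and invoking \eqref{eq:T2pts}, there exist $T_0, C_0 > 0$ (depending only on $\delta_0$ and $\zeta$) such that, for every $T \geq T_0$ and every $p \in [T^{-\zeta},1]$, the event $A_T := \{N^\partial_T \leq 3T\} \cap \{N_T \geq pT^2/2\}$ has probability at least $1 - 2\mathrm{e}^{-C_0 T^{1-\zeta}}$. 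For $T$ large enough that also $pT^2/2 \geq 2k$ uniformly in $p \geq T^{-\zeta}$, on $A_T$ the previous bound gives
\[
\frac{k\,N^\partial_T}{N_T - k + 1} \;\leq\; \frac{12k}{pT} \;\leq\; \frac{12k}{T^{1-\zeta}}.
\]

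Putting everything together and using \eqref{eq:boundnt} to handle $\P_p(N^\circ_T < k)$,
\[
\sup_{p \geq T^{-\zeta}}\P_p(\Omega_k^c) \;\leq\; \sup_{p \geq T^{-\zeta}}\P_p(N^\circ_T < k) \;+\; 2\mathrm{e}^{-C_0 T^{1-\zeta}} \;+\; \frac{12k}{T^{1-\zeta}} \longrightarrow 0
\]
as $T\to\infty$, proving \eqref{eq:boundomega}. The step that requires most care is the uniformly-random-assignment property: one must inspect the coupling of Sections~\ref{sec:boundary}--\ref{sec:blockheights} to verify that attaching the decreasing order statistics to interior points via the $z$-mark ordering produces the same joint law as assigning i.i.d.\ $F$-samples directly. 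Once this is confirmed, the rest of the argument is a direct combinatorial bound combined with the concentration estimates of Lemma~\ref{lemma:control}.
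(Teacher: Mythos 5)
Your proof is correct and follows essentially the same route as the paper's: both proofs reduce $\P_p(\Omega_k^c)$ to a hypergeometric probability $1-\binom{N^\circ_T}{k}/\binom{N_T}{k}$ via the exchangeability of the heights given the geometric configuration, and then control it using the concentration estimates \eqref{eq:T2pts} together with \eqref{eq:boundnt}. The only differences are cosmetic: you finish with the explicit bound $1-\prod_{i=0}^{k-1}(1-N^\partial_T/(N_T-i))\le kN^\partial_T/(N_T-k+1)$ and quantitative tail estimates, whereas the paper appeals to a uniform bounded-convergence argument, and you spell out the exchangeability/uniform-assignment claim (which the paper takes as immediate from the coupling construction).
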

Details of the proofs of both lemmas can be found in Appendix~\ref{sec:techappc1}. Finally, in~light of the bound in \eqref{eq:boundc1},  \eqref{eq:c1b} and \eqref{eq:boundomega} combined immediately yield (C1').


\subsubsection{Condition (C2')}\label{sec:condc2}

Given $T>1$ and $p \in [T^{-\zeta},1]$, by the union bound and the construction of $\mathfrak{C}$ in Section \ref{sec:couplingstep2}, we can bound the probability
\[
\P_p \left( \sum_{i=1}^{k \wedge N_T} \|U_i - r_{p}(\tfrac{1}{T}U_i^{(T)})\| > \delta\right)
\] from above by 
\[
\P_p(\Omega_k^c)+\P_p \left( \sum_{i=1}^{k} \|r_p(U^{(p)}_i -\tfrac{1}{T}U_i^{\circ,(T)})\| > \delta\,,\,N^\circ_T \geq k\right).
\] 
In view of~\eqref{eq:boundomega},
to establish (C2'), it will suffice to show that
\begin{equation}\label{eq:suff1}
\lim_{T \rightarrow \infty} \left[ \sup_{p \geq T^{-\zeta}} \P_p \left( \sum_{i=1}^{k} \|r_p(U^{(p)}_i -\tfrac{1}{T}U_i^{\circ,(T)})\| > \delta\,,\,N^\circ_T \geq k\right)\right]=0,
\end{equation} 
i.e. it is enough to assume that the $k$ largest heights from the discrete model correspond to points in the interior $\cV^\circ_T$. 

In order to show \eqref{eq:suff1}, we first observe that the term $\|r_p(U^{(p)}_i -\tfrac{1}{T}U_i^{\circ,(T)})\|$ is small whenever $U_i^{\circ,(T)}=\varphi_T(U^{(p)}_i)$ with $\varphi_T$ the scaling-projection from \eqref{eq:defvarphi0}, i.e. whenever the space-time position of the $i$-th largest (interior) height in the discrete model coincides with the scaled position of the $i$-th largest weight from the continuous one. Indeed, by definition of $\varphi_T$, 
\begin{equation} \label{eq:boundrp}
	\|r_p(U^{(p)}_i -\tfrac{1}{T}U_i^{\circ,(T)})\| = \frac{1}{p}\left|x(U^{(p)}_i) - \frac{1}{T}\lfloor Tx(U^{(p)}_i) + 1/2\rfloor\right| \leq \frac{1}{2pT} \leq T^{\zeta -1}
\end{equation} which shows it is small since $T^{\zeta-1} \rightarrow 0$ as $T \rightarrow \infty$. Nevertheless, it may not always be the case that $U_i^{\circ,(T)}=\varphi_T(U^{(p)}_i)$ for all $i=1,\dots,k$, there are issues for points which are close to the boundaries: points belonging to the $k$ largest weights in the continuous model may fall outside $\Delta_T$ via the mapping $\varphi_T$ and thus not correspond to any $U_i^{\circ,(T)}$, while points outside the continuous triangle $\Delta$ may fall inside the discrete triangle $\Delta_T$ via $\varphi_T$ and thus become part of the $k$ largest heights in the discrete model. The proof of \eqref{eq:suff1} amounts to showing that these undesirable boundary effects occur with vanishing probability as $T \to \infty$. To carry out the proof, we first introduce the favorable event $\Omega'_k$ in which none of these boundary effects occur, i.e.
\begin{equation} \label{eq:defomega2}
\Omega'_k:=\{ N^\circ_T \geq k\} \cap \{ U_i^{\circ,(T)}=\varphi_T(U^{(p)}_i) \text{ for all }i=1,\dots,k\}.
\end{equation} In light of \eqref{eq:boundrp},
since $T^{\zeta-1} \rightarrow 0$ as $T \rightarrow \infty$ we conclude that for all $T$ large enough (depending only on $\zeta$ and $\delta$) the event in \eqref{eq:suff1} cannot occur on $\Omega'_k$. 
Now, let $\mathcal{P}_{T,\delta'}$ be the complement of the event in \eqref{eq:triangle} with $\delta'>0$ in place of $\delta$.
By the estimate in \eqref{eq:triangle}, the statement in \eqref{eq:suff1} will follow once we show that the event $\Omega'_k$ occurs with high probability whenever the discrete triangle $\frac{1}{T}\Delta_T$ is close enough to $\Delta^{(p)}$ (the $p$-slope triangle from \eqref{eq:deftrianglep}), i.e. once we show that
\begin{equation}
	\label{eq:suff2}
\lim_{\delta' \rightarrow 0^+} \limsup_{T \rightarrow \infty} \left[\sup_{p \geq T^{-\zeta}} \P_p((\Omega'_k)^c \cap \mathcal{P}_{T,\delta'} )\right]=0.
\end{equation} 

In order to check \eqref{eq:suff2}, we first note that, since $U_i^{\circ,(T)} \in \Delta^\circ_T$ and $U^{(p)}_i~\in~\Delta^{(p)}$ for each $i=1,\dots,N^\circ_T$ by construction of our coupling, for any such $i$ we have the following implication:
\[
U_i^{\circ,(T)}=\varphi_T(U^{(p)}_i) \Longrightarrow \varphi_T(U^{(p)}_i) \in \Delta^\circ_T \text{ and }U_i^{\circ,(T)} \in \varphi_T(\Delta^{(p)}). 
\] In other words, if the space-time positions  $U_i^{\circ,(T)}$ and $\varphi_T(U^{(p)}_i)$ agree then they must lie in the intersection between $\Delta^\circ_T$ and (the $\varphi_T$-scaling of) $\Delta^{(p)}$. 
In particular, we have the inclusion
\[
\Omega'_k \subseteq \{ N^\circ_T \geq k\} \cap \left[\bigcap_{i=1}^k \left\{\varphi_T(U^{(p)}_i), U^{\circ,(T)}_i \in [ \Delta^\circ_T \cap \varphi_T(\Delta^{(p)})]\right\}\right].
\] 
A moment's thought reveals that this inclusion above is in fact an equality, i.e. $U_i^{\circ,(T)}=\varphi_T(U^{(p)}_i) \text{ for all }i=1,\dots,k$ if and only if both the $\varphi_T(U^{(p)}_i)$'s and the $U^{\circ,(T)}_i$'s all fall in the intersection of $\Delta^\circ_T$ and (the $\varphi_T$-scaling of)~$\Delta^{(p)}$. 
Therefore, by \eqref{eq:boundnt} and the union bound, in order to obtain \eqref{eq:suff2} it will suffice to prove the following lemma:

\begin{lemma} \label{lemma:c2.1} If $\mathcal{P}_{T,\delta'}$ denotes the complement of the event appearing in \eqref{eq:triangle} (with $\delta'>0$ in~place of~$\delta$) then, for each $i=1,\dots,k$, 
\begin{equation} \label{eq:suff3}
\lim_{\delta' \rightarrow 0^+}\limsup_{T \rightarrow \infty} \left[\sup_{p\geq T^{-\zeta}} \P_p\left(\{ \varphi_T(U^{(p)}_i) \notin \Delta^\circ_T\} \cap \mathcal{P}_{T,\delta'} \right)\right]=0
\end{equation} and
\begin{equation} \label{eq:suff4}
\lim_{\delta' \rightarrow 0^+}\limsup_{T \rightarrow \infty} \left[\sup_{p \geq T^{-\zeta}} \P_p\left(\{N^\circ_T\geq k\} \cap \{U^{\circ,(T)}_i \notin \varphi_T(\Delta^{(p)})\} \cap \mathcal{P}_{T,\delta'}\right)\right]=0.
\end{equation}
\end{lemma}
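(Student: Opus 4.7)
The strategy for both parts is to compare the scaled discrete triangle with $\Delta^{(p)}$ on the favorable event $\mathcal{P}_{T,\delta'}$. By definition of $\mathcal{P}_{T,\delta'}$, the boundary paths $\gamma^{\pm,T}$ stay within $p\delta'T + O(1)$ of the ideal lines $s \mapsto \pm pT(1-s/T)$ that bound $\varphi_T(\Delta^{(p)})$ (the $O(1)$ coming from the càdlàg regularization and jump sizes of $\gamma^{\pm,T}$). Consequently the symmetric difference between $\Delta^\circ_T$ and $\varphi_T(\Delta^{(p)})$ is contained in a thin horizontal strip of total $n_\Z \otimes \lambda_{[0,T]}$-measure $O(p\delta'T^2 + T)$, while Lemma~\ref{lemma:control} ensures that $\Delta^\circ_T$ itself has measure of order $pT^2$. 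Both \eqref{eq:suff3} and \eqref{eq:suff4} should then follow by expressing each of the two probabilities as a (conditional) measure ratio involving these boundary strips.

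For \eqref{eq:suff3}, the plan is to first observe that $U_i^{(p)}$ is uniform on $\Delta^{(p)}$ and, by the construction in Section~\ref{sec:couplingstep2}, independent of $\widehat{\partial\cV}_T$ and hence of $\mathcal{P}_{T,\delta'}$. Unpacking $\varphi_T(x,t)=(\lfloor Tx+1/2\rfloor,Tt)$, I would verify that on $\mathcal{P}_{T,\delta'}$ the inclusion $\varphi_T(x,t) \in \Delta^\circ_T$ holds whenever $|x| < p(1-t) - p\delta' - C/T$ for some absolute constant $C$. A direct integration over $\Delta^{(p)}$ then shows that the uniform measure of the ``boundary strip'' $\{(x,t)\in\Delta^{(p)} : |x|\geq p(1-t)-\eta\}$ is of order $\eta/p$; applied with $\eta = p\delta'+C/T$ this gives an upper bound of order $\delta' + 1/(pT) \leq \delta' + T^{\zeta-1}$, uniform in $p\geq T^{-\zeta}$, which vanishes as $T\to\infty$ and then $\delta'\to 0$.

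For \eqref{eq:suff4}, the key observation to establish first is that, conditional on $\widehat{\partial\cV}_T$ (which determines $\Delta^\circ_T$) and on the event $\{N^\circ_T\geq i\}$, the random point $U_i^{\circ,(T)}$ is uniformly distributed on $\Delta^\circ_T$ with respect to $n_\Z\otimes\lambda_{[0,T]}$. This follows from the standard fact that a Poisson process conditioned on its total number of points yields i.i.d.\ uniform points, combined with the independence of the $z$-marks from the spatial positions—so selecting the ``$i$-th smallest $z$-mark'' amounts to picking a uniformly random index. Consequently the probability in \eqref{eq:suff4} is bounded above by the ratio $|\Delta^\circ_T\setminus \varphi_T(\Delta^{(p)})|/|\Delta^\circ_T|$. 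On $\mathcal{P}_{T,\delta'}$ the numerator is contained in a horizontal strip of width $p\delta'T+O(1)$ on each side and has measure $O(p\delta'T^2+T)$, while the denominator is at least $pT^2/2$ for $T$ large and $\delta'$ small (again by Lemma~\ref{lemma:control}). The resulting bound is once more $O(\delta' + 1/(pT))$, uniform over $p\geq T^{-\zeta}$.

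The principal technical difficulty throughout is maintaining uniformity in $p\in[T^{-\zeta},1]$. As $p$ decreases, both the near-boundary strip and the total triangle shrink in proportion, so their ratio stabilizes; but the $O(1)$ discretization error from $\varphi_T$ becomes a relative error of order $1/(pT)$, which remains controllable only if $p$ does not decay faster than $T^{-1}$. The hypothesis $p\geq T^{-\zeta}$ with $\zeta<1$ is precisely what forces $1/(pT)\leq T^{\zeta-1}\to 0$ and thus validates the iterated limit in the statement.
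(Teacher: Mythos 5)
Your proposal is correct and follows essentially the same strategy as the paper: for both \eqref{eq:suff3} and \eqref{eq:suff4} you exploit the uniformity of the relevant point ($U_i^{(p)}$ on $\Delta^{(p)}$, respectively $U_i^{\circ,(T)}$ on $\Delta^\circ_T$), use the event $\mathcal{P}_{T,\delta'}$ together with the estimate \eqref{eq:triangle}--\eqref{eq:boundarea2} from Lemma~\ref{lemma:control} to show the symmetric difference between $\Delta^\circ_T$ and $\varphi_T(\Delta^{(p)})$ lives in a thin boundary strip, and then bound the probability by a ratio of measures of order $\delta' + T^{\zeta-1}$. The paper parameterizes the strip as the complement of the shrunken triangle $\Delta^{(p)}(\rho)$ (choosing $\delta' < \rho/2$ so that $\varphi_T(\Delta^{(p)}(\rho)) \subseteq \Delta^\circ_T$ on $\mathcal{P}_{T,\delta'}$), whereas you describe it as $\{|x| \geq p(1-t)-\eta\}$ and compute the measure directly, but these are only cosmetic variants of the same geometric estimate and yield the same conclusion.
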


The heuristics behind the proof of Lemma \ref{lemma:c2.1} are quite simple. Indeed, since $\varphi_T(\frac{x}{T},\frac{t}{T}) \approx (x,t)$ (on the scale of $T$) for all $T$ sufficiently large, whenever $\frac{1}{T}\Delta_T \approx \Delta^{(p)}$ we see that $\Delta_T \approx \varphi_T(\Delta^{(p)})$ as~well, so that the intersection $\Delta^\circ_T \cap \varphi_T(\Delta^{(p)})$ will cover most of both $\Delta^\circ_T $ and $\varphi_T(\Delta^{(p)})$, therefore making the events $\{  \varphi_T(U^{(p)}_i) \notin \Delta^\circ_T\}$ and $\{U^{\circ,(T)}_i \notin \varphi_T(\Delta^{(p)})\}$ both extremely unlikely. The full proof of Lemma~\ref{lemma:c2.1} is given in Appendix~\ref{sec:techappc2}. 
In conclusion, this result implies \eqref{eq:suff2} and, as argued above, from this (C2') now follows.

\subsubsection{Condition (C3')}\label{sec:condc3}

We first observe that, by our work in the preceding two subsections, to show condition (C3') we may assume that the $k$ largest block heights in the discrete model lie in $\cV^\circ_T$ and that the space-time points to which they correspond are coupled with those of the $k$ largest weights in the continuous model. Indeed, if we set $\Omega''_k:= \Omega_k \cap \Omega'_k$, where $\Omega_k$ and $\Omega'_k$ are as in \eqref{eq:defomega1} and \eqref{eq:defomega2} respectively, then, by \eqref{eq:boundomega}, \eqref{eq:triangle} and \eqref{eq:suff2}, we see that in order to establish condition (C3') it will be enough to show that
\begin{equation} \label{eq:c3suff1}
\lim_{T \rightarrow \infty} \left[ \sup_{p \geq T^{-\zeta}} \P_p(\{ \cC^{(k)} \nsubseteq \cC^{(k)}_T \} \cap \Omega''_k) \right]= 0
\end{equation} together with
\begin{equation} \label{eq:c3suff2}
	\lim_{T \rightarrow \infty} \left[ \sup_{p \geq T^{-\zeta}} \P_p( \{ \cC^{(k)}_T\nsubseteq \cC^{(k)}\} \cap \Omega''_k ) \right]= 0. 
\end{equation} 
We prove only~\eqref{eq:c3suff1}, as the argument for \eqref{eq:c3suff2} is very similar.
Before jumping to the proof,
let us explain the main challenges involved.
Suppose that two points $U_i$ and $U_j$
in the continuous triangle $\Delta$ are such that the 
segment $\overline{U_i U_j}$
has a slope with absolute value close to 1 (where by slope we mean $\frac{x(U_i)-x(U_j)}{t(U_i)-t(U_j)}$, i.e. the slope induced by our unconventional definition of graph and not the usual geometric slope in Euclidean coordinates which is its inverse). If $\overline{U_i U_j}$ has slope close to $1$ (the case when the slope is close to $-1$ is analogous) then two things may happen:
\begin{itemize}
\item If the slope is slightly less than 1, then there exists a $1$-Lipschitz path in the continuous model joining $U_i$ and $U_j$ but there may fail to be one in the discrete model which joins $U_i^{(T)}$ and $U_j^{(T)}$.
Note that, in particular, the scaling-projection $\varphi_T$
may be such that the segment 
$\overline{U_i^{(T)}U_j^{(T)}}$
has a slope larger than 1.
\item If the slope is slightly larger than 1,
then there is no $1$-Lipschitz path in the continuous model
which joins $U_i$ and $U_j$,
but there may still be one in the discrete model joining $U_i^{(T)}$ and $U_j^{(T)}$.
Note that, in particular, the scaling-projection
$\varphi_T$ may be such that the segment
$\overline{U^{(T)}_iU^{(T)}_j}$
has a slope smaller than 1.
\end{itemize}
Thus,
the first challenge is to show that the above situations
happen with low probability simultaneously 
for all pairs of points in $U_1,\dots,U_k$.
Intuitively, 
one would like to do this by arguing that 
the situations described above are analogous 
to what happens with the boundaries of the 
continuous/discrete triangles $\Delta$ and $\Delta_T$.
This is where the second challenge arises:
while the boundary of the discrete triangle $\Delta_T$ admits a simple representation in terms
of a Poisson process and this allows us to compare it with the boundary of $\Delta$ via \eqref{eq:triangle},
there is no such representation in the current setting
due to the fact that we are conditioning on having at least $k$ points in the discrete model and that, by definition, any compatible path which joins any two of these $k$ points is forced to remain inside the triangle $\Delta_T$.

To carry out the proof of \eqref{eq:c3suff1}, 
let us introduce some further notation.
Given $(n,t) \in \Z \times \R_+$, define
the set of compatible paths (in the discrete model)
ending at $(n,t)$ as
\[
\cC_T(n,t):= \bigg\lbrace
\gamma:[0,t]\to \Z\,\bigg|\,\begin{matrix} \gamma(t)=n,\, \gamma\text{ c\`adl\`ag},\, |\gamma(u^-)-\gamma(u)|\leq 1 \text{ for all }u,\\ 
	\gamma(u^-)\neq \gamma(u) \text{ for some }u \Longrightarrow (\gamma(u),u) \in
\cV^{[\text{st}]}_T \end{matrix}
\bigg\rbrace,
\] 
where $\cV^{[\text{st}]}_T:=\{ U_i^{(T)}  :\varepsilon(U_i^{(T)})=1\}$. That is, $\cC_T(n,t)$ is analogous to the set $\cC_{[0,t];n}$ from \eqref{def:compatible}, but where now the space-time points used to decide when a compatible~path can jump are those belonging to the coupled discrete model at time $T$. 
In particular,  $\cC_T(0,T)$ coincides with the set of paths $\cC_T$ in~\eqref{eq:ab}. 
Then, by analogy with the case $(n,t)=(0,T)$ considered in Section~\ref{sec:boundint}, for arbitrary $(n,t) \in \Z \times \R_+$ we can define $\gamma^{+,T}_{(n,t)}$ and $\gamma^{-,T}_{(n,t)}$ respectively as the rightmost~and leftmost paths in $\cC_T(n,t)$ and consider their time-reversed versions $s^{+,T}_{(n,t)}$ and $s^{-,T}_{(n,t)}$, as well as the discrete triangle $\Delta_T(n,t)$ generated by them, see Figure~\ref{fig:subtriangle} for an illustration. 
We will call $(n,t)$ the \textit{apex} of~$\Delta_T(n,t)$. Notice that, with this notation, $\Delta_T(0,T)=\Delta_T$. 
Finally, for $(x,t) \in \R \times \R_+$, define the continuous triangle $\Delta(x,t)$ with apex $(x,t)$ as
\[
\Delta(x,t):= \{ (x',t') : 0 \leq t' \leq t\,,\, |x'-x| \leq t-t'\}
\] and, for each $\rho \in [0,t]$, 
let $\Delta((x,t);\rho)$ be the $\rho$-\textit{interior} of $\Delta(x,t)$ defined as 
$\Delta((x,t);\rho):= \Delta(x,t-\rho)$ (see Figure \ref{fig:subtriangle} for an illustration). 
Observe that, with this notation, we have that $\Delta(0,1)=\Delta$ and, moreover, that $\Delta(x,t;\rho)$ is the subset of $\Delta(x,t)$ consisting of the points
at an $\ell^1$-distance greater or equal than $\rho$ from the boundary (and hence the name $\rho$-interior). For convenience, let us also set $\Delta(x,t):=\emptyset$ whenever $t < 0$.
 \begin{figure}
	\includegraphics[width=\textwidth]{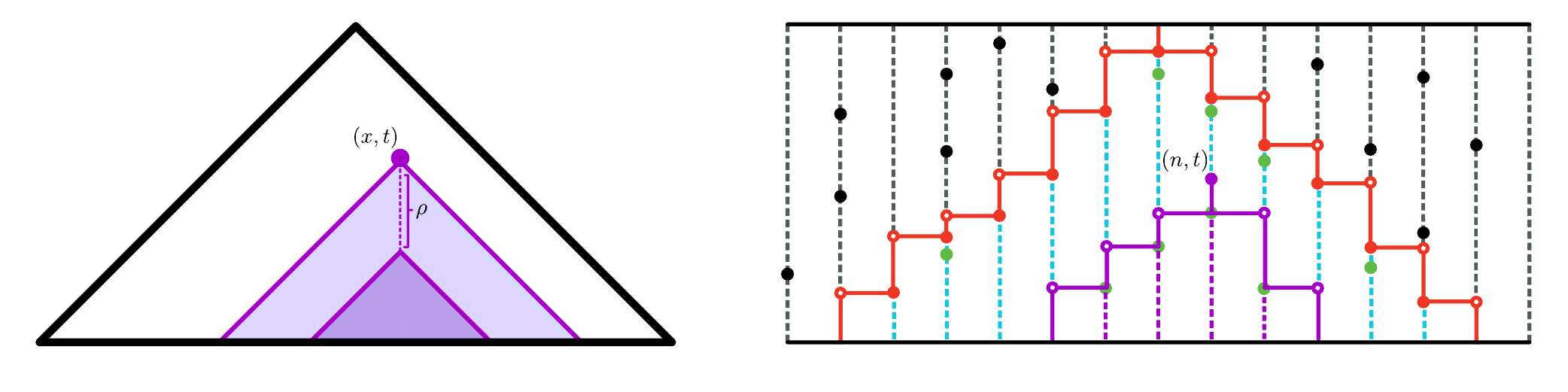}
	\caption{\textit{The triangles $\Delta(x,t)$ and $\Delta_T(n,t)$}. The picture on the left shows three triangles: the white outermost one represents $\Delta$, the middle one colored in light purple is $\Delta(x,t)$, while the smaller dark purple one is its $\rho$-interior $\Delta(x,t;\rho)$. On the right, we can see an illustration of $\Delta_T$, the region enclosed by the red paths and the $x$-axis, with the triangle $\Delta_T(n,t)$ colored in purple inside (for simplicity, we assume for the picture that $p=1$ so that all points are sticky). The paths $\gamma^{\pm,T}_{(n,t)}$ forming the boundary of $\Delta_T(n,t)$ are depicted as solid purple lines (while the interior is illustrated by dashed purple lines).}
	\label{fig:subtriangle}
\end{figure}

Before we embark on the proof of \eqref{eq:c3suff1}, let us make a few preliminary remarks. Recall the definition of the sets $\cC^{(k)}$ and $\cC^{(k)}_T$ from \eqref{eq:defck}-\eqref{eq:defckt} and observe that:
\begin{enumerate}
	\item [$\bullet$] For any $(x,t) \in \Delta$, there always exists  $\gamma \in \mathcal{L}$ such $(x,t) \in \text{graph}(\gamma)$. In particular, $\{i\} \in \cC^{(k)}$ for any $i=1,\dots,k$.
	\item [$\bullet$] For any $(n,t) \in \Delta_T$, there always exists $\gamma \in \cC_T$ such $(n,t) \in \text{graph}(\gamma)$. In particular, $\{i\} \in \cC^{(k)}_T$ for any $i=1,\dots,k$.
	\item [$\bullet$] Given $(x,t),(x',t') \in \Delta$ with $t' \leq t$, there exists $\gamma \in \mathcal{L}$ such that $\{(x,t),(x',t')\} \subseteq \text{graph}(\gamma)$ if and only if $(x',t') \in \Delta(x,t)$.
	\item [$\bullet$] Given $(n,t),(n',t') \in \Delta$ with $t' \leq t$, there exists $\gamma \in \cC_T$ such that $\{(n,t),(n',t')\} \subseteq \text{graph}(\gamma)$ if and only if $(n',t') \in \Delta_T(n,t)$.
\end{enumerate}

In light of these observations, it follows that on the event $\{ \cC^{(k)} \nsubseteq \cC^{(k)}_T \} \cap \Omega''_k$ there must exist $i\neq j \in \{1,\dots,k\}$ such that
\begin{equation}\label{eq:c3inc}
\begin{array}{ccc}
U_j \in \Delta(U_i)&\text{ and }&U^{\circ,(T)}_j \notin \Delta_T(U^{\circ,(T)}_i),
\end{array} 
\end{equation} so that, to obtain \eqref{eq:c3suff1}, it will suffice to show that, for each $i\neq j \in \{1,\dots,k\}$,
\begin{equation} \label{eq:c3inc2}
\lim_{T \rightarrow \infty} \left[ \sup_{p \geq T^{-\zeta}} \P_p ( \{U_j \in \Delta(U_i)\,,\,U^{\circ,(T)}_j \notin \Delta_T (U^{\circ,(T)}_i)\} \cap \Omega''_k) \right]= 0.
\end{equation} 
Before going into the details, we explain briefly the strategy for the proof of \eqref{eq:c3inc2}. To do this, we need to compare discrete triangles
	$\Delta_T(n,t)$ with their continuous counterparts taking into account the space-time scaling
	involved. To that end, we introduce the mapping $r_{p,T}(n,t):=r_{p}(\frac{1}{T}(n,t))=(\frac{1}{pT},\frac{t}{T})$. The strategy to obtain \eqref{eq:c3inc2} can now be summarized as follows. By analogy with the case $\Delta_T=\Delta_T(0,T)$ treated in Lemma \ref{lemma:control}, if $T$~is~large then for any $(n,t) \in \Delta_T$ one expects the (scaled) discrete triangle $r_{p,T}(\Delta_T(n,t))$ to resemble the continuous triangle $\Delta(r_{p,T}(n,t))$. 
Furthermore, on $\Omega''_k$ we have $r_{p,T}(U^{\circ,(T)}_i) \rightarrow U_i$ as~$T \rightarrow \infty$ by \eqref{eq:boundrp}, 
so that on $\Omega''_k$ 
we should have $r_{p,T}(\Delta_T(U_i^{\circ,(T)})) \approx \Delta(U_i)$ for all $T$ large enough. In particular, for any fixed $\rho \in (0,1)$, the discrete triangle $r_{p,T}(\Delta_T(U_i^{\circ,(T)}))$ should contain the $\rho$-interior $\Delta(U_i;\rho)$ if $T$ is sufficiently large, see Figure \ref{fig:inclusionF}.
The former is an important event, which for future reference we shall denote
by $\mathcal{I}^{(T)}_{i,\rho}$, i.e.
\[
\mathcal{I}^{(T)}_{i,\rho}:=\{ \Delta(U_i; \rho) \subseteq r_{p,T}(\Delta_T(U_i^{\circ,(T)}))\}.
\]
With this in mind we see that,
in order to obtain \eqref{eq:c3inc2}, by the union bound it will be enough to show that: 
\begin{enumerate}
	\item [i.] If $\rho$ is taken sufficiently small, on the event $\Omega''_k\cap\mathcal{I}^{(T)}_{i,\rho}$ 
the condition in \eqref{eq:c3inc} essentially cannot occur.
	\item [ii.] The probability of the event $(\mathcal{I}^{(T)}_{i,\rho})^c$ vanishes as $T \rightarrow \infty$.
\end{enumerate}

Step (i) in this strategy is contained in the following lemma:

\begin{lemma}\label{lemma:c3.1} For each $\zeta \in [0,1)$ and $i \neq j \in \{1,\dots,k\}$,
	\begin{equation} \label{eq:c3.1}
	\lim_{\rho \searrow 0} \limsup_{T \rightarrow \infty} \left[ \sup_{p \geq T^{-\zeta}} \P_p ( \{U_j \in \Delta(U_i)\,,\,U^{\circ,(T)}_j \notin \Delta_T (U^{\circ,(T)}_i)\} \cap \Omega''_k \cap \mathcal{I}^{(T)}_{i,\rho}) \right]=0,
	\end{equation} where $\mathcal{I}^{(T)}_{i,\rho}:=\{ \Delta(U_i; \rho) \subseteq r_{p,T}(\Delta_T(U_i^{\circ,(T)}))\}$.
\end{lemma}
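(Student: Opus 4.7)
\textbf{Proof plan for Lemma \ref{lemma:c3.1}.}
The strategy is to combine the inclusion $\Delta(U_i;\rho) \subseteq r_{p,T}(\Delta_T(U_i^{\circ,(T)}))$ provided by $\mathcal{I}^{(T)}_{i,\rho}$ with the fact that, on $\Omega_k''$, the point $r_{p,T}(U_j^{\circ,(T)})$ sits arbitrarily close to its continuous counterpart $U_j$. More precisely, since $\Omega_k'' \subseteq \Omega_k'$, the identity $U_j = r_p(U_j^{(p)})$ together with the computation in \eqref{eq:boundrp} (applied to the index $j$) yields that on this event
\[
\bigl\|U_j - r_{p,T}(U_j^{\circ,(T)})\bigr\| \,=\, \bigl\|r_p\bigl(U_j^{(p)} - \tfrac{1}{T}U_j^{\circ,(T)}\bigr)\bigr\| \,\leq\, \frac{1}{2pT} \,\leq\, \frac{T^{\zeta - 1}}{2}
\]
for every $p \geq T^{-\zeta}$, a quantity that vanishes as $T \to \infty$ because $\zeta < 1$.

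Using this, the event in \eqref{eq:c3.1} is split into two pieces. Observe that, on $\mathcal{I}^{(T)}_{i,\rho}$, if $r_{p,T}(U_j^{\circ,(T)}) \in \Delta(U_i;\rho)$ then $U_j^{\circ,(T)} \in \Delta_T(U_i^{\circ,(T)})$, contradicting the assumption $U_j^{\circ,(T)} \notin \Delta_T(U_i^{\circ,(T)})$. Hence the target event is contained in
\[
\{U_j \in \Delta(U_i) \setminus \Delta(U_i;\rho)\} \,\cup\, \{U_j \in \Delta(U_i;\rho),\ r_{p,T}(U_j^{\circ,(T)}) \notin \Delta(U_i;\rho),\ \Omega_k''\}.
\]
For the first piece, observe that under our coupling $U_i$ and $U_j$ are independent uniform random variables on $\Delta$ (the $U_\ell^{(p)}$ being i.i.d.\ uniform on $\Delta^{(p)}$ and $r_p$ a bijection onto $\Delta$). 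Since $\Delta(U_i;\rho) = \Delta(x(U_i), t(U_i) - \rho)$ is obtained from $\Delta(U_i)$ by shrinking its apex by $\rho$, the difference $\Delta(U_i) \setminus \Delta(U_i;\rho)$ has Lebesgue area at most $2\rho$ (with the convention $\Delta(U_i;\rho) = \emptyset$ whenever $t(U_i) \leq \rho$), so conditioning on $U_i$ gives
\[
\P_p\bigl(U_j \in \Delta(U_i) \setminus \Delta(U_i;\rho)\bigr) \,\leq\, 2\rho,
\]
uniformly in $T$ and $p$.

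For the second piece, the perturbation bound forces $U_j$ to lie within Euclidean distance $T^{\zeta-1}/2$ of the boundary $\partial \Delta(U_i;\rho)$, which is a union of three line segments of total length at most $4$. Conditioning on $U_i$ and using once more the independence and uniform distribution of $U_j$ on $\Delta$, a simple tubular-neighbourhood estimate gives
\[
\P_p\bigl(U_j \in \Delta(U_i;\rho),\ r_{p,T}(U_j^{\circ,(T)}) \notin \Delta(U_i;\rho),\ \Omega_k''\bigr) \,\leq\, C\, T^{\zeta - 1},
\]
uniformly over $p \geq T^{-\zeta}$ for some absolute constant $C$. Adding the two bounds, passing to $\limsup_{T \to \infty}$ (which kills the second contribution), and then sending $\rho \searrow 0$ yields \eqref{eq:c3.1}. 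The most delicate step is the first display above: its force comes from the fact that, thanks to the specific coupling constructed in Section~\ref{sec:couplingstep2}, the discrepancy between $U_j$ and $r_{p,T}(U_j^{\circ,(T)})$ is purely spatial and of order $(pT)^{-1}$, and it is precisely this uniform-in-$p$ control that allows the single threshold $T^{\zeta-1}/2$ to handle the entire admissible range $p \geq T^{-\zeta}$ at once.
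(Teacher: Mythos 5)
Your proof is correct and follows essentially the same strategy as the paper: split according to whether $U_j$ lies within the $\rho$-shell of $\partial\Delta(U_i)$ (probability $O(\rho)$ uniformly in $T,p$) or deep inside $\Delta(U_i;\rho)$, and in the latter case use the uniform perturbation bound $\|U_j - r_{p,T}(U_j^{\circ,(T)})\|\le \tfrac{1}{2pT}\le \tfrac{T^{\zeta-1}}{2}$ on $\Omega_k''$ together with $\mathcal I^{(T)}_{i,\rho}$ to force $U_j^{\circ,(T)}\in\Delta_T(U_i^{\circ,(T)})$ unless $U_j$ is within $O(T^{\zeta-1})$ of $\partial\Delta(U_i;\rho)$. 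The paper phrases the first step as a preliminary reduction to $U_j\in\Delta(U_i;2\rho)$ and then shows the reduced event is eventually empty, whereas you keep both contributions as explicit probability bounds; this is a presentational difference only.
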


The full proof of the Lemma~\ref{lemma:c3.1}
is given in Appendix~\ref{sec:techappc3}, we include here a shorter explanation. Notice that, with overwhelming probability as $\rho \searrow 0$, on the event that $U_j \in \Delta(U_i)$ we will have that in fact $U_j \in \Delta(U_i;2\rho)$. Thus, in this case, on the event $\mathcal{I}^{(T)}_{i,\rho}$ not only $U_j$ will belong to $r_{p,T}(\Delta_T(U_i^{\circ,(T)}))$ but also all points sufficiently close to $U_j$. By \eqref{eq:boundrp}, on $\Omega''_k$ this will also include $r_{p,T}(U^{\circ,(T)}_j)$ for all large enough $T$, which implies that $U^{\circ,(T)}_j \in \Delta_T(U_i^{\circ,(T)})$. This contradicts the conditions stated in the event from \eqref{eq:c3.1} and thus shows that its probability must vanish as $T \rightarrow \infty$ and $\rho \searrow 0$. 

Let us now turn to the proof of (ii).
In agreement with our heuristics, we would like to apply Lemma \ref{lemma:control} in this context. However, before we can do so, we must first translate the event $\mathcal{I}^{(T)}_{i,\rho}$ into the language of Lemma~\ref{lemma:control}. To do this, let us write $(n_i,t_i):=U^{\circ,(T)}_i$ momentarily to simplify the notation. Then, observe that the inclusion $\Delta(U_i;\rho) \subseteq r_{p,T}(\Delta_T(n_i,t_i))$ will be guaranteed if the following two conditions occur:
\begin{enumerate}
	\item[I1.] The apexes of the triangles $r_{p,T}(\Delta_T(n_i,t_i))$ and $\Delta(U_i)$ are sufficiently close to each other.
	\item[I2.] 
	The ``slopes'' of the paths generating the boundary of $r_{p,T}\big(
	\Delta_T(n_i,t_i) \big)$ are not much smaller (in absolute value) than $1$, where 1 corresponds to the absolute value of the slopes of the paths which generate the boundary of $\Delta(U_i)$.
\end{enumerate} 
More precisely, 
if for each $\rho>0$ we consider the events
\begin{equation}\label{eq:normalizedpaths}
	\mathcal{P}^{+,(T)}_{i,\rho}:=\left\{ \inf_{0 \leq u \leq 1}\left[ \frac{1}{pt_i}(s^{+,T}_{(n_i,t_i)}(ut_i)-n_i) - \left(1-\frac{\rho}{4}\right)u\right] > -\frac{\rho}{4}\right\}
\end{equation} and 
\[
\mathcal{P}^{-,(T)}_{i,\rho}:=\left\{ \inf_{0 \leq u \leq 1} \left[ \frac{1}{pt_i}(n_i-s^{-,T}_{(n_i,t_i)}(ut_i)) - \left(1-\frac{\rho}{4}\right)u\right] > -\frac{\rho}{4}\right\}
\]then, since (I1) immediately holds on $\Omega''_k$ for all $T$ sufficiently large by \eqref{eq:boundrp}, one can check that for all $T$ large enough we have that
\begin{equation}\label{eq:c3inc0}
	\Omega''_k \cap (\mathcal{P}^{+,(T)}_{i,\rho} \cap \mathcal{P}^{-,(T)}_{i,\rho}) \subseteq \Omega''_k \cap \mathcal{I}^{(T)}_{i,\rho}. 
\end{equation}Indeed, if $t(U_i) < \rho$ then we have $\Delta(U_i;\rho)=\emptyset$ and there is nothing to prove. On the other hand, if $t(U_i) \geq \rho$ then on $\Omega''_k \cap (\mathcal{P}^{+,(T)}_{i,\rho} \cap \mathcal{P}^{-,(T)}_{i,\rho})$ we have that $T \geq t_i=t(U_i)T \geq \rho T$ and also that
\[
\frac{1}{pT}(s^{-,T}_{(n_i,t_i)}(vT)-n_i)< -v + \frac{\rho}{2} \hspace{1cm}\text{ and }\hspace{1cm}v - \frac{\rho}{2}  < \frac{1}{pT}(s^{+,T}_{(n_i,t_i)}(vT)-n_i) 
\] for all $v \in [0,t_i/T]$ which, since $|s^{+,T}_{(n_i,t_i)}(t)-\gamma^{+,T}_{(n_i,t_i)}(t_i-t)|\leq 1$ for all $t \in [0,t_i]$ (and the same holds for $s^{-,T}_{(n_i,t_i)}$ and $\gamma^{-,T}_{(n_i,t_i)}$), implies the inequalities
\begin{equation}\label{eq:compslopes1}
	\frac{1}{pT}(\gamma^{-,T}_{(n_i,t_i)}(vT)-n_i)<-\left(\frac{t_i}{T}-v\right) + \frac{\rho}{2} + \frac{1}{pT}  
\end{equation} and
\begin{equation}\label{eq:comslopes2}
	\left(\frac{t_i}{T}-v\right) - \frac{\rho}{2} -\frac{1}{pT}  < \frac{1}{pT}(\gamma^{+,T}_{(n_i,t_i)}(vT)-n_i) 
\end{equation} for all $v \in [0,t_i/T]$. From here, a straightforward computation using \eqref{eq:boundrp} then shows that, if $p\geq T^{-\zeta}$ and $T$ is sufficiently large so as to have $T^{\zeta-1} < \frac{\rho}{4}$, the inclusion $\Delta(U_i;\rho) \subseteq r_{p,T}(\Delta_T(n_i,t_i))$ holds and therefore \eqref{eq:c3inc0}  now follows. See Figure \ref{fig:inclusionF} for an illustration.

\begin{figure}
	\includegraphics[width=\textwidth]{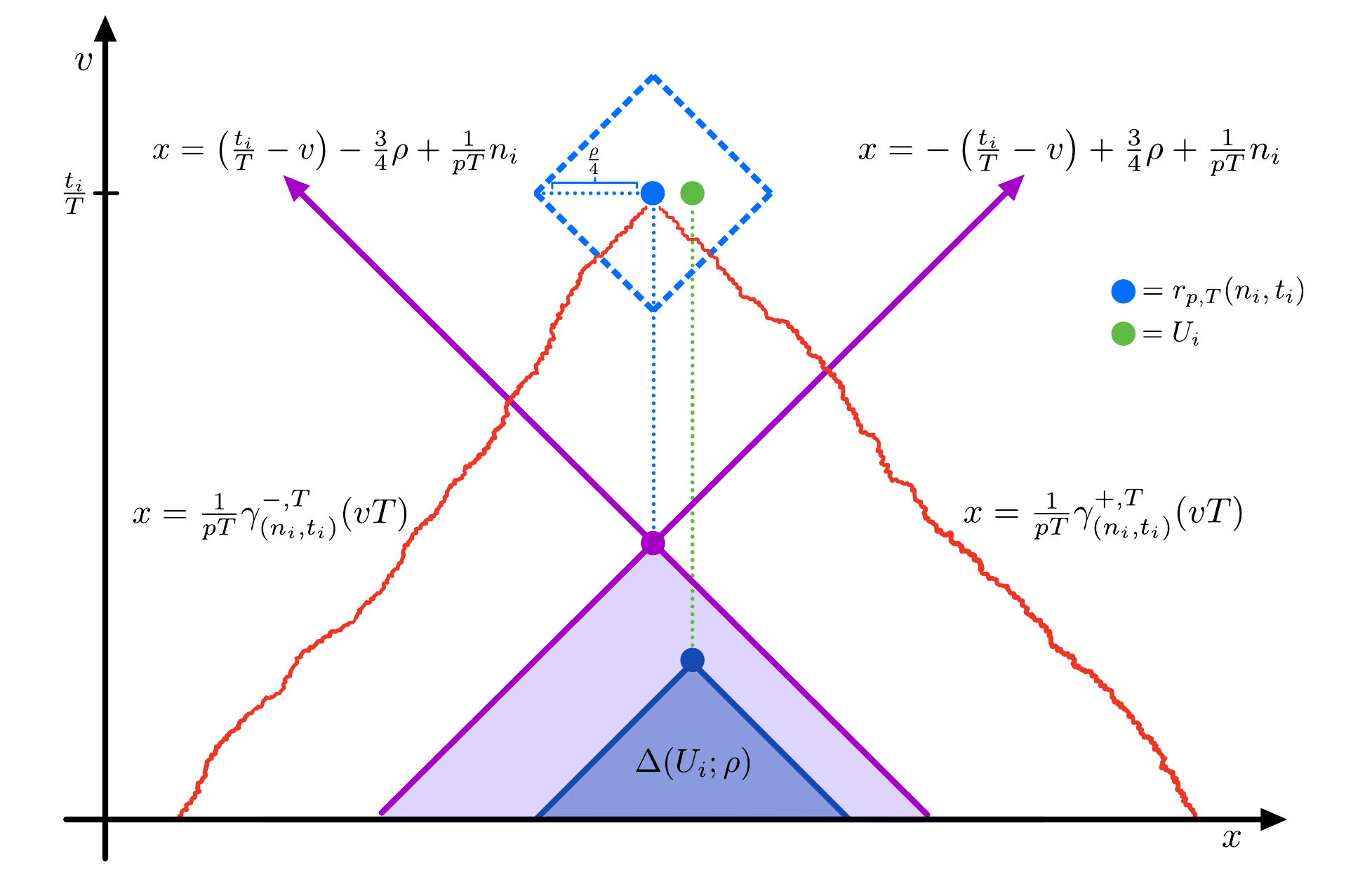}
	\caption{\textit{The inclusion $\Delta(U_i;\rho) \subseteq r_{p,T}(\Delta_T(n_i,t_i))$}. If $T^{\zeta-1} < \frac{\rho}{4}$, whenever the events in \eqref{eq:compslopes1}-\eqref{eq:comslopes2} occur we have that $r_{p,T}(\Delta_T(n_i,t_i))$, the region enclosed by the red paths, contains the purple triangle shown in the picture, which is in fact exactly $\Delta(r_{p,T}(n_i,t_i);\frac{3}{4}\rho)$. In particular, if $\|r_{p,T}(n_i,t_i) - U_i\| < \frac{\rho}{4}$ then the blue triangle $\Delta(U_i;\rho)$ is contained in the purple one $\Delta(r_{p,T}(n_i,t_i);\frac{3}{4}\rho)$, so that the inclusion $\Delta(U_i;\rho) \subseteq r_{p,T}(\Delta_T(n_i,t_i))$ holds.}
	\label{fig:inclusionF}
\end{figure}

In particular, in light of Lemma \ref{lemma:c3.1} and \eqref{eq:c3inc0}, we obtain that \eqref{eq:c3inc2} will follow from the union bound once we prove that, for each $i=1,\dots,k$ and $\rho \in (0,1)$,
\begin{equation} \label{eq:c3suff4}
	\lim_{T \rightarrow \infty} \left[\sup_{p \geq T^{-\zeta}} \P_p\big(\Omega''_{k} \cap (\mathcal{P}^{+,(T)}_{i,\rho})^c\big) \right] = 0	
\end{equation} and
\begin{equation}\label{eq:c3suff4b}
	\lim_{T \rightarrow \infty} \left[\sup_{p \geq T^{-\zeta}} \P_p\big(\Omega''_{k} \cap (\mathcal{P}^{-,(T)}_{i,\rho})^c\big) \right] = 0.	
\end{equation} We will prove only \eqref{eq:c3suff4}, as the argument for \eqref{eq:c3suff4b} is completely analogous.

Given the similarity between $(\mathcal{P}^{+,(T)}_{i,\rho})^c$ and the lower bound found in the event in \eqref{eq:triangle}, one would be tempted to use the exponential decay from Lemma \ref{lemma:control} to show \eqref{eq:c3suff4}. The main issue is that on the event $\{N^\circ_T \geq k\}$ the path~$s_{(n_i,t_i)}^{+,T}$ may not act exactly as a Poisson jump process: indeed, the value of $N^\circ_T$ will influence the number of jumps that this path can make. However, since we are only conditioning on the event $\{N^\circ_T \geq k\}$ which will occur with probability tending to one as $T \rightarrow \infty$, this will only introduce a small dependence which we can do away with. Indeed, the idea will be to split the interior $\cV^\circ_T$ into two sets $\cV^{\circ,\,\text{small}}_T$ and $\cV^{\circ,\,\text{big}}_T$, which will be independent given $\partial \cV_T$, in such~a~way that, with overwhelming probability as $T \rightarrow \infty$, the following occurs:
\begin{enumerate}
	\item [a)] On the event $\{N^\circ_T \geq k\}$, the space-time positions of the $k$ largest heights in the interior belong to $\cV^{\circ,\,\text{small}}_T$, i.e. $U_1^{\circ,(T)},\dots,U_k^{\circ,(T)} \in \cV_T^{\circ,\,\text{small}}$.
\item [b)] For each $i=1,\dots,N^\circ_T$, the rightmost path in $\cC_T(n_i,t_i)$ does not depend on any of the points in $\cV_T^{\circ,\,\text{small}}$, except perhaps for $(n_i,t_i)$. (Notice that, in principle, $s^{+,T}_{(n_i,t_i)}$ should depend on \textit{all} of $\cV_T=\cV_T^{\circ,\,\text{small}} \cup \cV_T^{\circ,\,\text{big}} \cup \partial \cV_T$.)
\end{enumerate} The two properties (a) and (b) above will allow us to ``decouple'' the behavior of the rightmost path $s^{+,T}_{(n_i,t_i)}$ in $\cC_T(n_i,t_i)$ from the information contained in the event $\{N^\circ_T \geq k\}$, thus freeing us from this small dependence stated earlier. 

We will construct the sets $\cV^{\circ,\,\text{small}}_T$ and $\cV^{\circ,\,\text{big}}_T$ as colorings of $\cV^\circ_T$. Recall that, given $p \in (0,1)$, a random subset $A$ of a countable set $X$ is a $p$-\textit{coloring} of~$X$ if $A$ is obtained from $X$ by letting each $x \in X$ belong to $A$ independently with probability $p$. Moreover, recall also that, according to our construction of $\cV^\circ_T$ in Section \ref{sec:couplingstep2}, each space-time~point in the interior $U_i^{\circ,(T)}$ has an associated $z$-mark in $\R_+$ denoted by $z(U_i^{\circ,(T)})$. Then, for a fixed parameter $\beta \in (\zeta,1)$, we define $\cV^{\circ,\,\text{small}}_T$ and~$\cV^{\circ,\,\text{big}}_T$ as
\[
\cV^{\circ,\,\text{small}}_T:=\{ U_i^{\circ,(T)} \in \cV^\circ_T : z(U_i^{\circ,(T)}) < T^\beta\}
\] and
\[
\cV^{\circ,\,\text{big}}_T:=\cV^{\circ}_T-\cV^{\circ,\,\text{small}}_T=\{ U_i^{\circ,(T)} \in \cV^\circ_T : T^\beta \leq z(U_i^{\circ,(T)}) < T^2\}.
\] Since the $z$-marks of points in $\cV^\circ_T$ are i.i.d. with uniform distribution on~$[0,T^2)$ (although we later order these in increasing fashion for labeling purposes), it~follows that $\cV^{\circ,\,\text{small}}_T$ is a $T^{\beta-2}$-coloring of $\cV^\circ_T$ (and, likewise, that $\cV^{\circ,\,\text{big}}_T$ is a $(1-T^{\beta-2})$-coloring of $\cV^\circ_T$). In particular, since $\cV^\circ_T$ is a Poisson process given~$\partial \cV_T$, the sets $\cV^{\circ,\,\text{small}}_T$ and $\cV^{\circ,\,\text{big}}_T$ are also independent given~$\partial \cV_T$.

To continue, let us verify properties (a) and (b) above for $\cV^{\circ,\,\text{small}}_T$ and $\cV^{\circ,\,\text{big}}_T$. If we write $N^{\circ,\,\text{small}}_T:=|\cV^{\circ,\,\text{small}}_T|$ then, since $\cV^{\circ,\,\text{small}}_T$ is a $T^{\beta-2}$-coloring of~$\cV^{\circ}_T$ and $pT^\beta \rightarrow \infty$ as $T \rightarrow \infty$ uniformly over all $p \in [T^{-\zeta},1]$, from an estimate of the form in \eqref{eq:T2pts} we obtain that for each $i \in \N$,
\begin{equation} \label{eq:ncirc1}
\lim_{T \rightarrow \infty}\left[\sup_{p\geq T^{-\zeta}}  \P_p( N^{\circ,\,\text{small}}_T < i )\right] = 0
\end{equation} which, since $\cV_T^{\circ,\,\text{small}}$ contains the points with the lowest $z$-marks, implies (a). To see (b), let us define for $(n,t) \in \Z \times \R_+$ the set of paths
\begin{equation} \label{eq:defcihat}
\widehat{\cC}_T(n,t):= \bigg\lbrace
\gamma \in \cC_T(n,t) : \gamma(u^-)\neq \gamma(u) \Longrightarrow (\gamma(u),u) \in \cV^{[\text{st}]}_T \setminus \cV_T^{\circ,\,\text{small}}
\bigg\rbrace
\end{equation} for $\cV^{[\text{st}]}_T :=\{ U^{(T)}_i \in \cV_T :  \varepsilon(U^{(T)}_i) =1\}$. That is, $\widehat{\cC}_T(n,t)$ is the set of paths in $\cC_T(n,t)$ which are allowed to jump only when they encounter a sticky point which belongs to $\cV^{\circ,\,\text{big}}_T \cup \partial \cV_T$, fully disregarding all sticky points in $\cV_T^{\circ,\,\text{small}}$. Then, property (b) can be formalized as saying that, for any $i=1,\dots,N^\circ_T$, with overwhelming probability as $T \rightarrow \infty$ the rightmost path in $\widehat{\cC}_T(U_i^{\circ,(T)})$ coincides with the one in $\cC_T(U_i^{\circ,(T)})$. More precisely, if we let $\widehat{s}^{\,+,T}_{(n,t)}$ denote the (time-reversed) rightmost path in $\widehat{\cC}_T(n,t)$ and we write $(n_i,t_i):=U^{\circ,(T)}_i$ as before, then we have the following
equivalent formulation of property~(b):

\begin{lemma}\label{lemma:bigpathscoincide} For each $\zeta \in [0,1)$ and $i=1,\dots,k$,
	\begin{equation} \label{eq:equality}
		\lim_{T \rightarrow \infty} \left[ \sup_{p\geq T^{-\zeta}} \P_p( N^\circ_T \geq i\,,\, s^{+,T}_{(n_i,t_i)} \neq \widehat{s}^{\,+,T}_{(n_i,t_i)})\right] =0.
	\end{equation}
\end{lemma}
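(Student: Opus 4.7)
The plan is to characterize the event $\{s^{+,T}_{(n_i,t_i)} \neq \widehat{s}^{\,+,T}_{(n_i,t_i)}\}$ as a ``coincidence'' event---namely, that some sticky point in $\cV^{\circ,\,\text{small}}_T$ lies on the graph of $\widehat{s}^{\,+,T}_{(n_i,t_i)}$---and then to bound the probability of this coincidence by combining Mecke's formula with the exchangeability of the $z$-marks.

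Since $\widehat{\cC}_T(n_i,t_i)\subseteq\cC_T(n_i,t_i)$, we have $s^{+,T}_{(n_i,t_i)}\geq \widehat{s}^{\,+,T}_{(n_i,t_i)}$ in reverse time. Letting $u^*$ be the first reverse time at which they disagree, $s^{+,T}$ makes a $+1$ jump at $u^*$ while $\widehat{s}^{\,+,T}$ does not, which forces the existence of a sticky point at $(\widehat{s}^{\,+,T}_{(n_i,t_i)}((u^*)^-),\,t_i-u^*)$ lying in $\cV^{\circ,\,\text{small}}_T$ (it cannot belong to $\cV^{\circ,\,\text{big}}_T\cup\partial\cV_T$, otherwise $\widehat{s}^{\,+,T}$ would also jump). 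Writing $G^{(T)}_{(n,t)}:=\{(\widehat{s}^{\,+,T}_{(n,t)}(u^-),\,t-u):u\in[0,t]\}$ for the associated trace in $\Delta_T$---a set of $n_\Z\otimes\lambda_{\R_+}$-measure exactly $t\leq T$---the event of interest is therefore contained in the event $\mathcal{E}$ that $\cV^{\circ,\,\text{small}}_T\setminus\{(n_i,t_i)\}$ contains at least one sticky point on $G^{(T)}_{(n_i,t_i)}$.

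Next I would condition on $\{N^{\circ,\,\text{small}}_T\geq i\}$, whose complement has $\P_p$-probability $o(1)$ uniformly in $p\geq T^{-\zeta}$ by \eqref{eq:ncirc1}. On this event $(n_i,t_i)$ is the $i$-th smallest $z$-mark point of $\cV^{\circ,\,\text{small}}_T$, so by exchangeability of the i.i.d.\ uniform $z$-marks, conditional on the positions and $\varepsilon$-marks of $\cV^{\circ,\,\text{small}}_T$ the identity of $(n_i,t_i)$ is uniform over $\cV^{\circ,\,\text{small}}_T$. This yields
\[
\P_p\bigl(\mathcal{E},\,N^{\circ,\,\text{small}}_T\geq i\bigr)\leq \E_p\!\left[\mathbf{1}\{N^{\circ,\,\text{small}}_T\geq i\}\cdot\frac{1}{N^{\circ,\,\text{small}}_T}\sum_{(n,t)\in\cV^{\circ,\,\text{small}}_T}\mathbf{1}\{B(n,t)\}\right],
\]
where $B(n,t)$ is the event that some sticky point of $\cV^{\circ,\,\text{small}}_T\setminus\{(n,t)\}$ lies on $G^{(T)}_{(n,t)}$.

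Finally, on the high-probability event that $|\Delta^\circ_T|$ is of order $pT^2$ (see \eqref{eq:T2pts}), the variable $N^{\circ,\,\text{small}}_T$ is conditionally Poisson with mean of order $pT^\beta$, and a Chernoff bound yields $\P_p(N^{\circ,\,\text{small}}_T<cpT^\beta)\leq \exp(-c'T^{\beta-\zeta})\to 0$ since $\beta>\zeta$. On the complementary event I would use $1/N^{\circ,\,\text{small}}_T\leq C/(pT^\beta)$ and then Mecke's formula applied to the Poisson process $\cV^{\circ,\,\text{small}}_T$ on $\Delta^\circ_T$ (intensity $T^{\beta-2}$, independent of $\partial\cV_T$ and $\cV^{\circ,\,\text{big}}_T$); since $G^{(T)}_{(n,t)}$ is measurable with respect to $\sigma(\partial\cV_T,\cV^{\circ,\,\text{big}}_T,(n,t))$ and hence independent of the rest of $\cV^{\circ,\,\text{small}}_T$,
\[
\E_p\!\left[\sum_{(n,t)\in\cV^{\circ,\,\text{small}}_T}\mathbf{1}\{B(n,t)\}\right]\leq T^{\beta-2}\cdot p\,T^{\beta-2}\int_{\Delta^\circ_T} t\,d(n,t)\leq C\,p^2 T^{2\beta-1},
\]
using the deterministic bound $|\Delta^\circ_T|\leq 2T^2$. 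Dividing by the lower bound on $N^{\circ,\,\text{small}}_T$ gives $O(T^{\beta-1})\to 0$ since $\beta<1$, which together with the Chernoff term completes the proof. The main obstacle is that a naive pair-sum (second-moment) bound alone produces $p^2T^{2\beta-1}$ and would require $\beta<1/2$; the exchangeability-based factor $1/N^{\circ,\,\text{small}}_T$ is precisely what converts this into a first-moment bound valid throughout the whole range $\beta\in(\zeta,1)$.
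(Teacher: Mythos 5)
Your approach is essentially the same as the paper's: you contain the disagreement event in the event that some sticky point of $\cV^{\circ,\,\text{small}}_T$ lies on the trace of $\widehat{s}^{\,+,T}_{(n_i,t_i)}$, and then control this by a first-moment bound using the smallness of $T^{\beta-2}$. The paper conditions on $\cV^{\circ,\,\text{big}}_T\cup\partial\cV_T$, $N^{\circ,\,\text{small}}_T$ and $(n_i,t_i)$, reduces to $\P(\text{Binomial}(N^{\circ,\,\text{small}}_T-1,q_{p,T}(U^{\circ,(T)}))\geq 1)$ with $U^{\circ,(T)}$ uniform on $\Delta^\circ_T$, and applies Markov; your exchangeability-of-$z$-marks step followed by Mecke's formula packages exactly the same computation (the $1/N^{\circ,\,\text{small}}_T$ averaging is what the paper does implicitly by conditioning on $(n_i,t_i)$). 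Your closing remark about why a naive pair bound would require $\beta<1/2$ correctly identifies the role of this averaging.

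However, there is one genuine error in your final estimate: the claimed ``deterministic bound $|\Delta^\circ_T|\leq 2T^2$'' is false. The quantity $n_\Z\otimes\lambda_{\R_+}(\Delta^\circ_T)$ is random and unbounded, since the reversed right/leftmost paths $s^{\pm,T}$ are Poisson jump processes with no almost-sure bound on their horizontal excursion; so the Mecke right-hand side $pT^{2\beta-4}\int_{\Delta^\circ_T} t\,d(n,t)$ cannot be bounded deterministically. You must instead either (a) take the expectation over $\partial\cV_T$ and use \eqref{eq:expectations}, which gives $\E_p\big[\int_{\Delta^\circ_T}t\,d(n,t)\big]\leq T\,\E_p(N^\circ_T)\leq pT^3$ and hence the same $Cp^2T^{2\beta-1}$ you wrote down, or (b) split on the good shape event $\mathcal{P}_{T,\delta'}$ from \eqref{eq:triangle} and handle its complement via Cauchy--Schwarz together with a crude second-moment bound on $N^{\circ,\,\text{small}}_T$, as the paper does. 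Option (a) is the quicker repair of your write-up; the conclusion $O(pT^{\beta-1})\to 0$ (uniformly in $p\in[T^{-\zeta},1]$, since $\beta<1$) then goes through as you describe.
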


The details of the proof can be found in the Appendix~\ref{sec:techappc3}.
The main idea behind the proof of Lemma \ref{lemma:bigpathscoincide} can be summarized as follows. The only way in which the rightmost paths $s^{+,T}_{(n_i,t_i)}$ and $\widehat{s}^{\,+,T}_{(n_i,t_i)}$ can differ is if both of them reach simultaneously a sticky point belonging to  $\cV^{\circ,\,\text{small}}_T$. However, since $\cV^{\circ,\,\text{small}}_T$ is relatively small in size (at least compared with the total size of~$\cV_T$), the points in $\cV^{\circ,\,\text{small}}_T$ will be very scarce and therefore the probability that the former occurs will~be very small for large enough~$T$. Indeed, since $\widehat{s}^{\,+,T}_{(n_i,t_i)}$ has length at most $T$, 
by standard properties of Poisson processes,
the probability that $\widehat{s}^{\,+,T}_{(n_i,t_i)}$ ever reaches a point in $\cV^{\circ,\,\text{small}}_T$
can be seen to be (roughly) at most $T^{\beta-2} \cdot T= T^{\beta -1}$
which goes to~$0$ as~$T \rightarrow \infty$ . 

Having shown properties (a) and (b), let us continue with the proof of \eqref{eq:c3suff4}. In light of Lemma \ref{lemma:bigpathscoincide}, it will be enough to prove a version of \eqref{eq:c3suff4} in which the rightmost path $s^{+,T}_{(n_i,t_i)}$ in the event $\mathcal{P}^{+,(T)}_{i,\rho}$ is replaced by the new one $\widehat{s}^{\,+,T}_{(n_i,t_i)}$. More precisely, by Lemma \ref{lemma:bigpathscoincide} and \eqref{eq:ncirc1}, to obtain \eqref{eq:c3suff4} (and thus conclude the proof), it will suffice to show that for each $\rho \in (0,1)$ and $i=1,\dots,k$
\[
\lim_{T \rightarrow \infty} \left[ \sup_{p \geq T^{-\zeta}} \P_p\big(\{N^{\circ,\,\text{small}}_T \geq i\} \cap  \big(\widehat{\mathcal{P}}^{+,(T)}_{i,\rho}\big)^c\big)\right]=0,
\] where $\widehat{\mathcal{P}}^{+,(T)}_{i,\rho}$ is defined as in \eqref{eq:normalizedpaths} but using the path $\widehat{s}^{\,+,T}_{(n_i,t_i)}$ instead of $s^{+,T}_{(n_i,t_i)}$. The advantage of performing this switch is that, as we will see, the events $\{N^{\circ,\,\text{small}}_T \geq i\}$ and $\big(\widehat{\mathcal{P}}^{+,(T)}_{i,\rho}\big)^c$ can be ``decoupled'' by conditioning~on~$\partial \cV_T$, thus solving the small dependence issue mentioned in the discussion following \eqref{eq:c3suff4}. However, as per our original plan, the idea now is still to use \eqref{eq:triangle} to control the probability of $\big(\widehat{\mathcal{P}}^{+,(T)}_{i,\rho}\big)^c$ and to do this we first need to make sure that $\widehat{s}^{\,+,T}_{(n_i,t_i)}$ is indeed a Poisson process. 

At this point is where we arrive at our  final obstacle: since $\cV_T^{\circ,\,\text{big}}$ is a ${(1-T^{\beta-2})}$-coloring of $\cV_T^\circ$ it follows that, conditionally on  $\cV_T^{\circ,\,\text{small}}$ and~$\partial \cV_T$, on the event $\{ N_T^{\circ,\,\text{small}}\geq i\}$ the path $\widehat{s}^{\,+,T}_{(n_i,t_i)}-n_i$ is indeed distributed as a Poisson jump process on the time interval $[0,t_i]$ (of parameter $p(1-T^{\beta-2})$), but only until \textit{the first time in which $\widehat{s}^{\,+,T}_{(n_i,t_i)}$ hits $\partial \Delta_T$}, the boundary of the ``outer'' triangle~$\Delta_T$. 
By definition of $\widehat{\cC}_T(n_i,t_i)$ (see \eqref{eq:defcihat}), once $\widehat{s}^{\,+,T}_{(n_i,t_i)}$ hits $\partial \Delta_T$ it will agree with the rightmost path $s^{+,T}$ in $\cC_T=\cC_T(0,1)$ from then onwards, making it no longer random (given $\partial \cV_T$). Therefore, we cannot directly apply \eqref{eq:triangle} to the path $\widehat{s}^{\,+,T}_{(n_i,t_i)}-n_i$. Fortunately, this will not be a serious problem for us because, as we shall later see, the event that this path eventually hits the boundary $\partial \Delta_T$ is extremely unlikely.		   

To deal with this small technical issue, we proceed in the following manner. That $\widehat{s}^{\,+,T}_{(n_i,t_i)}-n_i$ is a Poisson process until the hitting time of $\partial \Delta_T$ implies~that we can couple it with a \textit{true} Poisson process $\sigma^{+,T}$ with parameter $p(1-T^{\beta-2})$, which is independent of both $\cV_T^{\circ,\,\text{small}}$ and $\partial \cV_T$, in such a way that both $\widehat{s}^{\,+,T}_{(n_i,t_i)}$ and $n_i+\sigma^{+,T}$ agree until the first~time in which they hit the boundary $\partial \Delta_T$ (we omit the details of such a coupling since it is straightforward). But, since $(n_i,t_i)$ will lie far away from $\partial \Delta_T$ with overwhelming probability as $T \rightarrow \infty$, there are essentially two ways in which $n_i+\sigma^{+,T}$ can hit the boundary: (i) if it makes an unusually high number of jumps (so that its ``slope'' exceeds the typical value $p(1-T^{\beta-2})$) or (ii) if the triangle $\Delta_T$ is unusually narrow, i.e. the rightmost path $s^{+,T}$ makes an unusually low number of jumps (so that its ``slope'' is below the typical value $p$). Both these large deviations events can be handled using \eqref{eq:triangle}, thus yielding:

\begin{lemma}\label{lemma:couplepoisson} For each $\zeta \in [0,1)$ and $i=1,\dots,k$, 
\begin{equation} \label{eq:equality2}
	\lim_{T \rightarrow +\infty} \left[\inf_{p \geq T^{-\zeta}} \P_p\left( N^{\circ,\,\text{small}}_T \geq i\,,\,  \widehat{s}^{\,+,T}_{(n_i,t_i)}= n_i+ \sigma^{+,T} \text{ on }[0,t_i]\right)\right]=1.
\end{equation}
\end{lemma}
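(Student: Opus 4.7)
The plan is to show that, with probability tending to $1$ uniformly over $p \geq T^{-\zeta}$, the coupled process $n_i + \sigma^{+,T}$ never overtakes the outer boundary path $s^{+,T}(T - t_i + \cdot)$ on the time window $[0, t_i]$. Introducing the unfavorable event
\[
B^+_T := \left\{ \exists\, t \in [0, t_i] : n_i + \sigma^{+,T}(t) \geq s^{+,T}(T - t_i + t) \right\},
\]
by the construction of the coupling described in the paragraphs preceding the lemma, $\widehat{s}^{\,+,T}_{(n_i, t_i)}$ and $n_i + \sigma^{+,T}$ agree on $[0, t_i]$ precisely when $B^+_T$ does not occur. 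Hence the lemma reduces to proving $\sup_{p \geq T^{-\zeta}} \P_p\big(N^{\circ,\text{small}}_T \geq i,\ B^+_T\big) \to 0$ as $T \to \infty$.

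To control $B^+_T$, I would fix an auxiliary $\delta' > 0$ and condition on $\partial \cV_T$ (which determines $s^{+,T}$). Two large-deviation estimates come into play. First, since $\sigma^{+,T}$ is an independent Poisson process of rate $p(1 - T^{\beta-2})$, a maximal Chernoff bound yields
\[
\P\Bigl(\sup_{t \in [0,T]} \bigl(\sigma^{+,T}(t) - pt\bigr) > \delta' pT\Bigr) \leq \mathrm{e}^{-c \delta'^2 pT}
\]
for $T$ large and some universal $c > 0$ (the rate correction $-pT^{\beta-1}$ in the mean being negligible since $\beta < 1$). Second, the lower deviation of $s^{+,T}$ is bounded directly by~\eqref{eq:triangle}. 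Since $pT \geq T^{1-\zeta} \to \infty$, both bounds vanish uniformly in $p \geq T^{-\zeta}$. Off the union of the two bad events, for every $t \in [0, t_i]$ one has $s^{+,T}(T - t_i + t) - (n_i + \sigma^{+,T}(t)) \geq p(T - t_i) - n_i - 2\delta' pT$, so $B^+_T$ cannot occur as soon as
\[
n_i \leq p(T - t_i) - 3\delta' pT.
\]

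The remaining ingredient is to show that this geometric gap is realized with probability at least $1 - C'\delta'$, uniformly in $p \geq T^{-\zeta}$, where $C'$ is universal. Under our construction, conditionally on $\partial \cV_T$ and on $N^{\circ}_T$, the points of $\cV_T^\circ$ have positions i.i.d.\ uniform on $\Delta_T^\circ$ and, independently, $z$-marks i.i.d.\ uniform on $[0, T^2)$. Since $U_i^{\circ, (T)}$ is selected by ordering the $z$-marks, exchangeability of positions gives that the position of $U_i^{\circ, (T)} = (n_i, t_i)$ is itself uniform on $\Delta_T^\circ$; moreover, conditioning further on $\{N^{\circ, \text{small}}_T \geq i\}$ is a condition on $z$-marks only and thus preserves this uniformity. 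On the event where \eqref{eq:triangle} holds, the measure of $\Delta_T^\circ$ with respect to $n_\Z \otimes \lambda_{\R_+}$ is comparable to $pT^2$, while the strip $\{(n,t) \in \Delta_T^\circ : n > p(T-t) - 3\delta' pT\}$ has measure at most of order $\delta' pT^2$, with the constant independent of $p$; the relative proportion is $O(\delta')$, proving the claim.

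Combining the three estimates, given any $\varepsilon > 0$ one first picks $\delta'$ so small that $C' \delta' < \varepsilon/2$ and then $T$ large enough so that each LDP bound is at most $\varepsilon/4$ uniformly in $p \geq T^{-\zeta}$. The main technical obstacle lies in the careful bookkeeping of independence under the various conditionings: one needs $\sigma^{+,T}$ to remain Poisson conditionally on $(\partial \cV_T, (n_i, t_i))$, and simultaneously the position of $(n_i, t_i)$ to remain uniform on $\Delta_T^\circ$ under the same conditioning. Both properties follow from the construction in Section~\ref{sec:blockheights} --- namely, $\sigma^{+,T}$ is drawn independently of $\partial \cV_T$ and of $\cV_T^{\circ,\text{small}}$, and positions of the interior Poisson process are independent of the $z$-marks --- but this verification constitutes the crucial technical input.
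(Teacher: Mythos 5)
Your proposal is correct and yields the lemma, but it takes a genuinely different route from the paper in one key step: how to establish that $(n_i,t_i)$ is located well away from the right boundary of $\Delta_T$. The paper first reduces to the coupling event $\Omega''_k$ (via \eqref{eq:ncirc1} and \eqref{eq:boundomega3}), on which $\|U_i - r_{p,T}(n_i,t_i)\|\leq T^{\zeta-1}$ by \eqref{eq:boundrp}, and then exploits the fact that $U_i$ a.s.\ lies in the interior of $\Delta$ to get the macroscopic-distance statement \eqref{eq:macroscopical}. You instead argue directly from the construction: conditional on $\partial\cV_T$ and $N^\circ_T$, the position of $U_i^{\circ,(T)}$ is uniform on $\Delta^\circ_T$ (the $z$-mark ranking is independent of positions), and conditioning further on $\{N_T^{\circ,\text{small}}\geq i\}$ --- an event in the $z$-marks only --- preserves this; the offending strip near $\gamma^{+,T}$ then has relative $n_\Z\otimes\lambda$-measure $O(\delta')$ on $\mathcal P_{T,\delta'}$. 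Your version is more self-contained for this particular lemma and avoids routing through $\Omega''_k$, at the cost of having to verify the conditional uniformity claim carefully (your final paragraph correctly identifies this as the crucial input, and it does hold, since $\sigma^{+,T}$ is constructed independent of $\cV_T^{\circ,\text{small}}$ and $\partial\cV_T$, and the positions of interior Poisson points are independent of their $z$-marks). The other difference --- using a direct maximal Chernoff bound for $\sigma^{+,T}$ rather than invoking \eqref{eq:triangle} for it as well --- is cosmetic, since \eqref{eq:triangle} is itself a Chernoff-type estimate and applies equally to a rate-$p(1-T^{\beta-2})$ Poisson process on a time interval of length $t_i\geq\rho T$. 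One small slip: the constructions you cite are in Sections~\ref{sec:couplingstep2} and \ref{sec:condc3}, not Section~\ref{sec:blockheights}.
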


As we claimed above, Lemma \ref{lemma:couplepoisson} follows from a standard application of \eqref{eq:triangle}. However, since some steps of the proof require a bit of care, for convenience of the reader we include all details in Appendix~\ref{sec:techappc3}.

With Lemma \ref{lemma:couplepoisson} at our disposal, we are ready to conclude the proof of~(C3'). Indeed, by combining \eqref{eq:ncirc1},  \eqref{eq:equality} and \eqref{eq:equality2}, we obtain that, in order to establish \eqref{eq:c3suff4}, it will suffice to prove that, for each $\rho \in (0,1)$ and $i \in \N$, 
\begin{equation} \label{eq:finalbound}
\lim_{T \rightarrow \infty} \left[ \sup_{p \geq T^{-\zeta}} \P_p(\{ N^{\circ,\,\text{small}}_T \geq i\} \cap \mathcal{Q}^{+,(T)}_{i,\rho})\right]=0,
\end{equation} where 
\[
\mathcal{Q}^{+,(T)}_{i,\rho}:=\left\{ \inf_{0 \leq u \leq 1} \left[ \frac{1}{pt_i}\sigma^{+,T}(ut_i) - \left(1-\frac{\rho}{4}\right)u\right] \leq \frac{\rho}{4}\right\}.
\] But, since $\sigma^{+,T}$ is a Poisson jump processes with parameter $p(1-T^{\beta-2})$, \eqref{eq:finalbound} follows from \eqref{eq:triangle} by conditioning on $\cV_T^{\circ,\,\text{small}}$ in a similar fashion to \eqref{eq:equality2} above. We omit the details (but refer the reader to  the proof of Lemma \ref{lemma:couplepoisson}). This yields \eqref{eq:c3suff4} and thus concludes the proof of Proposition \ref{prop:coupling2}.

\section{Proof of Proposition \ref{prop:remdiscrete2}}

In this section we carry out the proof of Proposition \ref{prop:remdiscrete2}. Observe that this will immediately prove Proposition \ref{prop:remdiscrete} as well. The proof will require a few preliminary results, which we shall cover in the following two subsections. The derivation of Proposition~\ref{prop:remdiscrete2} using these results will then be carried out in Subsection \ref{sec:derivation}. 

\subsection{Bernoulli ballistic deposition}\label{sec:bbd}

Our first step in the proof of Proposition~\ref{prop:remdiscrete2} will be to study an auxiliary ballistic deposition~model, which we call Bernoulli ballistic deposition (BBD). This model is exactly the same as the original one introduced in Section \ref{sec:desc}, with the only difference that now the heights of the falling blocks are i.i.d. Bernoulli random variables of some parameter $\sigma \in (0,1]$. Thus, a ``flat'' block of height $0$ can only impact the size of a given column whenever it sticks to one of its adjacent columns which is higher.

Let us fix $\s,p\in (0,1]$ and denote by $h_T(x)$ the height of the growing cluster of blocks at time $T>0$ above a given site $x \in \Z$. Then, once again we have the last passage percolation representation of the height $h_T$ from Section \ref{sec:lpprep}, i.e. if $h_0 \equiv 0$ then with probability one,
\begin{equation} \label{eq:lpprep2}
	h_T(0)=\max\bigg\lbrace
	\sum_{(\gamma(t),t) \in \xi} \eta(
	\gamma(t),t
	) :
	\gamma\in\cC_{T}
	\bigg\rbrace,
\end{equation} where the $\xi$ and $\cC_T$ are the same before, but now the marks $\eta(x,t)$ are i.i.d. Bernoulli random variables of parameter $\sigma$.

Throughout this subsection, since we will work with fixed values of $\sigma$ and $p$, we shall omit the dependence on these parameters from the notation unless explicitly stated otherwise. The main objective of this subsection is to prove the following result:
\begin{theorem}\label{theo:bbp} There exists a constant $C>0$ such that, for any $\zeta \in [0,1]$, $\sigma\in (0,1]$, $T \geq 1$ and $p\in [T^{-\zeta},1]$, 
	\begin{equation} \label{eq:hml}
		\E\big(
		h_T(0)
		\big)\leq C\big(\sigma p T^2\big)^{\tfrac{1}{2-\zeta}},
	\end{equation} where $h_T$ is the height function for the BBD model with sticking parameter~$p$, block height distribution $\text{Bernoulli}(\sigma)$ and initial configuration $h_0 \equiv 0$.  
\end{theorem}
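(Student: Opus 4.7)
The plan is to adapt the union-bound-plus-Chernoff argument of \cite[Lemma 3.5]{HamblyMartin07} to account for the Poissonian geometry of our triangle $\Delta_T$. First I would use Lemma~\ref{lemma:control} to restrict to a ``typical'' event $\Omega_{\text{typ}}$ on which (i)~$|\cV_T|\le 2pT^2$, (ii)~every column $\{y\}\times[0,T]$ intersecting $\Delta_T$ contains at most $2pT$ sticky $\xi$-points, and (iii)~the envelope paths $s^{\pm,T}$ each have at most $2pT$ jumps. By Lemma~\ref{lemma:control} and standard Poisson concentration, $\P(\Omega_{\text{typ}}^c)\le Ce^{-cT^{1-\zeta}}$, so on $\Omega_{\text{typ}}^c$ the trivial bound $h_T(0)\le |\cV_T|$ makes a negligible contribution to $\E(h_T(0))$.

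On $\Omega_{\text{typ}}$ I would work with the LPP representation \eqref{eq:lpprep2} and classify compatible paths by their number of jumps $j$. Each path with $j$ jumps is specified by a direction sequence $(\epsilon_1,\ldots,\epsilon_j)\in\{\pm 1\}^j$ (which, together with the endpoint $\gamma(T)=0$, fixes the sequence of positions $y_0,\ldots,y_j=0$ visited) and an increasing sequence $\tau_1<\cdots<\tau_j$ of sticky jump times with $(y_l,\tau_l)\in\xi^{[\text{st}]}$. The column-wise independence of the Poisson processes of sticky points together with property~(ii) yields $|\cC_T^{(j)}|\le (4pT)^j/j!$ and hence $|\cC_T|\le e^{4pT}$. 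A further union bound over these $e^{O(pT)}$ paths, combined with the fact that $L_\gamma := |\xi \cap \mathrm{graph}(\gamma)|$ is Poisson$(T)$-distributed for each fixed $\gamma$, establishes that $\max_{\gamma\in \cC_T}L_\gamma \le CT$ uniformly with probability $1-e^{-cT^{1-\zeta}}$.

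Combining these ingredients with the Chernoff estimate $\P(\mathrm{Bin}(L_\gamma,\sigma)>k\mid \xi,\varepsilon)\le (e\sigma L_\gamma/k)^k$ valid for $k > e\sigma L_\gamma$, one obtains on a high-probability event
\[
\P(h_T(0) > k)\;\le\;\exp\bigl(4pT - k\log(k/(eC\sigma T))\bigr).
\]
Setting $k_* := C_0(\sigma pT^2)^{1/(2-\zeta)}$ with a large enough $C_0$, and using the assumption $p\ge T^{-\zeta}$ to bound $4pT\le 4T^{1-\zeta}$, one verifies by direct computation that $k_*\log(k_*/(eC\sigma T)) \ge 5pT$, so $\P(h_T(0) > k)$ is super-polynomially small for $k \ge k_*$. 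Integrating $\E(h_T(0)) = \int_0^\infty \P(h_T(0)>k)\,dk$ and splitting at $k_*$ gives the announced bound in the main regime $\sigma pT^2 \ge 1$; the complementary regime $\sigma pT^2 \le 1$ is covered by the trivial inequality $\E(h_T(0)) \le \E|\{q\in\xi\cap\cV_T:\eta(q)=1\}| \le \sigma pT^2 \le (\sigma pT^2)^{1/(2-\zeta)}$, where the last inequality uses $1/(2-\zeta)\le 1$.

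The main obstacles are (a) the uniform concentration of $L_\gamma$ over the random path set $\cC_T$, which requires careful conditioning to decouple the $\varepsilon$-marks (defining $\cC_T$) from the $\eta$-marks (driving the sums) and from the position of the non-jump $\xi$-points; and (b) the verification of the optimization yielding the exponent $1/(2-\zeta)$. The exponent emerges from balancing the path-counting factor $4pT \sim T^{1-\zeta}$ against the Chernoff exponent $k\log(k/(\sigma T))$, and the hypothesis $p\ge T^{-\zeta}$ is used precisely so that this balance closes without logarithmic losses. The combinatorial enumeration $|\cC_T^{(j)}|\le (4pT)^j/j!$ on $\Omega_{\text{typ}}$ is where the proof diverges from the simpler lattice argument of \cite{HamblyMartin07}, since here the admissible jump locations are themselves given by the random Poisson process $\xi^{[\text{st}]}$ rather than a fixed grid.
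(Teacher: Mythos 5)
Your approach diverges substantially from the paper's mesoscopic-box argument, and unfortunately the final optimization does not close. Take $\zeta=0$, $p=1$, $\sigma=T^{-3/2}$ (so that $\sigma pT^2=T^{1/2}>1$, hence you are in the main regime). Then $k_*=C_0(\sigma p T^2)^{1/2}=C_0 T^{1/4}$, and $k_*/(eC\sigma T)=\tfrac{C_0}{eC}T^{3/4}$, so that
\[
k_*\log\!\big(k_*/(eC\sigma T)\big)\;=\;C_0T^{1/4}\Big(\tfrac{3}{4}\log T+\log\tfrac{C_0}{eC}\Big)\;=\;O\!\big(T^{1/4}\log T\big)\;\ll\; 5pT=5T.
\]
Thus the inequality ``$k_*\log(k_*/(eC\sigma T))\ge 5pT$'' fails, and your tail estimate $\exp(4pT-k\log(k/(eC\sigma T)))$ is not even $\le 1$ at $k=k_*$, so the integration over $k\ge k_*$ yields nothing. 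The problem is structural: you compare an entropy term of order $pT$ against a per-path Binomial tail at scale $\sigma T$, but these two quantities do not balance to give $(\sigma p T^2)^{1/(2-\zeta)}$ when $\sigma\ll p$. The naive union bound over \emph{individual paths} is simply too wasteful, because the $e^{\Theta(pT)}$ paths overlap heavily. The paper instead unions over mesoscopic boxes of size $\ell_h\times\ell_v$ chosen so that each box contains $O(1)$ height-$1$ marks and so that $p\ell_v/\ell_h\le 1$ makes a single horizontal crossing unlikely; the exponent $1/(2-\zeta)$ comes out of the box-geometry balance (Lemmas~\ref{lemma:dimrk}, \ref{lemma:cardBk}, \ref{smallk}, \ref{largek}), not from a path-vs.-Binomial tradeoff.

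A secondary gap: the claimed \emph{deterministic} bound $|\cC_T^{(j)}|\le(4pT)^j/j!$ on $\Omega_{\mathrm{typ}}$ does not follow from condition~(ii). Given a direction sequence, the positions $y_1,\ldots,y_j$ are fixed and one must count increasing chains $t_1<\dots<t_j$ with $(y_\ell,t_\ell)\in\xi^{[\text{st}]}$; the $1/j!$ factor reflects only the \emph{expected} fraction of ordered choices that are increasing. In a worst-case configuration (say, all sticky times in column $y_1$ precede those in $y_2$, etc.) \emph{every} selection of one point per column is already increasing, so the count can be $(2pT)^j$ with no factorial gain — and your event $\Omega_{\mathrm{typ}}$ does not exclude this. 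The Poissonian step does give $\E|\cC_T^{(j)}|\le(2pT)^j/j!$, but that is an expectation, not a high-probability statement, and upgrading it would require extra work (and in any case would not fix the optimization error above). To salvage the theorem you would need the box decomposition, or some other device that exploits the overlap among nearby paths rather than treating them as independent events in a union bound.
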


In order to ease the notation of this subsection we write
\[
\Gamma:=(\sigma p T^2)^{\tfrac{1}{2-\zeta}}
\]

Let us notice that for the case $\sigma pT^2 \leq 1$, the bound in \eqref{eq:hml} is straightforward. Indeed, by \eqref{eq:lpprep2} we can bound $h_T(0)$ by the number of blocks with height $1$ in $\cV_T$ which, by \eqref{eq:expectations}, gives the bound
\begin{equation}\label{eq:bbd1}
	\E(h_T(0)) = \sigma \E(N_T) = \sigma(pT^2+T) \leq 
	2\sigma pT^2 \leq 2(\sigma pT^2)^{\tfrac{1}{2-\zeta}},
\end{equation} where the first inequality follows from the fact that 
$pT^2\geq T^{2-\zeta}\geq T$ if $\zeta \leq 1$, whereas the second inequality is a consequence of the assumption $\sigma p T^2\leq 1$ and the fact that $2-\zeta \geq 1$. Hence, for the remainder of the~section we shall focus on the case 
$\sigma pT^2 > 1$. 

There are essentially two ways for a path to collect a large number of positive Bernoulli marks. Either the path finds regions where there are many positive Bernoulli marks and remains localized there, or it manages to travel across many different regions, collecting all the positive Bernoulli marks it can find. Whenever 
$\sigma pT^2 > 1$, the good concentration properties of the Bernoulli distribution rule out the former option (see Lemma \ref{smallk}), 
while the regularity of the Poisson process makes the latter highly unlikely (see Lemma \ref{largek} and keep in mind that, for $T$ large enough, compatible paths should be~essentially $p$-Lipschitz, see Section \ref{sec:condc3} for details). Therefore, with high probability any compatible path will not be able to collect a large number of positive Bernoulli~marks, a fact which in~turn will yield the bound in \eqref{eq:hml}.

To formalize this intuition and prove Theorem \ref{theo:bbp} in this case, we introduce some new notation and present a few useful lemmas. 
For fixed $\sigma \in (0,1]$ 
and $p\in [T^{-\zeta}, 1]$, set
\[
\ell_h:=\bigg\lfloor
\frac{\Gamma}{\sigma T}
\bigg\rfloor\,,\qquad
\ell_v:=\frac{T}{2\Gamma},
\]
and for $(i,j)\in\Z^2$ define the mesoscopic box $C_{ij}\subset \R^2$ as
\[
C_{i,j}
:=
(\ell_h i, \ell_v j)
+[-\ell_h/2,\ell_h/2)
\times [0,\ell_v].
\]
We shall call $i$ the \textit{column index} and $j$ the \textit{row index} of $C_{i,j}$, respectively. Notice that the dimensions of any such box are 
$\ell_h \times \ell_v$. We point out a few important consequences of this choice of $\ell_v$ and $\ell_h$ in the following lemma:

\begin{lemma}\label{lemma:dimrk} 
For $\ell_h$ and $\ell_v$ defined as above, we have:
\begin{enumerate}
\item $\ell_h \geq 1$;
\item the mean number of blocks of height $1$ in any   mesoscopic box $C_{i,j}$ has Poisson distribution with mean $\sigma \ell_h \ell_v \leq 1$;
\item $p\ell_v/\ell_h \leq 1$.
\end{enumerate}
\end{lemma}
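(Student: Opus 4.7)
The three claims are all rather direct computations once one unpacks the definitions of $\Gamma$, $\ell_h$, $\ell_v$, so my plan is basically to record the inequalities that make each one work, in the given order.

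For part (1), I would reduce $\ell_h \geq 1$ to the inequality $\Gamma \geq \sigma T$, since $\ell_h = \lfloor \Gamma/(\sigma T)\rfloor$. Raising both sides to the power $2-\zeta$, this is equivalent to $\sigma p T^2 \geq (\sigma T)^{2-\zeta}$, which rearranges to $p T^\zeta \geq \sigma^{1-\zeta}$. Under the standing hypothesis $p \geq T^{-\zeta}$ we have $pT^\zeta \geq 1$, while $\sigma\leq 1$ and $\zeta \leq 1$ give $\sigma^{1-\zeta}\leq 1$, so the inequality holds.

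For part (2), the key observation is that $\ell_h$ is an integer by construction, so the mesoscopic box $C_{i,j}$ contains exactly $\ell_h$ sites of $\Z$ and spans a time interval of length $\ell_v$. The blocks of height $1$ form a marked-Poisson thinning of $\xi$ with intensity $\sigma$ (by Bernoulli marking), so their count in $C_{i,j}$ is Poisson of mean $\sigma\ell_h\ell_v$. Using $\ell_h\leq \Gamma/(\sigma T)$ and the definition of $\ell_v$, I then compute
\[
\sigma \ell_h \ell_v \;\leq\; \sigma\cdot\frac{\Gamma}{\sigma T}\cdot\frac{T}{2\Gamma}\;=\;\frac{1}{2}\;\leq\; 1.
\]

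For part (3), I would first upgrade the trivial lower bound from part (1) to $\ell_h \geq \Gamma/(2\sigma T)$. This splits in two cases: if $\Gamma/(\sigma T)\geq 2$ then $\ell_h \geq \Gamma/(\sigma T)-1\geq \Gamma/(2\sigma T)$, while if $1\leq \Gamma/(\sigma T)<2$ then $\ell_h=1\geq \Gamma/(2\sigma T)$. Substituting this into $p\ell_v/\ell_h$ and using $\Gamma^{2-\zeta}=\sigma p T^2$ gives
\[
\frac{p\ell_v}{\ell_h}\;\leq\;\frac{pT/(2\Gamma)}{\Gamma/(2\sigma T)}\;=\;\frac{\sigma p T^2}{\Gamma^2}\;=\;(\sigma p T^2)^{1-\tfrac{2}{2-\zeta}}\;=\;(\sigma p T^2)^{-\tfrac{\zeta}{2-\zeta}},
\]
which is at most $1$ since $\sigma p T^2>1$ (the regime under consideration) and $\zeta/(2-\zeta)\geq 0$; the boundary case $\zeta=0$ just gives equality to $1$. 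There is no real obstacle here: the only thing that takes care is handling the floor in $\ell_h$, which I take care of by the two-case argument above; otherwise the claims follow by routine substitution.
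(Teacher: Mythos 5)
Your proof is correct and follows essentially the same route as the paper's: for part (1) you reduce to $\Gamma\geq\sigma T$ via the hypotheses $p\geq T^{-\zeta}$, $\sigma\leq 1$, $\zeta\leq 1$; for part (3) you establish $\ell_h\geq\Gamma/(2\sigma T)$ (your two-case argument is just an unpacking of the paper's one-line observation that $2\lfloor x\rfloor\geq x$ for $x\geq 1$) and then substitute to obtain $(\sigma pT^2)^{-\zeta/(2-\zeta)}\leq 1$ using the standing assumption $\sigma pT^2>1$. Part (2) is the same immediate thinning computation.
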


\begin{remark}\label{boxdimrk}
On average, a compatible path that jumps only in one direction (right or left) crosses a horizontal distance of $p\ell_v$ in a time $\ell_v$, so that the quantity $p\ell_v/\ell_h$ can be interpreted as the mean proportion of a mesoscopic box that can be covered horizontally. The fact that this quantity is smaller than $1$ means that it is \emph{difficult} for an average path to fully cross a box horizontally.
\end{remark}

\begin{proof}[Proof of Lemma \ref{lemma:dimrk}]$\,$
	\begin{enumerate}
		\item By choice of $p$ we have 
		$\Gamma=(\sigma p T^2)^{\tfrac{1}{2-\zeta}}\geq  \sigma^{\tfrac{1}{2-\zeta}}T\geq \sigma T$ since $2-\zeta \geq 1$, so that $\ell_h\geq 1$.
		\item Obvious from the definition of the model and the dimensions of $C_{i,j}$.
		\item By Part 1, we can use the fact that $2\lfloor x\rfloor \geq x$ when $x\geq 1$,
		so that 
		\[
		\frac{p\ell_v}{\ell_h} \leq \frac{\sigma p T^2}{(\sigma p T^2)^{\tfrac{2}{2-\zeta}}} =(\sigma p T^2)^{-\tfrac{\zeta}{2-\zeta}},
		\]
		which, since $\sigma p T^2 > 1$, is bounded from above by 1.
	\end{enumerate}
\end{proof}

Let $r:=\lceil T/\ell_v \rceil \geq 1$ be the minimum number of mesoscopic boxes $C_{i,j}$
needed to vertically cover the interval $[0,T]$.
In particular, $r$ is also the minimum number of boxes 
needed to cover (the graph of) any compatible path $\gamma \in \cC_T$. 
Note that the boxes from row $r$ may fail to be entirely contained 
in $\R \times [0,T]$
and can in fact exceed the height $T$
(but this will not make any difference in the argument). 
Given a path $\gamma \in\cC_T$, let $B(\gamma)$ be the minimal collection of boxes $C_{i,j}$ covering the graph of $\gamma$. A collection of boxes $C_{i,j}$ will be called a \textit{compatible configuration} if it equals $B(\gamma)$ for some path $\gamma\in\cC_T$. For $k\geq r$, let $\cB_k$ denote the set of all compatible
configurations $\beta$ consisting of $k$ boxes. We shall encode the configurations of boxes in $\cB_k$ in the following manner: an element $\beta\in\cB_k$ will be denoted by a sequence
\[
\beta=(i_j,k_j)_{j=1}^r
\] where $i_j$ represents the column index of the leftmost
box in row $j$ and $k_j$ the number of boxes in row $j$.
Since we are considering compatible configurations of boxes and all graphs of paths $\gamma \in \cC_T$ cover $[0,T]$ vertically,
there must be at least one box in each row  so that 
$k_1,\dots,k_r \geq 1$ and also, since there are $k$ boxes in total,
we have $k_1+\cdots+k_r=k$. Furthermore, since almost~surely there are no points of the Poisson process at times $t=\ell_v j$, for any fixed $\sigma\in (0,1]$,
$p\in [T^{-\zeta}, 1]$
and all $j \in \Z$, at least one box in each row has~to~be adjacent to (i.e. share one of the sides with) one of the boxes in the row~above (except for the top row, of course). Finally, since any path $\gamma \in \cC_T$ ends at $x=0$, any compatible configuration must contain the box $C_{0,r}$ and so $i_r \leq 0$. 
Therefore, we have the following description for $\cB_k$:
\[
\cB_k=\Bigg\lbrace
(i_j,k_j)_{j=1}^r\,\Bigg|\,\begin{matrix}
	k_1,\dots,k_r\geq1\,,\ k_1+\cdots+k_r=k\,,\
	-k_r+1\leq i_r\leq 0\\
	i_{j+1}-k_j+1\leq i_j\leq i_{j+1}+k_{j+1}-1\, \text{ for all $1\leq j\leq r-1$}
\end{matrix}
\Bigg\rbrace.
\] This description allows us to estimate the size of $\cB_k$ in the following way: 
\begin{lemma}\label{lemma:cardBk}
	For every $k\geq r\geq 2$, we have
	\[
	|\cB_k|\leq	\mathrm{e}^{3k}.
	\]
\end{lemma}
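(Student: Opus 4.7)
The plan is to exploit the explicit description of $\cB_k$ given just before the lemma: each $\beta \in \cB_k$ is encoded by a composition $(k_j)_{j=1}^r$ of $k$ into $r$ positive parts, together with a tuple of leftmost-column indices $(i_j)_{j=1}^r$ whose admissible ranges are described by the displayed inequalities. I will count these two pieces separately and multiply.

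First, the number of compositions of $k$ into $r$ positive parts is $\binom{k-1}{r-1}$, which is bounded by $2^{k-1}$ regardless of $r$. Second, for a fixed composition, the range $-k_r + 1 \leq i_r \leq 0$ leaves exactly $k_r$ choices for $i_r$, while for $j < r$ the compatibility constraint on $i_j$ in terms of $i_{j+1}$ leaves exactly $k_j + k_{j+1} - 1$ choices. Hence the total number of admissible $(i_j)_{j=1}^r$ equals
\[
k_r \prod_{j=1}^{r-1} (k_j + k_{j+1} - 1).
\]

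To bound this product uniformly in the composition, I will apply the elementary inequality $n \leq 2^{n-1}$ (valid for all $n \geq 1$) factor by factor and then use the telescoping identity $\sum_{j=1}^{r-1} (k_j + k_{j+1}) = 2k - k_1 - k_r$. The exponents collapse, and since $k_1 \geq 1$ and $r \geq 2$ the product is bounded by $2^{2k}$, uniformly in the composition. Summing over the at most $2^{k-1}$ compositions yields
\[
|\cB_k| \leq 2^{k-1} \cdot 2^{2k} = 2^{3k-1} < e^{3k},
\]
where the final inequality uses $\ln 2 < 1$.

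The only subtlety, and the step to do carefully, is the uniform-in-composition bound on $k_r \prod_{j=1}^{r-1}(k_j + k_{j+1} - 1)$: the inequality $n \leq 2^{n-1}$ is tight enough here only because each $k_j$ (other than $k_1$ and $k_r$) appears twice in the telescoping sum, so the exponent grows like $2k$ rather than $k$; this is what forces the extra factor of $2^{2k}$ and ultimately accounts for the bound $e^{3k}$ rather than $e^{2k}$. Beyond this book-keeping, no genuine obstacle is expected.
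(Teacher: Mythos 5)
Your proof is correct and follows essentially the same route as the paper's: both count the compositions $(k_1,\dots,k_r)$ (bounded by $\binom{k-1}{r-1}\le 2^{k-1}$), both express the number of column placements for a fixed composition as $k_r\prod_{j=1}^{r-1}(k_j+k_{j+1}-1)$, and both bound this product by applying an elementary exponential inequality termwise and telescoping to get an exponent of order $2k$. The only cosmetic difference is that you use $n\le 2^{n-1}$ where the paper uses $x\le e^x$, which changes nothing substantive.
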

\begin{proof} Let $\mathcal{B}_k(k_1,\dots,k_r)$ be the collection of compatible  configurations of $k$ boxes which have exactly $k_j$ boxes on the $j$-th row, for each $j=1,\dots,r$. Since at least one box in each row has to be adjacent to one of the boxes in the row above,
	\[
	|\mathcal{B}_k(k_1,\dots,k_r)|=  (k_1+k_2-1)\dots(k_{r-1}+k_r-1)k_r.
	\] Using the bound $x \leq e^x$ for all $x > 0$, we obtain that 
	\[
	|\mathcal{B}_k(k_1,\dots,k_r)|\leq \exp\Big\{  (k_1+k_2)+\dots+(k_{r-1}+k_r)+k_r\Big\} \leq \mathrm{e}^{2k}.
	\] Hence, by summing over all possible choices of $k_1,\dots,k_r$, we obtain that
	\[
	|\mathcal{B}_k| \leq \binom{k-1}{r-1} \mathrm{e}^{2k}.
	\]
	Since the binomial coefficient is bounded from above by $2^{k-1}$, this readily gives the desired bound.
\end{proof}

Now, given any configuration of boxes $\beta$, let $n_\beta$ denote
the number of points contained in $\beta$ with height mark $\eta$ equal to $1$.
\begin{lemma}\label{smallk} Let $u > 0$ and $k < u\Gamma$. Then, for any $\beta\in\cB_k$,
	\[
	\mathbb{P}\big(n_\beta>u \Gamma
	\big) \leq 
	\exp\Big\{ -k-u\Gamma\Big( \log \frac{u\Gamma}{k} -1\Big)\Big\}.
	\]
\end{lemma}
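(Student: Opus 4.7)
The plan is to recognize $n_\beta$ as (dominated by) a Poisson random variable and then apply the standard Chernoff-type tail bound for Poisson distributions, optimizing in the dual parameter.

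First I would observe that, by construction, the mesoscopic boxes $C_{i,j}$ making up any configuration $\beta \in \mathcal{B}_k$ are pairwise disjoint. Since the space–time points with $\eta = 1$ form a Poisson process on $\Z \times \R_+$ with intensity $\sigma \cdot (n_\Z \otimes \lambda_{\R_+})$ (recall that the $\eta$-marks are i.i.d.\ Bernoulli$(\sigma)$ and independent of the Poisson process $\xi$), the counts of such points in the $k$ different boxes of $\beta$ are independent Poisson random variables, each with parameter $\sigma \ell_h \ell_v$. Consequently,
\[
n_\beta \sim \mathrm{Poisson}(k \sigma \ell_h \ell_v),
\]
and by Lemma \ref{lemma:dimrk}(2) we have $k \sigma \ell_h \ell_v \leq k$. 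In particular, $n_\beta$ is stochastically dominated by a $\mathrm{Poisson}(k)$ random variable, so it will be enough to bound the right tail of the latter.

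Next I would apply a Chernoff bound. For any $t > 0$, using the Poisson moment generating function $\E(e^{tn_\beta}) \leq \exp\{k(e^t - 1)\}$ together with Markov's inequality yields
\[
\P\bigl(n_\beta > u\Gamma\bigr) \leq e^{-tu\Gamma}\,\E\bigl(e^{t n_\beta}\bigr) \leq \exp\bigl\{ k(e^t - 1) - t u\Gamma \bigr\}.
\]
Since the hypothesis $k < u\Gamma$ makes $t^{*} := \log(u\Gamma / k)$ strictly positive, I would choose $t = t^{*}$ to minimize the right-hand side. Substituting $e^{t^*} = u\Gamma / k$ gives
\[
\P\bigl(n_\beta > u\Gamma\bigr) \leq \exp\Bigl\{ k\Bigl(\tfrac{u\Gamma}{k} - 1\Bigr) - u\Gamma \log \tfrac{u\Gamma}{k} \Bigr\} = \exp\Bigl\{ -k - u\Gamma \Bigl( \log\tfrac{u\Gamma}{k} - 1 \Bigr) \Bigr\},
\]
which is precisely the desired inequality.

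There is no real obstacle here: the estimate is the standard Cramér-type large-deviation bound for Poisson tails, and the only content specific to the paper is the identification of the parameter of the Poisson variable $n_\beta$ together with the key normalization $\sigma \ell_h \ell_v \leq 1$ provided by Lemma \ref{lemma:dimrk}(2), which is exactly what makes the mean of $n_\beta$ bounded by the number of boxes $k$ and thus produces the clean form of the bound.
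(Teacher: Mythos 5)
Your proof is correct and follows essentially the same argument as the paper: identify $n_\beta$ as Poisson with parameter $k\sigma\ell_h\ell_v \leq k$ (via disjointness of the boxes and Lemma~\ref{lemma:dimrk}(2)), apply the exponential Chebyshev/Chernoff bound, and optimize at $\lambda = \log(u\Gamma/k)$. The only cosmetic difference is that you phrase the parameter bound as stochastic domination by a $\mathrm{Poisson}(k)$ variable, whereas the paper uses it directly inside the MGF inequality; both are the same calculation.
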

\begin{proof} 
	Fix $u > 0$, 
	$k<u\Gamma$ and 
	$\beta \in \mathcal{B}_k$. 
	Since the different boxes $C_{i,j}$ do not overlap, 
	the random variable $n_\beta$ is Poisson distributed 
	with parameter $k\sigma \ell_h\ell_v \leq k$ by Lemma \ref{lemma:dimrk}. Hence,
	by the exponential Chebyshev inequality, for every $\lambda>0$,
	$$
	\P\Big(
	n_\beta > u\Gamma
	\Big)
	<\mathrm{e}^{
		-u\lambda \Gamma}
	\E\big(
	\mathrm{e}^{\l n_\beta}
	\big)\,
	\leq\,
	\exp\Big(
	-u\lambda \Gamma+k(\mathrm{e}^\lambda-1)
	\Big).
	$$
	Minimizing with respect to $\lambda$,
	we find that 
	$\lambda=\log(u\Gamma/k)>0$ is optimal,
	which in turn gives the desired bound.  
\end{proof}

Recall that, given a path $\gamma\in\cC_T$, we defined $B(\gamma)$ to be the minimal cover~of~$\gamma$ by mesoscopic boxes $C_{i,j}$. We have the following result:
\begin{lemma}\label{largek} If $k>16r$ then for any $\b\in\cB_k$ we have
	\begin{align*}
		\P\big(
		\exists\, \gamma\in\cC_T: B(\gamma)=\b
		\big)\,&\leq\
		\exp\Big\{ \ell_h\Big[ k - (k-4r)\log\Big(\frac{k}{r}-4\Big)\Big]\Big\}\\
		&\leq\exp\Big\{ k\Big[ 1 - \frac{3}{4}\log\Big(\frac{k}{r}-4\Big)\Big]\Big\}.
	\end{align*}
\end{lemma}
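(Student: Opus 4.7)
My approach is a first-moment (Markov) argument. Let $\mathcal{N}_\beta$ be the number of compatible paths $\gamma \in \cC_T$ with $B(\gamma) = \beta$. I would use $\P(\exists\,\gamma: B(\gamma) = \beta) \le \E[\mathcal{N}_\beta]$ and estimate the expectation by enumerating paths using the Poisson structure of $\xi^{\text{[st]}}$. The geometric starting point is that any $\gamma$ with $B(\gamma) = \beta$ must visit all $k_j$ horizontally consecutive boxes of row $j$, so it traverses at least $(k_j - 1)\ell_h$ horizontal units within the time slab $[\ell_v(j-1), \ell_v j]$; since each jump has unit amplitude and must occur at a sticky Poisson point, $\gamma$ makes at least $N_j := (k_j - 1)\ell_h$ jumps in row $j$.

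To count such paths, I would parameterize each $\gamma$ by its starting site $\gamma(0)$ (at most $k_1 \ell_h$ choices inside the row-$1$ boxes of $\beta$), its sequence of jump directions $(d_i)_{i=1}^N \in \{\pm 1\}^N$ (which, given the starting site, determines the jump positions), and its sequence of jump times. For a fixed starting site and direction sequence, Campbell's formula applied to the sticky Poisson process (of intensity $p$ per unit area) shows that the expected number of valid time sequences with $N_j$ jumps in row $j$ is exactly $\prod_j (p\ell_v)^{N_j}/N_j!$. Summing over direction sequences (contributing $2^{N_j}$ per row) and over admissible jump counts $N_j \ge (k_j-1)\ell_h$, I reach the master bound
\[
\E[\mathcal{N}_\beta] \le k_1 \ell_h \prod_{j=1}^r \sum_{N_j \ge (k_j - 1)\ell_h} \frac{(2 p \ell_v)^{N_j}}{N_j!}.
\]

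To control the row-wise tails I would use $p \ell_v \le \ell_h$ (Lemma~\ref{lemma:dimrk}) together with the standard Poisson Chernoff bound $\sum_{N \ge s} \mu^N/N! \le \exp\{s - s\log(s/\mu)\}$ valid for $s \ge \mu$. I would then split the rows into \emph{small} ($k_j \le 4$) and \emph{large} ($k_j \ge 5$): on small rows the trivial bound $e^\mu \le e^{2\ell_h}$ suffices, whereas on large rows the Chernoff bound yields $\exp\{(k_j - 1)\ell_h [1 - \log((k_j - 1)/2)]\}$. Writing $r_l := |\{j: k_j \ge 5\}|$ and $S := \sum_{j: k_j \ge 5}(k_j - 1)$, I would apply Jensen's inequality to the concave function $u \mapsto u(1 - \log u)$ to aggregate the large-row contributions into $\ell_h \, S \, (1 - \log(S/r_l))$. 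Since each small row contributes at most $3$ to $\sum_j(k_j - 1) = k - r$, I obtain $S \ge k - 4r$ and $S/r_l \ge (k - 4r)/r = k/r - 4$, which after bookkeeping yields the exponent $\ell_h \bigl[k - (k - 4r)\log(k/r - 4)\bigr]$. The second inequality then follows since $k > 16r$ implies $k - 4r > (3/4)k$.

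The main technical obstacle I anticipate is the combinatorial bookkeeping in the final step: the starting-site factor $k_1 \ell_h$ and the trivial small-row contributions (of size $e^{r_s \ell_h}$) must be absorbed into the main exponent. The assumption $k > 16r$ is tuned precisely for this purpose, as it guarantees $\log(k/r - 4) > \log 12 > 2$, so that the good term $(k - 4r)\log(k/r - 4) \ge 24r$ is large enough to dominate these $O(r\ell_h)$ corrections. A closely related delicate point is ensuring Jensen's inequality together with the bound $r_l \le r$ produces an expression in terms of $k/r$ rather than $k/r_l$, so that the resulting estimate is uniform in the shape of the row sizes $(k_j)$.
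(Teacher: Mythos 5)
Your proposal takes a genuinely different route from the paper's. Instead of a first-moment (path-counting) argument, the paper works directly with the \emph{probability} that a path crosses a large horizontal distance within one time slab: the event $\{\exists\gamma : B(\gamma)=\beta\}$ forces, independently in each row $j$ with $k_j\geq 3$, a crossing of $(k_j-2)\ell_h$ sites within time $\ell_v$, and such a crossing forces a chain of sticky points at $(k_j-2)\ell_h$ consecutive \emph{fixed} sites with increasing times, whose probability is simply $\P(\tau_{(k_j-2)\ell_h}\leq\ell_v)$ with $\tau_n\sim\Gamma(n,p)$. Both approaches use row-by-row factorization, a Chernoff tail per row, and Jensen's inequality across rows, but the first-moment version carries an intrinsic enumeration cost that the crossing-probability argument does not, and this cost is what kills your bookkeeping.

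Concretely, the factor $2^{N_j}$ from summing over jump directions adds $\ell_h\log 2$ per jump, hence an extra term of order $(k-4r)\ell_h\log 2$ in the exponent; there is no way to avoid it in a straight first-moment count. Done correctly, your Jensen aggregation yields $\ell_h S\bigl(1-\log\tfrac{S}{2r_l}\bigr)$ --- note the factor $2$ you dropped --- so that your exponent is $\log(k_1\ell_h)+2\ell_h r_s+\ell_h(k-4r)\bigl(1+\log 2-\log(k/r-4)\bigr)$. Matching the lemma's first exponent $\ell_h\bigl[k-(k-4r)\log(k/r-4)\bigr]$ would require $\log(k_1\ell_h)+2\ell_h r_s+\ell_h(k-4r)(1+\log 2)\leq\ell_h k$, equivalently (dropping the small corrections) $k\log 2\leq (4+4\log 2)r$, which already fails for $k\gtrsim 10r$ and in particular for all $k>16r$. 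Even the lemma's second, weaker inequality cannot be recovered near $k\asymp 16r$ when $\ell_h=1$: the slack $(\tfrac14 k-4r)\log(k/r-4)$ you would need vanishes at $k=16r$ while the terms to be absorbed do not. Your route therefore demands a considerably larger threshold than $16r$ and does not prove the lemma as stated.

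Two further errors: (i) the geometric lower bound on the number of jumps in row $j$ is $(k_j-2)\ell_h$, not $(k_j-1)\ell_h$ --- the path may enter the leftmost box at its right edge and leave the rightmost box at its left edge, so the guaranteed horizontal span is only $(k_j-2)\ell_h+1$; (ii) the small-row cost per row is $e^{2p\ell_v}\leq e^{2\ell_h}$, not $e^{\ell_h}$.
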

\begin{proof}
	Note that a path $\gamma$ can only change its $x$ coordinate
	if it encounters a sticky point, so that the occurrence of the event $\lbrace\exists\,\gamma\in\cC_T: B(\gamma)=\b\rbrace$ depends only on the points in $\xi^{\text{[st]}}$ (recall \eqref{eq:defs}). Thus, in the sequel we restrict our attention only to the subset $\xi^{\text{[st]}}$ of $\xi$, which is itself a Poisson process on $\Z \times \R_+$ with intensity measure $p(n_\Z \otimes \lambda_{\R^+})$. 
	
	Let $k > 16r$ and $\beta \in \mathcal{B}_k$.
	For $\beta=(i_j,k_j)_{j=1}^r$, the event 
	$\lbrace\exists\, \gamma\in\cC_T: B(\gamma)=\b\rbrace$
	implies that in each row $j$ such that $k_j\geq 3$ there exists a path that crosses a horizontal distance (in row $j$) of length at least $(k_j-2)\ell_h$  in a time lapse shorter than $\ell_v$. 
	Since these crossing events are independent for different~rows, by ignoring those rows on which $\beta$ has less than three boxes, we can bound the probability of the event 
	$\lbrace\exists\,\gamma\in\cC_T: B(\gamma)=\b\rbrace$
	by
	\begin{equation}\label{eq:prod}
		\P\big(
		\exists\, \gamma\in\cC_T: B(\gamma)=\b
		\big) \leq 
		\prod_{j\,:\, k_j \geq 3} \mathbb{P}(\tau_{(k_j-2)\ell_h} \leq \ell_v),
	\end{equation} 
	where $\tau_0 \equiv 1$ and, for $n \geq 1$, $\tau_n$ represents a sum of $n$ independent Exp$(p)$ random variables, i.e., $\tau_n \sim \Gamma(n,p)$. 
	By Chernov's bound, for every $\lambda\geq 0$ and $k_j \geq 3$ we have
	\begin{equation}\label{eq:jb0}
		\mathbb{P}(\tau_{(k_j-2)\ell_h} \leq \ell_v) 
		\leq 
		\exp\Big\{ \lambda \ell_v +(k_j-2)\ell_h \log \frac{p}{\lambda+p}\Big\}.
	\end{equation} Taking $\lambda= (k_j -2)\ell_h/\ell_v - p \geq 0$ (which is  optimal) yields
	\begin{align} 
		\mathbb{P}(\tau_{(k_j-2)\ell_h} \leq \ell_v) 
		&\leq \exp\Big\{ (k_j-2)\ell_h-p\ell_v + (k_j-2)\ell_h\log \frac{p\ell_v}{(k_j-2)\ell_h}\Big\} \nonumber\\
		& \leq \exp \Big\{ -\ell_h (k_j-2)\Big(\log\frac{(k_j-2)\ell_h}{p\ell_v}-1\Big)\Big\} \nonumber\\
		& \leq \exp \Big\{ -\ell_h (k_j-2)\Big(\log (k_j-2)-1\Big)\Big\},
		\label{eq:jb}
	\end{align} 
	where the last inequality follows from the fact that $p\ell_v/\ell_h\leq 1$.
	
	Plugging \eqref{eq:jb} back into \eqref{eq:prod} gives the bound
	\begin{align*}
		\mathbb{P}( \exists\, \gamma\in\cC_T: B(\gamma)=\b
		) &\leq \exp\Big\{ \ell_h\Big(\sum_{j\,:\,k_j \geq 3}(k_j-2) -\sum_{j\,:\,k_j \geq 3}(k_j-2)\log(k_j-2)\Big)\Big\}\\
		& \leq \exp\Big\{ \ell_h\Big[ k - M_3\Big(\frac{1}{M_3}\sum_{j\,:\,k_j \geq 3}(k_j-2)\log(k_j-2)\Big)\Big]\Big\},
	\end{align*} where $M_3:=|\{j:k_j\geq3\}|$. By the convexity of $x \mapsto x \log x$ for $x \geq 1$, using that $M_3 \leq r$ and also that $\sum_{j:k_j\geq 3} k_j \geq k-2r$, we conclude that for $k > 4r$, 
	\[
	\mathbb{P}( \exists\,\gamma\in\cC_T: B(\gamma)=\b) \leq \exp\Big\{ \ell_h\Big[ k - (k-4r)\log\Big(\frac{k}{r}-4\Big)\Big]\Big\}\,.
	\] 
	This is the first inequality of the lemma, 
	the second one follows from the first upon noticing that if $k > 16r$ then $k-4r > \frac{3}{4}k$ and
	\[
	1 - \frac{3}{4}\log\Big(\frac{k}{r}-4\Big) < 0,
	\] so that now the fact that $\ell_h \geq 1$ implies the desired inequality.
\end{proof}

We are now ready to finish the proof of Theorem \ref{theo:bbp} in the case $\Gamma > 1$ (which is the only remaining part). 
Fix $K\in\N$ and define the event 
\[
\cE_K:=\big\lbrace
\exists\,\gamma\in\cC_T:B(\gamma)\in\bigcup_{k>K}\cB_k
\,\big\rbrace.
\]
Then, for any $u>0$,
\[
\P\big(
h_T(0) >u\Gamma
\big)\leq
\P\big(\{h_T(0) >u\Gamma\} \cap \cE_K^c
\big)+
\P\big(
\cE_K
\big).
\]
Observe that, if the events $\lbrace h_T(0) >u\Gamma\rbrace$
and $\cE_K^c$
are both realized,
there~must exist a configuration of boxes $\beta$
in $\bigcup_{r\leq k\leq K}\cB_k$ which contains more than
$u\Gamma$ positive Bernoulli marks. Therefore,
by the union bound, we have 
\begin{multline}\label{eq:smallLarge}
	\P\big(
	h_T(0)>u\Gamma
	\big)
	\leq \sum_{k=r}^K
	\sum_{\beta \in\cB_k}
	\P\big( n_\beta >u\Gamma
	\big)\\
	+\sum_{k>K}
	\sum_{\beta \in\cB_k}
	\P\big(
	\exists\, \gamma\in\cC_T: B(\gamma)=\beta
	\big).
\end{multline}
As long as $16r<K<u\Gamma$,
we can use the three Lemmas~\ref{lemma:cardBk}, \ref{smallk} and~\ref{largek} to bound the above sums by
\begin{multline*}
	\sum_{r\leq k\leq K} \mathrm{e}^{3k}\exp\bigg(
	-k-u\Gamma\Big(
	\log\frac{u\Gamma}{k}-1
	\Big)
	\bigg)
	+\sum_{k>K} \mathrm{e}^{3k}\exp\bigg( k\Big[ 1 - \frac{3}{4}\log\Big(\frac{k}{r}-4\Big)\Big]\bigg)\\
	\leq K\exp\bigg(
	2K-u\Gamma\Big(
	\log\frac{u\Gamma}{K}-1
	\Big)
	\bigg)
	+\sum_{k>K} \exp\bigg( k\Big[ 4 - \frac{3}{4}\log\Big(\frac{K}{r}-4\Big)\Big]\bigg).
\end{multline*}
Now, let us make the following choice of $u$ and $K=K(u)$: we take  
\[
u > 8\mathrm{e}^{14}\,,\qquad
K:=\bigg\lfloor
\frac{u\Gamma}{\mathrm{e}^3}
\bigg\rfloor.
\] 
Observe that this choice of $u$ and $K$ implies in particular that
$K \geq 4\mathrm{e}^{11}\Gamma$.
But, since $r=\lceil T/\ell_v\rceil=\lceil 2\Gamma \rceil$ and $\Gamma > 1$,
we have $r < 3\Gamma$ and thus $K > \mathrm{e}^{11}r$.
Hence, for this choice of $u$ and $K$, we obtain the bound	
\begin{multline}
	\P\big(
	h_T(0)>u\Gamma
	\big)\leq
	\frac{u\Gamma}{\mathrm{e}^3}e^{-u\Gamma}
	+\frac{1}{1-\mathrm{e}^{-2}}\exp\Big(
	-\frac{2u\Gamma}{\mathrm{e}^3}
	\Big)\\
	\leq
	\frac{2u\Gamma}{\mathrm{e}^3}\exp\Big(
	-\frac{2u\Gamma}{\mathrm{e}^3}
	\Big)
\end{multline}
which is valid for all $u>8e^{14}$. Therefore, we reach the following bound for
the expectation of $h_T(0)$:
\begin{multline*}
	\E(h_T(0))=\int_0^\infty \P(h_T(0)>x)\mathrm{d}x=
	\Gamma\int_0^\infty \P(h_T(0)>u \Gamma)\mathrm{d}u\\
	\leq\Gamma \bigg(8\mathrm{e}^{14}+\int_{8\mathrm{e}^{14}}^\infty \P(h_T(0)>u \Gamma)\mathrm{d}u\bigg)
	\leq 8\mathrm{e}^{14}\Gamma+\mathrm{e}^3/2,
\end{multline*}
where the last inequality follows from the bound
\[
\int_{8\mathrm{e}^{14}}^\infty \P(h_T(0)>u \Gamma)\mathrm{d}u \leq \int_{0}^\infty \frac{2u\Gamma}{\mathrm{e}^3}\exp\Big(
-\frac{2u\Gamma}{\mathrm{e}^3}\Big) \mathrm{d}u = \frac{\mathrm{e}^3}{2\G}.
\] Since we are assuming that
$\Gamma=\sigma p T^2>1$, the previous bound implies that
\[
\E(h_T(0)) \leq 9\mathrm{e}^{14} (\sigma p T^2)^{\tfrac{1}{2-\zeta}}.
\] Taking into consideration \eqref{eq:bbd1}, by setting $C:=9\mathrm{e}^{14}$ we obtain \eqref{eq:hml} and thus conclude the proof of Theorem \ref{theo:bbp}.

\subsection{Controlling the $L^1$-norm of $\widetilde{R}^{(T)}_k$ on a good event} 

Our next step in the proof of Proposition \ref{prop:remdiscrete2} will be to obtain suitable control over the expectation of $\widetilde{R}^{(T)}_k$ outside of some ``bad'' event on which the heights $M_i^{(T)}$ are not well-behaved.

To this end, for each $T > 0$ let us consider the random variable $N_T$, which is the number of attainable space-time points in $\cV_T$. Recall that the law of~$N_T$ depends on the sticking parameter $p$, even though this is not made explicit in the notation. With this in mind, let us write
\[
\cB^{(T)}:=\{ \tfrac{1}{2}pT^2 \leq N_T \leq \tfrac{3}{2}pT^2\}
\]and, in analogy with \cite[Equation~(3.8)]{HamblyMartin07}, for each $k \in \N_0$ with $T >~(2k)^{\tfrac{1}{\alpha}}$ define the ``good event'' $\cG^{(T)}_k$ as
\[
\cG^{(T)}_k:= \cB^{(T)} \cap \widetilde{\cG}^{(T)}_k,
\] where
\[
\widetilde{\cG}^{(T)}_k:=\{ F^{-1}\big(1-\tfrac{2i}{N_T}\big) \leq M_{i}^{(T)} \leq F^{-1}\big(1-\tfrac{1}{N_T}\big) \text{ for all }k \leq i \leq N_T\},
\] with the convention that $F^{-1}(y):=0$ if $y \leq 0$. 
Note that, since $p \in [T^{-\zeta},1]$ and $\zeta \in (0,2-\alpha)$, we have $\tfrac{1}{2}pT^2 > k$ so that on $\cB^{(T)}$ the condition $N_T > k$ in the definition of $\widetilde{\cG}^{(T)}_k$ is always satisfied and thus $\cG^{(T)}_k$ is well-defined.

\begin{lemma}\label{lemma:badset} For any fixed $\zeta \in (0,(2-\alpha) \wedge 1)$ we have that
\[
\lim_{k \rightarrow \infty} \left[\sup_{T > (2k)^{1/\alpha}}\left( \sup_{p \geq T^{-\zeta}} \P_p\big((\cG^{(T)}_k)^c\big) \right)\right] = 0.
\]
\end{lemma}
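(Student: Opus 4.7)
The plan is to split the bad event as
\[
\P_p((\cG^{(T)}_k)^c) \leq \P_p((\cB^{(T)})^c) + \P_p((\widetilde{\cG}^{(T)}_k)^c),
\]
and bound each term uniformly in $p \geq T^{-\zeta}$ and $T > (2k)^{1/\alpha}$. The first term is directly controlled by the concentration estimate \eqref{eq:T2pts} from Lemma~\ref{lemma:control}: taking $\delta = 1/2$ yields $\sup_{p \geq T^{-\zeta}} \P_p((\cB^{(T)})^c) \leq \mathrm{e}^{-C T^{1-\zeta}}$ for all $T$ large enough, so the supremum over $T > (2k)^{1/\alpha}$ is at most $\mathrm{e}^{-C(2k)^{(1-\zeta)/\alpha}}$, which vanishes as $k \to \infty$ since $\zeta < 1$.

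For the second term, I would condition on $N_T$. By the construction in Section~\ref{sec:const}, the $\eta$-marks are i.i.d.\ with distribution $F$ and are independent of the $\varepsilon$-marks; since $\cV_T$ is a function of $\xi$ and the $\varepsilon$-marks alone, conditionally on $\{N_T = N\}$ the heights $(M^{(T)}_1, \ldots, M^{(T)}_N)$ are distributed as the decreasing order statistics of $N$ i.i.d.\ samples from $F$. Applying the quantile transform $V_i := 1 - F(M^{(T)}_{N-i+1})$ (uniform on $[0,1]$ by continuity of $F$), the complement of $\widetilde{\cG}^{(T)}_k$ becomes
\[
\bigcup_{k \leq i \leq N} \bigl\{V_{(i)} \notin [1/N, 2i/N]\bigr\},
\]
where $V_{(1)} \leq \cdots \leq V_{(N)}$ are the increasing uniform order statistics.

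For the upper tail, $\{V_{(i)} > 2i/N\}$ with $i \leq N/2$ says that fewer than $i$ of the $V_j$'s fall below $2i/N$; the multiplicative Chernoff bound applied to $\mathrm{Bin}(N, 2i/N)$ (mean $2i$) bounds this probability by $\mathrm{e}^{-i/4}$, while for $i > N/2$ the condition is automatic since $V_{(i)} \leq 1 \leq 2i/N$. A union bound gives $\sum_{i \geq k} \mathrm{e}^{-i/4} \leq C \mathrm{e}^{-k/4}$. For the lower tail, monotonicity reduces $\{\exists i \geq k: V_{(i)} < 1/N\}$ to $\{V_{(k)} < 1/N\}$, which is the event that at least $k$ uniforms fall below $1/N$; a direct union bound gives probability at most $\binom{N}{k}(1/N)^k \leq 1/k!$. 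Crucially, both bounds are independent of $N$, so $\P_p((\widetilde{\cG}^{(T)}_k)^c) \leq C\mathrm{e}^{-k/4} + 1/k!$ uniformly in $p$ and $T$, and summing the two pieces proves the claim.

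The main subtlety is ensuring that every estimate is uniform over $p \in [T^{-\zeta}, 1]$: for the first term this comes from the uniform tail bound in Lemma~\ref{lemma:control}, while for the second it is automatic once one notices that the concentration bounds for uniform order statistics do not depend on $N$ at all. The hypothesis $\zeta < 2 - \alpha$ enters only to guarantee that on $\cB^{(T)}$ with $T > (2k)^{1/\alpha}$ one indeed has $N_T \geq pT^2/2 \geq T^{2-\zeta}/2 \geq k$, so that the conditional argument applies with $N \geq k$ throughout the relevant range. Beyond this uniformity check, the argument is standard.
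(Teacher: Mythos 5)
Your proof is correct and takes essentially the same route as the paper: the same decomposition into $\P_p((\cB^{(T)})^c)$ (controlled uniformly via \eqref{eq:T2pts}) plus the conditional estimate on $(\widetilde{\cG}^{(T)}_k)^c$ given $N_T=N$, with Chernoff's bound for the lower-tail deviations and a binomial tail estimate for the upper ones. The only cosmetic differences are your quantile-transform framing and your use of $\binom{N}{k}N^{-k}\le 1/k!$ where the paper applies Markov's inequality to get $1/k$; both are $N$-free and vanish as $k\to\infty$.
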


\begin{proof} Using the union bound, we have that
\[
\P_p\big((\cG^{(T)}_k)^c\big) \leq \P_p((\cB^{(T)})^c) + \sum_{N=k+1}^\infty \P_p\big((\widetilde{\cG}^{(T)}_k)^c | N_T = N\big)\P_p( N_T = N). 
\] By Lemma \ref{lemma:control} there exists a constant $C>0$ such that for all $k \geq 1$, 
\begin{equation}
\label{eq:convtp}
\sup_{T > (2k)^{1/\alpha}}\left( \sup_{p \geq T^{-\zeta}} \P_p\big((\cB^{(T)})^c\big)\right)\leq \mathrm{e}^{-Ck^{1/\alpha}}.
\end{equation} On other hand, for $N > k$ we can bound $\P_p\big((\widetilde{\cG}^{(T)}_k)^c | N_T = N\big)$ from above by
\[
\P_p\big(M_{k}^{(N)} > F^{-1}(1-\tfrac{1}{N}) | N_T = N\big) + \sum_{i=k}^{
\lfloor N/2\rfloor}\P_p\big(M_{i}^{(N)} < F^{-1}(1-\tfrac{2i}{N}) | N_T = N\big),
\]
where, since $F(F^{-1}(x))\geq x$ for all $x \in [0,1)$,
\[
\P_p\big(M_{k}^{(N)} > F^{-1}(1-\tfrac{1}{N}) | N_T = N\big) \leq \P(\text{Binomial}(N,\tfrac{1}{N}) \geq k) \leq \frac{1}{k}
\] by Markov's inequality and 
\[
\P_p\big(M_{i}^{(N)} < F^{-1}(1-\tfrac{2i}{N}) | N_T = N\big) \leq \P(\text{Binomial}(N,\tfrac{2i}{N}) < i) \leq \mathrm{e}^{-\frac{i}{4}}
\] by Chernov's bound. Therefore, we obtain that 
\[
\sum_{N=k}^\infty \P_p\big((\widetilde{\cG}^{(T)}_k)^c | N_T = N\big)\P_p( N_T = N) \leq \frac{1}{k} +  \frac{1}{1-\mathrm{e}^{-\frac{1}{4}}}\mathrm{e}^{-\frac{k}{4}}
\] uniformly over $T > 0$ and $p \in [T^{-\zeta},1]$ which, together with the bound in~\eqref{eq:convtp}, implies the result.
\end{proof}

Next, let us define for each $k \in \N$ and $T > 0$, the quantity
\[
L_k^{(T)}:= \max_{A \in \cC_T^{(k)}} |A|,
\] i.e. $L_k^{(T)}$ is the maximum number of the points $U_1^{(T)},\dots,U_{k \wedge N_T}^{(T)}$ which can be collected by a single path $\gamma \in \cC_T$. Once again we point out that the law of $L_k^{(T)}$ depends on the sticking parameter $p$. Taking this into consideration, we have the following uniform control over the expectation of $L_k^{(T)}$ on $\cB^{(T)}$:

\begin{lemma}\label{lemma:hml} There exists a constant $C>0$ such that, for any $\zeta \in [0,1]$, $T \geq 1$, $p \in [T^{-\zeta},1]$ and $k \in \N$, we have
\begin{equation} \label{eq:boundhm}
	\E_p(L_{k}^{(T)} ; \cB^{(T)}) \leq C k^{\tfrac{1}{2-\zeta}}.
\end{equation}
\end{lemma}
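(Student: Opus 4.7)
My plan is to bound $L_k^{(T)}$ by coupling the original heavy-tailed model with an appropriately chosen Bernoulli ballistic deposition model on the same Poisson process, so that Theorem \ref{theo:bbp} can be invoked. The key reformulation relies on the observation that, since $F$ is continuous by Assumption~(F1), the heights at points of $\cV_T$ can be generated as $\eta(v) := F^{-1}(1-W_v)$ for i.i.d.\ Uniform$[0,1]$ variables $(W_v)_{v \in \xi}$ (independent of $\xi$ and of the $\varepsilon$-marks). Then the top $k$ points $U_1^{(T)},\ldots,U_{k\wedge N_T}^{(T)}$ are precisely those points of $\cV_T$ carrying the $k$ smallest values of $W_v$. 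This makes it possible to reuse the same $W_v$'s to produce a BBD model on the same $\xi$ and same $\varepsilon$-marks by declaring the height of block $v$ to be $\bar\eta(v) := \mathbf{1}_{\{W_v \leq \sigma\}}$ for a parameter $\sigma$ to be chosen.

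The next step is to set $\sigma := \min\{4k/(pT^2),\,1\}$. On the event
\[
\mathcal{D} := \{W_{U_i^{(T)}} \leq \sigma \text{ for all } i = 1,\ldots, k\wedge N_T\},
\]
every top-$k$ point of the heavy-tailed model receives a $1$-mark in the BBD, so any path $\gamma \in \cC_T$ that visits $m$ of the top-$k$ points picks up at least $m$ units in the BBD variational formula \eqref{eq:lpprep2}. Writing $h_T(0)$ for the BBD height function at the origin at time $T$, this gives $L_k^{(T)} \leq h_T(0)$ on $\mathcal{D}$. Combined with the deterministic bound $L_k^{(T)} \leq k$, this yields
\[
\E_p(L_k^{(T)};\,\cB^{(T)}) \leq \E_p(h_T(0)) + k\,\P_p(\mathcal{D}^c \cap \cB^{(T)}).
\]
By Theorem \ref{theo:bbp}, $\E_p(h_T(0)) \leq C(\sigma pT^2)^{1/(2-\zeta)} \leq C(4k)^{1/(2-\zeta)} \leq 4C\,k^{1/(2-\zeta)}$, using $\zeta \leq 1$ in the last step.

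For the second term I will split into two cases. When $\sigma = 1$ (the regime $pT^2 \leq 4k$) the event $\mathcal{D}$ holds with probability one and the term vanishes. When $\sigma < 1$, the conditional distribution of $|\{v \in \cV_T : W_v \leq \sigma\}|$ given $N_T$ is $\text{Binomial}(N_T,\sigma)$, and $\mathcal{D}$ is exactly the event that this count is at least $k \wedge N_T$. On $\cB^{(T)}$ we have $\sigma N_T \geq 2k$, so a standard multiplicative Chernoff bound yields $\P_p(\mathcal{D}^c \mid N_T) \leq e^{-k/4}$ on $\cB^{(T)}$, and $k e^{-k/4}$ is uniformly bounded in $k$. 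Combining these estimates gives the desired $O(k^{1/(2-\zeta)})$ bound, uniformly in $T \geq 1$ and $p \in [T^{-\zeta},1]$. The only delicate point (the main ``obstacle'', such as it is) is the calibration of $\sigma$: it must be large enough that $\mathcal{D}$ is overwhelmingly likely on $\cB^{(T)}$, yet small enough that $\sigma p T^2 \lesssim k$ to keep the BBD bound of Theorem \ref{theo:bbp} at the correct order; the choice $\sigma = \min\{4k/(pT^2),1\}$ balances both requirements simultaneously.
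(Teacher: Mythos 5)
Your proposal is correct, and it reaches the conclusion by a route that differs in an interesting way from the paper's. Both arguments reduce to the Bernoulli ballistic deposition bound of Theorem~\ref{theo:bbp} with exactly the same parameter $\sigma=\min\{4k/(pT^2),1\}$ and the same multiplicative Chernoff step on $\cB^{(T)}$, but the mechanism that links $L_k^{(T)}$ to the BBD height is different. The paper works conditionally on $N_T=N$ and uses exchangeability of the positions $(U_i^{(T)})$ to establish the exact identity
\[
\E_p(h_T(0;\sigma)\,;\,N_T=N)=\sum_{r=0}^N\P(\mathrm{Binomial}(N,\sigma)=r)\,\E_p(L_r^{(T)}\,;\,N_T=N),
\]
and then inverts it by noting that the coefficient weight $\P(\mathrm{Binomial}(N,\sigma)\geq k\wedge N)$ is bounded below (using monotonicity of $r\mapsto \E_p(L_r^{(T)};N_T=N)$). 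You instead construct an explicit monotone coupling of the two height assignments through a single sequence of uniforms $W_v$, obtaining the pathwise inequality $L_k^{(T)}\leq h_T(0)$ on the event $\mathcal{D}$ that all top-$k$ uniforms fall below $\sigma$, and handle $\mathcal{D}^c$ by the crude bound $L_k^{(T)}\leq k$ together with the same Chernoff estimate. Your version is arguably more transparent, since the domination is realized pathwise rather than through a distributional identity; the paper's version yields the somewhat more structured mixture identity \eqref{eq:descompbbd} as a byproduct, but for the purpose of Lemma~\ref{lemma:hml} the two are equivalent in strength. One small point worth stating explicitly (which you implicitly use): since $F$ is continuous (Assumption F1), its generalized inverse $F^{-1}$ is strictly increasing, so the order of the heights $\eta(v)=F^{-1}(1-W_v)$ is exactly the reverse of the order of the $W_v$'s; this is what guarantees that the top-$k$ points of the heavy-tailed model are precisely the points with the $k$ smallest $W$-values.
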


\begin{remark} In the particular case when $\zeta = 0$, \eqref{eq:boundhm} becomes the analogue (for ballistic deposition) of the estimate from \cite[Lemma 3.5]{HamblyMartin07} obtained in the context of last passage percolation. Therefore, we can view Lemma \ref{lemma:hml} as an extension of \cite[Lemma 3.5]{HamblyMartin07} (in the context of ballistic deposition) to the~case in which the sticking parameter is allowed to depend (suitably) on~$T$.
\end{remark}

\begin{proof} Note that the law of $L_k^{(T)}$ does not depend on the height distribution, since $L_k^{(T)}$ is a measurable function of only $\xi$ and its $\varepsilon$-marks. In particular, we may assume $L_k^{(T)}$ is the one corresponding to the BBD model introduced in Subsection~\ref{sec:bbd}. Therefore, if $h_T(0;\sigma)$ denotes the height of the cluster over $0$ at time $T > 0$ for~the BBD model with height distribution $\text{Bernoulli}(\sigma)$ then, for any $N \in \N$, we have
\begin{equation} \label{eq:descompbbd}
\E_p(h_T(0;\sigma) ; N_T = N) = \sum_{r=0}^N \P( \text{Binomial}(N,\sigma)=r)\E_p(L_r^{(T)} ; N_T = N).
\end{equation} Indeed, let us call a set $A \subseteq \{1,\dots,N_T\}$ \textit{attainable} if there exists some path $s \in \cC_T$ collecting all the points $U_i^{(T)}$ with $i \in A$ and, given $B \subseteq \{1,\dots,N_T\}$, let us define
\[
\mathcal{C}_T^{B}:= \{ A \subseteq B : A \text{ attainable}\}.
\] (Note that $\cC^{(k)}_T = \cC^B_T$ with $B=\{1,\dots,k\wedge N_T\}$.) Then, since the distribution of $(U_i^{(T)} : i=1,\dots,N_T)$ is invariant under permutations of the indices $i$, it~follows that $\cC^{B}_T$ has the same law as $\cC^{(k)}_T$ for any $B$ such that $|B|=k\wedge N_T$. Hence, since $h_T(0)\overset{d}{=}\max_{A \in \cC^{\widehat B}_T}|A|$ with $\widehat B:=\{ i=1,\dots,N_T : \eta(U_i^{(T)})=1\}$ and the $\eta$-marks are independent of both the exact positions of the $U_i^{(T)}$ and their respective $\varepsilon$-marks, we obtain that, for any $0\leq r \leq N$, 
\[
\E_p(h_T(0;\sigma) \,|\, |\widehat{B}|=r\,,\,N_T=N) = \E_p(L_r^{(T)}|N_T = N)
\] from where \eqref{eq:descompbbd} now readily follows. In particular, since $\E_p(L_r^{(T)} ; N_T = N)$ is increasing in $r$ by definition and $L^{(T)}_{k \wedge N}=L^{(T)}_k$ on the event that $N_T=N$, for any $k,N \in \N$ we have that
\begin{equation} \label{eq:bound}
\P( \text{B}_{N,\sigma}\geq k \wedge N)\E_p(L_{k}^{(T)} ; N_T = N) \leq \sum_{r=0}^N \P( \text{B}_{N,\sigma}=r)\E_p(L_r^{(T)} ; N_T = N),
\end{equation} where we have abbreviated the $\text{Binomial}(N,\sigma)$ distribution by $\text{B}_{N,\sigma}$.

Now, observe that \eqref{eq:bound} holds for all $\sigma \in (0,1]$. Let us take $\sigma:= \min \{ \frac{4k}{pT^2} , 1\}$. Then, whenever $4k \geq pT^2$ we have that $\sigma=1$ and thus \eqref{eq:bound} becomes 
\begin{equation}\label{eq:lowerbound1}
\E_p(L_{k}^{(T)} ; N_T = N) \leq \sum_{r=0}^N \P( \text{B}_{N,\sigma}=r)\E_p(L_r^{(T)} ; N_T = N).
\end{equation} On the other hand, whenever $4k < pT^2$ we have that $k < \frac{N}{2}$ for all $N \geq \tfrac{1}{2}pT^2$ so that $k \wedge N = k$ and thus, since $k=\tfrac{1}{4}\sigma pT^2$ in this case, it holds that  
\begin{equation}\label{eq:lowerbound2}
\P( \text{B}_{N,\sigma}\geq k \wedge N) \geq \P( \text{B}_{N,\sigma} \geq  \tfrac{1}{2}N\sigma) \geq 1 - \mathrm{e}^{-\tfrac{1}{8}\sigma N} \geq 1 - \mathrm{e}^{-\tfrac{1}{4}}
\end{equation} by Chernov's bound. Combining \eqref{eq:lowerbound1} and \eqref{eq:lowerbound2}, we conclude that there exists some constant $C_1>0$ such that, for all $k \in \N$ and $N \geq \tfrac{1}{2}pT^2$, 
\begin{align*}
\E_p(L_{k}^{(T)} ; N_T = N) &\leq C_1 \sum_{r=0}^N \P( \text{B}_{N,\sigma}=r)\E_p(L_r^{(T)} ; N_T = N)\\ & = C_1 \E_p(h_T(0;\sigma) ; N_T = N),
\end{align*} where the equality on the second line is a consequence of \eqref{eq:descompbbd}. Therefore, by first summing over $N \in [\frac{1}{2}pT^2,\tfrac{3}{2}pT^2]$ and then using Theorem \ref{theo:bbp}, we see that there~exists a constant $C_2 > 0$ such that 
\[
\E_p(L_{k}^{(T)} ; \cB^{(T)}) \leq C_2 (\sigma p T^2)^{\tfrac{1}{2-\zeta}} \leq C_2(4k)^{\frac{1}{2-\zeta}} 
\] which, upon taking $C:=4C_2$, completes the proof.
\end{proof}

Next, we shall need a bound on the $L^1$-norm of the random variables $\widetilde{M}_i^{(T)}$ on the event $\widetilde{\cG}_k^{(T)}$.

\begin{lemma}\label{lemma:3.8} Given $\rho \in (0,\tfrac{1}{\alpha})$ there exist constants $c_0, c_1 >0$ and $c_2 \in~(0,1)$ (depending only on $\rho$ and $F$) such that
\[
\E( \tfrac{1}{a_N}M_i^{(T)}\mathbf{1}_{\widetilde{\cG}_k^{(T)}}| N_T = N ) \leq c_0i^{-\tfrac{1}{\alpha}+\rho} +  \frac{c_1}{a_{N}}\mathbf{1}_{\{i \geq c_2 N\}}
\] for all $N$, $i$ and $k$ satisfying $2(1+\tfrac{1}{\alpha})<k\leq i \leq N$.
\end{lemma}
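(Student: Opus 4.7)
The plan is to replace $M_i^{(T)}$ by a Beta-distributed variable via the probability integral transform and then combine a Potter-type estimate on the regularly varying tail quantile $a_t=F^{-1}(1-1/t)$ with explicit moment bounds for the Beta distribution.

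Conditionally on $N_T=N$, the (decreasing) order statistics $(M_i^{(T)})_{1\le i\le N}$ coincide in distribution with those of $N$ i.i.d.~samples from $F$. Since $F$ is continuous (Assumption F1), the variables $V_i:=1-F(M_i^{(T)})$ form the increasing order statistics of $N$ i.i.d.~uniforms on $(0,1)$, so $V_i\sim\mathrm{Beta}(i,N-i+1)$, and the event $\widetilde{\cG}_k^{(T)}$ forces $V_i\in[1/N,2i/N]$. Writing $M_i^{(T)}=F^{-1}(1-V_i)=a_{1/V_i}$, I get
\begin{equation*}
\E\Big(\tfrac{1}{a_N}M_i^{(T)}\mathbf{1}_{\widetilde{\cG}_k^{(T)}}\,\Big|\,N_T=N\Big)
\le \E\Big(\tfrac{a_{1/V_i}}{a_N}\,\mathbf{1}_{\{V_i\in[1/N,2i/N]\}}\Big).
\end{equation*}

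Since $a_t$ is regularly varying with index $1/\alpha$ by Assumption F2, I would invoke Potter's bounds: for the fixed $\rho\in(0,1/\alpha)$ there exist $T_0\ge 1$ and $K_\rho>0$ such that
\begin{equation*}
\frac{a_{1/v}}{a_N}\le K_\rho\,(Nv)^{-1/\alpha+\rho}
\quad\text{whenever }v\in[1/N,1/T_0]\text{ and }N\ge T_0,
\end{equation*}
while for $v>1/T_0$ the monotonicity of $F^{-1}$ yields the crude estimate $a_{1/v}\le F^{-1}(1-1/T_0)=:c_1$. I would then split the range of $V_i$ accordingly. The contribution from $\{V_i>1/T_0\}$ is nonempty only when $2i/N>1/T_0$, i.e.~when $i\ge c_2 N$ with $c_2:=1/(2T_0)$, and is bounded by $c_1/a_N$, producing the second term of the claim. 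The contribution from $\{V_i\in[1/N,1/T_0\wedge 2i/N]\}$ is bounded by
\begin{equation*}
K_\rho\,\E\big((NV_i)^{-1/\alpha+\rho}\big)=K_\rho\,N^{-1/\alpha+\rho}\,\E\big(V_i^{-1/\alpha+\rho}\big).
\end{equation*}

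The heart of the argument is now the exact Gamma-function identity
\begin{equation*}
\E(V_i^{-s})=\frac{\Gamma(i-s)\,\Gamma(N+1)}{\Gamma(i)\,\Gamma(N+1-s)},\qquad s<i,
\end{equation*}
applied with $s=1/\alpha-\rho$. The hypothesis $k>2(1+1/\alpha)$ ensures that $i\ge k$ stays uniformly away from the pole $i=s$, so that $\Gamma(i-s)/\Gamma(i)\le C_1\,i^{-s}$ uniformly in $i$; similarly $\Gamma(N+1)/\Gamma(N+1-s)\le C_2\,N^s$. Multiplying, I get $\E\big((NV_i)^{-1/\alpha+\rho}\big)\le C\,i^{-1/\alpha+\rho}$, which supplies the first term of the claim with $c_0:=K_\rho C$. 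The finitely many small values of $N$ below $T_0$ can be absorbed into the constants since on $\widetilde{\cG}_k^{(T)}$ one trivially has $M_i^{(T)}/a_N\le 1$.

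The main obstacle will be the technical bookkeeping needed to make the Potter bound and the Gamma-ratio estimates hold uniformly in $i$ and $N$ simultaneously; the constraint $k>2(1+1/\alpha)$ and the cutoff $T_0$ for the regime of effectiveness of Potter's bound are the key ingredients making this uniformity possible.
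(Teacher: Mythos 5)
Your proof is correct, and it follows the same standard route as the proof in Hambly–Martin that the paper simply cites and does not reproduce: reduce to the Beta-distributed variables $V_i=1-F(M_i^{(T)})$ via the probability integral transform, apply Potter's bound to $a_{1/V_i}/a_N$, and control $\E(V_i^{-s})$ through the Gamma-ratio identity, with the hypothesis $k>2(1+1/\alpha)$ keeping $i-s$ bounded away from the pole. The splitting at $V_i=1/T_0$ to produce the second term with $c_2=1/(2T_0)$, and the absorption of finitely many small $N$ into $c_0$ using $M_i^{(T)}/a_N\le 1$ on $\widetilde{\cG}_k^{(T)}$, are both handled correctly.
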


\begin{proof} This is exactly the statement shown in \cite[Lemma 3.8]{HamblyMartin07}, with $N$ in place of $n^2$ and the parameter $k$ here playing the role of $k+1$ in said reference. We therefore omit the proof.
\end{proof}

Finally, the bound on the expectation of $\widetilde{R}_k^{(T)}$ we shall need is the following:

\begin{lemma}\label{lemma:l1} Given $\rho \in (0,\tfrac{1}{\alpha})$ there exists a constant $C>0$ (depending~only on $\rho$ and $F$) such that, for all $k \in \N$ sufficiently large (depending only on~$F$), if $T >(2k)^{1/\alpha}$ and $p \in [T^{-(2-\alpha)},1]$ then
	\[
	\E_p(\widetilde{R}^{(T)}_k ; \cG_k^{(T)}) \leq C\left[\sum_{i=k}^{\infty}\E_p(L_i^{(T)};\cB^{(T)})i^{-\tfrac{1}{\alpha}-1+\rho}+ T(pT^2)^{-\tfrac{1}{\alpha}+\rho} + \frac{T}{a_{pT^2}}\right]
	\] 
\end{lemma}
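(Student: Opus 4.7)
My approach, adapted from \cite[Lemma~3.7]{HamblyMartin07}, is to combine a summation-by-parts identity for $R_k^{(T)}$ with the conditional-independence structure of the model and the estimates from Lemmas~\ref{lemma:3.8} and~\ref{lemma:hml}.

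\emph{Step~1 (Abel summation and conditional independence).} Let $A^*\in\cC_T^{(\geq k)}$ attain the maximum in the definition of $R_k^{(T)}$. Since $A^*\cap\{1,\dots,j\}$ is an attainable subset of $\{1,\dots,j\}$, one has $|A^*\cap\{k,\dots,j\}|\leq L_j^{(T)}$ for every $j\geq k$, and Abel summation yields
\[
R_k^{(T)}\;\leq\;M_k^{(T)}L_k^{(T)}+\sum_{j=k+1}^{N_T}M_j^{(T)}\bigl(L_j^{(T)}-L_{j-1}^{(T)}\bigr),
\]
with the convention $M_{N_T+1}^{(T)}:=0$. The key observation is then that, given the unordered set $\cV_T$ of attainable space-time points together with its $\varepsilon$-marks, the permutation $\pi$ that labels these positions according to the decreasing order of their $\eta$-marks is uniform on $S_{N_T}$ and independent of the decreasing order statistics $(M_1^{(T)},\dots,M_{N_T}^{(T)})$. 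Since $L_j^{(T)}$ is a measurable function of $(\cV_T,\varepsilon)$ and $\pi(1),\dots,\pi(j)$, whereas $\widetilde{\cG}_k^{(T)}$ depends only on $(M_i^{(T)})$ and $N_T$, each expectation $\E(M_j^{(T)}(L_j^{(T)}-L_{j-1}^{(T)})\mathbf{1}_{\widetilde{\cG}_k^{(T)}}\mid N_T=N)$ factorises as a product.

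\emph{Step~2 (Applying Lemmas~\ref{lemma:3.8} and \ref{lemma:hml}).} Combining this factorisation with Lemma~\ref{lemma:3.8} and the uniform bound $a_N/a_{pT^2}\leq C$ on $\cB^{(T)}$ (which follows from the regular variation of $F$), one obtains
\[
\E(\widetilde R_k^{(T)};\cG_k^{(T)})\leq Cc_0\!\sum_{j\geq k}\!j^{-1/\alpha+\rho}\E\bigl((L_j^{(T)}-L_{j-1}^{(T)})\mathbf{1}_{j\leq N_T};\cB^{(T)}\bigr)+Cc_1 a_{pT^2}^{-1}\E(L_{N_T}^{(T)};\cB^{(T)}),
\]
the boundary contribution coming from telescoping $\sum_{c_2 N_T\leq j\leq N_T}(L_j^{(T)}-L_{j-1}^{(T)})\leq L_{N_T}^{(T)}$. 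Applying Lemma~\ref{lemma:hml} with a $p$-adapted exponent $\zeta'\in[0,1]$ (explicitly $\zeta':=\log(1/p)/\log T$ when $p\geq T^{-1}$, and the trivial bound $L_{N_T}^{(T)}\leq N_T\leq\tfrac{3}{2}pT^2\leq\tfrac{3}{2}T$ in the regime $p<T^{-1}$) gives $\E(L_{N_T}^{(T)};\cB^{(T)})\leq CT$, producing the $T/a_{pT^2}$ contribution. For the main term, a further Abel summation using $j^{-1/\alpha+\rho}-(j+1)^{-1/\alpha+\rho}\asymp j^{-1/\alpha-1+\rho}$ (valid since $\rho<1/\alpha$) converts it to $C\sum_{j\geq k}j^{-1/\alpha-1+\rho}\E(\widehat L_j;\cB^{(T)})$ with $\widehat L_j:=L_{j\wedge N_T}^{(T)}$. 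Its portion $j\leq N_T$ recovers the first term of the RHS, while the tail $j>N_T$ is bounded by $\E(L_{N_T}^{(T)};\cB^{(T)})\,\sum_{j>pT^2/2}j^{-1/\alpha-1+\rho}\leq CT(pT^2)^{-1/\alpha+\rho}$, yielding the middle term.

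The main obstacle is a clean formalisation of the conditional-independence step, since the labelling $i\mapsto U_i^{(T)}$ intertwines position and height information and the independence between the $L$-side and $M$-side quantities only holds after conditioning on the \emph{unordered} set $\cV_T$ together with its $\varepsilon$-marks. A secondary delicate point is the application of Lemma~\ref{lemma:hml}, which requires the $p$-adapted choice of $\zeta'$ in order to get the sharp $T$-bound on $\E(L_{N_T}^{(T)};\cB^{(T)})$ uniformly over $p\in[T^{-(2-\alpha)},1]$, with a separate trivial argument needed in the regime $p<T^{-1}$ (which only occurs for $\alpha<1$).
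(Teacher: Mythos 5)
Your proposal is correct and follows essentially the same path as the paper: Abel summation to isolate the weights $L_j^{(T)}$, the conditional independence (given $N_T$) between the order statistics $(M_i^{(T)})$ and the attainability functionals $(L_j^{(T)})$, Lemma~\ref{lemma:3.8} for the $M$-side, and Lemma~\ref{lemma:hml} for the $L$-side. The presentation of the two Abel summations is reversed relative to the paper---you first pass to $M_j^{(T)}(L_j^{(T)}-L_{j-1}^{(T)})$ and then Abel back onto $j^{-1/\alpha-1+\rho}L_j^{(T)}$, whereas the paper starts from $L_i^{(T)}(\widetilde{M}_i^{(T)}-\widetilde{M}_{i+1}^{(T)})$, replaces $m_N(i)$ by the bound $m^*(i)$, and Abel's once more onto $\ell_N(i)\,(m^*(i)-m^*(i+1))$---but the two computations are algebraically equivalent and produce the same three terms. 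One thing you do more carefully than the paper: you flag that Lemma~\ref{lemma:hml} only holds for $\zeta\in[0,1]$, so the $p$-adapted application ($\zeta'=\log(1/p)/\log T$) needed to get $\E_p(L_{\lceil\tfrac32 pT^2\rceil}^{(T)};\cB^{(T)})\leq CT$ requires $p\geq T^{-1}$, and you supply the trivial bound $L_{N_T}^{(T)}\leq N_T\leq\tfrac32 pT^2<\tfrac32 T$ to cover the regime $p\in[T^{-(2-\alpha)},T^{-1})$, which is nonempty exactly when $\alpha<1$. The paper applies Lemma~\ref{lemma:hml} at the same point without separating these cases; in the downstream application (Proposition~\ref{prop:remdiscrete2}) one always has $p\geq T^{-\zeta}$ with $\zeta<1$, so the omission is harmless there, but your treatment is the cleaner one for the lemma as stated.
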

\begin{proof}
	Since $N_T$ is almost surely finite, for each $k \in \N$ we may always find some $A^{(T)}_k \subseteq \{k,k+1,\dots\}$ such that 
	\[
	\widetilde{R}^{(T)}_k = \sum_{i \in A^{(T)}_k} \widetilde{M}_i^{(T)}.
	\] For such $A^{(T)}_k$, define $I_i:= |A^{(T)}_k \cap \{1,\dots,i\}|$ for each $i \geq k$ and set $I_{k-1} \equiv 0$. We then have that $I_{i}-I_{i-1} = \mathbf{1}_{\{i \in A^{(T)}_k\}}$ and $I_i \leq L_i^{(T)}$ for each $i \geq k$, so that, using the convention $\widetilde{M}_{N_T+1}^{(T)}:=0$, we obtain the bound
	\begin{align}
	\widetilde{R}^{(T)}_k &= \sum_{i \in A^{(T)}_k} \widetilde{M}_i^{(T)} \nonumber \\
	& = \sum_{i=k}^{N_T} \widetilde{M}_i^{(T)}(I_i - I_{i-1}) \nonumber \\
	& =\sum_{i=k}^{N_T-1} I_i (\widetilde{M}_i^{(T)} - \widetilde{M}_{i+1}^{(T)}) + I_{N_T}\widetilde{M}_{N_T}^{(T)}\nonumber\\
	& \leq \sum_{i=k}^{N_T} L_i^{(T)} (\widetilde{M}_i^{(T)} - \widetilde{M}_{i+1}^{(T)}) \label{eq:boundrt} 
	\end{align} since $\widetilde{M}_i^{(T)} \geq \widetilde{M}_{i+1}^{(T)} \geq 0$ for all $i=k,\dots,N_T-1$.

Now, for each $N \in [\tfrac{1}{2}pT^2,\tfrac{3}{2}pT^2]$ define $\cG_k^{(T),N}:=\{ N_T = N\} \cap \widetilde{\cG}_k^{(T)}$. Then, by taking expectations on \eqref{eq:boundrt} and since $\widetilde{M}^{(T)}:=(\widetilde{M}_i^{(T)} : i=1,\dots,N_T)$ and $\cV_T$ are independent given $N_T$, we obtain that
\[
\E_p(\widetilde{R}^{(T)}_k ; \widetilde{\cG}_k^{(T),N})\leq \sum_{i=k}^{N} \E_p(L_i^{(T)} ; N_T = N)  \E([\widetilde{M}_i^{(T)}-\widetilde{M}_{i+1}^{(T)}]\mathbf{1}_{\widetilde{\cG}_k^{(T)}}|N_T=N).
\] Furthermore, if for each $i$ we abbreviate 
\[
\ell_N(i):=\E_p(L_i^{(T)} ; N_T = N)\hspace{0.5cm} \text{ and }\hspace{0.5cm} m_N(i):=\E(\widetilde{M}_i^{(T)}\mathbf{1}_{\widetilde{\cG}_k^{(T)}}|N_T=N)
\] then, recalling that $m_N(N+1)=0$, we can rewrite the above as
\begin{align*}
\E_p(\widetilde{R}^{(T)}_k ; \widetilde{\cG}_k^{(T),N})&\leq \sum_{i=k}^{N} \ell_N(i)(m_N(i)-m_N(i+1))\\& = \ell_N(k)m_N(k) + \sum_{i=k+1}^{N} m_N(i)(\ell_N(i)-\ell_N(i-1)).
\end{align*}
Now, let us define for $i\geq k$ the quantity
\begin{equation} \label{eq:defmistar}
	m^*(i):=\frac{a_{\tfrac{3}{2}pT^2}}{a_{pT^2}}\left[ c_0i^{-\tfrac{1}{\alpha}+\rho} +  \frac{c_1}{a_{\tfrac{1}{2}pT^2}}\mathbf{1}_{\{i \geq \tfrac{c_2}{2}pT^2\}}\right],
\end{equation} where $c_0,c_1,c_2$ are the constants from Lemma \ref{lemma:3.8}, and note that this lemma implies that $m_N(i) \leq m^*(i)$ for all $k \leq i \leq N$ with $N \in [\tfrac{1}{2}pT^2,\tfrac{3}{2}pT^2]$. Hence, since $\ell_N(i)-\ell_N(i-1)=0$ for all $i\geq N+1$, we can further bound 
\begin{align*}
\E_p(\widetilde{R}^{(T)}_k ; \widetilde{\cG}_k&^{(T),N}) \leq \ell_N(k)m^*(k) + \sum_{i=k+1}^{\lceil \tfrac{3}{2}pT^2 \rceil} m^*(i)(\ell_N(i)-\ell_N(i-1)) \\
& = \sum_{i=k}^{\lceil \tfrac{3}{2}pT^2 \rceil} \ell_N(i)(m^*(i)-m^*(i+1)) + \ell_N(\lceil \tfrac{3}{2}pT^2 \rceil)m^*(\lceil \tfrac{3}{2}pT^2 \rceil+1).
\end{align*}
Now, by summing the last bound over $N \in [\tfrac{1}{2}pT^2,\tfrac{3}{2}pT^2]$, we conclude that $\E_p(\widetilde{R}^{(T)}_k ; \cG_k^{(T)})$ is bounded from above by
\begin{equation} \label{eq:lsum}
\sum_{i=k}^{\lceil \tfrac{3}{2}pT^2 \rceil}\E_p(L_i^{(T)}; \cB^{(T)})(m^*(i)-m^*(i+1))+ \E_p(L_{\lceil \tfrac{3}{2}pT^2 \rceil}^{(T)}; \cB^{(T)})m^*(\lceil \tfrac{3}{2}pT^2 \rceil+1).
\end{equation} Furthermore, since 
\begin{equation} \label{eq:bsup}
\sup\left\{ \max\left\{\frac{a_{\tfrac{3}{2}pT^2}}{a_{pT^2}}\,,\,\frac{a_{pT^2}}{a_{\tfrac{1}{2}pT^2}}\right\} : T > (2k)^{1/\alpha}\,,\,p \in [T^{-(2-\alpha)},1]\right\} < \infty
\end{equation} for all $k$ large enough by our assumptions on $F$, a simple computation using the mean value theorem shows that, for any $i \geq k$,
\[
m^*(i) - m^*(i-1) \leq C_1 i^{-\tfrac{1}{\alpha}-1+\rho}
\] for some constant $C_1 > 0$ depending only on $\rho$ and $F$. On the other hand, by Lemma \ref{lemma:hml} and the bound $\lceil x \rceil \leq 2x$ valid for all $x \geq \tfrac{1}{2}$, we have that
\[
\E_p(L_{\lceil \tfrac{3}{2}pT^2 \rceil}^{(T)}; \cB^{(T)}) \leq C_2 T
\] for some constant $C_2 > 0$. Finally, since by \eqref{eq:bsup} we have
\[
m^*(\lceil \tfrac{3}{2}pT^2 \rceil+1) \leq C_3\left( (pT^2)^{-\tfrac{1}{\alpha}+\rho} + \frac{1}{a_{pT^2}}\right)
\] for some $C_3 > 0$ depending only on $\rho$ and $F$, plugging all the estimates above in \eqref{eq:lsum} we conclude the result.
\end{proof}

\subsection{Conclusion of the proof} \label{sec:derivation}

With all the previous estimates, we now finish the proof of Proposition~\ref{prop:remdiscrete2}. Fix $\delta > 0$ and, using Markov's inequality, let us decompose
\[
\P_p\big( \widetilde{R}_k^{(T)} > \delta\big) \leq \P_p\big((\cG^{(T)}_k)^c\big) + \frac{1}{\delta}\E_p(\widetilde{R}^{(T)}_k ; \cG_k^{(T)}).
\] In light of Lemma \ref{lemma:badset}, it will suffice to show that
\begin{equation}\label{eq:enuff}
\lim_{k \rightarrow \infty} \left[\sup_{T > (2k)^{1/\alpha}}\left( \sup_{p \geq T^{-\zeta}}\E_p(\widetilde{R}^{(T)}_k ; \cG_k^{(T)}) \right)\right] = 0.
\end{equation} Fix $\rho \in (0,\tfrac{1}{\alpha})$ small enough so that $(2-\zeta)(\tfrac{1}{\alpha}-\rho)>1$ (which is possible since $\zeta < 2-\alpha$). By Lemma \ref{lemma:l1}, for any $T>(2k)^{1/\alpha}$ and $p \in [T^{-\zeta},1]$, 
\begin{equation}\label{eq:boundtail}
\E_p(\widetilde{R}^{(T)}_k ; \cG_k^{(T)}) \leq C\left[\sum_{i=k}^{\infty}\E_p(L_i^{(T)};\cB^{(T)})i^{-\tfrac{1}{\alpha}-1+\rho}+ T(pT^2)^{-\tfrac{1}{\alpha}+\rho} + \frac{T}{a_{pT^2}}\right].
\end{equation} Since $a_t=t^{1/\alpha}L(t)$ for some slowly varying function~$L$, by choice of $\rho$ and $\zeta$  we have
\[
\lim_{T \rightarrow \infty} \left[\sup_{p \geq T^{-\zeta}} \left(  T(pT^2)^{-\tfrac{1}{\alpha}+\rho}+\frac{T}{a_{pT^2}}\right)\right] = 0.
\] 
On the other hand, by Lemma \ref{lemma:hml} we have that
\[
\sup_{T > (2k)^{1\alpha}}\left(\sup_{p \geq T^{-\zeta}}\left[ \sum_{i=k}^{\infty}\E_p(L_i^{(T)};\cB^{(T)})i^{-\tfrac{1}{\alpha}-1+\rho} \right]\right)\leq C \sum_{i=k}^{\infty}i^{\tfrac{1}{2-\zeta}-\tfrac{1}{\alpha}-1+\rho} \underset{k \rightarrow \infty}{\longrightarrow} 0
\] since $\tfrac{1}{2-\zeta}-\tfrac{1}{\alpha}-1+\rho < -1$ by choice of $\rho$. Taking into consideration \eqref{eq:boundtail}, this yields \eqref{eq:enuff} and thus concludes the proof. 

\section{Technical Appendix}\label{sec:techapp}

This final section is devoted to giving the proofs of various technical lemmas used throughout Section \ref{coupling} for the proof of Proposition \ref{prop:coupling2}. 

\subsection{Proof of Lemma \ref{lemma:control}}\label{sec:proofrelemma}

We split the proof of the lemma into three parts:

\noindent \textbf{Proof of \eqref{eq:expectations}}.
We begin by showing that
\begin{equation}
\label{eq:npartialc}
\E_p(N^\partial_T)=2T-\frac{1-\mathrm{e}^{-pT}}{p}.
\end{equation} If for $A \in \Z \times \R_+$ we define $n_T(A):=|\xi_T \cap A|$ then, by inclusion-exclusion, we have that 
\begin{equation}\label{eq:npartial}
N^\partial_T = n_T(G^+_T)+n_T(G^-_T)-n_T(G^+_T\cap G^-_T),
\end{equation} where, for $t \in [0,T]$, we define $G^{+}_t:=\text{graph}(\gamma^{+,T}|_{[T-t,T]})$ and $G^-_t$ analogously. Observe that $(n_T(G^+_t))_{t \in [0,T]}$ and $(n_T(G^-_t))_{t \in [0,T]}$ are both Poisson processes with rate $1$. In particular, in light of \eqref{eq:npartial}, \eqref{eq:npartialc} will follow once we show that
\begin{equation} \label{eq:b2}
\E_p(n_T(G^+_T\cap G^-_T)) =\frac{1-\mathrm{e}^{-pT}}{p}.
\end{equation} To this end, let $H_T:=\inf\{ t \in [0,T] : s^{+,T}(t)\neq s^{-,T}(t)\}$ be the first time that the reversed right/leftmost paths $s^{+,T}$ and $s^{-,T}$ disagree (the first time that they reach a sticky point), and set $H_T \equiv T$ if these paths never split. Since $s^{+,T}$ and $s^{-,T}$ coincide until $H_T$ and then disagree forever afterwards, it follows that
\begin{equation} \label{eq:martingale}
n_T(G^+_T \cap G^-_T) = n_T(G^+_{H_T}). 
\end{equation} In particular, the optional stopping theorem for Poisson processes then yields that $\E_p(n_T(G^+_T\cap G^-_T)) = \E_p(H_T)$. Upon noticing that $H_T$ is distributed as $E_p \wedge T$, where $E_p$ is an exponential random variable with parameter $p$, \eqref{eq:b2} (and therefore \eqref{eq:npartialc}) now follows by a straightforward computation.

Let us now turn to the proof of the formula
\begin{equation} \label{eq:ninteriorc}
\E_p(N^\circ_T)=pT^2-T+\frac{1-\mathrm{e}^{pT}}{p}.
\end{equation}
To this end, we first observe that, since the conditional distribution of $N^\circ_T$ given $\partial \Delta_T$ is Poisson of parameter $n_\Z \otimes \lambda_{\R_+}(\Delta_T^\circ)$ (see paragraph below~\eqref{eq:decomp}), by conditioning on $\partial \Delta_T$ we obtain 
\[
\E_p(N^\circ_T)= \E_p( n_\Z \otimes \lambda_{\R_+}(\Delta_T^\circ)).
\] Furthermore, by the definition of $\Delta^\circ_T$ and the Fubini-Tonelli theorem, 
\begin{align}
n_\Z \otimes \lambda_{\R_+}(\Delta_T^\circ)&=\int_0^{T-H_T}\big(\gamma^{+,T}(t)-1 -\gamma^{-,T}(t)\big)\mathrm{d}t \nonumber \\
&=\int_0^{T}\big(\gamma^{+,T}(t) -\gamma^{-,T}(t)\big)\mathrm{d}t - T + H_T \label{eq:boundarea}
\end{align} where $H_T$ is, as before, the first time the time-reversed paths $s^{+,T}$ and $s^{-,T}$ split. Since these reversed paths are both Poisson jump processes with rate~$p$, recalling that $\E_p(H_T)=\frac{1-\mathrm{e}^{-pT}}{p}$ we conclude that
\begin{align*}
\E_p(N^\circ_T) &= 2\int_0^T \E_p(\gamma^{+,T}(t))\mathrm{d}t - T + \frac{1-\mathrm{e}^{-pT}}{p}.\\
&= 2\int_0^T p(T-t)\mathrm{d}t - T + \frac{1-\mathrm{e}^{-pT}}{p} = pT^2 - T +\frac{1-\mathrm{e}^{-pT}}{p}
\end{align*} and so \eqref{eq:ninteriorc} now follows. Together with \eqref{eq:npartialc}, this concludes the proof of~\eqref{eq:expectations}.

\noindent \textbf{Proof of \eqref{eq:triangle}}.
		The process $\widetilde s(t):=s^+(tT)/pT$, $t\in [0,1]$,  
		is a Poisson process with generator of the form
		$$A^h f(x)=\frac{1}{h}\big(
		f(x+h)-f(x)
		\big),$$
		where, in our case, $h=(pT)^{-1}$.
		We are looking for a large deviations result for the trajectories of such a Poisson process, in the spirit of~\cite{FreidlinWentzell}.
		In view of Section~5.3 therein, the action functional for this process is
		given by:
		$$S(\phi)\,:=\,\begin{cases}
			\displaystyle\int_0^1\big(
			\phi'(t)\ln\phi'(t)-\phi'(t)+1
			\big)\,dt &\text{if}\ \phi\ \text{is abs. cont. and $\phi'\geq 0$},\\
			+\infty &\text{otherwise}.
		\end{cases}$$
		This action functional has a unique minimizing trajectory, namely $\phi^*(t)=t$. The Poisson process with generator $A^h$
		satisfies a Large Deviations Principle
		with action functional $S$ and normalizing coefficient $1/h$
		(cf. Theorem~5.3.3 in~\cite{FreidlinWentzell}).
		Let us denote by $d$ the supremum metric on the space of trajectories from $[0,1]$
		to $\R$ and let us define, for $\eta \geq 0$, the sets of trajectories
		\[
		\Phi(\eta):=\big\lbrace
		\phi : S(\phi)\leq \eta 
		\big\rbrace.
		\]
		The sets $\Phi(\eta) $ are compact and, furthermore, $\Phi(0)$
		contains only the optimal trajectory $\phi^*$.
		Using formula~(3.3.2) in~\cite{FreidlinWentzell}, 
		we have that for any $\d',\gamma,\eta>0$, there exists
		an $h_0>0$ such that for all $h\leq h_0$
		\[
		P_h\Big(
		d\big(\widetilde s, \Phi(\eta)\big)\geq \d'
		\Big)\leq
		\exp\bigg(
		-\frac{\eta- \gamma}{h}
		\bigg),
		\] where $P_h$ denotes the law of the process $\widetilde{s}$ having generator $A^h$.
		This implies that, if we fix 
		$\zeta,\d',\gamma,\eta>0$, there exists
		an $T_\zeta>0$ such that for all $T\geq T_\zeta$
		and $p\in[T^{-\zeta},1]$ we have 
		\begin{equation} \label{eq:ldp}
		\P_p\Big(
		d\big(
		\widetilde s, \Phi(\eta)\big)\geq \delta'
		\Big)\leq
		\exp\big(
		-T^{1-\zeta}(\eta- \gamma)
		\big).
		\end{equation}
		However, since this inequality is only valid for $\eta>0$, this will be insufficient for our purposes.
	Therefore, obtaining the desired estimate will require a bit more work. Indeed, given $\delta > 0$ let us consider the set 
		\[
		A_\delta:=\big\lbrace
		\phi : d(\phi,\phi^*)\geq \delta
		\big\rbrace.
		\]
		The values of $S$ on the closed set $A_\delta$ are all strictly positive,
		therefore so is the infimum value of $S$ over this set.
		Pick $\gamma > 0$ (depending on $\delta$) so that 
		\[
		\inf \big\lbrace
		S(\phi):\phi\in A_\delta
		\big\rbrace> 2\gamma.
		\]
		In particular, this choice of $\gamma$ gives $A_\delta\cap \Phi(2\gamma)=\emptyset$.
		Next, choose $\delta'$ such~that 
		$$\delta'<d(A_\delta,\Phi(2\gamma)).$$
		Then, by using \eqref{eq:ldp} with $2\gamma$ in place of $\eta$, we obtain
		\[
		\sup_{p \geq T^{-\zeta}}\P_p\Big(
		d(\widetilde s, \phi^*)\geq \delta
		\Big)\leq \exp\big(
		-T^{1-\zeta}\gamma
		\big).
		\] The analogous estimate for $s^{-,T}$ follows by symmetry. By combining the two, this concludes the proof of \eqref{eq:triangle}.

\noindent \textbf{Proof of Equation \eqref{eq:T2pts}}. Since $N_T = N^\partial_T + N^\circ_T$, to obtain \eqref{eq:T2pts} it will suffice to show that for any $\zeta \in (0,1)$ and $\delta>0$ there exist $T_{\delta,\zeta},C_{\delta,\zeta} > 0$ such that, for all $T>T_{\delta,\zeta}$, 
\begin{equation}\label{eq:Tp2eq1}
\sup_{p \geq T^{-\zeta}} \P_p\Big(|N^\partial_T - 2T| > \delta T\Big)\leq \mathrm{e}^{-C_{\delta,\zeta}T^{1-\zeta}}
\end{equation}
and
\begin{equation}\label{eq:Tp2eq2}
	\sup_{p \geq T^{-\zeta}} \P_p\Big(|N^\circ_T - pT^2| > \delta pT^2\Big)\leq \mathrm{e}^{-C_{\delta,\zeta}T^{1-\zeta}}.
	\end{equation}
Let us show \eqref{eq:Tp2eq1} first. Since $n_T(G^+_T) \sim \text{Poisson}(T)$, using Chernov's bound 
\begin{equation}\label{eq:chernov}
\P\big(|X-\lambda| >  x \big) \leq 2\mathrm{e}^{- \frac{x^2}{2 (\lambda + x)}}
\end{equation} valid for all $x > 0$ and any random variable $X \sim \text{Poisson}(\lambda)$, it follows that for any $\delta \in (0,1)$,
\begin{equation} \label{eq:poissonldp}
\P_p(|n_T(G^+_T) - T| > \delta T) \leq 2\mathrm{e}^{- \tfrac{\delta}{4}T}.
\end{equation} By symmetry, the same bound holds for $n_T(G^-_T)$. On the other hand, in view of \eqref{eq:martingale}, by the union bound and \eqref{eq:chernov} again, for any $\delta \in (0,1)$ and $T > \frac{2}{\delta}$ we obtain that
\begin{align}
\P_p(n_T(G^+\cap G^-) > \delta T) &\leq \P_p\big( n_T(G^+_{H_T}) > \delta T\,,\,H_T \leq \tfrac{\delta}{2}T\big) + \P_p(H_T > \tfrac{\delta}{2}T) \nonumber \\
& \leq  \P_p\Big( n_T(G^+_{\tfrac{\delta}{2}T}) > \delta T\Big)+\mathrm{e}^{-\tfrac{\delta}{2}pT} \nonumber \\
& \leq 2\mathrm{e}^{-\frac{2\delta}{3}T} + \mathrm{e}^{-\tfrac{\delta}{2}T^{1-\zeta}}. \label{eq:finalbound2}
\end{align} Taking into consideration \eqref{eq:npartial}, combining \eqref{eq:poissonldp} and \eqref{eq:finalbound2} then yields \eqref{eq:Tp2eq1}.

Let us now turn to the proof of \eqref{eq:Tp2eq2}. In light of \eqref{eq:triangle}, it will be enough to show that for some sufficiently small  $\delta'>0$ (depending only on $\delta$) there exist  $T'_{\delta,\zeta},C'_{\delta,\zeta} > 0$ such that, for all $T>T'_{\delta,\zeta}$,
\begin{equation}\label{eq:checkint}
\sup_{p \geq T^{-\zeta}} \P_p( \{|N^\circ_T - pT^2| > \delta pT^2\} \cap \mathcal{P}_{T,\delta'}) \leq \mathrm{e}^{-C'_{\delta,\zeta}T^{2-\zeta}},
\end{equation} where $\mathcal{P}_{T,\delta'}$ denotes the complement of the event in \eqref{eq:triangle} with $\delta'$ in place of~$\delta$. To see \eqref{eq:checkint}, note that,  for all $T$ large enough (depending only on $\zeta$ and $\delta$) so as to have $T \leq \delta'pT^2$ for all $p \in [T^{-\zeta},1]$, on the event $\mathcal{P}_{T,\delta'}$ we have
\begin{equation}\label{eq:boundarea2}
|n_\Z \otimes \lambda_{\R_+}(\Delta^\circ_T)-pT^2| \leq 3\delta'pT^2.
\end{equation} Indeed, by \eqref{eq:boundarea} we have that
\[
 \bigg|n_\Z \otimes \lambda_{\R_+}(\Delta^\circ_T)  - \int_0^{T}\big(s^{+,T}(t) -s^{-,T}(t)\big)\mathrm{d}t \bigg| \leq T \leq \delta'pT^2
 \] so that, upon noticing that $\sup_{t \in [0,T]} |(s^{+,T}(t) -s^{-,T}(t)) - 2pt| \leq 2\delta'pT$ on the event $\mathcal{P}_{T,\delta'}$, \eqref{eq:boundarea2} now follows by computing a simple integral. Therefore, since conditional on $\partial \Delta_T$ the random variable $N^\circ_T$ is Poisson distributed with parameter $n_\Z \otimes \lambda_{\R_+}(\Delta_T^\circ)$, \eqref{eq:checkint} now follows by conditioning on $\partial \Delta_T$ and using Chernov's bound in \eqref{eq:chernov} with the help of \eqref{eq:boundarea2}, we omit the details which are straightforward. This concludes the proof of~\eqref{eq:T2pts} and thus of Lemma~\ref{lemma:control}.
 
\subsection{Verifying condition (C1')} \label{sec:techappc1}

We now complete the details of the proof of condition (C1') in Section \ref{sec:condc1}, by proving Lemmas \ref{lemma:c1.a} and \ref{lemma:c1.b}.

\begin{proof}[Proof of Lemma \ref{lemma:c1.a}] 
	Since $a_N^{-1}M^\circ_{i,(N)} \overset{as}{\longrightarrow} M_i$ as $N \rightarrow \infty$ for each $i \in \N$ and also $N^\circ_T \overset{\P}{\longrightarrow} \infty$ uniformly over all $p \in [T^{-\zeta},1]$ by \eqref{eq:T2pts}, we have that
	\[
	\lim_{T \rightarrow \infty} \left[\sup_{p \geq T^{-\zeta}} \P_p\left( \sum_{i=1}^{k} |M_i - (a_{N^\circ_T})^{-1}M_i^{\circ,(T)}| > \delta\,,\,N^\circ_T\geq k\right)\right]=0.
	\] However, since $\frac{a_{N^\circ_T}}{a_{pT^2}} \overset{\P}{\longrightarrow} 1$ holds uniformly over all $p \in [T^{-\zeta},1]$ by \eqref{eq:T2pts} again, this readily implies \eqref{eq:c1b}.
\end{proof}

\begin{proof}[Proof of Lemma \ref{lemma:c1.b}]
Notice that, conditional on $N_T$, the collection of space-time points having the $k$ largest heights in $\cV_T$ is uniformly distributed among all subsets of $N_T \wedge k$ elements in $\cV_T$. Thus, by conditioning on $N^\circ_T$ and $N^\partial_T$, it follows that
\[
\P_p(\Omega_k)= \E_p( \mathbf{1}_{N^\circ_T \geq k} \P( \cH(N_T,N^\partial_T,k) = 0 \,|\, N^\circ_T,N^\partial_T)),
\] for $\cH(N,D,k)$ the hypergeometric distribution with total population size~$N$, total number of defective objects in the population $D$ and sample size $k$. Moreover, on the event $\{N^\circ_T \geq k\}$ we have that
\[
\P( \cH(N_T,N^\partial_T,k) = 0 \,|\, N^\circ_T,N^\partial_T) = \frac{\binom{N^\circ_T}{k}}{\binom{N_T}{k}} = \prod_{i=0}^{k-1} \left(1 - \frac{N^\partial_T}{N_T-i}\right).
\] Therefore, since $\frac{N^\partial_T}{N_T-i} \overset{\P}{\longrightarrow} 0$ uniformly over $p \in [T^{-\zeta},1]$ for all $i=0,\dots,k-1$ due to the fact that $\zeta <1$, by \eqref{eq:boundnt} and the bounded convergence theorem (which can be applied uniformly over $p \in [T^{-\zeta},1]$ due to our uniform control) we conclude that \eqref{eq:boundomega} holds.
\end{proof}

\subsection{Verifying condition (C2')}\label{sec:techappc2}

Next, we complete the verification of (C2') by giving the proof of Lemma~\ref{lemma:c2.1}.

\begin{proof}[Proof of Lemma \ref{lemma:c2.1}] We only show \eqref{eq:suff3}, \eqref{eq:suff4} follows by a similar argument. For each $p,\rho \in (0,1]$, let $\Delta^{(p)}(\rho)$ be the $p$-slope triangle with apex $(0,1-\rho)$ given by the formula
\[
\Delta^{(p)}(\rho):=\big\lbrace
(x,t)\in\R^2 : 0\leq t\leq 1-\rho\,,\, |x|\leq p(1-\rho-t)\big\rbrace.
\] Since each $U_i^{(p)}$ is uniformly distributed on $\Delta^{(p)}$ and, furthermore, since by the Fubini-Tonelli theorem we have
\[
\frac{\lambda_{\R} \otimes \lambda_{[0,1]}(\Delta^{(p)}\setminus \Delta^{(p)}(\rho))}{\lambda_{\R} \otimes \lambda_{[0,1]}(\Delta^{(p)})}\leq \frac{(2p)\rho}{p} = 2\rho,
\] it follows that, for each $i \in \N$,
\[
\lim_{\rho \rightarrow 0^+} \left[\sup_{p \in (0,1]} \P( U_i^{(p)} \notin \Delta^{(p)}(\rho) ) \right]= 0.
\] Thus, in order to obtain \eqref{eq:suff3}, it will suffice to show that, for any $\rho \in (0,1)$, one can find $\delta' > 0$ such that, for all $T > 0$ large enough, on the event $\mathcal{P}_{T,\delta'}$ one has the inclusion
\begin{equation} \label{eq:finalinc}
\varphi_T(\Delta^{(p)}(\rho)) \subseteq \Delta^\circ_T
\end{equation} for all $p \in (0,1]$. To this end, observe that for each $(x,t) \in \R \times [0,1]$ we have
\begin{equation} \label{eq:goodinc}
\left\|\frac{1}{T}\varphi_T(x,t) - (x,t)\right\|=\left|x - \frac{1}{T}\lfloor Tx +1/2\rfloor\right| \leq \frac{1}{2T},
\end{equation} so that for all $T$ large enough (depending on $\rho$) we have 
\[
\frac{1}{T}\varphi_T(\Delta^{(p)}(\rho)) \subseteq \Delta^{(p)}(\rho/2).
\] But it follows immediately from the definition of $\mathcal{P}_{T,\delta'}$ that, if we take $\delta'<\frac{\rho}{2}$, on the event $\mathcal{P}_{T,\delta'}$ we have 
\begin{equation}\label{eq:fakeinc}
\Delta^{(p)}(\rho/2) \subseteq \frac{1}{T} \Delta^\circ_T.\footnote{In reality, since we have defined $\Delta^\circ_T$ as a subset of $\Z \times [0,T]$, the inclusion in \eqref{eq:fakeinc} is only true if we intersect the left-hand side with $\frac{1}{T}\Z \times [0,1]$. However, this does not affect the argument since $\frac{1}{T}\varphi_T(\Delta^{(p)})$ is also contained in $\frac{1}{T}\Z \times [0,1]$.}
\end{equation} In combination with \eqref{eq:goodinc}, this yields \eqref{eq:finalinc} and so \eqref{eq:suff3} follows. 

The argument to show \eqref{eq:suff4} is completely analogous, one only has to replace $\Delta^{(p)}$ with $\Delta^\circ_T$ and $\Delta^{(p)}(\rho)$ with
\[
\Delta_T(\rho):=\{ (x,t) \in \Z \times [0,T(1-\rho)] : \gamma^{-,T}(t+\rho T) < x < \gamma^{+,T}(t+\rho T)\}. 
\] We omit the details.
\end{proof}

\subsection{Verifying condition (C3')}\label{sec:techappc3}

To conclude, we now give the proofs of the various lemmas appearing in the proof of condition (C3') in Section \ref{sec:condc3}. Recall that, throughout the latter, we often used the abbreviation $(n_i,t_i):=U_i^{\circ,(T)}$ for each $i=1,\dots,N^\circ_T$, which we shall continue to use here during the proofs.

\begin{proof}[Proof of Lemma \ref{lemma:c3.1}] We begin by giving ourselves some ``room to maneuver''. Observe that, since almost surely we have $t(U_i)>0$ and also that the pair $\{U_i,U_j\}$ is not contained in any line of slope $\pm 1$, we may as well assume that there exists $\rho>0$ such that $U_j \in \Delta(U_i;2\rho)$. More precisely, since 
\[
\lim_{\rho \searrow 0} \P( U_j \in \Delta(U_i) \,,\, U_j \notin \Delta(U_i;2\rho))=0,
\] we see that, in order to prove the lemma, by the union bound it will suffice to show that, for any $i \neq j \in \{1,\dots,k\}$ and  $\rho \in (0,1)$, 
\begin{equation} \label{eq:c3suff3}
		\lim_{T \rightarrow \infty} \left[\sup_{p \geq T^{-\zeta}} \P_p( \{ U_j \in \Delta(U_i;2\rho)\,,\,U^{\circ,(T)}_j \notin \Delta_T(U^{\circ,(T)}_i)\} \cap \Omega''_{k} \cap \mathcal{I}^{(T)}_{i,\rho} )\right] = 0.
	\end{equation}  

To show \eqref{eq:c3suff3}, recall first the notation $r_{p,T}(n,t):=r_p(\frac{1}{T}(n,t))=(\frac{n}{pT},\frac{t}{T})$ and observe that, since on $\Omega''_k$ we have 
	$\|U_j - r_{p,T}(U_j^{\circ,(T)})\| \leq T^{\zeta-1} \rightarrow 0$ as 
	$T \rightarrow \infty$ (see~\eqref{eq:boundrp}), if $T$ is large enough (depending only on $\rho$ and $\zeta$) then on the event $\{U_j \in \Delta(U_i;2\rho)\} \cap \Omega''_{k} \cap \mathcal{I}^{(T)}_{i,\rho}$ we have that 
	\[
	r_{p,T}(U_j^{\circ,(T)}) \subseteq \Delta(U_i;\rho) \subseteq  r_{p,T}(\Delta_T(U_i^{\circ,(T)}))
	\] and therefore that $U^{\circ,(T)}_j \in \Delta_T(U^{\circ,(T)}_i)$. In particular, for all such $T$ the event in \eqref{eq:c3suff3} is empty, so that \eqref{eq:c3suff3} and hence the entire lemma now follow. 
\end{proof}

\begin{proof}[Proof of Lemma \ref{lemma:bigpathscoincide}] To establish \eqref{eq:equality}, note that $s^{+,T}_{(n_i,t_i)}$ coincides with $\widehat{s}^{\,+,T}_{(n_i,t_i)}$ until the first time $t > 0$ in which these two paths encounter a sticky point in $\cV_T^{\circ,\,\text{small}}$ (other than $(n_i,t_i)$ itself). In particular, the event in \eqref{eq:equality} is contained in the intersection 
\begin{equation} \label{eq:equality3}
	\{N^{\circ,\,\text{small}}_T \geq i\} \cap \{ | \cV_T^{\circ,\,\text{small}} \cap \text{graph}(\widehat{\gamma}^{+,T}_{(n_i,t_i)}|_{[0,t_i)})| \geq 1\}, 
\end{equation} where $\widehat{\gamma}^{+,T}_{(n_i,t_i)}$ denotes the (non-time-reversed) rightmost path in $\widehat{\cC}_T(n_i,t_i)$ and $\widehat{\gamma}^{+,T}_{(n_i,t_i)}|_{[0,t_i)}$ is its restriction to $[0,t_i)$ (i.e. we exclude $(n_i,t_i)$ from the graph). By conditioning on $\cV_T^{\circ,\,\text{big}} \cup \partial \cV_T$ and $N^{\circ,\,\text{small}}_T$ (and also $(n_i,t_i)$, to be precise), the $\P_p$-probability of the event in \eqref{eq:equality3} is bounded from above by 
\begin{equation} \label{eq:probap}
	\sup_{p \geq T^{-\zeta}} \P_p\left( \text{Binomial}\left(N^{\circ,\,\text{small}}_T -1, q_{p,T}(U^{\circ,(T)})\right) \geq 1\right),
\end{equation} where $U^{\circ,(T)}$ is uniformly distributed on $\Delta_T^\circ$ and, for each $(n,t) \in \Z \times \R_+$, we write
\[
q_{p,T}(n,t):=\P_p\left( U^{\circ,(T)} \in \text{graph}(\widehat{\gamma}^{+,T}_{(n,t)}) \,\Big|\, \cV_T^{\circ,\,\text{big}} \cup \partial \cV_T \right).
\] Upon noticing that for any $t \in [0,T]$ we have $n_\Z \otimes \lambda_{\R_+} ( \widehat{\gamma}^{+,T}_{(n,t)}) \leq t \leq T$ since the path $\widehat{\gamma}^{+,T}_{(n,t)}$ is piecewise constant and, moreover, that on the event $\mathcal{P}_{T,\tfrac{1}{4}}$ (the complement of the event in \eqref{eq:triangle} with $\delta=\frac{1}{4}$) by \eqref{eq:boundarea2} we have
\[
n_\Z \otimes \lambda_{\R_+}(\Delta^\circ_T) \geq pT^2 - \frac{3}{4}pT^2 = \frac{1}{4}pT^2
\] for all $T$ sufficiently large (depending only on $\zeta$), by the Chebyshev inequality we obtain that the probability in \eqref{eq:probap} can be bounded from above by 
\begin{align}
	\E_p(N^{\circ,\,\text{small}}_T q_{p,T}(U^{\circ,(T)})) & \leq \left(\E_p([N^{\circ,\,\text{small}}_T]^2) \P_p((\mathcal{P}_{T,\tfrac{1}{4}})^c)\right)^{\tfrac{1}{2}} + \E_p(N^{\circ,\,\text{small}}_T)\frac{T}{\frac{1}{4}pT^2} \nonumber\\
	& \leq \sqrt{18}T^\beta\mathrm{e}^{-C_\zeta T^{1-\zeta}} + 4T^{\beta-1}, \label{eq:ob1}
\end{align} for all $T$ sufficiently large and some constant $C_\zeta > 0$ depending only on~$\zeta$. Indeed, the first inequality above follows from the linearity of expectation and the Cauchy-Schwarz inequality, while \eqref{eq:ob1} does so from \eqref{eq:triangle} and the~fact~that the conditional distribution of $N^{\circ,\,\text{small}}_T$ given $\partial \cV_T$ is Poisson with parameter $T^{\beta-2}(n_\Z \otimes \lambda_{\R_+}(\Delta^\circ_T))$, so that by \eqref{eq:expectations} and the bound
\[
n_\Z \otimes \lambda_{\R_+}(\Delta^\circ_T) \leq \int_0^{T}\big(s^{+,T}(t) -s^{-,T}(t)\big)\mathrm{d}t \leq T\big(s^{+,T}(T) -s^{-,T}(T)\big)
\] which follows from \eqref{eq:boundarea}, a standard computation yields the crude bound
\[
\E_p([N^{\circ,\,\text{small}}_T]^2) \leq 18T^{2\beta}.
\] Finally, since the expression in \eqref{eq:ob1} tends to zero as $T \rightarrow \infty$ uniformly over all $p \in [T^{-\zeta},1]$ by choice of $\beta$ and $\zeta$, we obtain~\eqref{eq:equality} and thus the result.
\end{proof}

\begin{proof}[Proof of Lemma \ref{lemma:couplepoisson}] Observe that by  \eqref{eq:ncirc1} it will suffice to prove that
\begin{equation}\label{eq:ncoincide}
\lim_{T \rightarrow \infty}\left[\sup_{p \geq T^{-\zeta}} \P_p\left( \Omega''_k \cap \{ \widehat{s}^{\,+,T}_{(n_i,t_i)}(t) \neq n_i+ \sigma^{+,T}(t) \text{ for some }t \in [0,t_i]\}\right)\right]=0
\end{equation} since $\Omega''_k=\Omega_k \cap \Omega'_k$, the intersection of the events from \eqref{eq:defomega1} and~\eqref{eq:defomega2}, satisfies
\begin{equation}\label{eq:boundomega3}
\lim_{T \rightarrow \infty} \left[\inf_{p \geq T^{-\zeta}} \P_p(\Omega''_k)\right] = 1
\end{equation} by \eqref{eq:boundomega}, \eqref{eq:triangle} and \eqref{eq:suff2}. The first step towards proving \eqref{eq:ncoincide} will be to give us some ``room to maneuver'' by showing that, with overwhelming probability as $T \rightarrow \infty$, the point $(n_i,t_i)$ is located at a macroscopical distance from the topological boundary of $\Delta_T$. More precisely, we wish to show that
\begin{equation} \label{eq:macroscopical}
	\lim_{\rho \searrow 0} \liminf_{T \rightarrow \infty} \left[ \inf_{p \geq T^{-\zeta}} \P_p (\Omega_k'' \cap \{ r_{p,T}(n_i,t_i) \in \Delta((0,1);\rho)\,,\, t_i \geq \rho T\})\right] = 1,
\end{equation} where $r_{p,T}(n,t)=r_p(\tfrac{n}{T},\tfrac{t}{T})=(\tfrac{n}{pT},\tfrac{t}{T})$.To prove \eqref{eq:macroscopical}, we notice that, by \eqref{eq:boundrp}, on the event $\Omega''_k$ we have that  
\[
\|U_i - r_{p,T}(n_i,t_i)\| \leq T^{\zeta-1} \hspace{1cm}\text{ and }\hspace{1cm}t_i=t(U_i)T,
\] so that the event in \eqref{eq:macroscopical} occurs, in particular, for all $T$ sufficiently large (depending only on $\rho$ and $\zeta$) whenever
\[
U_i \in \Delta((0,1);2\rho) \hspace{1cm}\text{ and }\hspace{1cm}t(U_i) \geq \rho.
\] Since almost surely we have that $U_i$ is not located on the topological boundary of $\Delta=\Delta(0,1)$, we have that 
\[
\lim_{\rho \searrow 0} \P( U_i \in \Delta((0,1);2\rho)\,,\,t(U_i) \geq \rho) = 1,
\] from where, in combination with \eqref{eq:boundomega3},  \eqref{eq:macroscopical} now follows. 

Let us now turn to the proof of \eqref{eq:ncoincide}. By \eqref{eq:macroscopical}, it will suffice to show that
\begin{equation}\label{eq:ncoincide2}
\lim_{T \rightarrow \infty}\left[\sup_{p \geq T^{-\zeta}} \P_p\left( \Omega''_{k;i,\rho}\cap \{ \widehat{s}^{\,+,T}_{(n_i,t_i)}(t) \neq n_i+ \sigma^{+,T}(t) \text{ for some }t \in [0,t_i]\}\right)\right]=0,
\end{equation} where we set $\Omega''_{k;i,\rho}:=\Omega''_k \cap \{ r_{p,T}(n_i,t_i) \in \Delta((0,1);\rho)\,,\, t_i \geq \rho T\}$. To this~end, observe that since $\widehat{s}^{\,+,T}_{(n_i,t_i)}$ and $n_i+ \sigma^{+,T}$ agree until the hitting time of $\partial \Delta_T$, on the event in \eqref{eq:ncoincide2} the path $n_i+\sigma^{+,T}$ must reach the boundary $\partial \Delta_T$ and therefore, for some $t \in [0,t_i]$, we must have
	\begin{equation} \label{eq:alter}
	n_i+\sigma^{+,T}(t)= s^{+,T}(T-t_i+t).
	\end{equation}  However, since a direct computation reveals that, for $T$ sufficiently large, \eqref{eq:alter} can only occur on the event $\Omega''_{k;i;\rho}$ if either 
	\begin{equation} \label{eq:devsigma}
	\sup_{0 \leq u \leq 1} \left[ \frac{1}{p(1-T^{\beta-2})t_i}\sigma^{+,T}(ut_i) - u \right] > \frac{\rho}{4}
	\end{equation} or
	\[
	\inf_{0 \leq u \leq 1} \left[ \frac{1}{pT}s^{+,T}(uT)  - u \right] < -\frac{\rho}{4}
\] (this is precisely where we use that $r_{p,T}(n_i,t_i) \in \Delta((0,1);\rho)$), by \eqref{eq:triangle} (applied to both $\sigma^{+,T}$ and $s^{+,T}$) \eqref{eq:ncoincide2} immediately follows (note that we can indeed use \eqref{eq:triangle} in \eqref{eq:devsigma} because the length $t_i$ of the path $\sigma^{+,T}$ satisfies $t_i \geq \rho T \rightarrow \infty$ as $T \rightarrow \infty$ on the event $\Omega''_{k;i;\rho}$). This concludes the proof.
\end{proof}

\section*{Acknowledgements} The authors are immensely grateful to Vladas Sidoravicius for encouraging us to take on this problem and for the numerous enlightening discussions we had together during our joint time at NYU-Shanghai.

The work of S.S. was supported in part at the Technion by a fellowship from the Lady Davis Foundation, the Israeli Science Foundation grants no. 1723/14 and 765/18, Fondecyt grant no. 11200690, Iniciativa Cient\'ifica Milenio ``Modelos Estoc\'asticos de Sistemas Complejos y Desordenados'' and by the United States-Israel Binational Science Foundation (BSF) grant no. 2018330.

\bibliographystyle{plain}
\bibliography{bd}

\begin{thebibliography}{10}

\bibitem{AmirCorwinQuastel}
Gideon Amir, Ivan Corwin, and Jeremy Quastel.
\newblock Probability distribution of the free energy of the continuum directed
  random polymer in 1 + 1 dimensions.
\newblock {\em Communications on Pure and Applied Mathematics}, 64(4):466--537,
  2011.

\bibitem{AuffingerLouidor}
Antonio Auffinger and Oren Louidor.
\newblock Directed polymers in a random environment with heavy tails.
\newblock {\em Communications on Pure and Applied Mathematics}, 64(2):183--204,
  2011.

\bibitem{BarabasiStanley}
Albert-L\'{a}szl\'{o} Barab\'{a}si and H.~Eugene Stanley.
\newblock {\em Fractal concepts in surface growth}.
\newblock Cambridge University Press, Cambridge, 1995.

\bibitem{BergerTorri}
Quentin Berger and Niccolò Torri.
\newblock {Directed polymers in heavy-tail random environment}.
\newblock {\em The Annals of Probability}, 47(6):4024 -- 4076, 2019.

\bibitem{BiroliBouchaudPotters}
Giulio Biroli, Jean-Philippe Bouchaud, and Marc Potters.
\newblock Extreme value problems in random matrix theory and other disordered
  systems.
\newblock {\em Journal of Statistical Mechanics: Theory and Experiment},
  2007(07):P07019--P07019, jul 2007.

\bibitem{cannizzaro2021brownian}
Giuseppe Cannizzaro and Martin Hairer.
\newblock The brownian castle, 2021.

\bibitem{Corwin16}
I.~Corwin.
\newblock Kardar-{P}arisi-{Z}hang universality.
\newblock {\em Notices Amer. Math. Soc.}, 63(3):230--239, 2016.

\bibitem{DamronRAS16}
Michael Damron, Firas Rassoul-Agha, and Timo Sepp\"{a}l\"{a}inen.
\newblock Random growth models.
\newblock {\em Notices Amer. Math. Soc.}, 63(9):1004--1008, 2016.

\bibitem{DamronRAS-book}
Michael Damron, Firas Rassoul-Agha, and Timo Sepp\"{a}l\"{a}inen, editors.
\newblock {\em Random growth models}, volume~75 of {\em Proceedings of Symposia
  in Applied Mathematics}.
\newblock American Mathematical Society, Providence, RI, 2018.
\newblock AMS Short Course, Random Growth Models, January 2--3, 2017, Atlanta,
  Georgia, Lecture notes.

\bibitem{DeyZygouras}
Partha~S. Dey and Nikos Zygouras.
\newblock {High temperature limits for $(1+1)$-dimensional directed polymer
  with heavy-tailed disorder}.
\newblock {\em The Annals of Probability}, 44(6):4006 -- 4048, 2016.

\bibitem{DurrettBook2019}
Rick Durrett.
\newblock {\em Probability---theory and examples}, volume~49 of {\em Cambridge
  Series in Statistical and Probabilistic Mathematics}.
\newblock Cambridge University Press, Cambridge, 2019.
\newblock Fifth edition of [ MR1068527].

\bibitem{FreidlinWentzell}
Mark~I. Freidlin and Alexander~D. Wentzell.
\newblock {\em Random perturbations of dynamical systems}, volume 260 of {\em
  Grundlehren der Mathematischen Wissenschaften [Fundamental Principles of
  Mathematical Sciences]}.
\newblock Springer, Heidelberg, third edition, 2012.
\newblock Translated from the 1979 Russian original by Joseph Sz\"{u}cs.

\bibitem{HamblyMartin07}
Ben Hambly and James~B. Martin.
\newblock Heavy tails in last-passage percolation.
\newblock {\em Probability Theory and Related Fields}, 137(1):227--275, 2007.

\bibitem{KatzavSchwartz}
Eytan Katzav and Moshe Schwartz.
\newblock What is the connection between ballistic deposition and the
  kardar-parisi-zhang equation?
\newblock {\em Phys. Rev. E}, 70:061608, Dec 2004.

\bibitem{khanin2010ballistic}
Konstantin Khanin, Sergei Nechaev, Gleb Oshanin, Andrei Sobolevski, and Oleg
  Vasilyev.
\newblock Ballistic deposition patterns beneath a growing kardar-parisi-zhang
  interface.
\newblock {\em Physical Review E}, 82(6):061107, 2010.

\bibitem{Penrose}
Mathew~D. Penrose.
\newblock Growth and roughness of the interface for ballistic deposition.
\newblock {\em J. Stat. Phys.}, 131(2):247--268, 2008.

\bibitem{Seppalainen}
Timo Sepp\"{a}l\"{a}inen.
\newblock Strong law of large numbers for the interface in ballistic
  deposition.
\newblock {\em Ann. Inst. H. Poincar\'{e} Probab. Statist.}, 36(6):691--736,
  2000.

\bibitem{Vold59}
Marjorie~J Vold.
\newblock A numerical approach to the problem of sediment volume.
\newblock {\em Journal of Colloid Science}, 14(2):168--174, 1959.

\end{thebibliography}
\end{document}